\crefname{equation}{}{}
\crefname{algocf}{Algorithm}{Algorithms}
\crefname{equation}{}{} %remove ``Equation''
\colorlet{refkey}{orange!20}
\colorlet{labelkey}{blue!30}
\crefname{algocf}{Algorithm}{Algorithms}
\numberwithin{equation}{section}
\newtheorem{theorem}{Theorem}[section]
\newtheorem{lemma}[theorem]{Lemma}
\newtheorem{claim}[theorem]{Claim}
\crefname{claim}{Claim}{Claims}
\newtheorem{corollary}[theorem]{Corollary}
\newtheorem*{question*}{Question}
\newtheorem{fact}[theorem]{Fact}
\theoremstyle{definition}
\newtheorem{definition}[theorem]{Definition}
\newtheorem{question}[theorem]{Question}
\newtheorem*{definition*}{Definition}
\theoremstyle{remark}
\newtheorem{remark}[theorem]{Remark}
\newtheorem*{remark*}{Remark}
\newcommand{\mb}{\mathbb}
\newcommand{\mbf}{\mathbf}
\newcommand{\mbm}{\mathbbm}
\newcommand{\mc}{\mathcal}
\newcommand{\mr}{\mathrm}
\newcommand{\ol}{\overline}
\newcommand{\on}{\operatorname}
\newcommand{\eps}{\varepsilon}
\let\originalleft\left
\let\originalright\right
\renewcommand{\left}{\mathopen{}\mathclose\bgroup\originalleft}
\renewcommand{\right}{\aftergroup\egroup\originalright}
\title{The Exact Rank of Sparse Random Graphs}
\author[A1]{Margalit Glasgow}
\address{Department of Computer Science, Stanford University, Stanford, CA.}
\email{mglasgow@stanford.edu}
\author[A2]{Matthew Kwan}
\address{Institute of Science and Technology Austria (ISTA).}
\email{matthew.kwan@ist.ac.at}
\author[A3]{Ashwin Sah}
\author[A4]{Mehtaab Sawhney}
\address{Department of Mathematics, Massachusetts Institute of Technology, Cambridge, MA 02139, USA}
\email{\{asah,msawhney\}@mit.edu}
\begin{document}

\begin{abstract}
Two landmark results in combinatorial random matrix theory, due to
Koml\'os and Costello--Tao--Vu, show that discrete random matrices
and symmetric discrete random matrices are typically nonsingular.
In particular, in the language of graph theory, when $p$ is a fixed
constant, the biadjacency matrix of a random Erd\H{o}s--R\'enyi bipartite
graph $\mathbb{G}(n,n,p)$ and the adjacency matrix of an Erd\H{o}s--R\'enyi
random graph $\mathbb{G}(n,p)$ are both nonsingular with high probability.
However, very sparse random graphs (i.e., where $p$ is allowed to
decay rapidly with $n$) are typically singular, due to the presence
of ``local'' dependencies such as isolated vertices and pairs of
degree-1 vertices with the same neighbour.

In this paper we give a combinatorial description of the rank of a
sparse random graph $\mathbb{G}(n,n,c/n)$ or $\mathbb{G}(n,c/n)$
in terms of such local dependencies, for all constants $c\ne e$ (and
we present some evidence that the situation is very different for
$c=e$). 
This gives an essentially complete answer to a question raised
by Vu at the 2014 International Congress of Mathematicians.

As applications of our main theorem and its proof, we also determine the asymptotic singularity probability of the 2-core of a sparse random graph, we show that the
rank of a sparse random graph is extremely well-approximated by its matching
number, and we deduce a central limit theorem for the rank of $\mathbb{G}(n,c/n)$. 
\end{abstract}

\maketitle

\section{Introduction} \label{sec:introduction}
\global\long\def\core{\operatorname{core}}
\global\long\def\corank{\operatorname{corank}}
\global\long\def\rank{\operatorname{rank}}
\global\long\def\supp{\operatorname{supp}}
\global\long\def\lam2{\lambda_{\mathrm{2}}}
\global\long\def\lamKS{\lambda_{\mathrm{KS}}}

A foundational theorem in combinatorial random matrix theory,
due to Koml\'os~\cite{Kom67,Kom68}, says that discrete random
matrices with i.i.d.~entries are typically nonsingular (over $\mb{R}$). In particular, let $B$ be an
$n\times n$ random matrix with i.i.d.~$\operatorname{Bernoulli}(p)$
entries (meaning that each entry $B_{ij}$ satisfies $\Pr[B_{ij}=1]=p$
and $\Pr[B_{ij}=0]=1-p$). For any constant $p\in(0,1)$, if we take $n\to\infty$ then
such a random matrix is nonsingular \emph{with high probability} (``whp''
for short): that is, $\lim_{n\to\infty}\Pr[B\text{ is singular}]=0$.

A huge number of strengthenings and variations of Koml\'os' theorem
have been considered over the years. Two particular highlights include a result
of Tikhomirov~\cite{Tik20} that for constant $0<p\le1/2$, the
singularity probability is $(1-p+o(1))^{n}$, and a result of Costello,
Tao, and Vu~\cite{CTV06} that \emph{symmetric} discrete random matrices
are also nonsingular whp. A symmetric binary matrix can be interpreted
as the adjacency matrix of a graph, so the Costello--Tao--Vu theorem
has an interpretation in terms of random graphs: for constant $p\in(0,1)$,
an Erd\H{o}s--R\'enyi random graph $G\sim\mb{G}(n,p)$ has nonsingular
adjacency matrix whp\footnote{There is a slight difference between a random symmetric Bernoulli
matrix and the adjacency matrix of a random graph: namely, the adjacency
matrix of any graph has zeroes on the diagonal. However, the same
techniques usually apply to both settings, and we will not further
concern ourselves with this detail.}. Actually, Koml\'os' theorem can be interpreted in graph-theoretic
terms as well: the random matrix $B$ described above can be interpreted
as the \emph{biadjacency} matrix of a \emph{bipartite} Erd\H{o}s--R\'enyi
random graph $G\sim\mb G(n,n,p)$ (where one of the parts corresponds
to the rows of the matrix, and the other part corresponds
to the columns).

If $p$ decays too rapidly with $n$ (in particular, if $p\le(1-\varepsilon)\log n/n$
for some constant $\varepsilon>0$), then for reasons related to the
coupon collector problem, a typical outcome of $G\sim\mb{G}(n,p)$ (respectively,
$G\sim\mb{G}(n,n,p)$) has isolated vertices, meaning that its adjacency
matrix (respectively, biadjacency matrix) has all-zero rows and is
therefore singular. In fact, $\log n/n$ is a \emph{sharp
threshold} for singularity, in the sense that if $p\ge(1+\varepsilon)\log n/n$
(and $p$ is bounded away from $1$) then a typical $G\sim\mb G(n,p)$
(respectively, $G\sim\mb G(n,n,p)$) has nonsingular adjacency matrix
(respectively, nonsingular biadjacency matrix). This seems to have been first
observed by Costello and Vu~\cite{CV08}\footnote{The Costello--Vu proof was only written for $\mb G(n,p)$, but it can be easily adapted to $\mb G(n,n,p)$; alternatively, see \cite{FKS21} for a very simple proof in the $\mb G(n,n,p)$ case.}, and refinements
and generalisations were proved by Basak and Rudelson~\cite{BR18}
and Addario-Berry and Eslava~\cite{AE14}. In particular, the latter
authors proved a sharp \emph{hitting time} type
result: if we consider
the random graph \emph{process} where we start with the empty graph
on $n$ vertices (or the empty bipartite graph with $n+n$ vertices)
and add random edges one-by-one (respecting our bipartition, in the
bipartite case), then whp at the very same moment where the last isolated
vertex disappears our graph becomes nonsingular.

Na\"ively, it is quite surprising that the property of being singular (which is intrinsically an algebraic property of a matrix) can be predicted so effectively by the simple combinatorial property of having an isolated vertex. It is natural to ask whether this theme continues below the singularity threshold $\log n/n$: even when a random matrix is likely to be singular, can we describe the \emph{rank} via ``local combinatorial dependencies'' such as
isolated vertices? In their aforementioned paper, Costello and Vu~\cite{CV08} actually proved the first result along these lines: for $p\ge\left(1/2+\varepsilon\right)\log n/n$,
whp the rank of $\mb G(n,p)$ is precisely $n$ minus the number of isolated vertices. In follow-up work, Costello and Vu~\cite{CV10} considered the more general
regime where $p\ge c\log n/n$ for a constant $c>0$. They found that in this regime it is still possible to give a combinatorial description of the rank, though one must consider more sophisticated types of ``local dependencies'' than isolated vertices. For example, the next simplest type of dependency is a \emph{cherry}:
a pair of degree-$1$ vertices with the same neighbour. More recently,
DeMichele, the first author, and Moreira~\cite{DGM22} gave a combinatorial description
of the rank of $G\sim\mb G(n,p)$ and $G\sim\mb G(n,n,p)$, in terms of
a procedure that iteratively deletes local dependencies, which holds
whp whenever $\lim_{n\to\infty}np=\infty$ (i.e., when $p$ asymptotically
dominates $1/n$).

The most challenging regime is where $p=c/n$ for constant $c$. An asymptotic for the typical rank of $\mb G(n,c/n)$ was conjectured by Bauer and Golinelli~\cite{BG01} (motivated by statistical physics considerations), and this asymptotic was later proved by Bordenave, Lelarge, and Salez~\cite{BLS11} via analytic techniques.
In his lecture at the 2014 International Congress of Mathematicians~\cite{VuVideo} (also in \cite{VuSlides}),
Vu asked whether one can also give a precise combinatorial characterisation
of the rank in this regime. 

The main purpose of this paper is to provide an answer to Vu's question, and the analogous question for $\mb G(n,n,c/n)$, exactly characterising the rank of sparse random graphs (and in the process, providing a linear-time algorithm to compute the rank).

At a high level, we show that whp all linear dependencies arise from two different types of combinatorial structures. First, we need to account for ``tree-like'' structures generalising isolated vertices and cherries (previously identified in the work of Costello and Vu~\cite{CV10} and DeMichele, the first author, and Moreira~\cite{DGM22}), which can be ``peeled off'' by an iterative process called \emph{Karp--Sipser leaf removal} (defined in \cref{def:ks}). Second, we need to account for certain short cycles (which we call ``special cycles'', defined in \cref{def:special-cycles}), which cause linear dependencies for a different reason.

Our proof of this characterisation involves a wide range of tools and ideas, both original and adapted from existing work. This includes analysis of degree-constrained random graphs and of the \emph{Karp--Sipser leaf-removal algorithm}, robust analysis of random walks, spectral convergence machinery for locally convergent graphs, a ``rank-boosting'' technique, and some special-purpose notions of matrix pseudoinverses and ``minimal kernel vectors'' (all of which we describe further in \cref{sec:outline}). To try to give a rough impression of the most fundamental difficulty compared to previous work: note that the rank of a matrix can be interpreted as the size of its largest nonsingular submatrix. In the setting of most previous work, maximum nonsingular submatrices are in some sense ``robustly'' nonsingular (in particular, the corresponding subgraphs have good \emph{expansion} properties), which makes it possible to rule out certain types of kernel vectors via lossy union bounds. However, in our situation the largest nonsingular submatrices are in some sense ``only barely nonsingular'', with essentially the weakest possible expansion a nonsingular submatrix can have, and there is almost no room to make any kind of lossy approximation.

In any case, once one has a characterisation of the rank in terms of explicit combinatorial structures, it becomes possible to prove further results about the rank via combinatorial tools. Indeed, as corollaries of our main theorem and its proof, we are able to show a number of additional theorems: we compute the asymptotic singularity probability of the 2-core, we obtain a very strong bound on the difference between the matching number and the rank, and we prove a central limit theorem for the rank of $\mb G(n, c/n)$. Since the statement of our main result (\cref{thm:rank-characterisation}) is somewhat technical, we take a moment to discuss these corollaries before presenting the precise statement of our main result.

\subsection{Nonsingularity of the 2-core}\label{sub:nonsingularity-2-core}
The \emph{$k$-core} $\core_{k}(G)$ of a graph $G$ is
the subgraph obtained by iteratively deleting vertices with degree
less than $k$ (in any order). Equivalently, it is the largest induced subgraph with minimum degree at least $k$. This notion was first introduced in 1984 by Bollob\'as~\cite{Bol84}, and $k$-cores have since become fundamental objects of study in random graph theory.

In the context of combinatorial random matrix theory, an important reason to study $k$-cores is that all of the most obvious types of ``local dependencies'' involve vertices of low degree. For example, recall that isolated vertices and cherries are abundant types of local dependencies, and it turns out that all of the ``tree-like'' local dependencies mentioned earlier in this introduction contain a vertex of degree 1. Another example of a local dependency, which has non-negligible probability of appearing in the regime $p=c/n$, is a pair of vertices of degree
2 with the same neighbourhood (i.e., a cycle of length 4, in which a pair of opposite vertices have degree exactly 2).

Resolving a conjecture
of Vu, it was recently proved by Ferber and the last three authors~\cite{FKSS21} (see also \cite{DGM22}) that
for constants $k\ge3$ and $c>0$, the $k$-core of $\mb G(n,c/n)$
is nonsingular whp. 
That is to say, trimming low-degree vertices typically removes any singularity present in the graph (foreshadowing the main result of this paper, that whp the \emph{only} dependencies are ``tree-like''
or ``cycle-like'').

While the assumption $k\ge3$ is necessary for a ``whp'' result due to the possibility of ``cycle-like'' dependencies, these types of dependencies seem to be rare (e.g., the expected number of 4-cycles is only $O(1)$, and with non-negligible probability there are no 4-cycles at all). So, it is natural to ask whether one can still prove meaningful theorems about nonsingularity in the case $k=2$. We prove such a theorem: roughly speaking, the 2-core is ``right on the edge of singularity'', being neither singular whp nor nonsingular whp. (In retrospect, one can see that in the case $k\ge 3$, the $k$-core is actually ``quite far from being singular'' with respect to natural local dependencies, and this ``wiggle room'' played a crucial role in the proofs in \cite{FKSS21,DGM22}).

\begin{theorem}\label{thm:2-core}
Fix a constant $c>1$, let $G\sim\mathbb{G}(n,c/n)$, and let $A$ be the adjacency matrix of the 2-core of the largest component of $G$ (which is unique whp). Then
\[\lim_{n\to \infty}\Pr[A\emph{ is nonsingular}] =\left(\frac{\displaystyle{1-\left(\frac{\lam2}{e^{\lam2/2}-e^{-\lam2/2}}\right)^{4}}}{\displaystyle{1-\left(\frac{\lam2}{e^{\lam2}-1}\right)^{4}}}\right)^{1/4}>0,\]
where $\lam2=\lam2(c)>0$ is the unique solution to $\lam2/(1-e^{-\lam2})=c$. Moreover, the corank\footnote{The \emph{corank} of a matrix is the dimension of its kernel.} of $A$ has an asymptotic $\on{Poisson}(\mu)$ distribution, where $\mu$ is chosen such that the above probability is asymptotic to $e^{-\mu}$ (and in particular, the corank is bounded in probability\footnote{A sequence of random variables $(X_n)_{n=1}^\infty$ is said to be \emph{bounded in probability} if for all $\varepsilon>0$, there are $N,M$ such that $\Pr[X_n\ge M]\le \varepsilon$ for all $n\ge N$.}).
\end{theorem}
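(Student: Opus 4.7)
The plan is to apply the main rank characterisation (\cref{thm:rank-characterisation}) directly, viewing the 2-core $C$ as a graph in its own right. Since $C$ has minimum degree at least $2$ by definition, Karp--Sipser leaf removal (\cref{def:ks}) started on $C$ halts immediately with $C$ unchanged. Consequently the ``tree-like'' contributions to the corank are absent, so $\corank A$ is accounted for entirely by the number $S$ of special cycles (\cref{def:special-cycles}) contained in $C$; in particular, $A$ is nonsingular exactly when $C$ has no special cycles.

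It therefore suffices to establish that $S$ converges in distribution to $\on{Poisson}(\mu)$, where $\mu$ matches the theorem's formula. For this, I would invoke the classical structural description of the 2-core of $\mb G(n, c/n)$ due to Pittel--Spencer--Wormald and Janson--{\L}uczak: conditional on its vertex set and degree sequence, $C$ is a uniformly random graph with that sequence, and the empirical degree distribution concentrates on Poisson$(\lam2)$ conditioned on being $\ge 2$, where $\lam2>0$ is the unique solution of $\lam2/(1-e^{-\lam2})=c$. This lets $C$ be analysed (asymptotically) via the configuration model with this truncated-Poisson degree distribution.

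With the configuration-model realisation in hand, I would apply the standard method of moments for short subgraph counts. For each ``type'' of special cycle---indexed by its length and by the degree profile of its vertices---compute the asymptotic factorial moments of its count, and show joint convergence to independent Poissons; summing rates across types then produces the single Poisson limit with mean $\mu$. The stated closed form should emerge by recognising the resulting sum as a logarithmic series: writing $a=\lam2/(2\sinh(\lam2/2))$ and $b=\lam2/(e^{\lam2}-1)$, one has the identity $b=ae^{-\lam2/2}$, and term-by-term comparison with the Taylor expansion of $\log(1-x^4)$ should give $\mu=\tfrac14\log\bigl((1-b^4)/(1-a^4)\bigr)$, matching the statement after exponentiation. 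The corank claim then follows at once, since the identity $\corank A=S$ transfers the Poisson limit and in particular yields boundedness in probability.

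The main technical obstacle will be the precise moment computation for special cycles in the configuration model: one must correctly weight each length and each degree profile (handling cycle automorphisms, edge-symmetries, and the $\ge 2$ truncation), and rule out contributions from overlapping or nearly-coincident cycles so that the joint-Poisson limits actually hold. This is fundamentally the standard ``short cycle count'' machinery for configuration models, so I expect the main difficulty to be careful bookkeeping and matching constants with the closed-form expression, rather than the need for any new technique.
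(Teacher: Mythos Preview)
There is a genuine gap in your first step. \Cref{thm:rank-characterisation} is a statement about $G\sim\mb G(n,c/n)$ for a constant $c\ne e$; it does not apply to the $2$-core $C$, which is not an Erd\H{o}s--R\'enyi graph (nor is it $\mb G(n',c'/n')$ for any $c'$). So you cannot invoke it ``viewing $C$ as a graph in its own right'' to conclude $\corank A(C)=s(C)$. The correct ingredient is \cref{thm:main-RMT}, which is stated precisely for the model $\mc K(n,m,2)$. The paper's proof runs: conditioned on its vertex set $W$ and edge count $m$, the \emph{full} $2$-core is exactly $\mc K(W,m,2)$; \cref{thm:2-core-statistics} gives $m/|W|\to\tfrac12\mb E[Z\mid Z\ge2]>1$ for $Z\sim\on{Poisson}(\lam2)$, so \cref{thm:main-RMT}(A) applies and yields both $\corank=s$ and the asymptotic Poisson law for $s$ directly. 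Your configuration-model moment computation would thus be reproving \cref{thm:main-RMT}(A2), which the paper has already done in \cref{sec:distributions}.

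There is also a second subtlety you do not address: the theorem concerns the $2$-core of the \emph{giant component}, not the full $2$-core, and the conditional-uniformity description you quote (\`a la Pittel--Spencer--Wormald) holds for the latter, not the former. The paper handles this via \cref{thm:2-core-components}: whp the giant's $2$-core is exactly the full $2$-core with its isolated cycles removed. Since \cref{thm:main-RMT}(A2) separates the count $s$ into non-isolated special cycles (asymptotically $\on{Poisson}(\gamma(\lam2)-2\gamma^\dagger(\lam2))$) and isolated ones, stripping the isolated cycles leaves corank asymptotically $\on{Poisson}(\gamma(\lam2)-2\gamma^\dagger(\lam2))$, which gives exactly the stated formula after exponentiation.
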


Note that the assumption $c>1$ corresponds to the celebrated \emph{phase transition} of the Erd\H{o}s--R\'enyi random graph. Indeed, if $c<1$ (the ``subcritical'' regime), whp all the connected components of $\mb G(n,c/n)$ have
size $O(\log n)$ (and each of them is a tree or is \emph{unicyclic}, having exactly one cycle); thus, in this regime the 2-core is rather trivial, consisting only of a very small number of isolated cycles. 
On the other hand, if $c>1$ (the ``supercritical'' regime), then whp $\mb{G}(n,c/n)$ has a unique ``giant component'' with nontrivial structure (whose number of vertices is of order $n$), in addition to some trees and unicyclic components of size $O(\log n)$. See for example the monographs \cite{JLR00,FK16,Bol01} for more details about the component phase transition of the Erd\H{o}s--R\'enyi random graph, and see \cite{DLP14} for a precise description of the ``anatomy'' of a supercritical random graph in terms of its 2-core.

\begin{remark*}
The statement of \cref{thm:2-core} is only about the giant component in the supercritical regime, but one may also wish to consider the entire 2-core (including any small-cycle components), in which case it makes sense to consider all $c>0$ (not just $c>1$). With the methods in this paper (and some results about critical random graphs~\cite{ABG10}) it is possible to show that for $G\sim\mb{G}(n,c/n)$ we have $\lim_{n\to\infty}\Pr[\text{the }2\text{-core of }G\text{ is nonsingular}]>0$ if and only if $c\ne 1$ (when $c=1$ there are too many nontrivial components, each of which is reasonably likely to be singular). 
We omit the details. (Also, note that the asymptotic nonsingularity probability in \cref{thm:2-core} tends to zero as $c\to 1$.)
\end{remark*}

\subsection{Comparing the rank and the matching number}\label{sub:rank-vs-matching-number}
In a graph $G$, a \emph{matching} is a collection of disjoint edges. The \emph{matching number} $\nu(G)$ is the maximum number of edges in a matching in $G$. If $G$ is bipartite, then $\nu(G)$ can be interpreted as the size of the largest permutation matrix ``contained'' in the biadjacency matrix $B(G)$ of $G$, where our notion of matrix containment allows deleting rows and columns, and changing 1-entries to 0-entries. Recalling the permutation definition of the determinant, $\nu(G)$ is a trivial upper bound for $\rank B(G)$. Confirming a conjecture of Lelarge~\cite{Lel13} motivated by statistical physics considerations, it was proved by Coja-Oghlan, Erg\"{u}r, Gao, Hetterich, and Rolvien~\cite{CEGHR20} that this trivial bound is nearly best-possible\footnote{Actually, they proved this for a much more general class of random matrices and for rank over any field.} for sparse random bipartite graphs: for $G\sim \mb G(n,n,c/n)$ we have $\nu(G)-\rank B(G)=o(n)$ whp.

In the non-bipartite case, there is no general inequality relating the rank of the adjacency matrix $\rank A(G)$ of a graph $G$ with its matching number $\nu(G)$, but a theorem of Bordenave, Lelarge, and Salez~\cite{BLS11} (mentioned earlier in the introduction) shows that nonetheless for a sparse random graph $G$ we have $\rank A(G)=2\nu(G)+o(n)$ whp (see also \cite{BLS13}). It is also natural to consider an alternative parameter $\sigma(G)$, defined to be the size of the largest permutation matrix ``contained'' in $A(G)$. This parameter $\sigma(G)$ has a combinatorial interpretation as the maximum number of vertices in a union of vertex-disjoint cycles and edges in $G$. Note that $2\nu(G)$ and $\rank A(G)$ are both at most $\sigma(G)$.

As our second result (a corollary of our main result \cref{thm:rank-characterisation}, to come), we dramatically improve the $o(n)$ error terms in the results described above, showing that $\nu(G)$ is an \emph{extremely} good approximation for $\rank B(G)$, and $2\nu(G)$ and $\sigma(G)$ are extremely good approximations for $\rank A(G)$, away from a ``critical point'' $p=e/n$. (The significance of this rather mysterious-sounding critical point will be explained later in this introduction; for now we just remark that this point also happens to be critical for several other spectral phenomena in Erd\H{o}s--R\'enyi random graphs~\cite{CCKLRb,CS21}).

\begin{theorem}\label{thm:matching-vs-rank}
Fix a constant $c\ne e$.
\begin{enumerate}
\item[(A)] Let $G\sim\mathbb{G}(n,c/n)$.
\begin{enumerate}
    \item[(1)] $|\rank A(G)-2\nu(G)|$ is bounded in probability.
    \item[(2)] $|\rank A(G)-\sigma(G)|$ is bounded in probability.
\end{enumerate}
\item[(B)] For $G\sim\mathbb{G}(n,n,c/n)$, $|\rank B(G)-\nu(G)|$ is bounded in
probability.
\end{enumerate}
\end{theorem}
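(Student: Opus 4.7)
The plan is to derive \cref{thm:matching-vs-rank} directly from the combinatorial rank characterization in \cref{thm:rank-characterisation}. That characterization (informally) says that whp $\rank A(G)$ equals twice the number of matched pairs produced by Karp--Sipser leaf removal, plus a term depending only on the KS-core $H$ (the minimum-degree-$\geq 2$ subgraph left after all leaves have been peeled) and the special cycles inside $H$; an analogous formula holds for $\rank B(G)$ in the bipartite case. Combined with the classical identities
\[
    \nu(G) \;=\; \nu_{KS}(G) + \nu(H), \qquad \sigma(G) \;=\; 2\nu_{KS}(G) + \sigma(H),
\]
(the first because every KS-peeled leaf may be assumed matched to its unique neighbor in some maximum matching; the second because a leaf cannot lie on a cycle, so it must be matched to its neighbor in any maximum cycle-edge cover), the theorem reduces to showing that $|\rank A(H) - 2\nu(H)|$, $|\rank A(H) - \sigma(H)|$, and the bipartite analogue $|\rank B(H) - \nu(H)|$ are each bounded in probability.

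The next step is to analyze the KS-core, and this is where the hypothesis $c \neq e$ enters. In the subcritical regime $c < e$, KS leaf removal whp dissolves essentially the entire graph, so $H$ has bounded-in-probability size and the claim is immediate. In the supercritical regime $c > e$, $H$ has linear size but, conditional on its (concentrated) degree sequence which is supported on degrees $\geq 2$, is distributed as a uniformly random graph with that sequence. Standard configuration-model arguments then yield: (i) $H$ has a near-perfect matching with only $O_p(1)$ unmatched vertices, so that $|V(H)| - 2\nu(H) = O_p(1)$ and $|V(H)| - \sigma(H) = O_p(1)$; (ii) by Poisson approximation for short cycles in the configuration model, the number of special cycles in $H$ is $O_p(1)$; and (iii) the contribution of $H$ provided by the main theorem differs from $|V(H)|$ by at most the matching deficit plus the special cycle count, hence by $O_p(1)$. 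Combining (i)--(iii) yields the three desired estimates. The bipartite case (B) runs on the same scheme, substituting the biadjacency matrix for $A$ and using that a random bipartite graph with minimum degree $\geq 2$ also admits a near-perfect matching whp.

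The hardest part is step (iii): extracting from the rank characterization a sufficiently tight comparison between the rank contribution of $H$ and its matching number, and in particular verifying that the correction due to special cycles contributes only $O_p(1)$ without a hidden linear defect lurking inside the KS-core term. The assumption $c \neq e$ is essential throughout this analysis: at the Karp--Sipser critical value $c = e$ the degree-$2$ fraction in the KS-core's degree sequence approaches its degenerate value, at which point both the matching deficit and the expected number of special cycles may blow up, so neither the matching-number comparison nor the Poisson approximation for special cycles can be carried out with $O_p(1)$ error. Modulo this bookkeeping, all the pieces are either direct applications of \cref{thm:rank-characterisation} or follow from well-developed configuration-model technology for the KS-core.
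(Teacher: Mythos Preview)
Your overall reduction to the Karp--Sipser core $H$ is sound and matches the paper's framework, but for parts (A2) and (B) you are working much harder than necessary, and in doing so you lean on a step that is not ``standard configuration-model technology'' at all.

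The paper's proof of (A2) and (B) is a one-liner: by \cref{cor:KS-bounds} there are deterministic sandwich inequalities
\[
\rank A(G)\le \sigma(G)\le n-i(G),\qquad \rank B(G)\le \nu(G)\le n-\max(i_1(G),i_2(G)),
\]
so $0\le \sigma(G)-\rank A(G)\le (n-i(G))-\rank A(G)$, and the right-hand side is bounded in probability by \cref{thm:rank-characterisation-distribution}. No separate control of $\sigma(H)$ or $\nu(H)$ is needed. You instead propose to prove directly that $|V(H)|-\sigma(H)=O_p(1)$ and (in the bipartite case) that $H$ has a matching saturating the smaller side up to $O_p(1)$ vertices. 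These are not routine: the bipartite near-perfect matching statement is essentially a variant of Frieze's difficult 2005 result, and the paper explicitly declines to prove either claim (see the discussion in \cref{sub:further-comments}), precisely because the sandwich argument makes them unnecessary. So your ``standard configuration-model arguments'' for (i) is a real gap for (B) and for the $\sigma$-part of (A2).

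For (A1) there is no such sandwich (there is no general inequality between $\rank A(G)$ and $2\nu(G)$), so both your approach and the paper's genuinely require a near-perfect matching result for the KS-core. The paper invokes the Frieze--Pittel theorem for $\mathcal K(n,m,2)$ (packaged as \cref{thm:matching-characterisation}), which gives $2\nu(H)=|V(H)|-q(H)+O(1)$ with $q(H)$ (the number of isolated odd cycles) bounded in probability; this is exactly your step (i), but it is a substantial external input rather than a configuration-model exercise. With that in hand, your comparison and the paper's coincide.
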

\begin{remark*}
Given \cref{thm:matching-vs-rank}, one may wonder whether $\nu(G)$ (in the setting of (B), and $2\nu(G),\sigma(G)$ in the setting of (A)) in fact \emph{perfectly} describe the rank. For example, could it be true that in the setting of (B) we have $\rank B(G)=\nu(G)$ whp? As will become clear when we discuss our main theorem, this is too much to hope for. We believe that in the setting of (B), the asymptotic distribution of $\nu(G)-\rank B(G)$ is Poisson (with a certain explicit parameter), and in the setting of (A), both $2\nu(G)-\rank A(G)$ and $\sigma(G)-\rank A(G)$ have somewhat more complicated ``Poisson-like'' distributions. However, rigorous proofs of these facts would require adaptations of certain highly nontrivial graph-theoretic results (to characterise $\nu(G)$ and $\sigma(G)$). We believe that these adaptations are possible, but pursuing this direction would be outside the scope of the present paper. See \cref{sec:distributions} for details.
\end{remark*}

\subsection{The asymptotic distribution of the rank}\label{sub:CLT}
Aronson, Frieze, and Pittel~\cite{AFP98} conjectured that for a constant $c$, the matching number $\nu(G)$ of a random graph $G\sim\mathbb{G}(n,c/n)$ satisfies a central limit theorem. This was proved for $c<1$ by Pittel~\cite{Pit90}, and for $c>e$ by Krea\v{c}i\'c~\cite[Theorem~19]{Kre17}. Since \cref{thm:matching-vs-rank}(A1) tells us that $\rank A(G)$ is extremely well approximated by $2\nu(G)$, it is easy to deduce a corresponding central limit theorem for the rank.

\begin{corollary}\label{cor:CLT}
Let $G\sim \mb G(n,c/n)$ for a constant $c<1$ or $c>e$, let $G\sim \mb G(n,c/n)$, and let $X$ be the rank of the adjacency matrix of $G$. Then $(X-\mb EX)/\sqrt{\on{Var} X}\overset{d}{\to}\mathcal{N}(0,1)$.
\end{corollary}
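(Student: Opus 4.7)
The plan is to combine \cref{thm:matching-vs-rank}(A1) with the central limit theorems of Pittel (for $c<1$) and Krea\v{c}i\'c (for $c>e$) for $\nu(G)$ via a Slutsky-type argument. Write $X = \operatorname{rank} A(G)$, $Y = 2\nu(G)$, and $R = X - Y$. The cited CLTs give $(Y-\mathbb{E}Y)/\sqrt{\operatorname{Var}(Y)} \overset{d}{\to} \mathcal{N}(0,1)$ together with the variance asymptotic $\operatorname{Var}(Y) = \Theta(n)$, while \cref{thm:matching-vs-rank}(A1) gives that $R$ is bounded in probability.

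Because $\sqrt{\operatorname{Var}(Y)} \to \infty$, tightness of $R$ implies $R/\sqrt{\operatorname{Var}(Y)} \to 0$ in probability. Applying Slutsky's theorem to $X/\sqrt{\operatorname{Var}(Y)} = Y/\sqrt{\operatorname{Var}(Y)} + R/\sqrt{\operatorname{Var}(Y)}$ then yields $(X - \mathbb{E}Y)/\sqrt{\operatorname{Var}(Y)} \overset{d}{\to} \mathcal{N}(0,1)$. To upgrade this to the stated normalization $(X-\mathbb{E}X)/\sqrt{\operatorname{Var}(X)}$, it suffices to show $\mathbb{E}R = o(\sqrt{n})$ and $\operatorname{Var}(R) = o(n)$: the former gives $\mathbb{E}X = \mathbb{E}Y + o(\sqrt{\operatorname{Var}(Y)})$, and the latter gives, via Cauchy--Schwarz applied to $\operatorname{Cov}(Y,R)$, that $\operatorname{Var}(X) = (1+o(1))\operatorname{Var}(Y)$; together these let the two normalizations be interchanged with an additive $o(1)$ error.

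The main obstacle is this final moment-comparison step, because the ``bounded in probability'' statement in \cref{thm:matching-vs-rank}(A1) is a priori only tightness and does not by itself imply $L^2$ tightness (and the trivial deterministic bound $|R|\le n$ is far too weak to close the gap by truncation alone). The way I would handle it is to go back into the proof of \cref{thm:matching-vs-rank}(A1): as foreshadowed by the remark after \cref{thm:matching-vs-rank}, that proof realizes $R$ via \cref{thm:rank-characterisation} as (essentially) a count of small local combinatorial obstructions with a ``Poisson-like'' distribution, so all moments of $R$ are in fact bounded uniformly in $n$. This immediately gives $\mathbb{E}R,\operatorname{Var}(R) = O(1)=o(n)$, and the corollary follows.
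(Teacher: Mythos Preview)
Your first two steps match the paper's argument: invoke the CLTs of Pittel and Krea\v{c}i\'c for $\nu(G)$ (with $\operatorname{Var}(Y)=\Theta(n)$), then use \cref{thm:matching-vs-rank}(A1) and Slutsky to transfer the CLT to $X$ with the centering and scaling inherited from $Y$. The divergence is in the last step, upgrading to the standardisation $(\mathbb{E}X,\sqrt{\operatorname{Var} X})$.

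Your proposed fix---going back into the proof of \cref{thm:matching-vs-rank}(A1) to extract uniform moment bounds on $R$---is not supported by the paper's arguments as written. The characterisations in \cref{thm:rank-characterisation} and \cref{thm:matching-characterisation} hold only whp, without a quantified failure probability, and on the failure event $|R|$ can be of order $n$; so convergence in distribution of the defect to a Poisson-like law does not by itself yield $\mathbb{E}R,\operatorname{Var} R=O(1)$. The paper explicitly flags this in the remark after \cref{thm:rank-characterisation-distribution}: controlling even the \emph{expected} defect would require additional quantitative care not carried out here. So ``all moments of $R$ are bounded'' is a plausible heuristic but not something you can lift from the existing proofs.

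The paper sidesteps this entirely by controlling the tails of $X$ directly rather than those of $R$. The rank of $A(G)$ is a function of independent edge-indicators that changes by at most $2$ when the neighbourhood of a single vertex is altered, so the vertex-exposure martingale and Azuma--Hoeffding give that $X$ is subgaussian with variance proxy $O(n)$. Combined with $(X-2\mu)/(2\sigma)\overset{d}{\to}\mathcal N(0,1)$ and $\sigma^2=\Theta(n)$, this subgaussianity forces uniform integrability of $((X-2\mu)/(2\sigma))^2$, hence $\mathbb{E}X=2\mu+o(\sqrt n)$ and $\operatorname{Var} X=(1+o(1))4\sigma^2$, which is exactly what is needed to swap normalisations. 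This is both shorter and more robust than trying to push moment bounds through the combinatorial characterisation.
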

Actually, in an upcoming paper together with Goldschmidt and Krea\v{c}i\'c~\cite{matching-CLT}, we are able to close the gap between $1$ and $e$ in \cref{cor:CLT}. Specifically, the regime $c\le e$ is rather different in nature than the regime $c>e$, and when $G\sim \mb G(n,c/n)$ for $c\le e$, we are able to give a unified proof that the rank and matching number of $G$ both satisfy a central limit theorem (without going through \cref{thm:matching-vs-rank}(A1)).

\begin{remark*}
\cite{Pit90} and \cite{Kre17} provide explicit formulas for the asymptotic values of $\mb EX$ and $\on{Var} X$, though these are a bit too complicated to describe here. It is worth remarking that the asymptotic formula for $\on{Var} X$ is the single place in this paper where there is a material difference between the ``binomial'' model of Erd\H{o}s--R\'enyi random graphs (where each edge is present with probability $p$ independently) and the ``uniform'' model of Erd\H{o}s--R\'enyi random graphs (where we choose a random subset of exactly $m$ edges, for say $m=\lfloor p\binom n2\rfloor$). Indeed, the variance of the matching number (and therefore the variance of the rank) differs by a constant factor between these two settings; see \cite{Pit90,Kre17} for details. For all the other results in the paper (which are all stated for the binomial model), one can make trivial changes to the proofs to obtain exactly the same result in the uniform model.
\end{remark*}
\begin{remark*}
We believe that a central limit theorem does \emph{not} hold for the rank of $\mb G(n,n,c/n)$; see \cref{sub:further}.
\end{remark*}

\subsection{Exactly characterising the rank}
In this subsection we finally state our main theorem, giving an exact combinatorial characterisation of the rank of a sparse random matrix. First, we need to introduce the \emph{Karp--Sipser leaf removal algorithm}, which was introduced in 1981 by Karp and Sipser~\cite{KS81} as a tool to study matchings in random graphs (in a paper which kickstarted the \emph{differential equations method} for random graph processes; see \cite{Wor99DE}), but is now also of great importance in statistical physics, theoretical computer science, and random matrix theory (see for example \cite{BG01,BG01b,MRZ03,BLS11,CCKLRb}). 

\begin{definition}[Karp--Sipser leaf removal] \label{def:ks}
Starting from a graph $G$, choose an arbitrary degree-1 vertex and delete it together with its neighbour. Repeat this ``leaf-deletion'' until no further degree-1 vertices remain. Let $i(G)$ be the number of isolated
vertices in the resulting graph. If $G$ is bipartite, let $i_{1}(G)$
and $i_{2}(G)$ be the number of isolated vertices on the two sides of the bipartition
$V_{1}\cup V_{2}$. Let $\core_{\mathrm{KS}}(G)$ be the graph of
remaining non-isolated vertices (the \emph{Karp--Sipser core}). One can check that $i(G)$
and $\core_{\mathrm{KS}}(G)$ (and $i_{1}(G),i_{2}(G)$, if $G$ is
bipartite) do not depend on the order that the leaf-deletions are performed (see for example the appendix of \cite{BG01b}).
\end{definition}

It is easy to check (see \cref{lem:rank-decrement}) that a single step of leaf-removal decreases $\rank A(G)$ by exactly 2, and if $G$ is bipartite, decreases $\rank B(G)$ by exactly 1. It is then easy to deduce (see \cref{cor:KS-bounds}) that $\rank A(G)\le n-i(G)$ for any $n$-vertex graph $G$ (i.e., $\corank A(G)\ge i(G)$), and $\rank B(G)\le n-\max(i_1(G),i_2(G))$ for any $(n+n)$-vertex bipartite graph $G$ (i.e., $\corank B(G)\ge\max(i_1(G),i_2(G))$). We will refer to these two bounds as the \emph{Karp--Sipser bounds} for the rank of $A(G)$ and $B(G)$, respectively. We remark that there is a one-sided version of the Karp--Sipser bound for $B(G)$ (where leaves are only removed from one of the two sides of our bipartite graph), sometimes called the \emph{2-core bound} in the computer science and statistical physics literature~\cite{CEGHR20,AS08,DM08} (here ``2-core'' refers to a certain hypergraph notion of a 2-core, not to be confused with the notion in \cref{thm:2-core}).

The Karp--Sipser process takes care of ``tree-like'' local dependencies. In random graphs $\mb G(n,p)$ or $\mb G(n,n,p)$ with $np\to \infty$, these are whp the only types of dependencies that exist (see \cite{DGM22,CV10}); that is, the Karp--Sipser core is nonsingular, so the Karp--Sipser bound is sharp. 
However, in the case $p=O(1/n)$, there may be ``cycle-like'' local dependencies in the Karp--Sipser core, such as pairs of degree-2 vertices with the same neighbourhood. We capture dependencies of this type in the following definition, depicted in \cref{fig:cycles}.

\begin{figure}
%\hspace{.8cm}
\begin{minipage}{0.4\textwidth}
\centering
\begin{tikzpicture}
\def \n {8}
\def \radius {2 cm}
\def \margin {12.5} % margin in angles, depends on the radius
\foreach \s in {1,...,\n}
{
    \tikzmath{\curr =  sin(90 * \s);}
    \node[draw, circle, minimum size = 25] at ({360/\n * (\s - 1)}:\radius) {\pgfmathprintnumber{\curr}};
  \draw[-, >=latex] ({360/\n * (\s - 1)+\margin}:\radius) 
    arc ({360/\n * (\s - 1)+\margin}:{360/\n * (\s)-\margin}:\radius);
}
\end{tikzpicture}
\caption*{Isolated special cycle}

\end{minipage}
\hspace{.7cm}
\begin{minipage}{0.35\textwidth}
%\centering
\begin{tikzpicture}
\def \n {4}
\def \radius {2 cm}
\def \margin {12.5} % margin in angles, depends on the radius
\foreach \s in {1,...,\n}
{
    \tikzmath{\curr =  sin(90 * (\s + 1) );}
    \node[draw, circle, minimum size = 25] at ({360/\n * (\s - 1)+45}:\radius) {\pgfmathprintnumber{\curr}};
  \draw[-, >=latex] ({360/\n * (\s - 1) + 45+\margin}:\radius) 
    arc ({360/\n * (\s - 1)+45+\margin}:{360/\n * (\s)+45-\margin}:\radius);
}
\phantom{\foreach \s in {1,...,\n}
{
    \tikzmath{\curr =  sin(90 * \s);}
    \node[draw, circle, minimum size = 25] at ({360/\n * (\s - 1)}:\radius) {\pgfmathprintnumber{\curr}};
  \draw[-, >=latex] ({360/\n * (\s - 1)+\margin}:\radius) 
    arc ({360/\n * (\s - 1)+\margin}:{360/\n * (\s)-\margin}:\radius);
}}
\node[draw, circle, minimum size = 25, dashed] at (3.8,1.4){0};
\draw[dashed] (1.87,1.4) -- (3.35,1.4);
\node[draw, circle, minimum size = 25, dashed] at (3.8,0){0};
\draw[dashed] (1.84,1.267) -- (3.39,.2);
\end{tikzpicture}
\caption*{\hspace{-2em}Non-isolated special cycle}
\end{minipage}
%\captionsetup{justification=centering}
\caption{On the left is an isolated special cycle with 8 vertices. On the right is a non-isolated special cycle with 4 vertices. In both pictures, we depict the entries of a kernel vector of the corresponding adjacency matrix. Note that for the isolated cycle, one can obtain an additional linearly independent kernel vector by shifting each entry one edge clockwise around the cycle. }
    \label{fig:cycles}
\end{figure}

\begin{definition}[Special cycles]\label{def:special-cycles}
Say an induced cycle in a graph $G$ is \emph{special} if
its length is divisible by 4, and if every second vertex has degree
2 in $G$. In particular, an \emph{isolated cycle} is a cycle in which every vertex has degree exactly 2 (i.e., it is its own connected component), so isolated cycles with length divisible by 4 are special ``in two different ways''. Let $s(G)$ be the number of special cycles in $G$, where
we count each isolated cycle twice. 

If $G$ is bipartite, say an induced cycle in $G$ is 1-special
(respectively, 2-special) if its length is divisible by 4, and every
vertex in $V_{1}$ (respectively, every vertex in $V_{2}$) has degree
2. Let $s_{1}(G)$ and $s_{2}(G)$ be the numbers of 1-special and
2-special cycles in $G$, respectively.
\end{definition}

To see that a special cycle indeed constitutes a dependency, note that we can construct a kernel vector by ``alternating $\pm 1$ entries around a special cycle'', as follows.
\begin{fact}\label{fact:special-kernel}
Let $G$ be a graph on the vertex set $V$. Let $u_1,\ldots,u_{4k}$ (in order) be the vertices of a special cycle, where $u_2,u_4,\ldots,u_{4k}$ have degree 2. Define $\mbf v\in \{-1,0,1\}^V$ by setting the entries indexed by $u_2,u_6,\ldots,u_{4k-2}$ to $1$ and setting the entries indexed by $u_4,u_8,\ldots,u_{4k}$ to $-1$, and setting all other entries to zero. (That is to say, we go around the cycle, alternating $1$ and $-1$ on our degree-2 vertices). Then, $\mbf v$ is a kernel vector of $A(G)$.

When $G$ is a bipartite graph with bipartition $V_1\cup V_2$, an analogous construction gives a left kernel vector of $B(G)$ if $u_2,u_4,\ldots,u_{4k}\in V_1$, and a right kernel vector of $B(G)$ if $u_2,u_4,\ldots,u_{4k}\in V_2$.
\end{fact}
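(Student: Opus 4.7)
The plan is to verify $A(G)\mathbf{v}=\mathbf{0}$ by checking each coordinate $(A(G)\mathbf{v})_w=\sum_{w'\sim w}v_{w'}$, splitting into three cases according to whether $w$ lies off the cycle, is an odd-indexed cycle vertex, or is an even-indexed (degree-2) cycle vertex. The two key structural facts that make everything work are that the cycle is \emph{induced} (so odd-indexed vertices have no chord to other odd-indexed vertices of the cycle, and degree-2 vertices on the cycle have no extra neighbours at all), and that the length is \emph{divisible by $4$} (so that consecutive even-indexed vertices around the cycle always receive opposite signs, even across the ``wraparound'' from $u_{4k}$ back to $u_2$).

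More concretely, for $w=u_{2j}$ (degree-2), its only neighbours are $u_{2j-1},u_{2j+1}$, both odd-indexed, so both have $v=0$ and the sum vanishes. For $w=u_{2j-1}$ (odd-indexed), any neighbour of $w$ with $v\ne 0$ must be an even-indexed cycle vertex $u_{2i}$; but $u_{2i}$ has degree $2$ with neighbours $u_{2i-1},u_{2i+1}$, so $w\sim u_{2i}$ forces $2i\in\{2j-2,2j\}$; hence the only contributions come from $u_{2j-2}$ and $u_{2j}$, which carry opposite signs by the divisibility-by-$4$ convention, and again the sum is $0$. For $w$ off the cycle, the same degree-$2$ argument shows $w$ is not adjacent to any $u_{2i}$, and the sum is $0$ trivially.

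For the bipartite case, one makes the same computation on $B(G)$. If $u_2,u_4,\dots,u_{4k}\in V_1$, then $\mathbf{v}$ is supported on $V_1$, so we check that $\mathbf{v}^\top B(G)=0$, i.e.\ that for every $w\in V_2$ the sum $\sum_{u\in V_1,\,u\sim w}v_u$ vanishes: if $w=u_{2j-1}$ is an odd cycle vertex the sum collapses to $v_{u_{2j-2}}+v_{u_{2j}}=0$, and otherwise no degree-$2$ cycle vertex $u_{2i}$ is adjacent to $w$ so the sum is empty. The symmetric case $u_2,\dots,u_{4k}\in V_2$ produces a right kernel vector of $B(G)$ by the same argument.

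There is no real obstacle here; the only thing to get right is the bookkeeping of signs around the cycle, which is precisely why the length-divisible-by-$4$ hypothesis is included in \cref{def:special-cycles}. I would present the argument as a short direct check, perhaps prefaced by the observation that the two bipartite statements follow by viewing the adjacency matrix of $G$ as a $2\times 2$ block matrix with $B(G)$ in the off-diagonal blocks.
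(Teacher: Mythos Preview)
Your proof is correct. The paper states this as a ``Fact'' without proof, treating it as a direct verification; your coordinate-by-coordinate check is exactly the intended argument (one minor remark: the induced-cycle hypothesis is not actually needed in your argument, since the degree-$2$ condition on $u_{2i}$ already forces its only neighbours to be $u_{2i-1},u_{2i+1}$).
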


Our main theorem says that for $c\ne e$, the rank of a sparse random graph $\mb G(n,c/n)$ or $\mb G(n,n,c/n)$ can be described in terms of the Karp--Sipser bound and the special cycles within the Karp--Sipser core.

\begin{theorem}\label{thm:rank-characterisation}
Fix a constant $c\ne e$.
\begin{enumerate}
\item[(A)] Let $G\sim\mathbb{G}(n,c/n)$. Then whp $\corank A(G)=i(G)+s(\core_{\mathrm{KS}}(G))$.
\item[(B)] Let $G\sim\mathbb{G}(n,n,c/n)$. Then whp
\[\corank B(G)=\max\big(i_{1}(G)+s_{1}(\core_{\mathrm{KS}}(G)),\;i_{2}(G)+s_{2}(\core_{\mathrm{KS}}(G))\big).\]
\end{enumerate}
\end{theorem}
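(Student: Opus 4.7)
Both parts of the theorem naturally split into matching lower and upper bounds on the corank, and both bounds pass through the Karp--Sipser core. Using the basic observation (\cref{lem:rank-decrement}) that each leaf-removal step decreases $\rank A(G)$ by exactly $2$ and $\rank B(G)$ by exactly $1$, one obtains
\[
\corank A(G) = i(G) + \corank A(\core_{\mathrm{KS}}(G)),
\]
together with an analogous identity in the bipartite case that reduces part (B) to showing that the left- and right-coranks of $B(\core_{\mathrm{KS}}(G))$ equal $s_1$ and $s_2$ respectively. Thus the whole problem reduces to showing whp that $\corank A(\core_{\mathrm{KS}}(G)) = s(\core_{\mathrm{KS}}(G))$, together with its bipartite analogue.

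For the \emph{lower bound} on the corank of the core, \cref{fact:special-kernel} supplies one explicit kernel vector per special cycle, supported on its degree-$2$ vertices. To obtain linear independence I would exploit the fact that a degree-$2$ vertex of the core has both neighbors determined, so any cycle through such a vertex must enter and leave along the same pair of edges; this forces two distinct special cycles to interact only in highly constrained ways, and a short combinatorial argument (e.g.\ induction on the number of cycles, or a canonical partition of the degree-$2$ vertices across cycles) shows that the $s(\core_{\mathrm{KS}}(G))$ vectors are whp linearly independent.

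The matching \emph{upper bound} is the technical heart of the paper, and I would combine three ingredients. First, local weak convergence: when $c > e$ the Karp--Sipser core is macroscopic and converges locally to a degree-constrained Galton--Watson tree, so via the spectral-convergence machinery of Bordenave, Lelarge, and Salez one obtains the leading-order asymptotic $\rank A(\core_{\mathrm{KS}}(G)) = \rho n + o(n)$ for an explicit constant $\rho$. Second, a classification of minimal-support kernel vectors: from $Av = 0$ together with the degree-$2$ structure of the core, any kernel vector of minimum support should be forced to be the alternating signed indicator of a special cycle, pinning down the ``type'' of every candidate kernel vector. Third, a ``rank-boosting'' step promotes the $o(n)$-error BLS lower bound to the desired $O(1)$-error statement: starting from an approximately maximum nonsingular submatrix, one iteratively augments by linearly independent rows and columns, using the minimal-kernel classification to rule out obstructions.

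The principal obstacle is the rank-boosting step. As the introduction stresses, the largest nonsingular submatrices of the core are ``only barely nonsingular'' and exhibit essentially the weakest possible expansion, so the lossy union-bound tools available in prior work on sparse random matrix rank cannot be deployed here; every potential kernel vector must be classified on its nose. The hypothesis $c \ne e$ enters precisely at this stage: at $c = e$ the Karp--Sipser process sits at its critical threshold, the core size jumps from $o(n)$ to $\Theta(n)$, and the local limit degenerates, breaking both the BLS-type asymptotic and the minimal-kernel analysis. For $c < e$ only $O(1)$ core vertices persist and the structure can essentially be enumerated directly; the genuinely hard regime is $c > e$, where the full spectral-plus-boosting machinery is required.
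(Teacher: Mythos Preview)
Your high-level architecture matches the paper's: reduce to the Karp--Sipser core via \cref{lem:rank-decrement}, handle $c<e$ directly (the core is a disjoint union of cycles), and for $c>e$ combine a BLS-type $o(n)$ corank bound with a classification of minimal kernel vectors and a boosting argument. Two points, however, are genuine gaps rather than details to fill in.

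First, your bipartite reduction is wrong. It is not true that the left and right coranks of $B(\core_{\mathrm{KS}}(G))$ equal $s_1$ and $s_2$: if the core has part sizes $v_1>v_2$ then the required identity is $\rank B(\core_{\mathrm{KS}}(G))=v_2-s_2$, so the right corank is $s_2$ but the left corank is $v_1-v_2+s_2$, which typically exceeds $s_1$. The paper uses precisely the asymmetry $|v_1-v_2|\to\infty$ (from \cref{thm:WP-outputs}(B2)) to single out the smaller side.

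Second, and more seriously, your boosting step has no mechanism. ``Augment an approximately maximum nonsingular submatrix by linearly independent rows and columns'' does not address the central difficulty you yourself flag: in a graph of average degree $O(1)$, a generic new row has only $O(1)$ random bits, so Littlewood--Offord-type anticoncentration gives nothing, and there is no way to certify that a new row is independent of the existing span. The paper's solution is quite specific: before applying BLS, it \emph{extracts} a small set $T$ of vertices of degree $\ge\Delta$ (without revealing their neighbourhoods), applies BLS to $G[V\setminus T]$, and then adds the vertices of $T$ back one at a time. Because each re-inserted vertex has $\ge\sqrt{\Delta}$ random neighbours in a large set $W$, linear and quadratic Littlewood--Offord inequalities on the Boolean slice (\cref{lem:corank-boosting-master}) show the corank decreases with probability $1-O(\Delta^{-1/8})$ unless it already equals the small-support kernel dimension. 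The corank is then analysed as a random walk drifting toward $\dim K_t^{(\eta)}$, with a robust walk lemma (\cref{thm:random-walk-master}) tolerating the bad steps. Your classification of minimal kernel vectors as special-cycle indicators also needs the delicate stalk enumeration of \cref{sec:stalks-computation}---in particular, vertex-disjointness of special cycles (needed even for your linear-independence lower bound) is a probabilistic fact (\cref{lem:dense-subset}), not a combinatorial one.
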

\begin{remark*}
If we fix a vertex and consider an exploration process to find a special cycle containing that vertex, it is not hard to show that this process is \emph{subcritical} and explores only $O(1)$ vertices in expectation. Via a standard concentration inequality, it follows that whp we can find all the special cycles in the Karp--Sipser core in time $O(n)$. The Karp--Sipser leaf removal process also completes in time $O(n)$, so \cref{thm:rank-characterisation} actually gives a linear-time algorithm for computing the rank of a sparse random graph\footnote{To be precise, we obtain a linear-time algorithm to compute a quantity that agrees with the rank whp.}.
\end{remark*}

We can also describe the asymptotic distribution of the ``defect'' in the Karp--Sipser bound; for this we need to define some Poisson parameters.

\begin{definition}[Poisson parameters]\label{def:poisson-parameters}
For $0\le c<e$, let $\eta=\eta(c)\in[0,1]$ be the unique solution to
the equation $c=\eta e^{\eta}$. For $c\ge0$, define $\Phi_c\colon[0,1]\to[0,1]$
by $\alpha\mapsto1-\exp\left(-c\exp(-c(1-\alpha))\right)$. If $c>e$
then $\Phi_c$ has multiple fixed points (see for example \cite{CCKLRb});
let $\alpha_\ast=\alpha_\ast(c)$ and $\alpha^\ast=\alpha^\ast(c)$ be
the smallest and largest of these fixed points, respectively, and
let $\lamKS(c)=c(\alpha^\ast-\alpha_\ast)$. For $\lambda\ge 0$ let
\[\gamma(\lambda)=-\frac{1}{4}\log\left(1-\left(\frac{\lambda}{e^{\lambda/2}-e^{-\lambda/2}}\right)^{4}\right),\quad \gamma^\dagger(\lambda)=-\frac{1}{8}\log\left(1-\left(\frac{\lambda}{e^{\lambda}-1}\right)^{4}\right)\]
Then, for $c\in [0,e)\cup (e,\infty)$ let
\[
\gamma_{\mr B}=\gamma_{\mr B}(c)=\begin{cases}
-\frac{1}{4}\log\left(1-\eta^{4}\right) & \text{if }c<e,\\
\gamma(\lamKS(c)) & \text{if }c>e,\end{cases}
\]
\[
\gamma_{\mr A}^{\dagger}=\gamma_{\mr A}^{\dagger}(c)=\begin{cases}
\gamma_\mr{B}/2 & \text{if }c<e,\\
\gamma^\dagger(\lamKS(c)) & \text{if }c>e,\end{cases},\qquad\qquad 
\gamma_{\mr A}=\gamma_{\mr A}(c)=\begin{cases}
0 & \text{if }c<e,\\
\gamma_\mr{B}-2\gamma_\mr{A}^{\dagger} & \text{if }c>e.\end{cases}
\]
\end{definition}

\begin{theorem}\label{thm:rank-characterisation-distribution}
Fix a constant $c\ne e$.
\begin{enumerate}
\item[(A)] Let $G\sim\mathbb{G}(n,c/n)$. Then
\[\corank A(G)-i(G)\overset d\to Y+2Y^{\dagger},\]
where $Y,Y^{\dagger}$ are independent Poisson random variables with means $\gamma_\mr{A}(c)$ and $\gamma_\mr{A}^{\dagger}(c)$, respectively.
\item[(B)] Let $G\sim\mathbb{G}(n,n,c/n)$. Then
\[\corank B(G)-\max(i_1(G),i_2(G))\overset d\to Y,\]
where $Y$ is Poisson with mean $\gamma_\mr{B}(c)$.
\end{enumerate}
\end{theorem}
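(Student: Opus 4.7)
The plan is to deduce \cref{thm:rank-characterisation-distribution} from \cref{thm:rank-characterisation} together with a Poisson-convergence statement for counts of special cycles in the Karp--Sipser core. For (A), \cref{thm:rank-characterisation}(A) gives $\corank A(G) - i(G) = s(\core_{\mr{KS}}(G))$ whp. Writing $s = s^\circ + 2 s^\dagger$ where $s^\circ$ and $s^\dagger$ count non-isolated and isolated special cycles respectively (the factor $2$ reflecting that an isolated special cycle admits two linearly independent alternating kernel vectors, as in \cref{fact:special-kernel}), it suffices to show $(s^\circ, s^\dagger) \overset{d}{\to} (Y, Y^\dagger)$ jointly with independent Poisson marginals of the stated means. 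For (B), a standard concentration/CLT-style argument shows that $|i_1(G) - i_2(G)|$ typically has order $\sqrt n$ (the variance of $i_1 - i_2$ is $\Theta(n)$ by the symmetry of $\mathbb G(n,n,c/n)$) and hence whp dominates the $O_p(1)$ quantities $s_1, s_2$; thus $\corank B(G) - \max(i_1, i_2) = s_{j^\ast}$ whp, where $j^\ast$ indexes the side with more isolated vertices. By the exchange symmetry of $\mathbb G(n,n,c/n)$, the problem reduces to showing $s_1 \overset{d}{\to} \mathrm{Poi}(\gamma_{\mr B})$.

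I would establish the Poisson convergence by the method of factorial moments. In the subcritical case $c<e$, the KS core whp consists of $O_p(1)$ vertices organized into disjoint isolated cycles; in particular $s^\circ = 0$ whp (matching $\gamma_{\mr A}=0$), and only $\mathbb E[(s^\dagger)_{(k)}]$ needs to be computed. An isolated $\ell$-cycle in $\core_{\mr{KS}}(G)$ arises from an $\ell$-cycle in $G$ all of whose off-cycle subtrees have been eaten by KS peeling. A standard branching-process analysis identifies the per-branch eat-probability $q = 1 - \eta/c$ via the fixed-point equation $1-q = e^{-c(1-q)}$, which is equivalent to $\eta e^\eta = c$; the probability that a given cycle vertex has all of its $\mathrm{Poi}(c)$ off-cycle branches eaten is $e^{-c(1-q)} = \eta/c$. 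Combined with the $c^\ell/(2\ell)$ expected count of $\ell$-cycles in $\mathbb G(n,c/n)$, this gives expected isolated $\ell$-cycle count $\eta^\ell/(2\ell)$ in the KS core, and summing over $\ell = 4k$ yields $\sum_{k\ge 1}\eta^{4k}/(8k) = \gamma_{\mr A}^\dagger(c)$. Higher factorial moments decouple because disjoint potential cycles are asymptotically independent. The subcritical bipartite case is parallel, with the bipartite expected count $c^{2k}/(2k)$ giving $\gamma_{\mr B}(c)$.

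In the supercritical case $c>e$, the KS core has $\Theta(n)$ vertices and I would use the structural input --- established as part of the proof of \cref{thm:rank-characterisation}, via the differential-equations method applied to the KS peeling process --- that conditional on its random degree sequence, $\core_{\mr{KS}}(G)$ is uniform over simple graphs with that degree sequence, whose asymptotic structure is explicitly described in terms of $\lamKS(c)$. Poisson convergence of short-cycle counts for uniform graphs with a given degree sequence is a classical theme going back to Bollob\'as, Wormald, and Janson; working out the means, the factors $\lambda/(e^{\lambda/2}-e^{-\lambda/2})$ and $\lambda/(e^{\lambda}-1)$ inside $\gamma$ and $\gamma^\dagger$ emerge as per-step probabilities of landing on a degree-2 vertex when walking around a $4k$-cycle, in the generic and the ``isolated-component'' cases respectively. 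Separating these contributions gives $\gamma_{\mr A}^\dagger = \gamma^\dagger(\lamKS)$ and $\gamma_{\mr A} = \gamma(\lamKS) - 2\gamma^\dagger(\lamKS)$; the bipartite supercritical case follows the same scheme with analogous formulas.

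The main obstacle is the supercritical regime. While the mean computations are ultimately a generating-function exercise, transferring the Poisson-convergence results for configuration-model-type random graphs to the KS core requires genuine care, since the KS core is not literally sampled from a configuration model but arises from the leaf-removal process on $\mathbb G(n,c/n)$. The needed uniformity-given-degree-sequence (or an equivalent switching-invariance) statement is likely already implicit in the machinery developed for \cref{thm:rank-characterisation}; the residual work is to verify that the short-cycle Poisson machinery applies with sufficient precision --- in particular, that the numbers of special cycles of each length concentrate and that different lengths are asymptotically independent --- and to separate the isolated from the non-isolated contributions.
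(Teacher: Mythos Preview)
Your proposal is correct and follows essentially the same route as the paper. The paper's proof reduces to \cref{thm:rank-characterisation} (as you do), then handles the subcritical case by citing \cref{lem:subcritical-cycles} (the Aronson--Frieze--Pittel result that the KS core is a disjoint union of cycles with Poisson cycle counts of mean $\eta^\ell/(2\ell)$) and handles the supercritical case by invoking \cref{lem:rotate-core} (uniformity of the KS core given its vertex set and edge count---proved by a two-line swapping argument, so cleaner than you may have feared) together with \cref{thm:main-RMT}(2), whose proof in \cref{sec:distributions} is exactly the configuration-model factorial-moment computation you outline.

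Two minor remarks. First, your branching-process derivation of the subcritical Poisson parameters is a nice alternative to citing \cite{AFP98}; the paper simply quotes the result. Second, for (B) the paper uses the identity $i_1(G)-i_2(G)=v_2(\core_{\mr{KS}}(G))-v_1(\core_{\mr{KS}}(G))$ (both differences equal the number of leaf-removal steps minus its counterpart) and then appeals to \cref{thm:WP-outputs}(B2), i.e.\ $|v_1-v_2|\overset{p}{\to}\infty$, rather than a direct CLT for $i_1-i_2$; either works, but note that once you condition on which side of the core is larger and apply \cref{thm:main-RMT}(B), the potential dependence between $j^\ast$ and $s_{j^\ast}$ that your symmetry argument glosses over is automatically handled.
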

\begin{remark*}
As written, our proof is not strong enough to estimate the \emph{expected} defect in the Karp--Sipser bound, but it does seem to be possible to prove such estimates by taking more care with quantitative aspects (which we do not pursue in this paper, in the interests of keeping our proofs as simple as possible). Specifically, one expects $\lim_{n\to \infty }\mb E[\corank A(G)-i(G)]= \gamma_{\mr A}(c)+2\gamma_{\mr A}^{\dagger}(c)$ in the setting of (A) and $\lim_{n\to \infty }\mb E[\corank B(G)-\max(i_1(G),i_2(G))]= \gamma_{\mr B}(c)$ in the setting of (B).
\end{remark*}

The reader is overdue an explanation for the significance of the ``critical point'' $c=e$. It turns out that this point amounts to a ``phase transition'' for the Karp--Sipser process. Namely (in the settings of both $\mb G(n,n,c/n)$ and $\mb G(n,c/n)$), for $c<e$, the Karp--Sipser core whp consists of a tiny number of vertex-disjoint cycles, whereas for $c>e$ the Karp--Sipser core whp has a single giant component with nontrivial structure, in addition to a tiny number of vertex-disjoint cycles. This situation parallels the phase transition (at $c=1$) of the components of a random graph, and suggests that when $c=e$, the Karp--Sipser core may have similar structure to the 2-core of a critical random graph $\mb G(n,1/n)$ or $\mb G(n,n,1/n)$. Unfortunately, it is very challenging to study the Karp--Sipser core in this critical regime, and essentially nothing has been rigorously proved (though see the very recent work of Budzinski, Contat, and Curien~\cite{BCC22} on a simpler model of random graphs, and the numerical simulations of Bauer and Golinelli~\cite{BG01b}).

Although our understanding of the critical Karp--Sipser process is not sufficient to prove or disprove \cref{thm:rank-characterisation} at the critical point $c=e$, we are at least able to show (as a consequence of \cref{thm:rank-characterisation-distribution}) that the defect in the Karp--Sipser bound is unbounded in probability for $p$ near $e/n$, strongly suggesting that the situation is rather different at the critical point.
\begin{theorem}\label{thm:critical}
There is a sequence $(p_{n})_{n=1}^{\infty}$ with $np_{n}\to e$,
such that:
\begin{enumerate}
\item[(A)] For $G\sim\mathbb{G}(n,p_n)$, we have $\corank A(G)-i(G) \overset{p}{\to}\infty$.
\item[(B)] For $G\sim\mathbb{G}(n,n,p_n)$, we have
$\corank B(G)-\max(i_1(G),i_2(G)) \overset{p}{\to}\infty$.
\end{enumerate}
\end{theorem}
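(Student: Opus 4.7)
The plan is to deduce \cref{thm:critical} from \cref{thm:rank-characterisation-distribution} by showing that the Poisson parameters blow up as $c\to e$, and then constructing the required sequence $p_n$ via diagonalization along a sequence $c_n\to e$.

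First I would verify the divergence of the relevant Poisson means as $c\to e$. For $c\uparrow e$, the unique $\eta(c)\in(0,1]$ solving $\eta e^\eta=c$ satisfies $\eta(c)\to 1$ (continuity of the map $\eta\mapsto \eta e^\eta$), so $\gamma_\mr{B}(c)=-\tfrac{1}{4}\log(1-\eta(c)^4)\to\infty$. For $c\downarrow e$, the two extremal fixed points $\alpha_\ast(c),\alpha^\ast(c)$ of $\Phi_c$ coalesce at the critical point, so $\lamKS(c)=c(\alpha^\ast-\alpha_\ast)\to 0^+$; since $e^{\lambda/2}-e^{-\lambda/2}=\lambda+O(\lambda^3)$ the ratio $\lambda/(e^{\lambda/2}-e^{-\lambda/2})\to 1$ as $\lambda\to 0$, and hence $\gamma_\mr{B}(c)=\gamma(\lamKS(c))\to\infty$. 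This directly handles part (B). For part (A), the limiting random variable $Y+2Y^\dagger$ has mean $\gamma_\mr{A}(c)+2\gamma_\mr{A}^\dagger(c)$, which by unwinding \cref{def:poisson-parameters} equals $\gamma_\mr{B}(c)$ in both the $c<e$ and $c>e$ regimes; in particular it too diverges, and since a Poisson (or sum of scaled Poissons) with diverging mean is bounded below by any fixed $M$ with probability tending to $1$, the limiting defect random variable tends to $\infty$ in probability as $c\to e$.

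Next I would carry out a routine diagonalization. For each positive integer $m$, pick $c_m\ne e$ with $|c_m-e|<1/m$ small enough that the limiting distribution in the relevant part of \cref{thm:rank-characterisation-distribution} assigns probability at least $1-1/m$ to the event of being at least $m$. By the convergence in distribution guaranteed by \cref{thm:rank-characterisation-distribution}, there is $N_m$ such that for all $n\ge N_m$, the defect $\corank A(G)-i(G)$ (respectively $\corank B(G)-\max(i_1(G),i_2(G))$) is at least $m$ with probability at least $1-2/m$ when $G\sim\mb G(n,c_m/n)$ (respectively $G\sim\mb G(n,n,c_m/n)$). We may assume $N_1<N_2<\cdots$. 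Setting $m(n):=\max\{m:N_m\le n\}\to \infty$ and $p_n:=c_{m(n)}/n$, we obtain $np_n\to e$, and the defect is at least $m(n)$ with probability at least $1-2/m(n)\to 1$, which gives both (A) and (B).

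The only substantive obstacle is the divergence statement $\gamma_\mr{B}(c)\to\infty$ as $c\to e$; everything else is soft. The $c\uparrow e$ direction is immediate from the definition of $\eta$, while the $c\downarrow e$ direction requires continuity (and coalescence) of the fixed points of $\Phi_c$ at the critical point $c=e$, which can be checked by a direct computation of $\Phi_c'$ at the fixed point (a standard transversality/bifurcation calculation). A minor bookkeeping point is that in part (A) with $c<e$ we have $\gamma_\mr{A}(c)=0$ and the defect comes entirely from the $2Y^\dagger$ term, so ``$\to\infty$ in probability'' is driven by $\gamma_\mr{A}^\dagger(c)=\gamma_\mr{B}(c)/2\to\infty$; for $c>e$, the identity $\gamma_\mr{A}+2\gamma_\mr{A}^\dagger=\gamma_\mr{B}$ again reduces the question to $\gamma_\mr{B}\to\infty$.
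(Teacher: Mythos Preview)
Your proposal is correct and follows essentially the same approach as the paper: observe that the relevant Poisson parameters in \cref{def:poisson-parameters} blow up as $c\to e$, then diagonalise along a sequence $c_m\to e$ using \cref{thm:rank-characterisation-distribution}. One small difference worth noting: the paper only approaches $e$ from above (taking $c_k=e+1/k$) and appeals directly to $\gamma_\mr{A}(c)\to\infty$ in that regime, whereas you allow approach from either side and use the clean identity $\gamma_\mr{A}+2\gamma_\mr{A}^\dagger=\gamma_\mr{B}$ to reduce both parts (A) and (B) to the single divergence $\gamma_\mr{B}(c)\to\infty$; this is a minor but tidy improvement.
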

\begin{remark*}
With some more work, it seems that it would be possible to prove that for $p_n=e/n$ (or any $(p_{n})_{n=1}^{\infty}$ for which $np_n$ converges sufficiently rapidly to $e$), in the settings of both (A) and (B), the defect in the Karp--Sipser bound is whp at least of order $\log n$. See \cref{rem:big-defect}.
\end{remark*}

We discuss the critical regime $c=e$ further in \cref{sub:further}.

\subsection{Degree-constrained random graphs}
Both the Karp--Sipser core and the 2-core have minimum degree at least 2. In fact, more is true: for each of these types of cores, if we condition on the vertex set of the core, and its number of edges, then it is a \emph{uniformly random} graph on the conditioned vertex set, with the conditioned number of edges, subject to the constraint of having minimum degree at least 2 (as we will see in \cref{sec:cores}).
\begin{definition}\label{def:min-degree-model}
For a set $V$ and a positive integer $m\ge|V|$, let $\mathcal{K}(V,m,2)$
be the uniform distribution on graphs with vertex set $V$, exactly
$m$ edges, and minimum degree at least $2$. For a pair of sets $V_{1},V_{2}$
and a positive integer $m\ge2\max(|V_{1}|,|V_{2}|)$, let $\mathcal{K}(V_{1},V_{2},m,2)$
be the uniform distribution on bipartite graphs with vertex set $V_{1}\cup V_{2}$,
exactly $m$ edges, and minimum degree at least $2$. We write $\mathcal{K}(n,m,2)=\mathcal{K}(\{1,\ldots,n\},m,2)$
and $\mathcal{K}(n_{1},n_{2},m,2)=\mathcal{K}(\{1,\ldots,n_{1}\},\{n_{1}+1,\ldots,n_{1}+n_{2}\},m,2)$.
\end{definition}

The main engine driving the proofs of \cref{thm:rank-characterisation,thm:2-core} is the following theorem on the rank of $\mathcal{K}(n,m,2)$ and $\mathcal{K}(n_{1},n_{2},m,2)$, which may be of independent interest.

\begin{theorem}\label{thm:main-RMT}
Fix $\varepsilon>0$. Recall the definitions of $\gamma(\lambda),\gamma^\dagger(\lambda)$ from \cref{def:poisson-parameters}.
\begin{enumerate}
\item[(A)] Suppose $(1+\varepsilon)n\le m\le n/\varepsilon$ and let $G\sim\mathcal{K}(n,m,2)$.
\begin{enumerate}
    \item[(1)] Whp $\rank A(G)=n-s(G)$.
    \item[(2)] Suppose $2m/n$ converges to a constant $\alpha>2$. Choose $\lambda>0$ such that if $Z\sim \on{Poisson}(\lambda)$, then $\alpha=\mb E[Z|Z\ge 2]$. Then $s(G)\overset d \to Y+2Y^{\dagger}$, where $Y,Y^{\dagger}$ are independent Poisson with means $\gamma(\lambda)-2\gamma^\dagger(\lambda)$ and $\gamma^\dagger(\lambda)$ respectively. (Here $Y^{\dagger}$ captures the isolated cycles with length divisible by 4, and $Y$ captures the other special cycles.)
\end{enumerate}
\item[(B)] Suppose $n_{1}-n_{2}\to\infty$, $n_{1}/n_{2}\to1$ and $(1+\varepsilon)(n_1+n_2)\le m\le (n_1+n_2)/\varepsilon$,
and let $G\sim\mathcal{K}(n_{1},n_{2},m,2)$.
\begin{enumerate}
    \item[(1)] Whp $\rank B(G)=n_{2}-s_{2}(G)$.
    \item[(2)] Suppose $2m/(n_1+n_2)$ converges to a constant $\alpha>2$. Choose $\lambda>0$ such that if $Z\sim \on{Poisson}(\lambda)$, then $\alpha=\mb E[Z|Z\ge 2]$. Then $s_1(G)\overset d \to Y$ and $s_2(G)\overset d \to Y$, where $Y$ is Poisson with mean $\gamma(\lambda)$.
\end{enumerate}
\end{enumerate}
\end{theorem}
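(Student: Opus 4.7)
The plan is to establish the rank characterisations (A1, B1) first, then obtain the Poisson distributions (A2, B2) from a separate moment calculation on special cycle counts.

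For (A1), the inequality $\corank A(G) \ge s(G)$ is almost immediate from Fact~\ref{fact:special-kernel}: each special cycle contributes an explicit kernel vector supported on its degree-2 vertices, and a short linear-algebra argument shows that the kernel vectors from distinct special cycles are linearly independent, with each isolated $4k$-cycle (counted twice) yielding a two-dimensional contribution via the rotational second kernel vector pictured in Figure~\ref{fig:cycles}. The reverse inequality $\corank A(G) \le s(G)$ is the main technical content. I would work with \emph{minimal-support kernel vectors}: pick a kernel vector $v$ of smallest support $U$. The balance equation $\sum_{w\sim u} v_w=0$ at every vertex $u$, together with the minimum-degree-2 assumption on $G$, forces $U$ to be ``rigid'' in the sense that the induced subgraph $G[U]$ has minimum degree at least $2$, and locally the values of $v$ look like an alternation of $\pm c$ around a cycle passing through degree-2 vertices of $G$. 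The goal is to show that in $\mathcal{K}(n,m,2)$ with $(1+\varepsilon)n\le m\le n/\varepsilon$, any such rigid $U$ must whp come from a special cycle; the bipartite case (B1) is analogous, with the right-kernel confined to $V_2$.

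The main obstacle is ruling out medium-size minimal kernel vectors. A naive union bound over supports $U$ of size $k$ produces $\binom{n}{k}$ terms, which overwhelms the structural probability gain from the local balance constraints. To close this gap I would use the ``rank-boosting'' technique and spectral/expansion analysis of $\mathcal{K}(n,m,2)$ mentioned in the paper's introduction: roughly, one enlarges a candidate bad support to a much more structured set whose probability can be tightly controlled via switching arguments inside $\mathcal{K}(n,m,2)$, and random-walk/mixing estimates on the degree-constrained graph convert local rigidity at $U$ into a global constraint that almost surely fails. I expect this to be the hardest part of the proof, because the degree-constrained model has the minimum possible ``wiggle room'' at degree-2 vertices---precisely where the special-cycle obstructions live---so unlike in previous work on denser regimes, one cannot rely on generous expansion.

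For (A2) and (B2), once the combinatorial description $\corank=s(\cdot)$ is in place, the Poisson limit reduces to counting special cycles in $\mathcal{K}(n,m,2)$ (or $\mathcal{K}(n_1,n_2,m,2)$) via the method of factorial moments. Under $2m/n\to\alpha$, the degree distribution of $\mathcal{K}(n,m,2)$ is well approximated by $\on{Poisson}(\lambda)$ conditioned on being $\ge 2$, with $\alpha=\mathbb{E}[\on{Poisson}(\lambda)\mid {\cdot}\ge 2]$. A standard configuration-model calculation gives, for each $k\ge 1$, an expected number of special $4k$-cycles asymptotic to $x^{4k}/(4k)$ with $x=\lambda/(e^{\lambda/2}-e^{-\lambda/2})$; summing over $k$ gives the series $-\tfrac{1}{4}\log(1-x^{4})=\gamma(\lambda)$. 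An analogous calculation restricted to \emph{isolated} $4k$-cycles (all vertices of degree exactly 2) gives $\gamma^{\dagger}(\lambda)$, so the non-isolated rate is $\gamma(\lambda)-2\gamma^{\dagger}(\lambda)$, matching the decomposition $Y+2Y^{\dagger}$. Joint Poisson convergence follows from the standard factorisation of higher factorial moments, since disjoint short-cycle counts are asymptotically independent in the sparse configuration model. The bipartite case is identical in spirit: $\gamma(\lambda)$ appears separately for $s_{1}$ and $s_{2}$, since the degree-2 vertices of a special bipartite cycle sit entirely on one side and so there is no ``isolated-cycle double counting''.
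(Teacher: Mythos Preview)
Your treatment of (A2) and (B2) is essentially correct and matches the paper: a method-of-moments calculation in the configuration model yields the stated Poisson limits, with the rates $\gamma(\lambda)$ and $\gamma^\dagger(\lambda)$ emerging from summing the appropriate geometric-type series.

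For (A1) and (B1), however, there is a genuine gap. First, your structural claim is wrong: if $\mathbf v\in\ker A(G)$ has support $U$, the balance equations do \emph{not} force $G[U]$ to have minimum degree $\ge 2$; indeed, in the canonical special-cycle example (\cref{fact:special-kernel}) the support $U$ consists of alternate vertices of the cycle and $G[U]$ is edgeless. The correct constraint (captured by the paper's notion of a \emph{stalk}, \cref{def:stalk}) is that every vertex of $N(U)$ has at least two neighbours in $U$. More seriously, your plan to rule out all non-special minimal kernel vectors by a direct combinatorial/switching argument cannot work as stated: the stalk estimates in the paper (\cref{lem:structure-master}) only handle supports of size $\le\eta n$ for a small constant $\eta$, and give no control over large-support kernel vectors, which a priori could contribute corank $o(n)$ but not $O(1)$.

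The paper's architecture is quite different from a direct union bound over supports. It first extracts a set $T$ of high-degree vertices and uses the Bordenave--Lelarge--Salez spectral convergence machinery to show $\corank A(G[V\setminus T])\le |T|/8$ (this is where ``spectral analysis'' enters---as a coarse initial bound, not as a tool to rule out specific supports). It then adds the vertices of $T$ back one at a time and analyses the corank as a random walk: if at some step there is a large-support (hence $\eta$-balanced) kernel vector, a linear Littlewood--Offord bound shows the new high-degree row/column drops the corank by $2$; if the kernel is entirely small-support, a quadratic Littlewood--Offord bound via a purpose-built pseudoinverse (\cref{lem:pseudoinverse}, \cref{lem:corank-boosting-master}) shows the corank does not increase. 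A robust random-walk lemma (\cref{thm:random-walk-master}) tolerates the small number of bad steps where these estimates fail. The stalk analysis is used only to track the small-support kernel along this process and to identify it with the special-cycle vectors at the end. Your proposal name-checks ``rank-boosting'' and ``random-walk'' but describes them as a switching/enlargement argument on bad supports; the high-degree extraction, the pseudoinverse, and the two-regime Littlewood--Offord dichotomy are the structural pieces you are missing.
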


\subsection{Further directions}\label{sub:further}
The theory of random Bernoulli matrices  (i.e., adjacency matrices of $\mb G(n,p)$, biadjacency matrices of $\mb G(n,n,p)$, and closely related random matrix models) is very rich, and there are a large number of conjectures and open problems. See for example the surveys of Guionnet~\cite{Gui} and Vu~\cite{Vu08,Vu20}. Below we mention some directions which are especially closely related to the present paper.

\subsubsection{The critical regime}
Perhaps the most obvious direction for further research is to improve our understanding in the critical case $c=e$. Unfortunately, our understanding of the critical Karp--Sipser core is very poor; even its typical number of vertices is unknown (though conjectures motivated by numerical simulations have been made by Bauer and Golinelli~\cite{BG01b}, and a rigorous result was recently obtained by Budzinski, Contat, and Curien~\cite{BCC22} for a simpler model of random graphs). Also, we suspect that in the critical case, the Karp--Sipser core typically has nearly as many vertices as edges (i.e., the average degree is very close to 2), so \cref{thm:main-RMT} does not apply, motivating the following question.

\begin{question}\label{ques:uniform-corank}
What can we say about the typical rank of the adjacency matrix of $G\sim \mc K(n,n+t,2)$, for $t=o(n)$? What can we say about the typical rank of the biadjacency matrix of $G\sim \mc K(n_1,n_2,2n_1+t,2)$, for $n_1\ge n_2$ with $n_1=(1+o(1))n_2$ and $t=o(n_1)$?
\end{question}

We remark that if $t=O(1)$ then the special cycles may intersect each other, and the combinatorial description of the rank in \cref{thm:main-RMT}(1) no longer holds whp. In this case there may simply not exist a description of the rank that holds whp and which can be reasonably described as ``combinatorial''.

\subsubsection{The asymptotic distribution of the rank}
In \cref{cor:CLT} we proved a central limit theorem for the rank of $\mb G(n,c/n)$ for $c<1$ or $c>e$, complemented by upcoming work with Goldschmidt and Krea\v{c}i\'c~\cite{matching-CLT} in which we handle the regime $c\le e$. We wonder whether it may also be possible to prove a \emph{local} central limit theorem for the rank. Indeed, it seems plausible that (at least in the regime $c>e$) the techniques in \cite{CCKS19} might be helpful to prove a local central limit theorem for the Karp--Sipser bound $n-i(G)$; we suspect that it would then be possible to adapt the methods in this paper to deduce a local central limit theorem for the rank of $\mb G(n,c/n)$.

However, we do \emph{not} believe that even a coarse central limit theorem holds for the rank of $G\sim \mb G(n,n,c/n)$. Recall that $\corank B(G)$ is approximately $\max(i_1(G),i_2(G))$; we believe that the asymptotic joint distribution of $i_1(G)$ and $i_2(G)$ is a nontrivial bivariate Gaussian, in which case the limiting distribution of $\rank B(G)$ would be expressible in terms of the maximum of two Gaussians.

\subsubsection{Other sparse random matrix distributions}
One may wish to study more general types of sparse random matrices than $\mb G(n,p)$ and $\mb G(n,n,p)$. For example, we could fix a distribution $\mathcal L$ for the nonzero entries (instead of having every nonzero entry be exactly 1). The methods in this paper are quite robust, and we believe it should be possible to handle random matrices of this type, though the notion of ``special cycle'' would have to be adapted accordingly (the defect in the Karp--Sipser bound would still be controlled by short cycles, but it would be more complicated to describe exactly which short cycles are relevant).

However, the methods in this paper do have some limitations: they are only suitable when an approximate rank result is available (e.g., recall that Bordenave, Lelarge and Salez~\cite{BLS11} found a formula for the rank of $\mb G(n,p)$ up to $o(n)$ additive error). Our methods also do not apply to graphs with bounded degree (e.g.~random regular graphs, which were recently shown to have full rank whp by Huang~\cite{Hua18} and M\'{e}sz\'{a}ros~\cite{Mes20}, in breakthrough works using completely different methods to the present paper).

\subsubsection{Rank over other fields}
One may wish to study rank over fields other than $\mb R$ (e.g., rank over $\mb F_2$).  We do not believe that an exact combinatorial characterisation of the rank is actually possible over finite fields, because in general dependencies need not be ``local'' (even a dense random matrix has a nontrivial probability of being singular over $\mb F_2$). However, we do believe that there are typically very few ``non-local dependencies'', and in particular it should still be true that the defect in the Karp--Sipser bound (for both $\mb G(n,n,c/n)$ and $\mb G(n,c/n)$, with $c\ne e$) is bounded in probability.

To prove this would require a number of modifications to our proof (for example, one should incorporate some of the techniques in \cite{FJSS21}, which build on ideas introduced in \cite{MapNote}). The most significant obstacle is that our proof uses spectral convergence machinery due to Bordenave, Lelarge and Salez~\cite{BLS11} which is fundamentally only suitable for real rank. In the bipartite setting (i.e., for $\mb G(n,n,p)$) one can substitute machinery due to Coja-Oghlan, Erg\"{u}r, Gao, Hetterich, and Rolvien~\cite{CEGHR20}, which provides asymptotic formulas for the rank of a broad class of random matrices over arbitrary fields. In the non-bipartite setting, such machinery is not yet available in appropriate generality, but an exciting first step in this direction was very recently made by van der Hofstad, M\"uller, and  Zhu~\cite{HMZ}.

\subsection{Notation}\label{sub:notation}
We use the notation $\delta\ll \varepsilon$ to indicate that $\delta$ is sufficiently small in terms of $\varepsilon$ (so $1/M\ll \varepsilon$ means that $M$ is sufficiently \emph{large} in terms of $\varepsilon$, and $\varepsilon\ll 1$ means that $\varepsilon$ is sufficiently small in absolute terms).

We use standard asymptotic notation throughout, as follows. For functions $f=f(n)$ and $g=g(n)$, we write $f=O(g)$ or $f \lesssim g$ to mean that there is a constant $C$ such that $|f(n)|\le C|g(n)|$ for sufficiently large $n$. Similarly, we write $f=\Omega(g)$ or $f \gtrsim g$ to mean that there is a constant $c>0$ such that $f(n)\ge c|g(n)|$ for sufficiently large $n$. Finally, we write $f\asymp g$ or $f=\Theta(g)$ to mean that $f\lesssim g$ and $g\lesssim f$, and we write $f=o(g)$ or $g=\omega(f)$ to mean that $f(n)/g(n)\to0$ as $n\to\infty$. Subscripts on asymptotic notation indicate quantities that should be treated as constants.

We also use standard graph-theoretic notation. In particular, $V(G)$ and $E(G)$ denote the vertex set of a graph $G$, and $v(G)=|E(G)|$ and $e(G)=|E(G)|$ denote the numbers of vertices and edges. We write $G[U]$ to denote the subgraph \emph{induced} by a set of vertices $U\subseteq V(G)$. For a vertex $v\in V(G)$, its neighborhood (i.e., the set of vertices adjacent to $v$) is denoted by $N_G(v)$, and its degree is denoted $\deg_G(v)=|N_G(v)|$ (the subscript $G$ will be omitted when it is clear from context). 
We also write $N_U(v)=U\cap N(v)$ and $\deg_U(v)=|N_U(v)|$ to denote the degree of $v$ into a vertex set $U$.

Somewhat less standardly, in this paper all bipartite graphs will have parts indexed by 1 and 2. We write $V_1(G),V_2(G)$ for the two parts of a bipartite graph $G$, and write $v_1(G),v_2(G)$ for the number of vertices in each part. For a set of vectors $S$, we write $\dim S$ for the dimension of the span of $S$, and we write $\supp(S)$ for the union of supports of vectors in $S$.

We define the \emph{double factorial} $n!!$ to be the product of all integers from $1$ to $n$ which have the same parity as $n$, and we define the \emph{falling factorial} $(n)_k=n!/(n-k)!$. For a real number $x$, the floor and ceiling functions are denoted $\lfloor x\rfloor=\max(i\in \mb Z:i\le x)$ and $\lceil x\rceil =\min(i\in\mb Z:i\ge x)$. We will however sometimes omit floor and ceiling symbols and assume large numbers are integers, wherever divisibility considerations are not important. 
All logarithms in this paper without an explicit base are to base $e$, and the set of natural numbers $\mb N$ includes zero.

\subsection{Acknowledgments} We would like to thank Noga Alon for suggesting that our main result gives a linear-time algorithm for computing the rank.

\section{Overview of the paper and proofs}\label{sec:outline}
Most of the paper (all of \cref{sec:random-walk,sec:boost-corank,sec:corank-init,sec:RMT-overview,sec:stalks-computation,sec:bipartite-sketch}) is devoted to \cref{thm:main-RMT}(1), characterising the corank of a degree-constrained random graph. Before discussing its proof, we briefly outline the reductions for the other theorems:
\begin{itemize}
    \item For the asymptotic distribution of the corank (\cref{thm:main-RMT}(2)): we simply need to understand the asymptotic distribution of the number of special cycles in a degree-constrained random graph ($\mathcal{K}(n,m,2)$ or $\mathcal{K}(n_{1},n_{2},m,2)$). This can be done with standard techniques (namely, we perform a method-of-moments calculation in the so-called \emph{configuration model} for random graphs with a given degree sequence, after using standard Poisson approximation techniques to study the typical degree sequence of $\mathcal{K}(n,m,2)$ and $\mathcal{K}(n_{1},n_{2},m,2)$). The details appear in \cref{sec:distributions}.
    \item Regarding the 2-core (\cref{thm:2-core}): for $G\sim\mb G(n,c/n)$ with $c>1$, it is easy to estimate the typical number of vertices and edges in the 2-core of $G$ (in particular, there are whp $\Omega(n)$ vertices and the average degree is $2+\Omega(1)$). So, \cref{thm:2-core} follows directly from \cref{thm:main-RMT}. The details appear in \cref{sec:cores}.
    \item Regarding our main rank characterisation theorems (\cref{thm:rank-characterisation,thm:rank-characterisation-distribution}): for $G\sim\mb G(n,c/n)$ with $c>e$, the typical number of vertices and edges in the Karp--Sipser core of $G$ were already studied in the seminal work of Karp and Sipser (again, there are whp $\Omega(n)$ vertices and the average degree is $2+\Omega(1)$). So, in this case the conclusions of \cref{thm:rank-characterisation}(A) and \cref{thm:rank-characterisation-distribution}(A) again follow directly from \cref{thm:main-RMT}. The case $c<e$ is actually much simpler, and does not require \cref{thm:main-RMT}: it was shown by Aronson, Frieze and Pittel~\cite{AFP98} that for $G\sim\mb G(n,c/n)$ with $c<e$, the Karp--Sipser core of $G$ whp consists purely of vertex-disjoint cycles (\cref{lem:subcritical-cycles}), so the conclusions of \cref{thm:rank-characterisation}(A) and \cref{thm:rank-characterisation-distribution}(A) then follow simply by reasoning about the rank of adjacency matrices of cycles. In all cases, the bipartite setting (for \cref{thm:rank-characterisation}(B) and \cref{thm:rank-characterisation-distribution}(B)) can be handled similarly.
    \item \cref{thm:critical} (regarding the critical regime $np_n\to e$) follows from \cref{thm:rank-characterisation-distribution}, and the observation that the Poisson parameters defined in \cref{def:poisson-parameters} blow up as $c\to e$.
    \item For the comparison between rank and matching number (\cref{thm:matching-vs-rank}): It turns out that \cref{thm:matching-vs-rank}(A2) and (B) follow directly from \cref{thm:rank-characterisation}, via certain trivial inequalities concerning $\nu(G)$ and $\sigma(G)$. For \cref{thm:matching-vs-rank}(A1), we combine \cref{thm:rank-characterisation} with an exact description of the matching number of $\mathcal{K}(n,m,2)$ due to Frieze and Pittel~\cite{FP04}.
    \item \cref{cor:CLT} (the central limit theorem for the rank) is an essentially immediate deduction from central limit theorems for the matching number due to Pittel~\cite{Pit90} and Krea\v{c}i\'c~\cite{Kre17}. The details appear in \cref{sec:distributions}.
\end{itemize}
The deductions of \cref{thm:rank-characterisation,thm:critical,thm:rank-characterisation-distribution,thm:matching-vs-rank} all appear in \cref{sec:karp-sipser}, together with various facts about the Karp--Sipser process. We also remark that \cref{sec:preliminaries} contains a few basic preliminary facts that will be used throughout the paper, \cref{sec:cores} contains some basic facts about 2-cores and Karp--Sipser cores, and \cref{sec:core-estimates} contains some general facts about degree-constrained random graphs.

Now we discuss the tools and ideas in the proof of \cref{thm:main-RMT}(1) (restricting our attention to (A1), which is the slightly more difficult of the two settings; (B1) is proved in essentially the same way, but certain minor simplifications are possible, sketched in \cref{sec:bipartite-sketch}).

\subsection{Spectral convergence}
Qualitatively, \cref{thm:main-RMT}(1) says that degree-constrained random graphs are very nearly nonsingular (the only obstructions to singularity are a small number of special cycles). One can obtain a much weaker result in a similar spirit using spectral convergence machinery of Bordenave, Lelarge, and Salez~\cite{BLS11}. Specifically, there is a notion of \emph{local weak convergence} of graphs, introduced independently by Benjamini and Schramm~\cite{BS01} and by Aldous and Steele~\cite{AS04}. In \cite{BLS11}, it is shown that when a sequence of graphs locally weakly converges to a Galton--Watson tree, then the spectrum also converges, and one can estimate the limiting rank via a generating function associated with the Galton--Watson tree. (The fact that spectral information can be deduced from a local limit is not surprising, in light of the fact that the $t$-th moment of the empirical spectral distribution is precisely equal to the number of closed walks of length $t$.)
It can be shown that if $\lim_{n\to \infty}n m_n$ converges to a limit, then the local weak limit of $\mathcal{K}(n,m_n,2)$ is a Galton--Watson tree whose offspring distribution has an explicit (``truncated Poisson'') distribution. With a simple calculation concerning generating functions associated with truncated Poisson distributions, and a compactness argument, one can use the machinery of \cite{BLS11} to show that in the setting of \cref{thm:main-RMT}(A1), we have $\corank(G)=o(n)$ whp.

Of course, the above result is far weaker than the statement of \cref{thm:main-RMT} (we hope to prove that $\corank(G)$ is bounded in probability, not just that $\corank(G)=o(n)$). We will make up the difference using a ``rank-boosting'' strategy, using tools that are traditionally used to study singularity of random matrices (in particular, tools related to the \emph{Littlewood--Offord problem}).

\subsection{The evolving rank, and the Littlewood--Offord problem}
In this subsection we very briefly explain the techniques in the seminal paper of Costello, Tao, and Vu~\cite{CTV06} (building on the original ideas of Koml\'os~\cite{Kom67,Kom68}), who proved that \emph{dense} random graphs are nonsingular. Roughly speaking, their approach was to reveal a random graph (say $\mb G(n,1/2)$) in a vertex-by-vertex fashion, at each step studying how the addition of a new vertex affects the rank. They proved that if, at a given step, the corank is nonzero, then at the next step the corank will typically decrease by one. On the other hand, if the corank is already zero, then at the next step the corank will typically stay at zero. In this way, they could view the evolution of the corank as a random walk that heavily trends towards zero, and show that such random walks almost always end at zero.

In order to implement this strategy, it is necessary to understand how the rank changes when we add a new vertex. For example, if we add a new vertex $v$ to a graph $H$ to obtain a graph $H+v$ (and let $\mbf x$ be a random zero-one vector describing the presence of edges between $v$ and the vertices of $H$), then the determinant of $A(H+v)$ can be expressed as a quadratic polynomial in $\mbf x$ (with coefficients depending on $H$). So, showing that $H+v$ is full-rank is tantamount to showing that a certain quadratic polynomial is nonzero. Correspondingly, an important ingredient in \cite{CTV06} was the fact that certain quadratic polynomials of independent random variables are unlikely to be zero.

The \emph{Littlewood--Offord problem} studies the point probabilities of sums of independent discrete random variables. In particular, the fundamental theorem in this field is the \emph{Erd\H{o}s--Littlewood--Offord theorem}, which was used in Koml\'os' foundational papers~\cite{Kom67,Kom68} on discrete random matrices. To study the evolving rank of a random graph, Costello, Tao, and Vu initiated the study of the \emph{quadratic} Littlewood--Offord problem: specifically, they proved that if an $N$-variable real quadratic polynomial $f$ has $\Omega(N^2)$ nonzero coefficients, and $\mbf x\in\{0,1\}^N$ is a uniform random binary vector, then $\Pr[f(\mbf x)=0]\le N^{-1/8}$. It turns out that in order to fully understand the evolution of the rank one needs Littlewood--Offord-type theorems of both linear and quadratic type: in order to show that the corank typically decreases when it is nonzero, one considers a linear Littlewood--Offord problem, and in order to show that the corank typically stays zero when it is zero, one considers a quadratic Littlewood--Offord problem.

A key reason for the difficulty of studying sparse random matrices is that Littlewood--Offord theorems break down in very sparse settings: if $\mbf x\in\{0,1\}^N$ is a random binary vector in which every entry is $1$ with probability only $c/N$, it is simply not in general true that the event $f(\mbf x)=0$ is unlikely. For example, if $f(\mbf x)=x_1+\cdots+x_N$ or $f(\mbf x)=(x_1+\cdots+x_N)^2$ (in the linear and quadratic cases, respectively), then the asymptotic distribution of $f(\mbf x)$ is $\on{Poisson}(c)$, or the square of a $\on{Poisson}(c)$ distribution, and the point probabilities of $f(\mbf x)$ are of the form $\Omega(1)$. Roughly speaking, the problem is that in this very sparse regime there is ``not enough randomness'' in $\mbf x$.

\subsection{Rank-boosting}\label{sub:rank-boosting}

The key insight to overcome this issue is as follows. In the setting of \cref{thm:main-RMT}, while the \emph{average} degree of $G$ is typically only $O(1)$, whp there are at least a few vertices with much higher degree. Indeed, the maximum of $n$ independent Poisson random variables is typically about $\log n/\log \log n$, and correspondingly it turns out that $G$ typically has at least a few vertices of that degree. More qualitatively, for any $\beta=o(1)$, the $\beta n$ highest-degree vertices all have degree $\omega(1)$.

In \cite{FKSS21}, Ferber and the last three authors leveraged this observation together with the techniques discussed in the last two subsections, to prove that the $k$-core of a random graph (for $k\ge 3$) is nonsingular whp. Specifically, for a random $n$-vertex graph constrained to have minimum degree at least $k$, they designed a procedure to identify $\beta n$ vertices of high degree \emph{without actually revealing the neighbours of these vertices}. They then showed that the graph induced by the remaining $(1-\beta)n$ vertices locally weakly converges to a Galton--Watson tree, and used the machinery in \cite{BLS11} to prove that the corank of this graph is at most say $\beta n/2$ whp. Now, adding back the $\beta n$ high-degree vertices one-by-one, and studying the evolution of the rank, at each step there is quite a lot of randomness, because each of these vertices has high degree and its neighbourhood has not yet been revealed. So, with a random walk argument together with a quadratic Littlewood--Offord theorem, they could show that at the end of this vertex-adding process the corank has decreased from $\beta n/2$ to zero whp.

At a high level, the approach in this paper is to apply the same rank-boosting strategy to $\mathcal{K}(n,m,2)$ to prove \cref{thm:main-RMT}. However, the situation is far more delicate, for reasons we discuss in the following subsections. 

\subsection{The small-support kernel, minimal kernel vectors, and stalks}
The above rank-boosting strategy cannot succeed as written, because it is simply not true that $\mathcal{K}(n,m,2)$ is nonsingular whp (due to the possible existence of special cycles). This is due to an issue we have so far neglected to mention: for any Littlewood--Offord-type approach (in which we study the rank via events of the form $f(\mbf x)=0$), it is necessary to establish ``non-degeneracy'' conditions for $f$. For instance, we need to ensure that $f$ has many nonzero coefficients (to see that something like this is necessary, note that if $f$ were the zero polynomial, we would have $f(\mbf x)=0$ with probability 1, no matter how dense of a random vector $\mbf x$ is).

The polynomials $f$ that we need to consider are defined in terms of the evolving random graph $G$ (as we add vertices one-by-one). It turns out that if, at some point in the process, $f$ has few nonzero coefficients, this essentially corresponds to $A(G)$ having a kernel vector with small \emph{support} (i.e., with few nonzero entries)\footnote{To be precise, recall that we may need to consider either linear or quadratic $f$, depending on the situation. In the linear case, the coefficients of $f$ correspond precisely to a kernel vector, and in the quadratic case there is a correspondence between coefficients of $f$ and ``almost kernel vectors'' of $A(G)$ (i.e., vectors $\mbf v$ such that $A(G)\mbf v$ has only two nonzero entries). So, in much of what follows, we really need to consider almost kernel vectors as well as kernel vectors.}. Therefore, an essential part of the Littlewood--Offord-based proofs mentioned so far~\cite{Kom67,Kom68,CTV06,FKSS21} is to prove that $A(G)$ has no small-support kernel vectors.

Crucially, this can be accomplished by purely \emph{combinatorial} means: for example, if $\mbf v$ is a kernel vector of an adjacency matrix $A(G)$ (such that the nonzero entries of $\mbf v$ correspond to a set of vertices $R$, say), then when a vertex has a neighbour in $R$, it must in fact have at least two neighbours in $R$ (in order for there to be a cancellation yielding zero in the corresponding entry of $A(G)\mbf v$). In the settings of \cite{Kom67,Kom68,CTV06,FKSS21}, one can simply use a crude combinatorial union bound calculation to show that whp there are no small sets $R$ with this property (for example, in \cite{FKSS21}, one can simply use that $R$ and its neighbours would comprise an atypically dense set, which is unlikely to appear in a sparse random graph). Specifically, union bounds of this type can be made to work as long as $R\le \eta n$ for some small constant $\eta$. We remark that when studying the $k$-core (for $k\ge 3$) in \cite{FKSS21}, it was  possible to engineer the high-degree-vertex extraction in such a way that (crude union bounds show that) whp no short kernel vectors ever appear during the entire vertex-adding process.

Unfortunately, in the setting of \cref{thm:main-RMT}, small-support kernel vectors seem to be unavoidable: special cycles may exist in $\mc K(n,m,2)$ itself, and since we are no longer assuming $k\ge 3$ it seems to be impossible to engineer our high-degree vertex extraction to avoid the emergence of many small-support kernel vectors during our vertex-adding process. Instead, we need to perform a very delicate calculation to upper-bound the numbers of various types of small-support kernel vectors (and in particular to show that at the end of the vertex-adding process, whp the \emph{only} short kernel vectors are those corresponding to special cycles, and linear combinations thereof). It turns out that a na\"ive union bound does not suffice here, and we need to consider a notion of \emph{minimal} kernel vectors (essentially, kernel vectors which cannot be broken down into kernel vectors with smaller support). This notion was first considered by DeMichele, the first author, and Moreira in \cite{DGM22}. A large part of the paper (\cref{sec:stalks-computation}) is spent very carefully studying the expected number of combinatorial configurations corresponding to minimal kernel vectors (called \emph{stalks}), with support size at most $\eta n$, in degree-constrained random graphs. (Unlike in \cite{FKSS21}, we cannot merely consider the density of a stalk; we need to very carefully consider its structure.)

\subsection{Boosting the large-support kernel, and a special-purpose pseudoinverse}
Due to the existence of small-support kernel vectors, the evolution of the rank no longer has such a simple description as in \cite{CTV06,FKSS21}. Instead of showing that the corank drifts towards zero, we show that the corank ``drifts towards the dimension of the small-support kernel''. Specifically, we prove that if there is a kernel vector with large support, then the corank decreases whp, and in any case the corank whp does not increase. It turns out that it is still possible to control the probabilities of these events via events of the form $f(\mbf x)=0$ for some linear or quadratic polynomials $f$, but unlike in \cite{CTV06,FKSS21}, we cannot define $f$ in terms of a determinant (because if there is any kernel vector the determinant is always zero). Instead, our polynomial $f$ is defined in terms of a special-purpose ``pseudoinverse'', first (implicitly) considered in \cite{DGM22}. We state and prove a general purpose rank-boosting lemma, summarising a one-step application of a linear and quadratic Littlewood--Offord theorem,  in \cref{sec:boost-corank}.

\subsection{Robust analysis of a random walk}
Summing up, our approach is as follows. After extracting high-degree vertices and showing that the resulting graph has small corank (executed in \cref{sec:corank-init}), we add back the high-degree vertices one-by-one, and consider the evolution of the rank of this random graph process. Letting $\dim K^{(\eta)}_t$ be the dimension of the span of small-support kernel vectors at time $t$ (which is a lower bound for the corank at time $t$), we prove an upper bound on $\mb E\dim K^{(\eta)}_t$ (in terms of $t$) via direct combinatorial means, and prove using Littlewood--Offord theorems that the corank trends towards $\dim K^{(\eta)}_t$. We wish to combine all these ingredients to prove that at the end of the process, whp the corank is exactly equal to $\dim K^{(\eta)}_t$ (which we then show is equal to $s(G)$).

In order to execute this plan, we need a more robust random walk analysis than in \cite{CTV06,FKSS21}. The main issue is that because we no longer have the ``wiggle room'' afforded by the assumption $k\ge 3$, it is much harder to prove bounds that hold whp for all steps $t$ (e.g., our bounds on $\mb E\dim K^{(\eta)}_t$, together with Markov's inequality, do not provide strong enough probabilistic bounds for a union bound over all $t$). Instead, we have estimates that hold for \emph{most} steps $t$, and we need a more robust analysis of random walks that can tolerate a small number of ``bad steps'' (as long as they are not clustered near the end of the process). We present a general lemma along these lines in \cref{sec:random-walk}, which we hope will be useful for other applications. In \cref{sec:RMT-overview} we put everything together, completing the proof of \cref{thm:main-RMT}(A1).

\section{Preliminaries}\label{sec:preliminaries}
In this section we collect some basic facts that will be used throughout the paper. First, to unify the proofs for the bipartite and nonbipartite cases to the greatest extent possible, we observe that for a bipartite graph $G$, the rank of its biadjacency matrix is related to the rank of its adjacency matrix.
\begin{fact}\label{fact:biadjacency-rank}
If $G$ is bipartite, then $\rank A(G)=2\rank B(G)$
\end{fact}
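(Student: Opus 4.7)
The plan is to exploit the block structure of the adjacency matrix of a bipartite graph. After labeling the vertices so that those in $V_1(G)$ come before those in $V_2(G)$, the adjacency matrix takes the form
\[
A(G) = \begin{pmatrix} 0 & B(G) \\ B(G)^T & 0 \end{pmatrix},
\]
since edges exist only between $V_1$ and $V_2$.

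From here, I would argue directly on the row space. The rows of $A(G)$ split into two groups: the first $v_1(G)$ rows are of the form $(0 \mid r)$ where $r$ ranges over the rows of $B(G)$, and the last $v_2(G)$ rows are of the form $(c^T \mid 0)$ where $c^T$ ranges over the rows of $B(G)^T$. The span of the first group has dimension $\rank B(G)$, and the span of the second group has dimension $\rank B(G)^T = \rank B(G)$. Because vectors in the first span are supported on the last $v_2(G)$ coordinates while vectors in the second span are supported on the first $v_1(G)$ coordinates, these two subspaces intersect trivially. Hence the row space of $A(G)$ has dimension $2\rank B(G)$, which gives $\rank A(G) = 2\rank B(G)$.

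The argument is essentially a one-line block matrix computation; there is no real obstacle. (Alternatively, one could cite the standard fact that the nonzero eigenvalues of such a block antidiagonal symmetric matrix are precisely $\pm \sigma_i(B(G))$ where $\sigma_i$ denotes the singular values of $B(G)$, each nonzero singular value contributing two nonzero eigenvalues.)
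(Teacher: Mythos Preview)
Your proof is correct and takes essentially the same approach as the paper: both use the block antidiagonal form of $A(G)$ in terms of $B(G)$ and $B(G)^\intercal$. The paper states the block decomposition and leaves the rank conclusion as immediate, whereas you spell out the row-space argument explicitly.
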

\begin{proof}
This follows immediately from the fact that (given an appropriate ordering of the vertices) $A(G)$ has the block representation
\[\begin{pmatrix}0 & B(G)^\intercal \\
B(G) & 0
\end{pmatrix}.\qedhere\]
\end{proof}

We will also need a Chernoff bound for binomial and hypergeometric distributions (see for example \cite[Theorems~2.1 and~2.10]{JLR00}). Recall that the hypergeometric distribution $\on{Hyp}(N,K,n)$ is the distribution of $|Z\cap U|$, for fixed sets $U\subseteq S$ with $|S|=N$ and $|U|=K$ and a uniformly random size-$n$ subset $Z\subseteq S$.
\begin{lemma}[Chernoff bound]\label{lem:chernoff}
Let $X$ be either:
\begin{itemize}
    \item a sum of independent random variables, each of which take values in $\{0,1\}$, or
    \item hypergeometrically distributed (with any parameters).
\end{itemize}
Then for any $\delta>0$ we have
\[\Pr[X\le (1-\delta)\mb{E}X]\le\exp(-\delta^2\mb{E}X/2),\qquad\Pr[X\ge (1+\delta)\mb{E}X]\le\exp(-\delta^2\mb{E}X/(2+\delta)).\]
\end{lemma}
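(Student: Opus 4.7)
The plan is to prove this via the classical exponential moment (Chernoff) method, handling the binomial-type sum first and then reducing the hypergeometric case to it. Throughout, write $\mu = \mb{E}X$.

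First I would treat the case where $X = X_1 + \cdots + X_N$ is a sum of independent $\{0,1\}$-valued random variables with $p_i = \mb{E}X_i$. For any $t > 0$, Markov's inequality gives $\Pr[X \ge (1+\delta)\mu] \le e^{-t(1+\delta)\mu}\mb{E}e^{tX}$, and by independence together with the pointwise bound $1 + p_i(e^t - 1) \le \exp(p_i(e^t-1))$, the moment generating function satisfies $\mb{E}e^{tX} \le \exp(\mu(e^t - 1))$. Optimizing at $t = \log(1+\delta)$ (and $t = \log(1-\delta)$ with the reversed inequality for the lower tail) yields the sharp bounds
\[\Pr[X \ge (1+\delta)\mu] \le \left(\frac{e^\delta}{(1+\delta)^{1+\delta}}\right)^\mu, \qquad \Pr[X \le (1-\delta)\mu] \le \left(\frac{e^{-\delta}}{(1-\delta)^{1-\delta}}\right)^\mu.\]
I would then convert these to the stated forms using the two elementary calculus inequalities $(1+\delta)\log(1+\delta) - \delta \ge \delta^2/(2+\delta)$ for $\delta > 0$ and $(1-\delta)\log(1-\delta) + \delta \ge \delta^2/2$ for $\delta \in (0,1)$, both of which follow by comparing derivatives at zero. (When $\delta \ge 1$ the lower-tail bound is vacuous.)

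For the hypergeometric case, I would invoke Hoeffding's classical reduction: if $X \sim \on{Hyp}(N,K,n)$, then for every convex function $\varphi$, $\mb{E}\varphi(X) \le \mb{E}\varphi(Y)$, where $Y$ is a sum of $n$ i.i.d.\ $\operatorname{Bernoulli}(K/N)$ random variables. Applying this to $\varphi(x) = e^{tx}$ shows that the moment generating function of a hypergeometric random variable is dominated by that of a binomial with the same mean, so the Markov-inequality argument above goes through verbatim and yields exactly the same two-sided bound.

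Neither step is really an obstacle; the content is essentially bookkeeping from the moment generating function to the final form. The only place where one needs to exercise some care is the application of Hoeffding's inequality in the hypergeometric setting (one has to be explicit that the domination holds for every convex $\varphi$, not merely for the specific ones used in Hoeffding's original concentration statement), and the choice of simplifying calculus inequalities so that the final bounds take the clean form stated in the lemma. Since this is a standard result, I would simply cite \cite[Theorems~2.1 and~2.10]{JLR00} for the detailed computations and sketch the above outline to make the paper self-contained.
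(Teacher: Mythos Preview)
Your proposal is correct, and it actually goes further than the paper does: the paper gives no proof of this lemma at all, merely stating it with the reference \cite[Theorems~2.1 and~2.10]{JLR00}. Your exponential-moment sketch and the Hoeffding reduction for the hypergeometric case are the standard arguments underlying that reference, so your write-up is compatible with (and more self-contained than) the paper's treatment.
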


Finally, we will need a consequence of the Azuma--Hoeffding inequality (see \cite[Theorem~2.25]{JLR00}).
\begin{lemma}\label{lem:azuma}
Let $Z$ be a random variable defined in terms of a sequence of random variables $X_0, \ldots, X_n$, such that modifying any individual $X_k$ changes $Z$ by at most $c_k$. Then 
\[\mb{P}[|Z-\mb{E} Z|\ge t]\le 2\exp\bigg(-\frac{t^2}{2\sum_{k}c_k^2}\bigg).\]
\end{lemma}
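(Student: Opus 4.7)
The statement is the standard bounded differences (McDiarmid-type) inequality, stated as a consequence of Azuma--Hoeffding, so I would simply derive it from the Azuma--Hoeffding martingale tail bound applied to the appropriate Doob martingale.

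The plan is as follows. First, I would define the Doob martingale associated with the revelation of $X_0,\ldots,X_n$: set $Y_{-1}=\mb E Z$ and, for $0\le k\le n$,
\[Y_k = \mb E[Z\mid X_0,\ldots,X_k].\]
By the tower property, $(Y_k)_{k=-1}^n$ is a martingale with respect to the filtration generated by $(X_k)$, with $Y_{-1}=\mb EZ$ and $Y_n=Z$ (the latter since $Z$ is a function of $X_0,\ldots,X_n$).

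Second, I would verify the bounded-difference condition $|Y_k-Y_{k-1}|\le c_k$ almost surely. Fix $k$ and condition on $X_0,\ldots,X_{k-1}$. Then $Y_{k-1}$ is the average of $Y_k$ over the randomness of $X_k$ (with $X_{k+1},\ldots,X_n$ averaged further on top). By the hypothesis, changing the value of the single variable $X_k$ changes $Z$ by at most $c_k$, and this property is preserved under taking conditional expectations over the independent remaining variables $X_{k+1},\ldots,X_n$. Hence the conditional range of $Y_k$ given $X_0,\ldots,X_{k-1}$ has length at most $c_k$, which forces $|Y_k-Y_{k-1}|\le c_k$ almost surely.

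Finally, I would invoke the Azuma--Hoeffding inequality for martingales with bounded increments: for any $t>0$,
\[\Pr\bigl[|Y_n-Y_{-1}|\ge t\bigr]\le 2\exp\!\left(-\frac{t^2}{2\sum_{k=0}^n c_k^2}\right).\]
Substituting $Y_n=Z$ and $Y_{-1}=\mb EZ$ yields the claimed inequality. The only mild subtlety is the argument that conditional expectation preserves the bounded-differences property; this is where one implicitly uses that the $X_k$ are treated as independent inputs (so that averaging over later coordinates commutes with perturbing the $k$-th coordinate). Since this step is entirely routine and the remainder is a direct citation of Azuma--Hoeffding, I would expect no real obstacle, which is consistent with the fact that the authors merely cite \cite[Theorem~2.25]{JLR00}.
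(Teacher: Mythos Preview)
Your proposal is correct and is exactly the standard derivation that the paper's citation \cite[Theorem~2.25]{JLR00} points to; the paper itself gives no proof beyond that reference. Your remark that independence of the $X_k$ is implicitly needed to pass from the Lipschitz hypothesis to bounded martingale increments is apt, since the lemma as stated omits this assumption (though every application in the paper is to independent inputs).
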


\section{Structure of cores}\label{sec:cores}
In this section we collect some standard results on the 2-core and the Karp--Sipser core of a sparse random graph.

\subsection{The 2-core}\label{sub:2-core}
First, the following description of the component structure of the supercritical 2-core follows immediately from, e.g., \cite[Theorem~5.12]{JLR00} and the main result of \cite{DLP14}. 
\begin{lemma}\label{thm:2-core-components}
Fix a constant $c>1$ and let $G\sim\mathbb{G}(n,c/n)$. Then whp the following holds. The giant component of $G$ has a 2-core which is connected, has $\Omega(n)$ vertices, and has average degree $2+\Omega(1)$. Also, all components outside the giant either have empty 2-core or their 2-core is a cycle.
\end{lemma}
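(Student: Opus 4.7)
The plan is to invoke the two cited references directly, handling the giant-component 2-core and the non-giant components separately.

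For the non-giant components, I would appeal to the classical component structure theorem (the cited \cite[Theorem~5.12]{JLR00}): whp $G$ has a unique giant component of linear size, and every other connected component has $O(\log n)$ vertices and is either a tree or unicyclic (contains exactly one cycle). The iterative leaf-removal defining the 2-core strips a tree down to the empty graph, while on a unicyclic graph it strips away the trees hanging off the unique cycle and leaves exactly that cycle. So whp the 2-core of every non-giant component is either empty or a single cycle, which is the second claim.

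For the giant component's 2-core, connectedness is immediate from the leaf-removal definition: starting from a connected graph on at least two vertices, removing a degree-$1$ vertex preserves connectedness (and there are no degree-$0$ vertices to remove), so iterating the procedure leaves either the empty graph or a connected subgraph. Thus whenever the 2-core of the giant is nonempty, it is connected. For the quantitative statements (linear number of vertices and average degree bounded away from $2$), I would quote the main structural theorem of \cite{DLP14}, which for any fixed $c>1$ gives explicit constants $\alpha(c),\beta(c)>0$ such that whp the 2-core of the giant component of $\mathbb{G}(n,c/n)$ has $(\alpha(c)+o(1))n$ vertices and $(\beta(c)+o(1))n$ edges with $2\beta(c)/\alpha(c)>2$; this yields both $\Omega(n)$ vertices and average degree $2+\Omega(1)$, and ensures the 2-core is nonempty so the connectedness discussion applies. (Alternatively, one can derive these two quantitative bounds from a direct computation on the Poisson branching-process limit of the 2-core, but quoting \cite{DLP14} is more efficient.)

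I do not expect any genuine obstacle here: the lemma is essentially a repackaging of standard facts. The only minor point requiring care is the connectedness of the giant's 2-core, but this is a one-line observation about leaf-peeling rather than anything requiring the supercritical assumption beyond what \cite{JLR00,DLP14} already provide.
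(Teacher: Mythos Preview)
Your proposal is correct and matches the paper's approach exactly: the paper does not give a proof but simply states that the lemma ``follows immediately from, e.g., \cite[Theorem~5.12]{JLR00} and the main result of \cite{DLP14}.'' Your write-up is in fact more detailed than the paper's treatment, and your one-line connectedness argument via leaf-removal is a reasonable addition.
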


Second, the following lemma concerns the edge and vertex statistics of the supercritical 2-core (there are whp $\Omega(n)$ vertices, and the average degree is whp $2+\Omega(1)$). It follows from, e.g., \cite[Lemma~2.16]{FK16}.

\begin{lemma}\label{thm:2-core-statistics}
Fix $c>1$, and let $\lam2=\lam2(c)>0$ be the unique solution to $\lam2/(1-e^{-\lam2})=c$. There is $\beta=\beta(c)>0$ such that the following holds. Let $Z$ be a Poisson random variable with mean $\lam2$, and let $G\sim\mathbb{G}(n,c/n)$. Then 
\[
\frac{v(\core_{2}(G))}{n}\overset{p}{\to}\beta,\quad\frac{2e(\core_{2}(G))}{\beta n}\overset{p}{\to}\mb E[Z|Z\ge2]>2.
\]
\end{lemma}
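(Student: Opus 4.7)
This lemma is a specialization of the classical $k$-core theorem of Pittel--Spencer--Wormald to $k=2$, which is packaged in textbook form as \cite[Lemma~2.16]{FK16}; the plan is to invoke that result and verify that the parameters match.

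First, I would recall the peeling description of the 2-core: iteratively delete all vertices of degree less than 2. The trajectory of the degree profile during this process can be tracked via Wormald's differential equations method, or alternatively via a configuration-model analysis. For $c>1$ this yields explicit constants $\beta=\beta(c)>0$ and $\zeta=\zeta(c)>0$ such that $v(\core_{2}(G))/n\overset{p}{\to}\beta$ and $2e(\core_{2}(G))/n\overset{p}{\to}\zeta$, along with an explicit description of the limiting degree distribution inside the core.

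Next, to identify these constants with the parametrization in our statement, I would run through the standard branching-process heuristic. The local weak limit of $\mb{G}(n,c/n)$ at a uniformly chosen root is a Galton--Watson tree with $\on{Poisson}(c)$ offspring. Letting $\pi$ denote the probability that a given child's subtree ``survives'' the peeling (in the sense that it contains a further surviving child), Poisson thinning yields the fixed-point equation $\pi=1-e^{-c\pi}$. Setting $\lambda_{2}:=c\pi$ recovers exactly the defining equation $\lambda_{2}/(1-e^{-\lambda_{2}})=c$ from the lemma. A vertex lies in the 2-core iff at least two of its incident edges have surviving far endpoints, so by Poisson thinning the degree of a 2-core vertex within the core is asymptotically distributed as $Z\mid Z\geq 2$ with $Z\sim\on{Poisson}(\lambda_{2})$; consequently $\beta=\Pr[Z\geq 2]$ and $\zeta=\beta\cdot\mb{E}[Z\mid Z\geq 2]$, matching the claimed convergence. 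The strict inequality $\mb{E}[Z\mid Z\geq 2]>2$ reduces to $\Pr[Z\geq 3]>0$, which holds since $c>1$ forces $\lambda_{2}>0$.

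The only step requiring genuine work is the concentration statement, that is, the passage from the branching-process heuristic to convergence in probability for the core's vertex and edge counts. This is precisely what the Pittel--Spencer--Wormald analysis (together with later cleaner reformulations in the literature) supplies, and we use it as a black box; the remaining deduction is essentially bookkeeping to confirm the identification of parameters.
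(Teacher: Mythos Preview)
Your proposal is correct and matches the paper's approach: the paper simply states that the lemma ``follows from, e.g., \cite[Lemma~2.16]{FK16}'' without further argument, and you invoke exactly that reference. Your additional paragraph verifying the parameter identification via the branching-process heuristic is accurate bookkeeping (the fixed-point $\pi=1-e^{-c\pi}$, the substitution $\lambda_2=c\pi$, and the identification $\beta=\Pr[Z\ge 2]$ are all standard and correct) and goes slightly beyond what the paper spells out.
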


Note that \cref{thm:2-core} directly follows from \cref{thm:main-RMT}(A) given \cref{thm:2-core-components,thm:2-core-statistics}, as follows.

\begin{proof}[Proof of \cref{thm:2-core}]
For $G\sim \mb G(n,c/n)$, it is well-known (see for example \cite[Lemma~6.1]{FKSS21}) that given the vertex set $W$ of the 2-core and its number of edges $m$, we have $\on{core}_2(G)\sim \mc K(W,m,2)$. So, by \cref{thm:main-RMT}(A) and \cref{thm:2-core-statistics}, after deleting isolated special cycles, the asymptotic distribution of the corank is Poisson with mean $\gamma(\lam2)-2\gamma^\dagger(\lam2)$. Considering the probability that such a Poisson random variable is equal to zero, and recalling the structural description in \cref{thm:2-core-components} (whp the giant component is obtained precisely by deleting isolated cycles), the desired result follows.
\end{proof}

\subsection{The Karp--Sipser core}
For the Karp--Sipser core, we need some results for both the supercritical ($c>e$) and subcritical ($c<e$) cases. First, for the supercritical Karp--Sipser core, we need a counterpart of \cref{thm:2-core-statistics}, and in the bipartite setting, we need the fact that the two sides of the Karp--Sipser core have quite different sizes.

\begin{lemma}\label{thm:WP-outputs}
Fix a constant $c>e$. Let $\lamKS=\lamKS(c)$ be
as in \cref{def:poisson-parameters}, and let $Z$ be a Poisson random variable with mean $\lamKS$.
There is $\beta=\beta(c)>0$ such that the following holds.
\begin{enumerate}
\item[(A)] If $G\sim\mathbb{G}(n,c/n)$ then 
\[
\frac{v(\core_{\mathrm{KS}}(G))}{n}\overset{p}{\to}\beta,\quad\frac{2e(\core_{\mathrm{KS}}(G))}{\beta n}\overset{p}{\to}\mb E[Z|Z\ge2]>2.
\]
\item[(B)] Let $G\sim\mathbb{G}(n,n,c/n)$.
\begin{enumerate}
\item[(1)] For each $i\in\{1,2\}$,
\[
\frac{v_{i}(\core_{\mathrm{KS}}(G))}{n}\overset{p}{\to}\beta,\quad\frac{e(\core_{\mathrm{KS}}(G))}{\beta n}\overset{p}{\to}\mb E[Z|Z\ge2]>2.
\]
\item[(2)] $|v_{1}(\core_{\mathrm{KS}}(G))-v_{2}(\core_{\mathrm{KS}}(G))|\overset{p}{\to}\infty$.
\end{enumerate}
\end{enumerate}
\end{lemma}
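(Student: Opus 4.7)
The plan is to invoke the differential-equations analysis of the Karp--Sipser leaf-removal process: parts (A) and (B1) should follow directly from the classical analysis, while (B2) requires an additional central-limit-type argument.

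For (A), I would invoke the rigorous analysis of Aronson, Frieze, and Pittel~\cite{AFP98}: for $c>e$ the leaf-removal process enters a ``jammed'' regime in which a residual graph of relative size $\beta(c)+o(1)$ survives in probability, and the leading-order degree statistics concentrate around explicit values governed by the fixed points $\alpha_\ast, \alpha^\ast$ of $\Phi_c$. Combined with the fact (established in \cref{sec:cores}) that $\core_{\mathrm{KS}}(G)$ conditioned on its vertex set and edge count is uniformly distributed among graphs with minimum degree at least $2$, together with standard configuration-model estimates showing that the empirical degree distribution of such uniform graphs converges to $\on{Poisson}(\lamKS)$ conditioned on being $\ge 2$, this yields both statements of (A). For (B1), I would adapt the same differential-equations analysis to the bipartite setting by simultaneously tracking the degree sequences on both sides of the bipartition; by the symmetry between the two sides of $\mathbb{G}(n,n,c/n)$ they converge in probability to the same limit $\beta(c)$, and the edge-count and degree-distribution statements follow exactly as in (A).

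The main obstacle is (B2). The key structural observation is that every leaf-removal step deletes exactly one vertex from each side of the bipartition (the leaf on one side and its unique neighbor on the other), so immediately after leaf removal both sides still contain exactly the same number of vertices. Hence, after the subsequent removal of isolated vertices,
\[
v_1(\core_{\mathrm{KS}}(G)) - v_2(\core_{\mathrm{KS}}(G)) = i_2(G) - i_1(G),
\]
and (B2) reduces to showing $|i_1(G) - i_2(G)|\overset{p}{\to}\infty$. The plan is to establish this via a central-limit result for the bipartite Karp--Sipser process: using the martingale CLT that accompanies Wormald's method, $(i_1(G)-i_2(G))/\sqrt{n}$ should converge in distribution to a centered Gaussian. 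By symmetry $\mathbb{E}[i_1(G)-i_2(G)]=0$, so the substantive content is strict positivity of the limiting variance, i.e.\ that $i_1$ and $i_2$ are not perfectly asymptotically correlated; this I would verify by showing that, conditional on the trajectory of the process restricted to one side of the bipartition, sufficient residual randomness remains on the other side to induce nonzero conditional variance in its isolated-vertex count.
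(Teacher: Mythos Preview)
Your treatment of (A) and (B1) matches the paper's: it does not prove these parts but cites the differential-equations analysis of Karp--Sipser and Aronson--Frieze--Pittel for (A), and notes that the same method (or alternatively the warning-propagation approach of Coja-Oghlan, Cooley, Kang, Lee, and Ravelomanana~\cite{CCKLRb}) adapts to (B1). One minor point: your detour through the uniform-graph description and configuration-model degree statistics is unnecessary for identifying the limiting constants, since the differential-equations analysis already outputs the edge and vertex counts directly.

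For (B2), the paper again does not give a proof but cites \cite[Lemma~7.1]{CCKLRb} as a black box, so there is no in-paper argument to compare against. Your reduction to $|i_1(G)-i_2(G)|\overset{p}{\to}\infty$ via the identity $v_1(\core_{\mathrm{KS}}(G)) - v_2(\core_{\mathrm{KS}}(G)) = i_2(G) - i_1(G)$ is correct (indeed, this identity appears in the paper's deduction of \cref{thm:rank-characterisation}(B)). However, your CLT sketch handwaves precisely the substantive step: establishing strict positivity of the limiting variance of $(i_1-i_2)/\sqrt{n}$. The phrase ``sufficient residual randomness remains on the other side'' is not an argument; in particular, the martingale CLT accompanying Wormald's method gives a variance expression in terms of an integral along the differential-equation trajectory, and verifying that this integral is strictly positive for the \emph{difference} $i_1-i_2$ (as opposed to each $i_j$ separately) requires real computation. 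If you intend to avoid citing \cite{CCKLRb}, you would need to carry out this variance analysis explicitly.
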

\cref{thm:WP-outputs}(B2) appears as \cite[Lemma~7.1]{CCKLRb}. There are multiple ways to prove \cref{thm:WP-outputs}(A)
and \cref{thm:WP-outputs}(B1). One classical way is to use the so-called \emph{differential
equations method} to study the likely trajectories of certain statistics
associated with the Karp--Sipser process. In the setting of (A), this was done by Karp and Sipser~\cite{KS81} in one of the first applications of the differential equations method. Their analysis was later refined by Aronson,
Frieze, and Pittel~\cite{AFP98} (see also the discussion in \cite{Kre17}). A more modern approach (which arguably yields simpler proofs, though often with worse quantitative aspects)
is to express the relevant statistics in terms of fixed points of
a certain \emph{warning propagation} operator. This was done by Coja--Oghlan,
Cooley, Kang, Lee, and Ravelomanana \cite{CCKLRb} in the setting of (B) (specifically, \cref{thm:WP-outputs}(B1) appears as \cite[Proposition~2.6]{CCKLRb}). Both approaches work equally well in the settings of (A) and (B), with very minor alterations to the proofs.

Second, in the subcritical case ($c<e$), we need the fact that the Karp--Sipser core consists only of vertex-disjoint cycles, and the numbers of cycles of each length are asymptotically jointly Poisson distributed.
\begin{lemma}\label{lem:subcritical-cycles}
Fix a constant $c<e$. Let $\eta\in [0,1]$ be the unique solution to $c=\eta e^\eta$.
\begin{enumerate}
\item[(A)] If $G\sim\mathbb{G}(n,c/n)$ then whp $\core_{\mathrm{KS}}(G)$ is a
collection of vertex-disjoint cycles. Let $N_{\ell}$ be the number
of such cycles of length $\ell$, and let $(Z_\ell)_{\ell=3}^\infty$
be a sequence of independent Poisson random variables with $\mb EZ_{\ell}=\eta^{\ell}/(2\ell)$.
Then $v(\core_{\mathrm{KS}}(G))$ is bounded in probability and
\[
(N_{\ell})_{\ell=3}^{\infty}\overset{d}{\to}(Z_{\ell})_{\ell=3}^{\infty}.
\]
\item[(B)] If $G\sim\mathbb{G}(n,n,c/n)$ then whp $\core_{\mathrm{KS}}(G)$ is
a collection of vertex-disjoint cycles. Let $N_{\ell}$ be the number
of such cycles of length $\ell$, and let $(Z_{2k})_{k=2}^\infty$
be a sequence of independent Poisson random variables with $\mb EZ_{2k}=\eta^{2k}/(2k)$.
Then $v(\core_{\mathrm{KS}}(G))$ is bounded in probability and
\[
(N_{2k})_{k=2}^{\infty}\overset{d}{\to}(Z_{2k})_{k=2}^{\infty}.
\]
\end{enumerate}
\end{lemma}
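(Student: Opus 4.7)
The plan is to prove (A) in detail; (B) is handled identically, up to the combinatorics of cycle-counting. First, to establish that $\core_{\mathrm{KS}}(G)$ is whp a vertex-disjoint union of cycles, I would appeal to the classical analysis of Karp and Sipser~\cite{KS81} (refined in Aronson--Frieze--Pittel~\cite{AFP98}), which implies that for $c<e$ we have $v(\core_{\mathrm{KS}}(G))\le V_n$ whp for some sufficiently slowly growing sequence $V_n$. Combined with a first-moment bound on subgraphs of $\mathbb{G}(n,c/n)$ having more edges than vertices, and the fact that $\core_{\mathrm{KS}}(G)$ has minimum degree at least $2$, this forces each component of $\core_{\mathrm{KS}}(G)$ to be a single cycle whp.

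Next, to establish the joint Poisson limit of $(N_\ell)$, I would use the method of moments. For an $\ell$-cycle $C=(v_1,\ldots,v_\ell)$ in $G$, let $T_i$ denote the connected component of $v_i$ in $G\setminus E(C)$; whp the $T_i$ are pairwise disjoint trees. Since the Karp--Sipser outcome is order-independent, we may first peel all leaves inside $\bigcup_i T_i\setminus\{v_i\}$, and then it is straightforward to verify that $V(C)$ persists as a cycle-component of $\core_{\mathrm{KS}}(G)$ if and only if $v_i$ is unmatched by KS on $T_i$ for every $i$. Indeed, if some $v_i$ instead gets matched in $T_i$, then $v_i$ is removed (via its partner in $T_i$ becoming a leaf of $G$) and the cycle is subsequently peeled away entirely.

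In the local weak limit, each $T_i$ is an independent rooted Galton--Watson tree with $\operatorname{Poisson}(c)$ offspring distribution. The probability $\rho$ that the root of such a tree is unmatched by KS satisfies the recursion
\[\rho=\mathbb{E}\bigl[(1-\rho)^{K}\bigr]=e^{-c\rho},\qquad K\sim\operatorname{Poisson}(c),\]
since the root is unmatched iff each child is matched within its subtree, which happens iff at least one grandchild is unmatched. The defining relation $c=\eta e^\eta$ gives $c\rho=\eta$, so $\rho=\eta/c$. A routine factorial moment calculation then yields
\[\mathbb{E}[N_\ell]\sim\binom{n}{\ell}\cdot\frac{(\ell-1)!}{2}\cdot\left(\frac{c}{n}\right)^{\!\ell}\cdot\rho^\ell\to\frac{(c\rho)^\ell}{2\ell}=\frac{\eta^\ell}{2\ell},\]
and disjoint cycle configurations decorrelate asymptotically, so the joint factorial moments $\mathbb{E}\bigl[\prod_j (N_{\ell_j})_{k_j}\bigr]$ factor into $\prod_j\bigl(\eta^{\ell_j}/(2\ell_j)\bigr)^{k_j}$. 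This gives the joint independent Poisson limit by method of moments. Since $\sum_\ell\ell\cdot\eta^\ell/(2\ell)=\eta/(2(1-\eta))<\infty$ for $\eta<1$, we also conclude that $v(\core_{\mathrm{KS}}(G))=\sum_\ell\ell N_\ell$ is bounded in probability.

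Case (B) follows by the same argument applied to bipartite Galton--Watson trees with $\operatorname{Poisson}(c)$ offspring in each part: the same fixed-point equation yields the same $\rho=\eta/c$, and the combinatorics of $2k$-cycles on $k+k$ vertices gives $\mathbb{E}[N_{2k}]\sim\binom{n}{k}^2\cdot\tfrac{(k-1)!\,k!}{2}\cdot(c/n)^{2k}\cdot\rho^{2k}\to\eta^{2k}/(2k)$. The main technical obstacle is not the computation itself but the rigorous transfer from the local weak limit to the finite graph: one must confirm that the relevant $T_i$'s are disjoint trees whp (by ruling out cycle chords and short-cycle sub-configurations via first moments) and that the KS dynamics restricted to $T_i$ faithfully predicts $v_i$'s fate in the global process. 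Both are of a standard nature in sparse random graph analysis.
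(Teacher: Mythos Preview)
Your approach differs substantially from the paper's, which (following Aronson--Frieze--Pittel) tracks the Karp--Sipser process via the differential equations method until a late stage where almost all surviving vertices have degree at most $2$, and then counts cycles in that late-stage graph by a configuration-model moment computation. Your static pendant-tree characterisation is more conceptual, and the fixed-point identity $\rho=e^{-c\rho}$ with $c\rho=\eta$ is correct and does produce the right Poisson parameters. But there is a genuine gap in the range $1<c<e$: the claim ``whp the $T_i$ are pairwise disjoint trees'' is simply false there. Conditioned on a fixed $\ell$-tuple forming a cycle $C$, each $v_i$ lies in the giant component of $G\setminus E(C)$ with probability bounded away from zero, and then the $T_i$ coincide with the giant and are certainly not trees. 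This is not a configuration one can ``rule out via first moments''---it occurs with constant conditional probability---and when it does, your criterion ``$v_i$ unmatched by KS on $T_i$'' no longer describes whether $C$ survives in $G$, nor does the factorisation into $\rho^\ell$ have any direct justification. As written, the argument is only valid for $c<1$, where all components are whp trees or unicyclic.

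The honest fix is to replace the global tree picture with an \emph{approximate locality} argument: for $c<e$ the two-step recursion $x\mapsto e^{-ce^{-cx}}$ has a unique attracting fixed point, so whether $v_i$ is peeled is determined up to error $o_R(1)$ by the depth-$R$ neighbourhood of $v_i$ in $G\setminus E(C)$, and \emph{that} bounded neighbourhood is whp a tree even when $v_i$ sits inside the giant. One can then pass to the local weak limit and recover the factorisation $\rho^\ell$ legitimately. This is a real (if not unprecedented) piece of work, and is essentially why the paper follows the AFP process-level route instead, which sidesteps the giant entirely.
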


\cref{lem:subcritical-cycles}(A) is implicit in the proof of \cite[Theorem~2]{AFP98}, and \cref{lem:subcritical-cycles}(B) can be proved in essentially the same way (as the bipartite case of \cref{lem:subcritical-cycles} has not explicitly appeared in the literature before, we provide a brief sketch in \cref{sec:AFP}). 

It turns out that the Karp--Sipser core enjoys the same symmetry property as the 2-core: if we condition on the vertex set $W$ of the Karp--Sipser core, and its number of edges $m$, then $\core_{\mathrm{KS}}(G)$ is distributed as $\mathcal{K}(W,m,2)$ (or $\mathcal{K}(W_{1},W_{2},m,2)$, in the
bipartite case, where $W=W_1\cup W_2$).
\begin{lemma}\label{lem:rotate-core}
Consider any $0\le p\le1$.
\begin{enumerate}
\item[(A)] Let $G\sim\mathbb{G}(n,p)$, let $W$ be the vertex set of $\core_{\mathrm{KS}}(G)$
and let $m$ be the number of edges in $\core_{\mathrm{KS}}(G)$.
Then the conditional distribution of $\core_{\mathrm{KS}}(G)$ is
$\mathcal{K}(W,m,2)$.
\item[(B)] Let $G\sim\mathbb{G}(n_{1},n_{2},p)$, let $W_{1}\cup W_{2}$ be the
vertex set of $\core_{\mathrm{KS}}(G)$ and let $m$ be the number
of edges in $\core_{\mathrm{KS}}(G)$. Then the conditional distribution
of $\core_{\mathrm{KS}}(G)$ is $\mathcal{K}(W_{1},W_{2},m,2)$.
\end{enumerate}
\end{lemma}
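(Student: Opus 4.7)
The plan is to show that $\Pr[\core_{\mathrm{KS}}(G)=H]$ depends on $H$ only through its edge count $m$ and vertex set $W$, so that conditioning on these statistics yields the uniform distribution $\mathcal K(W,m,2)$ on min-degree-$\ge 2$ graphs.

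I would start by decomposing $G\sim\mathbb G(n,p)$ into two independent parts: the \emph{interior} $H_0:=G[W]$ (which is a $\mathbb G(|W|,p)$ graph, since each potential edge within $W$ is present independently with probability $p$), and the \emph{exterior} $F$ consisting of all edges with at least one endpoint in $V\setminus W$. The two are independent in $\mathbb G(n,p)$ because they involve disjoint sets of potential edges. I will then argue that for any fixed graph $H$ on $W$ with $m$ edges and minimum degree $\ge 2$,
\[\{\core_{\mathrm{KS}}(G)=H\}=\{H_0=H\}\cap\mathcal E_F(W),\]
where $\mathcal E_F(W)$ is an event depending only on $F$ and $W$. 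Granting this, independence gives
\[\Pr[\core_{\mathrm{KS}}(G)=H]=p^{m}(1-p)^{\binom{|W|}{2}-m}\cdot\Pr[\mathcal E_F(W)],\]
which depends on $H$ only through $m$. Summing over min-degree-$\ge 2$ graphs $H$ on $W$ with $m$ edges and taking the resulting conditional distribution yields part (A).

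The heart of the argument, and what I expect to be the main obstacle, is showing that $\mathcal E_F(W)$ really is $F$-measurable. By definition, $\core_{\mathrm{KS}}(G)=H$ requires that leaf-removal applied to $G=H\cup F$ (i) never deletes a $W$-vertex and (ii) leaves every surviving $V\setminus W$-vertex isolated. I would exploit the well-known fact that the outcome of Karp--Sipser leaf-removal is order-independent, so I may run the process in a specific ``safe'' order: whenever possible, pick a leaf $v$ whose unique neighbour also lies in $V\setminus W$. Because $H$ has minimum degree $\ge 2$, every vertex of $W$ has degree $\ge 2$ throughout the process (the $H$-edges within $W$ are never touched), so $W$-vertices are never themselves leaves; hence the set of current leaves, and whether each is ``safe'' (neighbour in $V\setminus W$) or ``unsafe'' (neighbour in $W$), is entirely determined by the $F$-edges among surviving $V\setminus W$-vertices and the $F$-edges between $V\setminus W$ and $W$. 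In particular, the safe process evolves identically for any two choices of $H$ with min degree $\ge 2$, so the set of vertices it deletes is a function of $F$ alone. The safe process terminates in one of two states: either no leaves remain at all (in which case the full Karp--Sipser process is also done, without ever touching $W$), or there exists an unsafe leaf (which forces the full process to subsequently delete a $W$-vertex). Whether condition (i) holds, as well as the additional condition (ii) that the surviving $V\setminus W$-vertices are isolated, is therefore a property of $F$ alone, as desired.

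For part (B), the same decomposition works: write $G\sim\mathbb G(n_1,n_2,p)$ as the bipartite interior $H_0:=G[W_1\cup W_2]$ together with the exterior $F$ of edges incident to $(V_1\setminus W_1)\cup(V_2\setminus W_2)$. Independence of $H_0$ and $F$ is immediate, and the safe-process argument goes through verbatim, with the only change being that $V\setminus W$ is replaced by $(V_1\setminus W_1)\cup(V_2\setminus W_2)$. Aside from the safe-process analysis, everything else is routine bookkeeping, and that analysis itself reduces to carefully invoking the order-invariance of Karp--Sipser together with the minimum-degree hypothesis on $H$.
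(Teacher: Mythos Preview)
Your proposal is correct and follows essentially the same idea as the paper's proof. The paper argues by a direct swap: for any outcome of $G$ with $\core_{\mathrm{KS}}(G)=G[W]=H$, replacing $G[W]$ by any other $H'$ (on $W$, with $m$ edges and minimum degree $\ge 2$) yields an equiprobable outcome with $\core_{\mathrm{KS}}(G)=H'$, because ``iterated leaf removal yields $G[W]$ in both cases''. Your factorisation $\{H_0=H\}\cap\mathcal E_F(W)$ is exactly the probabilistic restatement of this bijection, and your safe-process argument is a careful justification of the step the paper leaves implicit.
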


\begin{proof}
We prove (A); the proof of (B) is similar. Consider any two graphs
$H,H'$ on the vertex set $W$ with $m$ edges and minimum degree
at least 2. For any outcome of $G$ yielding $\core_{\mathrm{KS}}(G)=G[W]=H$,
we can simply replace $G[W]$ with $H'$ to obtain an outcome of $G$
yielding $\core_{\mathrm{KS}}(G)=H'$ (iterated leaf removal yields
$G[W]$ in both cases). This implies that $H$ and $H'$ are equally
likely to occur as $\core_{\mathrm{KS}}(G)$.
\end{proof}

In much the same way that we were able to deduce \cref{thm:2-core} (on the 2-core) from the lemmas in \cref{sub:2-core} together with \cref{thm:main-RMT}, we will be able to deduce \cref{thm:matching-characterisation,thm:matching-vs-rank}
from the lemmas in this subsection together with \cref{thm:main-RMT}. However, the deductions are not quite as immediate, so we save them for the next section.

\section{Karp--Sipser leaf removal}\label{sec:karp-sipser}
In this section we make some basic observations about the Karp--Sipser leaf-removal process, and show how to deduce \cref{thm:rank-characterisation,thm:critical,thm:rank-characterisation-distribution,thm:matching-vs-rank} from these observations together with \cref{thm:main-RMT}.

It is a simple fact (first observed by Karp and Sipser \cite{KS81}) that in
any graph $G$, removing a degree-1 vertex and its neighbour reduces
the matching number by exactly 1. This leaf removal also has a predictable
effect on $\rank A(G)$ and $\sigma(G)$, and on $\rank B(G)$, if $G$ is bipartite. (Recall that $\sigma(G)$ is the size of the largest permutation matrix ``contained'' in $A(G)$, where our notion of matrix containment allows deleting rows and columns, and changing 1-entries to 0-entries.)
\begin{lemma}\label{lem:rank-decrement}
Fix any graph $G$, and delete a leaf $v$ and its neighbour $w$
to obtain a graph $G'$.
\begin{enumerate}
\item[(A)] $\rank A(G')=\rank A(G)-2$ and $\sigma (G')=\sigma(G)-2$.
\item[(B)] If $G$ is bipartite then $\rank B(G')=\rank B(G)-1$.
\end{enumerate}
\end{lemma}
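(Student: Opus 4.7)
The plan is to handle each equality by a combination of rank-preserving row/column operations and a short combinatorial case analysis. For $\rank A(G')=\rank A(G)-2$, I order $V(G)$ as $v,w,u_1,\ldots,u_{n-2}$, so that
\[
A(G)=\begin{pmatrix} 0 & 1 & \mbf{0}^{\intercal} \\ 1 & 0 & \mbf{r}^{\intercal} \\ \mbf{0} & \mbf{r} & A(G') \end{pmatrix},
\]
where $\mbf{r}\in\{0,1\}^{V(G')}$ is the indicator vector of $N_G(w)\setminus\{v\}$. Row $v$ has a single nonzero entry (at column $w$), so for each $u_i\in N_G(w)\setminus\{v\}$, subtracting row $v$ from row $u_i$ zeroes the corresponding entry of column $w$; because row $v$ vanishes on the columns indexed by $V(G')$, the $A(G')$-block is untouched. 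Symmetrically, column $v$ then has a single nonzero entry (at row $w$), and subtracting column $v$ from each column $u_i\in N_G(w)\setminus\{v\}$ zeroes the remainder of row $w$ while preserving the $A(G')$-block. The resulting matrix is block diagonal with blocks $\bigl(\begin{smallmatrix}0 & 1\\1 & 0\end{smallmatrix}\bigr)$ and $A(G')$, so $\rank A(G)=2+\rank A(G')$.

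For $\sigma(G')=\sigma(G)-2$: since any disjoint union of cycles and edges in $G'$ extends to one in $G$ by appending the edge $\{v,w\}$, we have $\sigma(G)\ge\sigma(G')+2$. For the reverse, let $S\subseteq V(G)$ be the vertex set of an optimal packing of disjoint cycles and edges in $G$. If $v\in S$ then, since $\deg_G(v)=1$, $v$ must lie in a matching edge, whose only possible other endpoint is $w$; hence $S\setminus\{v,w\}$ is a valid packing in $G'$ and $\sigma(G')\ge|S|-2$. If $v,w\notin S$, then $S$ itself is a valid packing in $G'$. Finally, if $v\notin S$ and $w\in S$, examine the structural element of $S$ containing $w$: if it is a matching edge $\{w,u\}$, then $S\setminus\{w,u\}$ is valid in $G'$; if it is a cycle $C$, then deleting $w$ leaves a path on $|C|-1$ vertices which admits a matching covering at least $|C|-2$ of them, yielding a packing in $G'$ of size $\ge|S|-2$. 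In every case $\sigma(G')\ge\sigma(G)-2$, and combining with the reverse bound gives equality.

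For (B), I apply \cref{fact:biadjacency-rank} to the bipartite graphs $G$ and $G'$ and combine with part (A) to obtain $2\rank B(G)-2\rank B(G')=\rank A(G)-\rank A(G')=2$, so $\rank B(G')=\rank B(G)-1$. (Alternatively, assuming $v\in V_1$, the row of $B(G)$ indexed by $v$ has a single $1$ in column $w$, and the same clearing procedure as above directly yields $\rank B(G)=1+\rank B(G')$.) I do not anticipate any serious obstacle; the one mildly delicate point is the subcase of the $\sigma$ argument where the optimum packing uses $w$ but not $v$, with $w$ lying in a cycle, since one must break the cycle and extract a near-perfect matching from the resulting path to exhibit a sufficiently large packing in $G'$.
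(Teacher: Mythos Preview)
Your proof is correct. The overall approach matches the paper's (both exploit the block structure of $A(G)$ and $B(G)$ coming from the leaf), with a few cosmetic differences worth noting. For the rank statements the paper argues by extending full-rank submatrices of $A(G')$ or $B(G')$ with the first one or two rows and columns, whereas you use explicit row/column elimination to reduce to a block-diagonal form; these are equivalent. For $\sigma$, the paper phrases things in terms of permutation submatrices of $A(G)$ and is rather terse on the upper bound $\sigma(G)\le\sigma(G')+2$, whereas you give a careful combinatorial case analysis on packings of cycles and edges, including the cycle-breaking subcase; your treatment is more detailed here. Finally, you deduce (B) from (A) via \cref{fact:biadjacency-rank}, while the paper proves (B) directly first and then handles (A) analogously; your route is a legitimate shortcut.
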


\begin{proof}
For (B), without loss of generality we can assume that $v$ corresponds
to the first row and $w$ corresponds to the first column. Then, observe
that 
\[
B(G)=\begin{pmatrix}1 & \begin{matrix}0 & \cdots & 0\end{matrix}\\
x_{2}\\
\vdots & B(G')\\
x_{n}
\end{pmatrix}
\]
for some $x_{2},\ldots,x_{n}\in\{0,1\}$. Since $B(G)$ comes from adding a zero row to the top of $B(G')$ and then adding a column, clearly $\rank B(G)\le\rank B(G')+1$. Furthermore, for every full-rank submatrix
of $B(G')$, we can add the first row and column of $B(G)$ to obtain
a full-rank submatrix of $B(G)$, so $\rank B(G)\ge\rank\ensuremath{B(G')}+1$. The result follows.

Similarly, for (A), without loss of generality we can assume that
$v$ corresponds to the first row and column, and $w$ corresponds
to the second row and column. Then,
\[
A(G)=\begin{pmatrix}\begin{matrix}0\\
1
\end{matrix} & \begin{matrix}1\\
0
\end{matrix} & \begin{matrix}0 & \cdots & 0\\
y_{3} & \cdots & y_{n}
\end{matrix}\\
0 & y_{3}\\
\vdots & \vdots & A(G')\\
0 & y_{n}
\end{pmatrix}
\]
for some $y_{3},\ldots,y_{n}\in\{0,1\}$. For every full-rank submatrix of $A(G')$,
we can add the first two rows and columns of $A(G)$ to obtain a full-rank
submatrix of $A(G)$, and we similarly deduce $\rank A(G)=\rank A(G')+2$. For $\sigma$, any permutation submatrix contained in $A(G')$ gives rise to a permutation submatrix in $A(G)$ with two more rows and columns, so we deduce $\sigma(G)=\sigma(G')+2$.
\end{proof}

Now we formally state the Karp--Sipser bounds on rank and matching number that were mentioned in the introduction.
\begin{corollary}\label{cor:KS-bounds}
Fix any graph $G$.
\begin{enumerate}
    \item[(A)] $\max(\rank A(G),2\nu(G))\le \sigma(G)\le v(G)-i(G)$.
    \item[(B)] If $G$ bipartite with vertex set $V_1\cup V_2$ then
    \[\rank B(G)\le \nu(G)\le \min(v_1(G)-i_1(G),v_2(G)-i_2(G)).\]
\end{enumerate}
\end{corollary}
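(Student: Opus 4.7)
The plan is to combine \cref{lem:rank-decrement} with some basic matching/determinant considerations; the four inequalities split naturally into two halves.

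For the lower bounds ($\rank A(G)\le\sigma(G)$, $2\nu(G)\le\sigma(G)$, and $\rank B(G)\le\nu(G)$), I would use the standard Leibniz expansion. Take a maximum nonsingular square submatrix $A(G)[R,C]$ with $|R|=|C|=\rank A(G)$; since its determinant is nonzero, at least one term in the permutation expansion must be nonzero, producing a bijection $\pi\colon R\to C$ with $A(G)_{i,\pi(i)}=1$ for all $i\in R$, i.e., a permutation matrix of size $\rank A(G)$ contained in $A(G)$. The identical argument applied to $B(G)$ instead yields a matching of size $\rank B(G)$ in $G$. For $2\nu(G)\le\sigma(G)$, take a maximum matching $M$ with vertex set $V(M)$; the involution of $V(M)$ swapping the two endpoints of each edge has its permutation matrix contained in $A(G)[V(M),V(M)]$, establishing $\sigma(G)\ge 2\nu(G)$.

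For the upper bounds, I would iterate Karp--Sipser leaf removal until no degree-1 vertices remain. Let $k$ be the number of steps; what is left is $\core_\mr{KS}(G)$ together with $i(G)$ (respectively $i_1(G)+i_2(G)$) isolated vertices, which contribute nothing to $\sigma$ or $\nu$. Iterating \cref{lem:rank-decrement}(A) gives $\sigma(G)=\sigma(\core_\mr{KS}(G))+2k$; combining the trivial bound $\sigma(H)\le v(H)$ with the vertex identity $v(G)=v(\core_\mr{KS}(G))+i(G)+2k$ then yields $\sigma(G)\le v(G)-i(G)$. For the bipartite part, each leaf-removal step removes exactly one vertex from each side (the leaf and its neighbour lie on opposite sides), so $v_i(G)=v_i(\core_\mr{KS}(G))+k$ for both $i\in\{1,2\}$, and \cref{lem:rank-decrement}(B) iterates to $\nu(G)=\nu(\core_\mr{KS}(G))+k$. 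Any bipartite matching uses distinct vertices on each side and avoids isolated vertices, so $\nu(\core_\mr{KS}(G))\le v_i(\core_\mr{KS}(G))-i_i(G)$ for $i\in\{1,2\}$; adding $k$ to both sides yields $\nu(G)\le v_i(G)-i_i(G)$, and taking the minimum over $i$ finishes the bound.

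The whole argument is essentially vertex bookkeeping once \cref{lem:rank-decrement} is in hand, so I do not expect any genuine obstacle. The one point requiring a little care is the symmetric vertex-counting in the bipartite case: one must verify that the same count $k$ governs the reduction of $v_1$ and $v_2$, which is immediate from the fact that each leaf-removal step deletes exactly one vertex per side.
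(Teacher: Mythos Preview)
Your approach is the same as the paper's: the inequalities $\rank A(G),2\nu(G)\le\sigma(G)$ and $\rank B(G)\le\nu(G)$ are treated as elementary (the paper just points back to the introduction; you spell out the Leibniz/permutation argument), and the upper bounds come from iterating leaf removal via \cref{lem:rank-decrement} and then bounding $\sigma$ or $\nu$ trivially on the core.

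There is a small bookkeeping slip in your bipartite paragraph. The isolated vertices counted by $i_i(G)$ are \emph{not} in $\core_{\mr{KS}}(G)$, so the correct identity is $v_i(G)=v_i(\core_{\mr{KS}}(G))+i_i(G)+k$, not $v_i(\core_{\mr{KS}}(G))+k$; correspondingly the core bound is just $\nu(\core_{\mr{KS}}(G))\le v_i(\core_{\mr{KS}}(G))$ (there are no isolated vertices in the core to subtract). Your two errors cancel, so the conclusion is unaffected. Also, \cref{lem:rank-decrement}(B) only tracks $\rank B$; the statement that leaf removal decreases $\nu$ by one is the Karp--Sipser observation recorded in the paragraph preceding that lemma, so cite that rather than \cref{lem:rank-decrement}(B) for the matching-number decrement.
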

\begin{proof}
First we prove (A). Recall from the introduction (\cref{sub:rank-vs-matching-number}) that $\rank A(G)\le \sigma(G)$ and $2\nu(G)\le \sigma(G)$. So, it suffices to prove $\sigma(G)\le n-i(G)$. Let $v^{\mr c}=v(\core_{\mr{KS}}(G))$. The number of leaf-removal steps in the Karp--Sipser process is $(v(G)-v^{\mr c}-i(G))/2$, so by \cref{lem:rank-decrement}(A), we have $\sigma(G)=v(G)-v^{\mr c}-i(G)+\sigma(\core_{\mr{KS}}(G))$. The desired result follows from the fact that $\sigma(\core_{\mr{KS}}(G))\le v^{\mr c}$.

Now we prove (B). Recall from the introduction that $\rank B(G)\le \nu(G)$, so it suffices to prove that $\nu(G)\le \min(v_1(G)-i_1(G),v_2(G)-i_2(G))$. For $i\in \{1,2\}$, let $v_i^\mr{c}=v_i(\core_{\mr{KS}}(G))$. The number of leaf-removal steps is $v_1(G)-v_1^\mr{c}-i_1(G)=v_2(G)-v_2^\mr{c}-i_2(G)$; since each leaf-removal reduces the matching number by exactly 1 we have
\[\nu(G)=v_1(G)-v_1^\mr{c}-i_1(G)+\nu(\core_{\mr{KS}}(G))=v_2(G)-v_2^\mr{c}-i_2(G)+\nu(\core_{\mr{KS}}(G)).\]
The desired result then follows from the fact that $\nu(\core_{\mr{KS}}(G))\le \min(v_1^\mr{c},v_2^\mr{c})$. 
\end{proof}

\subsection{Deductions}
We now show how to deduce \cref{thm:rank-characterisation,thm:critical,thm:matching-vs-rank,thm:rank-characterisation-distribution}.
\begin{proof}[Proof of \cref{thm:rank-characterisation,thm:rank-characterisation-distribution}]
First we prove (A). Let $v=v(\core_{\mr{KS}}(G))$ and $m=e(\core_{\mr{KS}}(G))$. The number of leaf-removal steps is $(n-v-i(G))/2$, so
\[\rank A(G)=n-v-i(G)+\rank A(\core_{\mr{KS}}(G))\]
by \cref{lem:rank-decrement}(A). For \cref{thm:rank-characterisation} we need to prove that $\rank A(\core_{\mr{KS}}(G))=v-s(\core_{\mr{KS}}(G))$ whp.
\begin{itemize}
\item If $c<e$ then by \cref{lem:subcritical-cycles}(A), whp $\core_{\mr{KS}}(G)$ is a vertex-disjoint union of cycles (and the number of cycles of length $\ell$ is asymptotically Poisson, with parameter $\eta^\ell/(2\ell)$, where $\eta$ is the unique solution to
the equation $c=\eta e^{\eta}$). It is easy to compute (see for example \cite[Example~7.8]{Bog18}) that for a length-$\ell$ cycle $C_\ell$ we have
\[\rank A(C_\ell)=\begin{cases}
  \ell-2&\text{if }\ell\text{ is divisible by 4},\\
  \ell&\text{otherwise.}
\end{cases}\]
So, whp $\rank A(\core_{\mr{KS}}(G))=v-s(\core_{\mr{KS}}(G))$, proving \cref{thm:rank-characterisation}(A). For \cref{thm:rank-characterisation-distribution}(A), note that the defect in the Karp--Sipser bound is exactly twice the number of 4-divisible cycles $\sum_{k=1}^\infty N_{4k}$ (with notation as in \cref{lem:subcritical-cycles}). Recall that any sum of independent Poisson random variables is itself Poisson, and that $v$ is bounded whp. Hence $\sum_{k=1}^\infty N_{4k}$ is Poisson with parameter
\[\sum_{k=1}^\infty\frac{\eta^{4k}}{8k}=-\frac{1}{8}\log(1 - \eta^4),\]
and the result follows.
\item If $c>e$ then by \cref{thm:WP-outputs}(A) we have $v=\Omega(n)$ and $m/v=1+\Omega(1)$ and $m/v=O(1)$ whp. Conditioning on such an outcome of $v,m$, by \cref{lem:rotate-core} and \cref{thm:main-RMT}(A) we have $\rank A(\core_{\mr{KS}}(G))=v-s(\core_{\mr{KS}}(G))$ whp, and the defect $s(\core_{\mr{KS}}(G))$ in the Karp--Sipser bound has the required asymptotic distribution.
\end{itemize}

Next we prove (B). For $i\in \{1,2\}$, let $v_i=v_i(\core_{\mr{KS}}(G))$ and let $m=v(\core_{\mr{KS}}(G))$. The number of leaf-removal steps is $n-v_1-i_1(G)=n-v_2-i_2(G)$, so
\[\rank B(G)=n-v_1-i_1(G)+\rank B(\core_{\mr{KS}}(G))=n-v_2-i_2(G)+\rank B(\core_{\mr{KS}}(G))\]
by \cref{lem:rank-decrement}(B). For \cref{thm:rank-characterisation} we need to prove that
\[\rank B(\core_{\mr{KS}}(G))=\min(v_1-s_1(\core_{\mr{KS}}(G)),v_2-s_2(\core_{\mr{KS}}(G)))\]
whp.
\begin{itemize}
\item If $c<e$ then by \cref{lem:subcritical-cycles}(B), whp $\core_{\mr{KS}}(G)$ is a vertex-disjoint union of even cycles (and therefore $v_1=v_2$). Using \cref{fact:biadjacency-rank} for $G=C_{2\ell}$ 
we see that
\[\rank B(C_{2\ell})=\begin{cases}
  \ell-1&\text{if }\ell\text{ is divisible by 2},\\
  \ell&\text{otherwise.}
\end{cases}\]
So, whp $\rank B(\core_{\mr{KS}}(G))=v_1-s_1(\core_{\mr{KS}}(G))=v_2-s_2(\core_{\mr{KS}}(G))$, as desired. Then, \cref{thm:rank-characterisation-distribution}(B) follows in essentially the same way as for \cref{thm:rank-characterisation-distribution}(A), using \cref{lem:subcritical-cycles}(B) for the joint cycle count distribution.
\item If $c>e$ then by \cref{thm:WP-outputs}(B) we have $v_1,v_2=\Omega(n)$ and $v_1=v_2+o(n)$ and $|v_1-v_2|=\omega(1)$ and $1+\Omega(1)\le m/(v_1+v_2)\le O(1)$ whp. Conditioning on such an outcome of $v_1,v_2,m$, the desired result follows from \cref{lem:rotate-core} and \cref{thm:main-RMT}(B) (for the $i$ minimising $v_i$, the defect in the Karp--Sipser bound is exactly $s_i(\core_{\mr{KS}}(G))$).\qedhere
\end{itemize}
\end{proof}

For \cref{thm:matching-vs-rank}(A1) we also need a counterpart of \cref{thm:rank-characterisation} for the matching number. Specifically, we need to know that $n-i(G)-2\nu(G)$ is bounded in probability; this follows from an exact characterisation of $\nu(G)$ essentially due to Frieze and Pittel~\cite{FP04}, as follows. For a graph $G$, let $q(G)$ be its number of isolated odd cycles.

\begin{theorem}\label{thm:matching-characterisation}
Fix a constant $c\ne e$. For $G\sim \mb G(n,c/n)$, whp
\[\nu(G)=\left\lfloor \frac{n-i(G)-q(\core_{\mr{KS}}(G))}2\right\rfloor.\]
Moreover, $q(\core_{\mr{KS}}(G))$ is bounded in probability.
\end{theorem}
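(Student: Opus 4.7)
The plan is to mirror the structure of the deductions of Theorem \ref{thm:rank-characterisation} from \cref{thm:main-RMT}. First, I reduce to the Karp--Sipser core. Let $v = v(\core_{\mr{KS}}(G))$ and $q = q(\core_{\mr{KS}}(G))$. Since each leaf-removal step decreases the matching number by exactly one (this is the matching analogue of \cref{lem:rank-decrement}, observed already by Karp and Sipser) and the total number of leaf-removal steps is $(n - v - i(G))/2$, we obtain
\[\nu(G) \;=\; \frac{n - v - i(G)}{2} + \nu(\core_{\mr{KS}}(G)).\]
Moreover $n-v-i(G)$ is even, so $n-i(G)-q$ and $v-q$ have the same parity. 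Thus the claimed identity is equivalent to showing that whp
\[\nu(\core_{\mr{KS}}(G)) \;=\; \lfloor (v - q)/2\rfloor,\qquad (\star)\]
and separately that $q$ is bounded in probability.

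For the subcritical case $c < e$, \cref{lem:subcritical-cycles}(A) gives that whp $\core_{\mr{KS}}(G)$ is a vertex-disjoint union of cycles of lengths $\ell_1,\ldots,\ell_r$, and that $v$ itself is bounded in probability. For such a union $\nu(\core_{\mr{KS}}(G))=\sum_i\lfloor \ell_i/2\rfloor$; grouping by parity, each even cycle is perfectly matched and each odd cycle leaves exactly one vertex unmatched, so $\nu(\core_{\mr{KS}}(G))=(v-q)/2$ (with $v-q$ even), verifying $(\star)$. The boundedness of $q\le v$ in probability is immediate.

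For the supercritical case $c > e$, \cref{thm:WP-outputs}(A) gives that whp $v=\Omega(n)$ and $(1+\Omega(1))v\le m\le O(v)$, where $m=e(\core_{\mr{KS}}(G))$. Conditioning on the vertex set $W$ and on $m$, \cref{lem:rotate-core}(A) tells us that the core is distributed as $\mathcal{K}(W,m,2)$. The combinatorial content of $(\star)$ in this regime is exactly the matching-number characterisation of Frieze and Pittel \cite{FP04}: in this edge-density regime, the graph $\mathcal{K}(n,m,2)$ whp has matching defect $n-2\nu(G)$ equal to the contribution from its ``obstructing'' components, which in the relevant range are precisely the isolated odd cycles, and the giant component carries a near-perfect matching (missing at most one vertex, accounted for by the floor). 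The boundedness in probability of $q$ in this regime can be shown by a configuration-model / Poisson-approximation computation of short cycle counts in $\mathcal{K}(n,m,2)$, essentially identical to the moment computations performed in \cref{sec:distributions} to handle $s(G)$.

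The main obstacle will be extracting the precise matching-defect statement from \cite{FP04} in the edge-density regime where the Karp--Sipser core lives, since the Frieze--Pittel results are phrased somewhat differently. If their result is not directly applicable, the adaptation should be routine: by the Tutte--Berge formula $n-2\nu(G)=\max_U(o(G-U)-|U|)$, so taking $U=\emptyset$ yields the easy bound $n-2\nu(G)\ge q$; the matching upper bound (that no $U\neq \emptyset$ does better whp) is exactly the type of ``no bad Tutte barrier'' assertion that Frieze--Pittel establish via analysis of degree-2-constrained random graphs, and the same argument transfers with only cosmetic changes.
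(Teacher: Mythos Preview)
Your proposal is correct and follows essentially the same approach as the paper: reduce to the Karp--Sipser core via the leaf-removal identity for $\nu$, handle $c<e$ directly from the cycle structure in \cref{lem:subcritical-cycles}(A), and for $c>e$ condition via \cref{lem:rotate-core}(A) and invoke Frieze--Pittel~\cite{FP04}. The paper is slightly more terse (it cites \cite[Theorem~2]{FP04} directly and does not spell out the Tutte--Berge fallback or the separate boundedness argument for $q$), but the content is the same.
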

\begin{proof}
Let $v=v(\core_{\mr{KS}}(G))$ and $m=e(\core_{\mr{KS}}(G))$. The number of leaf-removal steps is $(n-v-i(G))/2$, so
\[\nu(G)=(n-v-i(G))/2+\nu(\core_{\mr{KS}}(G)).\]
We need to prove that $\nu(\core_{\mr{KS}}(G))=\lfloor (v-q(\core_{\mr{KS}}(G)))/2\rfloor$ whp.
\begin{itemize}
    \item If $c<e$ then by \cref{lem:subcritical-cycles}(A), whp $\core_{\mr{KS}}(G)$ is a vertex-disjoint union of cycles; let $N_\ell$ be the number of such cycles of length $\ell$. Note that $\nu(C_\ell)=\lfloor\ell/2\rfloor$ so whp
\[\nu(\core_{\mr{KS}}(G))=\sum_{\ell=3}^\infty N_\ell\lfloor \ell/2\rfloor,\]
    from which the desired result follows.
    \item If $c>e$ then by \cref{thm:WP-outputs}(A) we have $v=\Omega(n)$ and $m/v=1+\Omega(1)$ and $m/v=O(1)$ whp. Condition on such an outcome of $v,m$, and note that by \cref{lem:rotate-core} we have $\core_{\mr{KS}}(G)\sim \mc K(v,m,2)$. A result of Frieze and Pittel~\cite[Theorem~2]{FP04}, which characterises the matching number of such random graphs whp, then implies the desired result.\qedhere
\end{itemize}
\end{proof}
\begin{proof}[Proof of \cref{thm:matching-vs-rank}]
\cref{thm:matching-vs-rank}(B) and (A2) follow directly from \cref{thm:rank-characterisation-distribution}, given \cref{cor:KS-bounds}. For (A1), we simply compare the formulas in \cref{thm:matching-characterisation,thm:rank-characterisation}.
\end{proof}
Finally, we deduce \cref{thm:critical} from \cref{thm:rank-characterisation-distribution} (more or less, we just need to observe that $\gamma_{\mr B}(c),\gamma_{\mr A}(c)\to \infty$ as $c\to e$).

\begin{proof}[Proof of \cref{thm:critical}]
A direct computation shows that $\gamma_{\mr B}(c),\gamma_{\mr A}(c)\to \infty$ as $c\to e$ (specifically, $\eta\to 1$ as $c\to e$ from below, and $\lamKS(c)\to 0$ as $c\to e$ from above). Also, by Chebyshev's inequality, for $Z\sim \mr{Poisson}(\gamma)$ we have $\Pr[Z\le\gamma/2]\le 4/\gamma$. So, for each $c$, if $n$ is sufficiently large (say $n\ge n_c$), in the setting of (A) we have
\[\Pr[n-i(G)-\rank A(G)<\gamma_{\mr A}(c)/2]<\frac{5}{\gamma_{\mr A}(c)}\]
and in the setting of (B) we have
\[\Pr[n-\max(i_1(G),i_2(G))-\rank B(G)<\gamma_{\mr B}(c)/2]<\frac{5}{\gamma_{\mr B}(c)},\]
by \cref{thm:rank-characterisation-distribution}. Letting $c_k=c+1/k$ for $k\ge 1$ and $k_n=\max(k\colon n\ge n_{c_k})$ for all $n\ge n_{c_1}$, the desired result follows by taking $p_n=c_{k_n}/n$ for $n\ge n_{c_1}$ (and say $p_n=0$ for $n<n_{c_1}$).
\end{proof}
\begin{remark}\label{rem:big-defect}
With more work, it seems to be possible to give an alternative (and more direct) proof of \cref{thm:critical} with stronger quantitative aspects. Indeed, if the leaf-removals in the Karp--Sipser process are performed one-by-one in a random order, then we obtain a randomly evolving ``partial Karp--Sipser core'' (which gradually shrinks over time until the final Karp--Sipser core is reached). If $np_n$ converges sufficiently rapidly to $e$, then using the differential equations method as in \cite{AFP98}, we believe that one can track the evolution of the partial Karp--Sipser core until a point where almost all vertices in the partial core have degree 2 (in the strong sense that the sum of degrees different from 2 is an $n^{-\Omega(1)}$-fraction of the total degree sum). Then, it is not hard to see that the partial core is uniform over all graphs with its degree sequence, and it should follow from a standard configuration-model calculation that there are $\Omega(\log n)$ isolated special cycles (which will end up as isolated special cycles in the final Karp--Sipser core, and will therefore each contribute to the defect in the Karp--Sipser bound).
\end{remark}

\section{Degree-constrained random graphs}\label{sec:core-estimates}
Most of the rest of the paper will be spent proving \cref{thm:main-RMT}, on the rank of degree-constrained random graphs of the form $\mc K(V,m,2)$ and $\mc K(V_1,V_2,m,2)$. In this section we first prove some basic properties about the degree sequence and edge distribution of such graphs.

First, a key observation is that both $\mc K(V,m,2)$ and $\mc K(V_1,V_2,m,2)$ are uniform given their degree sequence.

\begin{lemma}\label{lem:rotate-sequence-simple}
$\phantom.$
\begin{enumerate}
    \item[(A)] Consider $G\sim \mc K(V,m,2)$ for any $V,m$. If we condition on an outcome of $(\deg_G(v))_{v\in V}$, then conditionally $G$ is a uniformly random graph with this degree sequence.
    \item[(B)] Consider $G\sim \mc K(V_1,V_2,m,2)$ for any $V_1,V_2,m$. If we condition on outcomes of $(\deg_G(v_1))_{v_1\in V_1}$ and $(\deg_G(v_2))_{v_2\in V_2}$, then conditionally $G$ is a uniformly random bipartite graph with this pair of degree sequences.
\end{enumerate}
\end{lemma}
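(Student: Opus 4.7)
The plan is to observe that Lemma 5.1 is an essentially immediate consequence of the definition of $\mc K(\cdot)$ as a \emph{uniform} distribution on a set of graphs, combined with the elementary fact that a uniform distribution on a finite set, conditioned on any event, is uniform on the elements of the set lying in that event. I do not expect any obstacle; the proof is just a matter of unwinding definitions, and the key point is that the minimum-degree-$2$ constraint is automatically respected whenever the conditioned degree sequence has all entries at least $2$.

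For part (A), the plan is as follows. Fix any degree sequence $d = (d_v)_{v \in V}$ that can possibly arise from $G \sim \mc K(V,m,2)$; then necessarily $d_v \ge 2$ for all $v$ and $\sum_{v \in V} d_v = 2m$. Let $\mc S_d$ be the set of all graphs on vertex set $V$ with degree sequence $d$. Every element of $\mc S_d$ automatically has exactly $m$ edges and minimum degree at least $2$, so $\mc S_d$ is contained in the support of $\mc K(V,m,2)$. By the definition of $\mc K(V,m,2)$, each graph in its support has the same probability, so for any $H \in \mc S_d$,
\[
\Pr[G = H \mid (\deg_G(v))_{v \in V} = d] = \frac{\Pr[G = H]}{\Pr[(\deg_G(v))_{v \in V} = d]} = \frac{1}{|\mc S_d|},
\]
which is the required uniformity. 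For degree sequences $d$ that cannot arise (either because $\sum d_v \ne 2m$ or some $d_v < 2$), the conditioning event has probability zero and there is nothing to check.

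Part (B) is proved by the identical argument, replacing ``graphs on vertex set $V$ with degree sequence $d$'' by ``bipartite graphs on vertex set $V_1 \cup V_2$ realising the pair of conditioned degree sequences on $V_1$ and $V_2$''; such bipartite graphs automatically have $m$ edges and minimum degree at least $2$ provided the conditioned sequences have all entries at least $2$ and both sum to $m$, and then the conditional law is uniform on this set by the same one-line calculation.
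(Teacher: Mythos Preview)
Your proof is correct and takes essentially the same approach as the paper: both observe that conditioning a uniform distribution on an event yields the uniform distribution on that event, and that any graph with the conditioned degree sequence automatically satisfies the defining constraints of $\mc K(V,m,2)$. Your version just spells out the one-line calculation more explicitly.
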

\begin{proof}
For (A), recall that $K(V,m,2)$ is a uniform distribution on graphs satisfying certain constraints on their degrees. So, if we condition on a particular degree sequence, the resulting distribution is uniform over all graphs with that degree sequence. Similar reasoning yields (B).
\end{proof}

With \cref{lem:rotate-sequence-simple} in hand, we can prove certain properties about $\mc K(V,m,2)$ and $\mc K(V_1,V_2,m,2)$ by first studying their degree sequence, then studying random graphs with given degree sequences. First, we can obtain a precise statistical understanding of the degree sequence using methods due to Cain and Wormald~\cite{CW06}: roughly speaking, the degree statistics can be approximated in terms of \emph{truncated Poisson} random variables, where we take a Poisson random variable $Z$ and condition on the event $Z\ge 2$.

\begin{lemma}\label{lem:degree-sequence-poisson}
Fix a constant $\varepsilon>0$.
\begin{enumerate}
    \item[(A)] For some $m,n$ satisfying $1+\varepsilon\le m/n\le 1/\eps$, let $G\sim \mc K(n,m,2)$, and choose $\lambda>0$ such that if $Z\sim \on{Poisson}(\lambda)$, then $2m/n=\mb E[Z|Z\ge 2]$. Then the following hold with probability at least $1-n^{-\omega(1)}$.
    \begin{enumerate}
    
    \item[(1)] For all $t\ge 2$, the number of vertices $v$ with $\deg(v)=t$ is $\rho_t n+O_\eps(\sqrt n\log n)$, where $\rho_t=\Pr[Z=t|Z\ge2]$.
    \item[(2)]$\displaystyle\sum_{v}\;\binom{\deg(v)}2=(E_2+o_{\eps}(1))n$, where $E_2=\mb E\left[\binom{Z}2\middle|Z\ge2\right]$ for $Z\sim \on{Poisson}(\lambda)$.
    \item[(3)] \vspace{3pt}For any $j\ge 3$,
$\displaystyle\sum_{v}\binom{\deg(v)}{j}\le e^{O_\varepsilon(j)} n.$

    \item[(4)] 
    For any set $S$ of $s$ vertices we have $\sum_{v\in S}\deg(v)\lesssim_{\varepsilon} s\log(2n/s)$.
    \end{enumerate}
    \item[(B)]
    For some $m,n_1,n_2$ satisfying $2+\varepsilon\le m/{n_1},m/{n_2}\le 1/\eps$, let $G\sim \mc K(n_1,n_2,m,2)$ (with parts $V_1,V_2$), and choose $\lambda_1,\lambda_2>0$ such that, for $i\in \{1,2\}$, if $Z_i\sim \on{Poisson}(\lambda_i)$, then $m/n_i=\mb E[Z_i|Z_i\ge 2]$. Then, writing $n=n_1+n_2$, the following hold with probability at least $1-n^{-\omega(1)}$.
    \begin{enumerate}
    
    \item[(1)] For all $i\in \{1,2\}$ and $t\ge 2$, the number of vertices $v\in V_i$ with $\deg(v)=t$ is $\rho_t^{(i)} n_i+O_\eps(\sqrt n\log n)$, where $\rho_t^{(i)}=\Pr[Z_i=t|Z_i\ge2]$.
    \item[(2)] For $i\in \{1,2\}$ we have $\displaystyle\sum_{v\in V_i}\;\binom{\deg(v)}2=(E_{2;i}+o_{\eps}(1))n_i$, where $E_{2;i}=\mb E\left[\binom{Z_i}2\middle|Z_i\ge2\right]$ for $Z_i\sim\on{Poisson}(\lambda_i)$.
    \item[(3)] \vspace{3pt}For any $j\ge 3$,
$\displaystyle\sum_{v}\binom{\deg(v)}{j}\le e^{O_\varepsilon(j)} n.$
    \item[(4)] 
    For any set $S$ of $s$ vertices we have $\sum_{v\in S}\deg(v)\lesssim_{\varepsilon} s\log(2n/s)$.
    \end{enumerate}
\end{enumerate}
\end{lemma}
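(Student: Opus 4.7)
The plan is to reduce to a model of independent truncated Poisson degrees and then transfer concentration statements through a local central limit theorem. Specifically, for each valid degree sequence $\mbf d = (d_1,\ldots,d_n)$ with $d_i \ge 2$ and $\sum_i d_i = 2m$, the probability under $\mc K(n,m,2)$ is proportional to the number of simple graphs realizing $\mbf d$. By asymptotic enumeration (McKay--Wormald style, applicable because $m = \Theta_\eps(n)$ and the degree sequence will turn out to have only very few vertices of large degree), this count is proportional, up to a $1+o(1)$ factor, to the configuration-model count $(2m)!/(m!\,2^m\,\prod_i d_i!)$. Combined with $\prod_i \lambda^{d_i}e^{-\lambda}/d_i!$, this shows that the $\mc K(n,m,2)$ degree sequence is distributed (up to subpolynomial error) as $n$ i.i.d.\ $\on{Poisson}(\lambda)$ variables conditioned on being $\ge 2$ and summing to $2m$, where $\lambda$ is precisely the parameter specified in the lemma. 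The Cain--Wormald framework handles exactly this kind of reduction, so we may invoke it as a black box.

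With this reduction in hand, part~(1) is a local CLT computation. Let $Z_1,\ldots,Z_n$ be i.i.d.\ truncated Poissons. The number $N_t$ of indices $i$ with $Z_i=t$ is a sum of i.i.d.\ Bernoulli$(\rho_t)$ variables, giving $N_t = \rho_t n + O_\eps(\sqrt{n}\log n)$ with probability $1-n^{-\omega(1)}$ by \cref{lem:chernoff}. The sum $\sum_i Z_i$ is a sum of independent bounded-variance variables which, by the local CLT for lattice distributions, equals any particular value $2m$ near its mean with probability $\asymp_\eps 1/\sqrt n$. Conditioning on this event costs only a $\sqrt n$ factor in any probability bound, which is absorbed into the $n^{-\omega(1)}$ tail. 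Part~(2) follows by the same template applied to the sum of the bounded-variance random variables $\binom{Z_i}{2}$; here the fluctuation is $O_\eps(\sqrt{n}\log n) = o_\eps(n)$, yielding the $o_\eps(1)n$ error term.

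For part~(3), a direct computation with $Z \sim \on{Poisson}(\lambda)$ gives $\mb E[\binom{Z}{j} \,|\, Z \ge 2] = \lambda^j/(j!\,\Pr[Z\ge 2]) \le e^{O_\eps(j)}$, since $\lambda = O_\eps(1)$ and $\Pr[Z \ge 2] = \Omega_\eps(1)$. Hence the expected value of $\sum_v \binom{\deg(v)}{j}$ in the unconditioned truncated Poisson model is $e^{O_\eps(j)}n$; Markov's inequality at level $e^j$ above the mean, combined with the local CLT conditioning loss, yields a tail of $n^{-\omega(1)}$ uniformly in $j$ (by invoking it only for $j \le O(\log n)$, and noting that for larger $j$ the maximum degree bound from part~(4) makes the sum vacuously small). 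For part~(4), the Poisson tails give $\Pr[Z \ge t \,|\, Z \ge 2] \le e^{-\Omega_\eps(t)}$ for large $t$, so with probability $1-n^{-\omega(1)}$ the number of vertices of degree $\ge t$ is at most $ne^{-\Omega_\eps(t)}$ simultaneously for all $t$. Summing by parts, the $s$ largest degrees have total at most $O_\eps(s\log(2n/s))$, which is what is claimed.

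The main obstacle is making the Cain--Wormald reduction quantitatively rigorous when the degree sequence is only required to satisfy $d_i \ge 2$: the asymptotic enumeration formula depends on the second moment of the sequence being bounded, so one needs to argue self-consistently that the typical sequence has this property (using a first bootstrap pass with crude bounds, then refining). The bipartite case (B) is proved identically, the only differences being that the configuration-model count becomes $m!/\prod_v d_v!$, the two independent Poisson parameters $\lambda_1,\lambda_2$ are chosen on each side separately, and the local CLT is applied to the two sums $\sum_{v\in V_i}Z_v$ which are now forced to be equal to $m$; the concentration and tail arguments go through without change.
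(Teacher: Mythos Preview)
Your overall strategy---reduce to i.i.d.\ truncated Poisson degrees via a Cain--Wormald / local-CLT comparison (paying a $\sqrt n$ factor), then prove concentration in the i.i.d.\ model---is exactly what the paper does. The paper packages the comparison as \cref{lem:CW-comparison}, obtained directly from the multigraph model $\mc M(n,2m,2)$ (whose degree sequence is \emph{exactly} i.i.d.\ truncated Poisson conditioned on the sum, and which is simple with probability $\Omega_\eps(1)$); this avoids the McKay--Wormald enumeration bootstrap you flag as the ``main obstacle''.

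However, your concentration arguments for (2) and (3) have genuine gaps:

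\textbf{Part (3).} Markov's inequality at threshold $e^j$ times the mean gives tail probability $e^{-j}$, not $n^{-\omega(1)}$. For $j=3$ this is just a constant, and union-bounding over $j$ gives $\sum_{j\ge 3}e^{-j}=O(1)$. The paper instead deduces (3) \emph{deterministically} from (1) together with the event $\{\max_v T_v\le\log n\}$ (which holds with probability $1-n^{-\omega(1)}$ by a Poisson Chernoff bound). Given those, the layer-cake identity
\[
\sum_v T_v^j = \int_0^\infty |\{v:T_v\ge s\}|\,j s^{j-1}\,ds
\]
combined with the bound $|\{v:T_v\ge s\}|\le (\Omega_\eps(s))^{-s}n + O(\sqrt n\log^3 n)$ from (1) yields $\sum_v T_v^j\le (O_\eps(j))^j n$, and Stirling gives the stated form.

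\textbf{Part (2).} The summands $\binom{Z_i}{2}$ are not bounded, so you cannot apply Chernoff ``by the same template'' as (1). The paper handles this by truncating: on the event $\{\max_v T_v\le\log n\}$, replace $\binom{T_v}{2}$ by $\binom{\min(T_v,\log n)}{2}$, check the truncation changes the mean by $o(1)$, and apply Azuma--Hoeffding (\cref{lem:azuma}) with increments bounded by $(\log n)^2$.
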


To prove \cref{lem:degree-sequence-poisson} we need the following observation (due to Bollob\'as, Cooper, Fenner, and Frieze~\cite{BCFF00} in the non-bipartite case), that the degree sequences in the setting of \cref{lem:degree-sequence-poisson} can be effectively approximated by a sequence of independent truncated Poisson random variables. (We write $\on{Poisson}_{\ge 2}(\lambda)$ for the conditional distribution of $Z\sim\on{Poisson}(\lambda)$ given $Z\ge 2$.) 
\begin{lemma}\label{lem:CW-comparison}
Fix $\varepsilon>0$.
\begin{enumerate}
    \item[(A)] Consider $m,n$ satisfying $1+\varepsilon\le m/n\le 1/\varepsilon$, and choose $\lambda>0$ such that if $Z\sim \on{Poisson}(\lambda)$ then $2m/n=\mb E[Z|Z\ge 2]$. For $V=\{1,\ldots,n\}$, let $G=\mc K(V,m,2)$ and let $(T_{v})_{v\in V}$ be a sequence of independent $\on{Poisson}_{\ge 2}(\lambda)$ random variables. Then for any $\mbf t\in \mb N^V$ we have
\[\Pr[(\deg_G(v))_{v\in V}=\mbf t]\lesssim_\varepsilon \sqrt n \Pr[(T_v)_{v\in V}=\mbf t].\]
    \item[(B)] Consider $m,n_1,n_2$ satisfying $2+\varepsilon\le m/n_1,m/n_2\le 1/\varepsilon$, and choose $\lambda_1,\lambda_2>0$ such that, for $i\in \{1,2\}$, if $Z_i\sim \on{Poisson}(\lambda_i)$ then $m/n_i=\mb E[Z_i|Z_i\ge 2]$. For a partition $V=V_1\cup V_2$ into two parts of sizes $n_1,n_2$, let $G\sim \mc K(V_1,V_2,m,2)$, and let $(T_{v})_{v\in V}$ be a sequence of independent truncated Poisson random variables with $T_v\in \on{Poisson}_{\ge 2}(\lambda_i)$ whenever $v\in V_i$. Then for any $\mbf t\in \mb N^V$ we have
\[\Pr[(\deg_G(v))_{v\in V}=\mbf t]\lesssim_\varepsilon n \Pr[(T_v)_{v\in V}=\mbf t].\] 
\end{enumerate}
\end{lemma}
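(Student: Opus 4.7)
The argument is a standard configuration-model / local limit theorem comparison, in the spirit of Bollob\'as--Cooper--Fenner--Frieze~\cite{BCFF00}. For part (A), let $q(\lambda) = 1 - e^{-\lambda} - \lambda e^{-\lambda}$ be the normalising constant for $\on{Poisson}_{\ge 2}(\lambda)$, and for a valid degree sequence $\mbf t$ (meaning $\sum_v t_v = 2m$ and all $t_v \ge 2$), let $N(\mbf t)$ be the number of simple graphs on $V$ with that sequence. The configuration-model identity gives
\[N(\mbf t) = \frac{(2m-1)!!}{\prod_v t_v!}\, P_{\mr{simp}}(\mbf t),\]
where $P_{\mr{simp}}(\mbf t)\le 1$ is the probability that a uniformly random perfect matching of half-edges produces a simple graph. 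A direct computation yields $\Pr[(T_v) = \mbf t] = \lambda^{2m} e^{-n\lambda}/(q(\lambda)^n \prod_v t_v!)$, so using $P_{\mr{simp}}\le 1$,
\[\frac{\Pr[(\deg_G(v))_{v\in V} = \mbf t]}{\Pr[(T_v)_{v\in V} = \mbf t]} = \frac{N(\mbf t)\, q(\lambda)^n \prod_v t_v!}{|\mc K(V,m,2)|\, \lambda^{2m} e^{-n\lambda}} \le \frac{(2m-1)!!\, q(\lambda)^n}{|\mc K(V,m,2)|\, \lambda^{2m} e^{-n\lambda}}.\]
This bound is uniform in $\mbf t$, so the proof of (A) reduces to establishing the lower bound $|\mc K(V,m,2)| \gtrsim_\eps (2m-1)!!\, q(\lambda)^n/(\sqrt n\, \lambda^{2m} e^{-n\lambda})$.

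Summing $N(\mbf t)$ over valid $\mbf t$ and re-expressing against the truncated-Poisson density, this target is equivalent to $\mb E\bigl[P_{\mr{simp}}((T_v));\, \sum_v T_v = 2m\bigr] \gtrsim_\eps 1/\sqrt n$. The choice of $\lambda$ makes $\mb E[\sum_v T_v] = 2m$, so the local central limit theorem for i.i.d.\ lattice sums with finite positive variance supplies $\Pr[\sum_v T_v = 2m] \gtrsim_\eps 1/\sqrt n$. Restricting to a typical event $\mc E$ on which $(T_v)$ has, say, maximum entry $O(\log n)$ and empirical second moment within $o(1)$ of $\mb E[T_v^2]$, a classical McKay-type formula for $P_{\mr{simp}}$ gives $P_{\mr{simp}}((T_v)) \gtrsim_\eps 1$ on $\mc E$, while standard concentration bounds show $\Pr[\mc E \cap \{\sum_v T_v = 2m\}] \gtrsim_\eps 1/\sqrt n$ survives the restriction.

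For part (B), the proof runs identically using the bipartite configuration model: one now imposes the two \emph{independent} conditions $\sum_{v\in V_1} T_v = m$ and $\sum_{v\in V_2} T_v = m$, each with local-limit probability $\Theta_\eps(1/\sqrt{n_i})$, giving a combined factor $\Theta(1/n)$ and hence the factor $n$ in place of $\sqrt n$ in the bound. The main obstacle throughout is the uniform lower bound $P_{\mr{simp}}(\mbf t) \gtrsim_\eps 1$ on the typical set. The McKay-type estimates are classical for sequences with bounded empirical second moment and sub-polylogarithmic maximum entry, but one must carefully specify the ``typical'' set so that it simultaneously captures enough Poisson mass to preserve the $1/\sqrt n$ (respectively $1/n$) local-limit lower bound and excludes all degree sequences for which $P_{\mr{simp}}$ could degenerate; the remaining manipulations are bookkeeping.
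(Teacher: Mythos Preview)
Your approach is correct and is essentially the same as the paper's. Both arguments reduce to the two estimates $\Pr[\sum_v T_v=2m]\gtrsim_\eps 1/\sqrt n$ (local CLT) and the fact that the configuration model is simple with probability $\Omega_\eps(1)$ on the relevant degree sequences.

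The only difference is packaging. The paper routes everything through the random multigraph $\mc M(n,2m,2)$ of Bollob\'as--Cooper--Fenner--Frieze (edges are $m$ i.i.d.\ uniform vertex pairs, conditioned on all degrees $\ge 2$): conditioning this multigraph on simplicity yields $\mc K(V,m,2)$ with conditioning probability $\Omega_\eps(1)$, and its degree sequence is exactly $(T_v)_v$ conditioned on $\sum_v T_v=2m$. This gives the chain
\[
\Pr[(\deg_G(v))=\mbf t]\le \frac{\Pr[(\deg_{\mc M}(v))=\mbf t]}{\Pr[\mc M\text{ simple}]}\lesssim_\eps \frac{\Pr[(T_v)=\mbf t]}{\Pr[\sum_v T_v=2m]}\lesssim_\eps \sqrt n\,\Pr[(T_v)=\mbf t],
\]
so one never needs to isolate a ``typical'' set $\mc E$ on which $P_{\mr{simp}}$ is pointwise bounded below: the single number $\Pr[\mc M\text{ simple}]=\mb E\bigl[P_{\mr{simp}}((T_v))\,\big|\,\sum_v T_v=2m\bigr]\gtrsim_\eps 1$ does all the work. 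Your version unpacks this same quantity via the configuration model and a McKay-type estimate on $\mc E$, which is fine but slightly more laborious. Part (B) is identical in both treatments, with the extra $\sqrt n$ coming from the second independent sum constraint.
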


\begin{proof}[Proof sketch]
Part (A) appears as \cite[Lemma~1]{BCFF00}. It is proved by considering a random multigraph distribution (called $\mc M(n,2m,2)$ in \cite{BCFF00}; the edges are just a sequence of $m$ independent random pairs of vertices sampled with replacement, conditioned on all degrees being at least 2) and observing that the following hold.
\begin{itemize}
    \item If one conditions on the event that this random multigraph is simple (which occurs with probability $\Omega_\eps(1)$), then one obtains the graph distribution $\mc K(V,m,2)$.
    \item The degree sequence $(\deg_G(v))_{v\in V}$ of this random multigraph has precisely the conditional distribution of $(T_v)_{v\in V}$ given $\sum_v T_v=2m$ (in \cite{BCFF00} this conditional distribution is called $\mc O(n,\lambda,2)$, and the unconditional distribution is called $\mc P(n,\lambda,2)$). Moreover, the event $\sum_v T_v=2m$ occurs with probability $\Omega_\eps(1/\sqrt n)$. (Roughly speaking, this is because $\sum_v T_v$ has standard deviation $O_\eps(\sqrt n)$, and is more-or-less uniform over integers within standard-deviation-range of the mean.)
\end{itemize}

For part (B), we can consider an analogous \emph{bipartite} random multigraph distribution on the vertex set $V_1\cup V_2$: consider $m$ independent random edges between $V_1$ and $V_2$, conditioned on all degrees being at least 2. Then, we analogously observe that the following hold.
\begin{itemize}
    \item If one conditions on the event that this random multigraph is simple (which occurs with probability $\Omega_\eps(1)$), then one obtains the graph distribution $\mc K(V_1,V_2,m,2)$.
    \item The degree sequence $(\deg_G(v))_{v\in V_1\cup V_2}$ of this random multigraph has precisely the conditional distribution of $(T_v)_{v\in V_1\cup V_2}$ given $\sum_{v\in V_1} T_v=\sum_{v\in V_2} T_v=m$. Moreover, the event $\sum_{v\in V_1}T_v=\sum_{v\in V_2} T_v=m$ occurs with probability $\Omega_\eps((1/\sqrt n)\cdot(1/\sqrt n) )=\Omega_\eps(1/n)$.\qedhere
\end{itemize}
\end{proof}

Now we prove \cref{lem:degree-sequence-poisson}.

\begin{proof}[Proof of \cref{lem:degree-sequence-poisson}]
We just prove (A); the proof of (B) is essentially identical. Let $V=\{1,\ldots,n\}$ and let $(T_v)_{v\in V}$ be a sequence of independent $\on{Poisson}_{\ge 2}(\lambda)$ random variables. By \cref{lem:CW-comparison}, to prove that a property of the degree sequence $(\deg_G(v))_{v\in V}$ holds with probability $1-n^{-\omega(1)}$, it suffices to prove that $(T_v)_{v\in V}$ satisfies this property with probability $1-n^{-\omega(1)}$. So, we work only with $(T_v)_{v\in V}$.

First, for each $t$, we have $\Pr[T_v=t]=\rho_t$ for each $v$ independently. So, (1) holds with the desired probability, by a Chernoff bound and a union bound. Also, by a Chernoff bound for the Poisson distribution (see for example \cite[Theorem~5.4]{MU17}), for each $v$ and $t\ge \lambda$ we have
\begin{equation}\label{eq:Poisson-chernoff}
\Pr[T_v\ge t]\le (\Omega_\eps(t))^{-t}.
\end{equation}
This (together with the union bound) implies that with probability $1-n^{-\omega(1)}$ we have say
\begin{equation}\label{eq:degree-bound}
T_v\le \log n\text{ for all }v\in V.
\end{equation}

If (1) and \cref{eq:degree-bound} hold, then trivially (4) holds whenever say $s\le n^{0.9}$. For the case $s>n^{0.9}$, note that (when (1) and \cref{eq:degree-bound} hold) for any $t\ge \lambda$ we have
\[\sum_{d_v \ge t}d_v\le \sum_{j=t}^{\log n} j\left((\Omega_\eps(j))^{-j} n + \log n\cdot O_\eps(\sqrt{n})\right)\le (\Omega_\eps(t))^{-t}n + \sqrt{n}(\log n)^4.\]
Taking $t = C\log(2n/s)$ for $1/C\ll \varepsilon$, we have that
\[
 \sum_{v \in S}d_v\le ts + \sum_{d_v \ge t} d_v
\le ts+ (s/n)n + \sqrt{n}(\log n)^4\le O_{\eps}(s\log(n/s))
\]
so (4) holds in this case too.
Next, note that
\[\mb E\left[\sum_{v}\;\binom{T_{v}}2\right]=nE_2,\]
but (2) does not immediately follow from an off-the-shelf concentration inequality, since $\on{Poisson}_{\ge 2}(\lambda)$ is a distribution with unbounded support. Let
\[P=\sum_{v}\;\binom{\min(\log n,T_{v})}2.\]
Recalling \cref{eq:degree-bound}, it suffices to prove that (with probability $1-n^{-\omega(1)}$) $P$ satisfies the estimate in (2). To this end, note that
\begin{align*}
\mb E\left[\binom{T_{v}}2-\binom{\min(\log n,T_{v})}2\right]&\le \sum_{t= \lceil\log n\rceil}^\infty O(t^2)\Pr[T_v\ge t]\\
&\le \sum_{t=\lceil\log n\rceil} O(t^2)(\Omega_\eps(t))^{-t}=o_\eps(1),
\end{align*}
by \cref{eq:Poisson-chernoff}, so $\mb E P=(E_2+o(1))n$.
Also, note that changing some $T_{v}$ changes $P$ by at most $(\log n)^2$, so by the Azuma--Hoeffding inequality (see \cref{lem:azuma}), we have $|P-\mb E P|\le n^{1/2+o(1)}$ with probability $1-n^{-\omega(1)}$, from which (2) follows. 

Finally, we prove (3). By \cref{eq:degree-bound}, it suffices to consider the case $3\le j\le\log n$. Then, when (1) and \cref{eq:degree-bound} hold, using \cref{eq:Poisson-chernoff}, we have
\begin{align*}
\sum_v T_v^j=\int_{0}^\infty |\{v\colon T_v^j\ge t\}|dt&= \int_{0}^\infty |\{v\colon T_v\ge s\}|js^{j-1}ds\\
&=\int_{0}^{\log n} \left((\Omega_\eps(s))^{-s}n+\sqrt n(\log n)^3\right)j s^{j-1}ds+O_\eps(n)\\
&=n\int_{0}^\infty js^{j-1}(\Omega_\eps(s))^{-s} ds+O_{\eps}\left(n + \sqrt{n}(\log n)^3\cdot (\log n)^j\right).
\end{align*}
Now, for any $c>0$ we have
\[\int_0^\infty js^{j-1}(cs)^{-s}ds=(O_c(j))^{j}.\]
Also note that for any $x>0$ we have $(x/j)^j\le \exp(x/e)$. Taking $x=\log n/100$, it follows that $(\log n)^j\le n^{1/3}(O(j))^{j}$. We deduce $\sum_v T_v^j\le n(O_\eps(j))^{j}$. Since $j!\ge (\Omega(j))^j$ by Stirling's approximation, we then deduce $\sum_v T_v^j/j!\le e^{O_\eps(j)}n$, from which (3) follows.
\end{proof}

For disjoint sets $V_1,V_2$ and sequences $\mbf d^1\in \mb N^{V_1},\mbf d^2\in \mb N^{V_2}$, write $\mb G(\mbf d^1,\mbf d^2)$ to denote the uniform distribution on bipartite graphs with degree sequence specified by $(\mbf d^1,\mbf d^2)$. For a set $V$ and a sequence $\mbf d\in \mb N^V$, write $\mb G(\mbf d)$ to denote the uniform distribution on graphs with degree sequence $\mbf d$. Now, to work with random graphs of the form $\mb G(\mbf d),\mb G(\mbf d^1,\mbf d^2)$, we use an auxiliary random graph model called the \emph{configuration model}. This model was first explicitly considered in 1980 by Bollob\'as~\cite{Bol80} (though similar ideas were considered earlier by various authors~\cite{BC78,BBK72,WorThesis}), and has since become an indispensable tool in random graph theory.

\begin{definition}\label{def:config-model}
For a degree sequence $\mbf d=(d_1,\ldots,d_n)$, consider a set of $r=d_1+\cdots+d_n$ ``stubs'', grouped into $n$ labelled ``buckets'' of sizes $d_1,\ldots,d_n$. A \emph{configuration} is a perfect matching on the $r$ stubs, consisting of $r/2$ disjoint edges. Given a configuration, contracting each of the buckets to a single vertex gives rise to a multigraph with degree sequence $d_1,\ldots,d_n$ (where we use the convention that loops contribute 2 to the degree of a vertex).
\begin{enumerate}
    \item[(A)] For a set $V$ and a degree sequence $\mbf d\in \mb N^V$, let $\mb G^\ast(\mbf d)$ be the random multigraph distribution obtained by contracting a uniformly random configuration.
    \item[(B)] For disjoint sets $V_1,V_2$ and a pair of sequences $\mbf d^{1}\in \mb N^{V_1},\mbf d^2\in \mb N^{V_2}$, let $\mb G^\ast(\mbf d^1,\mbf d^2)$ be the random bipartite multigraph distribution obtained by contracting a uniformly random configuration in which we only allow edges between the buckets corresponding to $V_1$ and the buckets corresponding to $V_2$.
\end{enumerate}
\end{definition}

The uniform models $\mb G(\mbf d),\mb G(\mbf d^1,\mbf d^2)$ can be closely compared with their configuration models, as follows.

\begin{lemma}\label{lem:simplicity}
Fix $C>0$.
\begin{enumerate}
    \item[(A)]
    \begin{enumerate}
        \item[(1)] For any set $V$ and sequence $\mbf d\in \mb N^V$, if we consider $G^\ast\in \mb G^\ast(\mbf d)$ and condition on $G^\ast$ being a simple graph, then we recover the distribution $\mb G(\mbf d)$.
        \item[(2)] If the sum of squares of entries of $\mbf d$ is at most $Cn$, then the probability that $G^\ast$ is simple is $\Omega_C(1)$.
    \end{enumerate}
    \item[(B)]
    \begin{enumerate}
        \item[(1)] For any disjoint sets $V_1,V_2$ and sequences $\mbf d^1\in \mb N^{V_1},\mbf d^2\in \mb N^{V_2}$, if we consider $G^\ast\in \mb G^\ast(\mbf d^1,\mbf d^2)$ and condition on $G^\ast$ being a simple graph, then we recover the distribution $\mb G(\mbf d^1,\mbf d^2)$.
        \item[(2)] If the sum of squares of entries of $\mbf d$ is at most $Cn$, then the probability that $G^\ast$ is simple is $\Omega_C(1)$.
    \end{enumerate}
\end{enumerate}
\end{lemma}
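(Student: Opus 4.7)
The plan is to handle the four assertions in two natural pairs: the enumerative statements (A)(1) and (B)(1), and the quantitative statements (A)(2) and (B)(2). All four follow from standard configuration-model reasoning, so I would present (A) in detail and indicate the minor modifications needed for (B).

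For (A)(1), the key observation is that each simple graph $G$ on $V$ with degree sequence $\mbf d$ is the contraction of exactly $\prod_{v\in V} d_v!$ configurations: once $G$ is fixed, we may independently permute the $d_v$ stubs inside each bucket $v$ and still recover $G$ upon contraction. Since this count is the same for every simple $G$ with degree sequence $\mbf d$, conditioning the uniform measure on configurations on the event that the contraction is simple yields the uniform measure on simple graphs with degree sequence $\mbf d$, which is $\mb G(\mbf d)$. Part (B)(1) is identical; bipartiteness of the contraction is automatic from the bipartite matching constraint in \cref{def:config-model}(B).

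For (A)(2), write $r = \sum_v d_v$ and let $L,M$ denote the number of loops and the number of pairs of parallel edges in $G^\ast \sim \mb G^\ast(\mbf d)$, so that $G^\ast$ is simple iff $L=M=0$. Since a pair of prescribed distinct stubs is matched under a uniform configuration with probability $1/(r-1)$, a direct computation gives
\[\mb E[L] \;=\; \frac{1}{r-1}\sum_{v\in V}\binom{d_v}{2}\;\lesssim_C\; 1,\qquad \mb E[M] \;\lesssim\; \frac{1}{(r-1)(r-3)}\Bigl(\sum_{v\in V}\binom{d_v}{2}\Bigr)^{2}\;\lesssim_C\; 1,\]
using the hypothesis $\sum_v d_v^2 \le Cn$ (which in particular forces $r = \Theta_C(n)$ in the applications of interest, since the relevant degrees are at least $2$). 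The harder step is to upgrade these first-moment estimates into a matching \emph{lower} bound on $\Pr[L = M = 0]$. For this I would carry out the classical factorial-moment calculation of Bollob\'as~\cite{Bol80}, enumerating ordered tuples of loops and double edges and evaluating the matching probability of the corresponding stubs, to verify that $\mb E[(L)_j(M)_k] \to \mb E[L]^j\mb E[M]^k$. This yields the joint Poisson convergence $(L,M)\overset{d}{\to}\on{Poisson}(\mb E[L])\otimes\on{Poisson}(\mb E[M])$, and hence
\[\Pr[L=0,M=0] \;=\; \exp\bigl(-\mb E[L]-\mb E[M]\bigr)+o(1) \;\gtrsim_C\; 1.\]
Part (B)(2) is the same argument with one simplification: bipartite configurations exclude loops outright, so $L\equiv 0$ and only the multi-edge calculation is needed.

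The main technical obstacle is the lower bound on $\Pr[L=M=0]$: bounded means alone do not suffice (Markov's inequality only yields upper bounds on $\Pr[L\ge 1]$ and $\Pr[M\ge 1]$), so we really must check convergence of all higher factorial moments. In the regime in which this lemma is applied, one has $\max_v d_v = O(\log n)$ by \cref{lem:degree-sequence-poisson}, which makes the factorial-moment enumeration entirely routine. To cover unbounded degrees within the stated $\sum_v d_v^2\le Cn$ hypothesis, one can first truncate stubs in buckets of degree exceeding $\sqrt{\log n}$, absorb the contribution of the truncated vertices into a vanishing error term using the second-moment hypothesis, and then apply the factorial-moment method to the truncated configuration model.
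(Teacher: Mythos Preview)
Your treatment of (A)(1) and (B)(1) is correct and matches the paper exactly: both argue via the observation that every simple graph with the given degree sequence arises from the same number $\prod_v d_v!$ of configurations.

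For (A)(2) and (B)(2), the paper does not give a proof at all; it simply cites Janson~\cite{Jan09} for (A)(2) and Blanchet--Stauffer~\cite{BS13} for (B)(2), noting that weaker special cases (Bollob\'as, McKay, etc.) already suffice for the applications in the paper. Your factorial-moment approach is precisely the classical Bollob\'as argument, and as you correctly observe, it goes through cleanly once $\max_v d_v = O(\log n)$; since \cref{lem:degree-sequence-poisson} guarantees this in every place the paper invokes \cref{lem:simplicity}, your approach is a legitimate substitute for the citations.

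However, your proposed extension to the full hypothesis $\sum_v d_v^2 \le Cn$ via ``truncating stubs in buckets of degree exceeding $\sqrt{\log n}$'' has a genuine gap. Truncating stubs produces a \emph{different} configuration model on a different degree sequence, and you have not explained how the simplicity probability of the truncated model controls that of the original (the events are not nested, and there is no obvious coupling). The hypothesis $\sum_v d_v^2 \le Cn$ permits a single vertex of degree $\Theta(\sqrt n)$, which already contributes $\Theta(1)$ expected self-loops on its own and violates the $\max_v d_v = o(r^{1/4})$ condition under which the factorial-moment method is known to work directly. Bridging this gap is exactly the content of Janson's paper (which uses a switching argument, not truncation), so this step is not a routine perturbation of Bollob\'as. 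If you want a self-contained proof at the stated generality you should either reproduce Janson's switching argument or weaken the hypothesis to the polylog-degree regime that the paper actually uses.
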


Parts (A1) and (B1) follow from the (easy) fact that each simple graph corresponds to the same number of configurations. Parts (A2) and (B2) of \cref{lem:simplicity} were first proved by Janson~\cite{Jan09} and Blanchet and Stauffer~\cite{BS13}, respectively. We remark that Janson~\cite{Jan14} later gave a simplified proof for both (A2) and (B2), and that many authors previously proved various special cases (see for example \cite{BC78,Bol80,Bol01,McK85,MW91,BBK72,GMW06,McK85}). Several of these special cases are sufficient for the applications in this paper.

The advantage of the configuration model is that it has much more independence than a uniformly random graph with a given degree sequence, and is therefore much easier to study.

We finish this section with a simple expansion estimate for random graphs with given degree sequences.

\begin{lemma}\label{lem:dense-subset}
Fix $\varepsilon>0$, and consider one of the following two situations.
\begin{enumerate}
    \item[(A)] Suppose $m,n$ satisfy $1+\varepsilon\le m/n\le 1/\varepsilon$, and let $G\sim \mc K(n,m,2)$.
    \item[(B)] Suppose $m,n,n_1,n_2$ satisfy $2+\varepsilon\le m/{n_1},m/{n_2}\le 1/\eps$, let $G\sim \mc K(n_1,n_2,m,2)$, and let $n=n_1+n_2$.
\end{enumerate}
In both situations, the following properties hold for $n$ large.
\begin{enumerate}
    \item There exists $C =C_{\ref{lem:dense-subset}}(\varepsilon)>0$ such that the following holds (for large enough $n$). With probability at least $1-O(1/\sqrt n)$: for all $s$, every subgraph of $G$ with $s$ vertices has at most $s+\lfloor Cs/\sqrt{\log(2n/s)}\rfloor$ edges.
    \item Whp, the number of cycles of length less than $\log \log n$ is at most $\exp((\log \log n)^3)$.
\end{enumerate}
\end{lemma}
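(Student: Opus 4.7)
The plan is to reduce both statements to first-moment calculations in the configuration model. By \cref{lem:degree-sequence-poisson}, with probability $1 - n^{-\omega(1)}$ the degree sequence $\mbf d$ of $G$ satisfies conclusions (2) and (4) of that lemma; this failure probability is negligible compared to $1/\sqrt n$. By \cref{lem:rotate-sequence-simple}, conditional on such $\mbf d$, $G$ is uniform on simple graphs with degree sequence $\mbf d$; and since \cref{lem:degree-sequence-poisson}(2) gives $\sum_v \deg(v)^2 = O_\varepsilon(n)$, \cref{lem:simplicity} implies that the configuration-model graph $G^\ast$ with this degree sequence is simple with probability $\Omega_\varepsilon(1)$. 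So any bad event for $G$ has conditional probability at most $O_\varepsilon(1)$ times its probability under $G^\ast$, reducing us to bounding expectations in $G^\ast$.

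For (1), note that the cases $s \le 3$ are vacuous since $\binom{s}{2} \le s$. For $s \ge 4$, set $k = \lfloor Cs/\sqrt L\rfloor + 1$, where $L = \log(2n/s)$ and $C = C(\varepsilon)$ is large. For any vertex set $S$ of size $s$ with stub-total $D_S$, a standard configuration-model computation gives
\[
\Pr\bigl[e(G^\ast[S]) \ge s+k\bigr] \;\le\; \Bigl(\frac{eD_S^2}{2m(s+k)}\Bigr)^{s+k}
\]
(in the bipartite case one uses $D_{S_1}D_{S_2} \le D_S^2/4$ to obtain the same bound up to constants). Since \cref{lem:degree-sequence-poisson}(4) gives $D_S \lesssim_\varepsilon sL$ uniformly over $S$, combining with $\binom{n}{s} \le (en/s)^s$ and $m \gtrsim_\varepsilon n$ yields
\[
\log \mb E[\#\{S\colon |S|=s,\, e(G^\ast[S])\ge s+k\}] \;\le\; O_\varepsilon(s\log L) \;-\; k\bigl(L - O(\log L)\bigr).
\]
Splitting at $s = \sqrt L/C$: for $s < \sqrt L/C$ we have $k=1$, so the bound reduces to $O_\varepsilon(s\log L) - L + O(\log L) \le -L + O(\log L)$ (using $s\log L \le L/2$ for $L$ large), giving $O(n^{-1+o(1)})$ per $s$; for $s \ge \sqrt L/C$ the $-kL$ term dominates $O_\varepsilon(s\log L)$ and the bound becomes $\exp(-\Omega_\varepsilon(s\sqrt L))$. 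Both estimates are valid provided $L \ge L_0(\varepsilon)$, i.e., $s \le 2n/e^{L_0}$; summing over $s$ in these ranges totals $O(1/\sqrt n)$. The complementary dense range $s > 2n/e^{L_0}$ is handled deterministically: $e(G[S]) \le m \le n/\varepsilon$ is at most $s + \lfloor Cs/\sqrt L\rfloor \le s(1+C/\sqrt{L_0})$ once $C$ is large in terms of $\varepsilon$, using $L \le L_0$.

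For (2), a classical configuration-model estimate bounds the expected number of $\ell$-cycles by $(2\ell)^{-1}\bigl(\sum_v \deg(v)(\deg(v)-1)/(2m-2\ell)\bigr)^\ell \le O_\varepsilon(1)^\ell/\ell$, using \cref{lem:degree-sequence-poisson}(2) and $m \gtrsim_\varepsilon n$. Summing over $3 \le \ell < \log\log n$ gives total expectation $(\log n)^{O_\varepsilon(1)}$, and Markov's inequality yields the (enormously loose) bound $e^{(\log\log n)^3}$ whp. The main obstacle is (1): the bound $\lfloor Cs/\sqrt L\rfloor$ is essentially tight, and the first-moment estimates just barely sum to $O(1/\sqrt n)$, so the calculation must carefully balance the $\binom{n}{s}$ and $D_S^{2(s+k)}$ losses against the $m^{s+k}$ gain, separately across small, intermediate, and dense regimes of $s$.
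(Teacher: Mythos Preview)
Your approach is essentially the same as the paper's: condition on a typical degree sequence, transfer to the configuration model via \cref{lem:simplicity}, and run a first-moment computation split into a small-$s$ regime, an intermediate regime, and a dense regime handled trivially. The paper uses the bound $\binom{D_S}{2(s+t)}(6s/n)^{s+t}$ where you use $(eD_S^2/(2m(s+k)))^{s+k}$; these are equivalent. One cosmetic difference: the paper actually proves the stronger statement with $t=\lfloor s/\sqrt L\rfloor+1$ (no $C$) for $s\le cn$ and only invokes $C$ to cover the dense range deterministically, whereas you carry $C$ throughout---this is immaterial. Your treatment of (2) via the direct cycle moment $O_\varepsilon(1)^\ell/\ell$ is cleaner than the paper's, which just recycles the $t=0$ case of the expansion estimate.

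There is one genuinely sloppy step. In the small-$s$ case you write
\[
O_\varepsilon(s\log L) - L + O(\log L) \;\le\; -L + O(\log L),
\]
but this is not literally true: for $s$ near $\sqrt L/C$ the term $O_\varepsilon(s\log L)$ is of order $\sqrt L\,\log L$, not $O(\log L)$. What \emph{is} true is that $O_\varepsilon(s\log L)=o(L)$ uniformly over $s<\sqrt L/C$, so the bound is $-L(1-o(1))$ and the per-$s$ contribution is $(s/n)^{1-o(1)}=n^{-1+o(1)}$, which is exactly what you go on to claim. So the conclusion is fine; just replace ``$\le -L+O(\log L)$'' by ``$\le -L(1-o(1))$'' and the argument goes through.
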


We did not attempt to prove the absolute best bounds possible; for example, with more care, in the setting of (1) it seems one can prove an upper bound of roughly $s+s/\log (n/s)$ when $s$ is not too large.

\begin{remark}\label{rem:f}
In practice, we will apply (1) in the case where $s/n$ is small with respect to $\varepsilon$. So, $\lfloor Cs/\sqrt{\log (n/s)}\rfloor$ can be thought of as a ``lower order term'' relative to $s$. In particular, when (say) $s< \sqrt{\log n}/(2C)$, we have $\lfloor Cs/\sqrt{\log (n/s)}\rfloor=0$, meaning that no set of $s$ vertices has more than $s$ edges (i.e., we cannot have anything denser than a cycle).
\end{remark}

\begin{proof}
We only prove (A); the proof of (B) is essentially identical. We handle (1) and (2) together, considering what happens more generally for a set of size $s$ with at least $s+t$ edges.

Let $\mbf d=(d_1,\ldots,d_n)$ be the degree sequence of $G$, and condition on any outcome of $\mbf d$ satisfying the conclusion of \cref{lem:degree-sequence-poisson}(A). By \cref{lem:simplicity,lem:rotate-sequence-simple}, it suffices to prove the desired result for $G\sim \mb G^\ast(\mbf d)$ (note that \cref{lem:degree-sequence-poisson}(A2) implies that $d_1^2+\cdots+d_n^2=O_\eps(n)$).

Consider any $t\le 2s$ with say $s\le n/6$. For a set $S$ of $s$ vertices, the probability that $G[S]$ contains at least $s+t$ edges is at most
\[\binom{\sum_{v\in S}d_v}{2s + 2t}\bigg(\frac{6s}{n}\bigg)^{s+t}.\]
Indeed, we are considering the probability of the event that there is some set of $2s+2t$ stubs from the buckets corresponding to vertices in $S$, which all pair among themselves (in our random configuration). For any set of $2s+2t$ stubs from $S$, the probability that they all pair among themselves is at most $(6s/n)^{s+t}$, since $2s+2t\le 6s$ and $2m-(2s+2t)\ge 2n-6s\ge n$.

So, the expected number of sets of $s$ vertices with at least $s+t$ edges is at most
\begin{align}
\sum_{\substack{S\subseteq \{1,\ldots,n\}:\\|S|=s}} \binom{\sum_{v\in S}d_v}{2s + 2t}\bigg(\frac{6s}{n}\bigg)^{\!s+t}
\!\!&\le \bigg(\frac{n}{s}\bigg)^{\!s}\bigg(\frac{s\log(2n/s)}{s+t}\bigg)^{\!2s+2t}\bigg(\frac{s}{n}\bigg)^{\!s+t}\!e^{O_\varepsilon(s)}\notag\\
&\le\left(O_\eps\left(\log \left(\frac{n}{s}\right)\right)\right)^{\!O(s)}\bigg(\frac{s}{n}\bigg)^{\!t},\label{eq:expansion-estimate}
\end{align}
where in the first inequality we used that $\binom{n}{s}\le (en/s)^s$ and we used \cref{lem:degree-sequence-poisson}(A4) (which says that $\sum_{v\in S} d_v\lesssim_\varepsilon s\log(2n/s)$). We immediately deduce (2), taking $t=0$ and summing over $s< \log \log n$.

For (1), let $t=\lfloor s/\sqrt{\log(2n/s)}\rfloor+1$; we will prove that for sufficiently small $c=c(\varepsilon)>0$, every subgraph  with $s\le cn$ vertices has fewer than $s+t$ edges. The desired result will then follow, noting that (1) trivially holds for subgraphs with $s>c n$ vertices (taking $C$ large in terms of $c$).

So, we sum the estimate in \cref{eq:expansion-estimate} over all $s\le cn$. The contribution from say $s\le (\log n)^{2/3}$ is $n^{-1+o_{\eps}(1)}\le 1/(2\sqrt n)$, and the contribution from $s>2C\sqrt{\log n}$ is at most
\begin{align*}
&\sum_{s=\lfloor(\log n)^{2/3}\rfloor+1}^{\lfloor cn\rfloor}\left(O_\eps\left(\log \left(\frac{n}{s}\right)\right)\right)^{s}\left(\frac{s}{n}\right)^{\Omega(s/\sqrt{\log(n/s)})}\\
&\qquad\qquad\le n\!\!\max_{cn\ge s\ge (\log n)^{2/3}} \exp\left(s\left(O_\varepsilon(1)+O\left(\log \log \left(\frac{n}{s}\right)\right)-\Omega\left(\sqrt{\log \left(\frac{n}{s}\right)}\right)\right)\right)\\
&\qquad\qquad\le n\!\!\max_{cn\ge s\ge (\log n)^{2/3}} \exp\left(-\Omega_\varepsilon(s\sqrt{\log(n/s)})\right)\le \frac{1}{2\sqrt n}
\end{align*}
for small enough $c$.
\end{proof}

\section{Random walk analysis}\label{sec:random-walk}
The following lemma is a slight adaptation of \cite[Lemma~5.2]{FKSS21} (which is itself a variation on \cite[Lemma~2.9]{CTV06}). Roughly speaking, it says that certain negatively biased random walks typically end up at a nonpositive value. 

\begin{lemma}\label{thm:random-walk-cheap}
Fix $C,\delta,\varepsilon>0$. Let
$X_{N},\ldots,X_{0}$ be a sequence of real random variables satisfying
the following conditions for some $p\in(0,1)$.
\begin{enumerate}[{\bfseries{W\arabic{enumi}'}}]
    \item\label{W1'} $X_{N}\le(1-\varepsilon)\delta N$ (with probability 1)
    \item\label{W2'} $X_{t}\le X_{t+1}+C$ for all $t\le N$ (with probability 1).
    \item\label{W3'} For any
$t\le N-1$ and any $x_{N},\ldots,x_{t+1}$:
\begin{enumerate}
\item if $x_{t+1}>0$ then $\Pr[X_{t}\le x_{t+1}-\delta\,|\,X_{N}=x_{N},\ldots,X_{t+1}=x_{t+1}]\ge 1-p$.
\item if $x_{t+1}\le0$ then $\Pr[X_{t}\le0\,|\,X_{N}=x_{N},\ldots,X_{t+1}=x_{t+1}]\ge1-p$.
\end{enumerate}
\end{enumerate}
Then 
\[
\Pr[X_{0}>0]\le O_{C,\delta,\varepsilon}(p).
\]
\end{lemma}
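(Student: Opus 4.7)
The plan is to relabel time by setting $S_s = X_{N-s}$, so that $(S_s)_{s=0}^N$ runs forward from $S_0 = X_N \le (1-\varepsilon)\delta N$ to $S_N = X_0$. In these coordinates, \ref{W2'} becomes $S_{s+1} \le S_s + C$ a.s., and \ref{W3'} says that conditionally on $S_0, \ldots, S_s$, with probability at least $1-p$ the step $s \to s+1$ is \emph{good} -- meaning $S_{s+1} \le S_s - \delta$ if $S_s > 0$, or $S_{s+1} \le 0$ if $S_s \le 0$. Otherwise the step is \emph{bad}. The goal is to show $\Pr[S_N > 0] \lesssim_{C,\delta,\varepsilon} p$. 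Set $\alpha = \varepsilon\delta/(C+\delta)$ and $\beta = \delta/(C+\delta)$.

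On the event $\{S_N > 0\}$, let $\sigma \in \{-1, 0, \ldots, N-1\}$ be the largest $s$ with $S_s \le 0$ (set $\sigma = -1$ if no such index exists), and split into two cases. If $\sigma = -1$, then $S_s > 0$ for all $s \in [0, N]$, so each good step contributes at most $-\delta$ and each bad step at most $+C$ to the increments; telescoping $S_N \le S_0 - \delta(N-K) + CK$ and using $S_0 \le (1-\varepsilon)\delta N$, the event $S_N > 0$ forces the total number of bad steps to satisfy $K > \alpha N$. If $\sigma = \sigma^* \in \{0, \ldots, N-1\}$, then the step at time $\sigma^*$ is necessarily bad (it goes from $S_{\sigma^*} \le 0$ to $S_{\sigma^*+1} > 0$), giving $S_{\sigma^*+1} \le C$; on the remaining interval $[\sigma^*+1, N]$ the walk stays strictly positive, so the same telescoping gives $S_N \le C - \delta(L-1-K') + CK'$ where $L = N - \sigma^*$ and $K'$ counts bad steps in $[\sigma^*+1, N-1]$. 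Hence $S_N > 0$ forces at least $\lceil \beta L\rceil$ bad steps in the length-$L$ interval $[\sigma^*, N-1]$.

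Crucially, the conditional-probability bound on bad steps implies that the number of bad steps in any interval of length $L$ is stochastically dominated by $\on{Bin}(L, p)$. Thus $\Pr[\sigma = \sigma^*, S_N > 0] \le \Pr[\on{Bin}(L, p) \ge \lceil \beta L\rceil] \le \binom{L}{\lceil \beta L\rceil} p^{\lceil \beta L\rceil}$, and likewise $\Pr[\sigma = -1, S_N > 0] \le \binom{N}{\lceil \alpha N\rceil} p^{\lceil \alpha N\rceil}$. To sum over $\sigma^*$, I group the $L$'s by the value $j = \lceil \beta L\rceil$: each $j \ge 1$ corresponds to at most $\lceil 1/\beta\rceil$ values of $L$, and for such $L$ we have $\binom{L}{j} p^j \le (eLp/j)^j \le (ep/\beta)^j$. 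Assuming $p \le \beta/(2e)$, the resulting geometric series $\sum_{j\ge 1} \lceil 1/\beta\rceil (ep/\beta)^j$ is $O_{C,\delta,\varepsilon}(p)$; the $\sigma = -1$ term is bounded the same way with $\alpha$ in place of $\beta$. For $p > \beta/(2e)$ the conclusion follows trivially from $\Pr[X_0 > 0] \le 1$.

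The main technical obstacle is that the naive ``single forced bad step'' bound $\Pr[\sigma = \sigma^*, S_N > 0] \le p$ (from the mandatory type-B bad transition at time $\sigma^*$) gives only $O(Np)$ after summing over $\sigma^*$, which is far too weak. Getting a linear-in-$p$ bound requires exploiting the additional bad steps that must occur to sustain any positive excursion away from zero, converting this into a Chernoff tail for $\on{Bin}(L,p)$, and then carefully grouping terms by $\lceil \beta L\rceil$ so that the contributions for short excursions ($L \lesssim 1/\beta$) are $O(p)$ and the longer ones decay geometrically.
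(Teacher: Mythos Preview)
Your proof is correct. The paper itself does not give a full argument: it first normalises to $\delta=1$ and nonnegative integer values by replacing $X_t$ with $\max(0,\lceil X_t/\delta\rceil)$, and then simply cites \cite[Lemma~5.2]{FKSS21} (itself a variant of \cite[Lemma~2.9]{CTV06}) for the resulting statement. Your argument is a genuinely self-contained alternative: you work directly with the real-valued walk, isolate the last visit $\sigma$ to $(-\infty,0]$, and on the event $\{X_0>0\}$ show that the final positive excursion of length $L=N-\sigma$ must contain at least $\lceil \beta L\rceil$ bad steps (with $\beta=\delta/(C+\delta)$), then dominate the bad-step count by $\operatorname{Bin}(L,p)$ and sum the tail bounds $\binom{L}{\lceil\beta L\rceil}p^{\lceil\beta L\rceil}\le (ep/\beta)^{\lceil\beta L\rceil}$ by grouping $L$'s according to $j=\lceil\beta L\rceil$. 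This last-excursion decomposition is exactly the idea underlying the cited lemmas, so conceptually your route is the same; what your write-up buys is that it avoids the external citation and the preliminary integer reduction, at the cost of having to handle the $\sigma=-1$ case separately and to check the arithmetic that $1+K'>\beta L$ yields $\ge\lceil\beta L\rceil$ bad steps.
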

Informally, condition \cref{W3'} says  that our random walk ``wants to be nonpositive'': when we are positive we tend to go down at the next step, and when we are nonpositive we tend to stay nonpositive at the next step.
\begin{proof}
First, note that the statement is trivial if say $N\le 10$. Then, note that we can reduce to the case where $\delta=1$ and each $X_t$ is a nonnegative integer. Indeed, define $X_t'=\max(0,\lceil X_t/\delta\rceil)$. Note that conditions \cref{W1',W2',W3'} 
are still satisfied for $X_N',\ldots,X_0'$ (with ``1'' in place of ``$\delta$'', with ``$\lceil C/\delta+1\rceil$'' in place of ``$C$'', and with say $\varepsilon/2$ in place of $\varepsilon$), and note that $X_0>0$ implies $X_0'>0$. After this reduction, the lemma statement is a slight variant of \cite[Lemma~5.2]{FKSS21} (and can be proved in the same way).
\end{proof}

We will need a generalisation of \cref{thm:random-walk-cheap} permitting a small number of ``bad steps'' in
which we have no control over the behaviour of our random walk. Crucially,
the set of bad steps is allowed to depend on the trajectory of the
random walk; we only assume that the bad steps are unlikely to concentrate near the end of the walk.

\begin{theorem}\label{thm:random-walk-master}
Fix $C,\delta,\varepsilon>0$ with $C/\delta \geq 1$. Let
$X_{N},\ldots,X_{0}$ be a sequence of real random variables, and let $R\subseteq\{0,\ldots,N\}$ be a random set of ``bad steps'', satisfying
the following conditions for some $p\in(0,1)$.
\begin{enumerate}[{\bfseries{W\arabic{enumi}}}]
\setcounter{enumi}{-1}
\item\label{W0} There is an underlying sequence of random elements $G_N,\ldots,G_0$, such that $X_t$ and the event $\{t\in R\}$ are both determined by $G_t$ (for all $t\le N$).
\item\label{W1} $X_{N}\le(1-\varepsilon)\delta N$ (with probability 1)
\item\label{W2} $X_{t}\le X_{t+1}+C$ for all $t\le N$ (with probability
1).
\item\label{W3} For any
$t\le N-1$ and any $x_{t+1}\in \mb R$, and any outcomes $g_N,\ldots,g_{t+1}$ of $G_N,\ldots,G_{t+1}$ satisfying $X_{t+1}=x_{t+1}$:
\begin{enumerate}
\item if $x_{t+1}>0$ then $\Pr[X_{t}\le x_{t+1}-\delta\emph{ or }t\in R\,|\,G_N=g_N,\ldots,G_{t+1}=g_{t+1}]\ge 1-p$.
\item if $x_{t+1}\le0$ then $\Pr[X_{t}\le0\emph{ or }t\in R\,|\,G_N=g_N,\ldots,G_{t+1}=g_{t+1}]\ge1-p$.
\end{enumerate}
\item\label{W4} With probability at least $1-p$, for each $t\le N$ we have $|R\cap \{0,\ldots,t\}|\le \delta t/(32C)$.
\end{enumerate}
Then 
\[
\Pr[X_{0}>0]\le O_{C,\delta,\varepsilon}(p).
\]
\end{theorem}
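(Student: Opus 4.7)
The plan is to reduce \cref{thm:random-walk-master} to the simpler \cref{thm:random-walk-cheap} by conditioning on \cref{W4} and introducing a pathwise correction that makes each bad step act as a deterministic decrement for a modified walk.

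First, condition on \cref{W4} (losing $p$); we may thus assume $|R\cap\{0,\dots,t\}|\le \delta t/(32C)$ for all $t$. In particular $|R|\le N/32$ (using $C\ge\delta$), and $0\notin R$ since \cref{W4} at $t=0$ forbids it. Introduce the modified walk $Y_t := \max(X_t, 0) + V_t$, where $V_t := (C+\delta)\,|R\cap\{0,\dots,t-1\}|$. Using \cref{W2} and the elementary inequality $\max(a,0) - \max(b,0) \le \max(a-b,0)$, one checks: (i) $Y_N \le X_N + V_N \le (1-\varepsilon + 1/16)\delta N$; (ii) $Y_t - Y_{t+1} \le C$; (iii) on bad steps $t\in R$, $Y_t - Y_{t+1} \le C - (C+\delta) = -\delta$ deterministically; and (iv) $V_0 = 0$, so $Y_0 = \max(X_0, 0)$, giving $\{X_0 > 0\} = \{Y_0 > 0\}$.

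I would then apply \cref{thm:random-walk-cheap} to $Y$ with parameter $\varepsilon' = \varepsilon - 1/16$. (For $\varepsilon \le 1/16$, a preliminary rescaling $(\delta, C) \to (K\delta, KC)$ with $K$ large, or equivalently an $\varepsilon$-dependent strengthening of the constant $32$ in \cref{W4}, makes the reduction go through; the loss is absorbed into the $O_{C,\delta,\varepsilon}(\cdot)$ constant in the conclusion.) \cref{W1'} and \cref{W2'} follow from (i)--(ii), and \cref{W3'} on bad steps follows from (iii). On good steps $t\notin R$ (where $V_t = V_{t+1}$), \cref{W3}(a) with $X_{t+1}>0$ yields $Y_t \le Y_{t+1}-\delta$ directly, and \cref{W3}(b) with $X_{t+1}\le 0$ and $V_{t+1}=0$ yields $Y_t = 0$.

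The hard part will be the remaining good-step case: $X_{t+1}\le 0$ and $V_{t+1}>0$, so that $Y_{t+1}=V_{t+1}>0$ but \cref{W3}(b) only forces $Y_t = V_t = V_{t+1} = Y_{t+1}$, failing the decrement $Y_t \le Y_{t+1}-\delta$ required by \cref{W3'}(a). I plan to handle this via a direct adaptation of the proof of \cref{thm:random-walk-cheap} (rather than invoking it as a black box): each such ``stalled'' good step occurs only while $V>0$, and $V$ can only decrease through bad steps, each of which contributes a deterministic decrement of at least $\delta$ to $Y$ by (iii). A Chernoff-style count of the \cref{W3}-failures among the ``ordinary'' good steps (those with $X_{t+1}>0$), together with the deterministic bad-step decrements covering the remaining surplus between $Y_N$ and $0$, should then yield $\Pr[Y_0>0] = O_{C,\delta,\varepsilon}(p)$, which completes the proof.
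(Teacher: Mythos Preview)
Your proposal has an adaptedness problem that you have not flagged. Both conditioning on \cref{W4} upfront and defining $V_t=(C+\delta)\,|R\cap\{0,\dots,t-1\}|$ use information from the \emph{future} of the walk: the event in \cref{W4} depends on all of $G_N,\dots,G_0$, and $V_t$ depends on $\mbm{1}_{s\in R}$ for $s<t$, which by \cref{W0} are functions of $G_{t-1},\dots,G_0$. Once such future information enters the conditioning, the one-step bounds in \cref{W3} (which condition only on $G_N,\dots,G_{t+1}$) are no longer guaranteed, so neither \cref{thm:random-walk-cheap} nor a direct ``count of \cref{W3}-failures'' is legitimately available for $Y$. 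Your acknowledged ``hard part'' (stalled good steps with $X_{t+1}\le 0$ and $V_{t+1}>0$) is a symptom of this, and the sketch you give does not resolve it. There is also a smaller slip: on ordinary good steps with $0<X_{t+1}<\delta$, the decrement of $\max(X_t,0)$ can be strictly less than $\delta$, so your claimed ``$Y_t\le Y_{t+1}-\delta$ directly'' fails at that boundary; and the proposed rescaling $(\delta,C)\to(K\delta,KC)$ would \emph{strengthen} \cref{W3}, not relax it.

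The paper avoids all of this by making the correction run backward rather than forward: after reducing to $\delta=1$, set $Y_N=0$ and $Y_t=Y_{t+1}-(C+1)\mbm{1}_{t\in R}+\tfrac12\,\mbm{1}_{Y_{t+1}<0}$, so each bad step incurs a debt that is repaid at rate $\tfrac12$ over the \emph{subsequent} steps. Then $Y_t$ depends only on $G_N,\dots,G_t$, the modified walk $X_t':=X_t+Y_t$ is adapted, and one checks \cref{W3'} for $X'$ with drift $\tfrac12$ in every case, including your stalled case (there $X_{t+1}'=Y_{t+1}\le 0$, and half-integrality of $Y$ forces $X_t'\le 0$). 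One applies \cref{thm:random-walk-cheap} to $X'$ \emph{without} conditioning on \cref{W4}, and only afterwards invokes \cref{W4} in a separate dyadic argument to show $\Pr[Y_0\ne 0]\le p$; a union bound finishes. The structural point is that \cref{W4} is never fed into the conditional-probability machinery.
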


\begin{proof}
As with \cref{thm:random-walk-cheap}, we can assume that $\delta=1$, each $X_t$ is a nonnegative integer, and $C$ is a nonnegative integer, if we prove the result under a slight weakening of condition \cref{W4} that $|R\cap \{0,\ldots,t\}|\le \delta t/(8C)=t/(8C)$. To see this, take $X_t'=\max(0,\lceil X_t/\delta)\rceil)$ and then note that conditions \cref{W0,W1,W2,W3} and the weaker version of \cref{W4} are still satisfied for $X_N',\ldots,X_0'$ (with ``1'' in place of ``$\delta$'', with ``$\lceil C/\delta+1\rceil$'' in place of ``$C$'', and with say $\varepsilon/2$ in place of $\varepsilon$). We will deduce \cref{thm:random-walk-master} from \cref{thm:random-walk-cheap}.

Inductively define a sequence $Y_N,\ldots,Y_0$ by taking $Y_N=0$ and
\[Y_t=Y_{t+1}-(C+1){\mbm 1}_{t\in R}+(1/2)\mbm 1_{Y_{t+1}<0}\]
for $t<N$.
The reader may wish to imagine a ``cost'' of $C+1$ being incurred at every bad step, and that this is repaid over the future of the process (specifically, $1/2$ is repaid per step, until all debts are repaid). Define a modified sequence $X_N',\ldots,X_1'$ by $X_t'=X_t+Y_t$. The idea is that $Y_t$ ``compensates'' if $t$ is a bad step. Notice that $Y$ is half-integral, and non-positive.

Note that $X_N'=X_N$ and $X_t'\le X_{t+1}'+C+1/2$ for all $t\le N$ (so our modified sequence still satisfies a version of properties \cref{W1} and \cref{W2} in \cref{thm:random-walk-cheap}). We now verify that $X_t'$ satisfies \cref{W3} in \cref{thm:random-walk-cheap} with $\delta=1/2$. Note that if $t \in R$, this is immediate, since $Y_t \leq Y_{t + 1} - C - 1/2$ and $X_t \leq X_{t + 1} + C$. If $t \notin R$ and $X_{t + 1} > 0$, then with probability at least $1 - p$, we have $X_t \leq X_{t + 1} - 1$, and deterministically, $Y_t \leq Y_{t+1} + 1/2$. Finally, if $t \notin R$ and $X_{t + 1} = 0$, then it must be the case that $X'_{t + 1} \leq 0$, since $Y$ is non-positive. Then with probability at least $1 - p$, $X_t$ stays $0$, and thus $X'_t \leq 0$.

Applying \cref{thm:random-walk-cheap}, we see that $\Pr[X_{0}'>0]\le O_{C,\delta,\varepsilon}(p)$, and it suffices to prove that $Y_0= 0$ (i.e., that $X_0= X_0'$) with probability at least $1-p$.

To see this, for each $i=0,1,\ldots,\lfloor\log_2 (N+2)\rfloor$ let $Q_i$ be the number of bad steps $t\in R$ in the range $[2^i-1,2^{i+1}-1)$. Note that if the inequality $(C+1)Q_i< (1/2)2^{i-1}$ holds for all $i$, then $Y_0\ge 0$: for each $i\ge 1$, the ``cost'' incurred in the interval $[2^i-1,2^{i+1}-1)$ is ``repaid'' in the $2^{i-1}$ steps in the interval $[2^{i-1}-1,2^{i}-1)$. The above is guaranteed by our weaker version of \cref{W4}, taking $t=2^{i+1}-2$ for each $i$.
\end{proof}

\section{Boosting the rank}\label{sec:boost-corank}
In this section we prove some general lemmas studying how the rank
of a matrix changes when a random row/column is added to it. The lemmas in this section represent the main difference between parts (A) and (B) of \cref{thm:main-RMT}: there are certain additional dependencies involved when dealing with symmetric random matrices.

First, the following simple lemma will be used for \cref{thm:main-RMT}(B).

\begin{definition}\label{def:balanced}
The \emph{$\lambda$-level set} of a vector is the set
of all entries equal to $\lambda$. Say that a vector $\mathbf{v}\in\mathbb{R}^{n}$
is \emph{$\eta$-balanced} if all of the level sets of $\mathbf{v}$
have size at most $n(1-\eta)$.
\end{definition}

\begin{lemma}\label{lem:corank-boosting-bipartite}
Fix $0<\eta<1/2$, let $d\ge1$, and consider a matrix $B\in\mathbb{R}^{n_1\times n_2}$. Consider a subset $E\subseteq \{1,\ldots,n_1\}$ with size at least $n_1(1-\eta/3)$, and let $\mathbf{x}=(x_{1},\ldots,x_{n_1})$ be a random zero-one vector,
such that the restriction $\mathbf{x}_{E}$ to the entries indexed
by $E$ is a uniformly random zero-one vector with exactly $d$ ones
(and the restriction $\mathbf{x}_{\overline{E}}$ to entries not indexed
by $E$ is deterministic). Add $\mathbf{x}$ as a new column of $B$ to obtain a new matrix $B'$.

If $B^\intercal $ has an $\eta$-balanced kernel vector, then $\rank(B')\ge \rank(B)+1$ with probability $1-O_{\eta}(1/\sqrt{d})$.
\end{lemma}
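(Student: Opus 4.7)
The plan is to reduce the statement to a Littlewood--Offord estimate for the hypergeometric distribution. Observe first that $\rank(B') \ge \rank(B) + 1$ is equivalent to $\mbf x$ lying outside the column span of $B$, which in turn is equivalent to $\mbf v^\intercal \mbf x \ne 0$ for some $\mbf v \in \ker(B^\intercal)$ (the left null space of $B$). Taking $\mbf v$ to be the provided $\eta$-balanced kernel vector of $B^\intercal$, it suffices to bound $\Pr[\mbf v^\intercal \mbf x = 0]$ by $O_\eta(1/\sqrt d)$. Write $\mbf v^\intercal \mbf x = \mbf v_E^\intercal \mbf x_E + c$, where $c = \mbf v_{\ol E}^\intercal \mbf x_{\ol E}$ is a deterministic constant. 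Since every level set of $\mbf v$ has size at most $n_1(1-\eta)$, while $|E| \ge n_1(1-\eta/3)$, the complement within $E$ of any level set of $\mbf v_E$ has size at least $|E| - n_1(1-\eta) \ge 2\eta n_1/3 \ge (2\eta/3)|E|$, so $\mbf v_E$ is $(2\eta/3)$-balanced in $\mb R^{|E|}$. The problem thus reduces to the following Littlewood--Offord estimate for hypergeometric sampling: if $\mbf w \in \mb R^N$ is $\delta$-balanced and $S \subseteq [N]$ is a uniformly random $d$-subset, then $\sup_t \Pr[\sum_{i\in S}w_i=t] \le O_\delta(1/\sqrt d)$ (where we may assume $d \le N/2$ by the symmetry $S \leftrightarrow [N] \setminus S$).

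I will prove this hypergeometric Littlewood--Offord bound via a classical pairing/decoupling argument. Using $\delta$-balancedness, a greedy construction produces $k \ge \delta N/2$ disjoint pairs $P_\ell = \{i_\ell, j_\ell\}$ in $[N]$ with $w_{i_\ell} \ne w_{j_\ell}$. Let $A \subseteq \{1, \ldots, k\}$ denote the set of \emph{active} pairs, i.e., those with $|S \cap P_\ell| = 1$. The key decoupling observation is that, conditional on the intersection sizes $(|S \cap P_\ell|)_{\ell=1}^k$ and on $S \setminus \bigcup_\ell P_\ell$, the choices of which element of each active pair lies in $S$ are independent uniform coin flips, by the symmetry of the uniform distribution on $d$-subsets with a prescribed intersection pattern. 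Applying the classical Erd\H{o}s--Littlewood--Offord inequality conditionally, the sum $\sum_{i \in S} w_i$ equals a constant plus a signed sum of the nonzero increments $\tfrac{1}{2}(w_{i_\ell} - w_{j_\ell})$ for $\ell \in A$, hence $\Pr[\sum_{i \in S} w_i = t \mid A] \le O(1/\sqrt{|A|})$ whenever $|A| \ge 1$.

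The final step is to show that $|A| = \Omega_\delta(d)$ with high probability. A direct calculation gives $\mb E[|A|] = k \cdot 2d(N-d)/(N(N-1)) = \Omega_\delta(d)$ (using $d \le N/2$), and standard second-moment estimates for sampling without replacement (exploiting negative correlation between distinct pairs) yield $\on{Var}(|A|) = O(\mb E[|A|])$. Chebyshev's inequality then gives $\Pr[|A| < \mb E[|A|]/2] = O_\delta(1/d)$, and combining this with the conditional Littlewood--Offord bound yields $\Pr[\sum_{i \in S} w_i = t] \le O_\delta(1/\sqrt d)$, completing the proof. The only non-routine step is the conditional decoupling into independent coin flips; once this is in place, the balancedness transfer, the greedy pairing, the Erd\H{o}s--Littlewood--Offord inequality, and Chebyshev all combine straightforwardly, and I do not anticipate any serious obstacles.
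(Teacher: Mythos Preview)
Your proof is correct and follows essentially the same route as the paper: reduce to bounding $\Pr[\mbf v^\intercal \mbf x = 0]$ for an $\eta$-balanced left-kernel vector $\mbf v$, observe that $\mbf v_E$ remains $\Omega(\eta)$-balanced, and then apply a linear Littlewood--Offord inequality on the Boolean slice. The only difference is that the paper quotes this slice Littlewood--Offord bound as a black-box lemma (citing prior work), whereas you supply a self-contained proof via the standard pairing/decoupling-to-Rademacher argument together with a second-moment bound on the number of active pairs.
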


\cref{lem:corank-boosting-bipartite} concerns the addition of a new random column, but of course it symmetrically applies to the addition of a new random row (we can simply consider the transpose of $B$). In fact, in our proof of \cref{thm:main-RMT}(B) we will use \cref{lem:corank-boosting-bipartite} to show that when a random row and a random column are independently added, the rank increases by 2.

The following more sophisticated lemma will be used for \cref{thm:main-RMT}(A).

\begin{definition}\label{def:unstructured}
Say that a symmetric matrix $A\in\mathbb{R}^{n\times n}$
is \emph{$\eta$-unstructured }if there are at least $\eta n^{2}$ pairs of
distinct entries $(i,i')\in\{1,\ldots,n\}^{2}$ such that if $\on{supp}(A\mathbf{v})=\{i,i'\}$
then $\mathbf{v}$ is $\eta$-balanced.
\end{definition}

\begin{lemma}
\label{lem:corank-boosting-master}Fix $0<\eta<1/2$ and let
$d\ge1$. Consider a symmetric matrix $A\in\mathbb{R}^{n\times n}$
and a subset $E\subseteq\{1,\ldots,n\}$ of size at least $n(1-\eta/3)$.
Let $\mathbf{x}=(x_{1},\ldots,x_{n})$ be a random zero-one vector,
such that the restriction $\mathbf{x}_{E}$ to the entries indexed
by $E$ is a uniformly random zero-one vector with exactly $d$ ones
(and the restriction $\mathbf{x}_{\overline{E}}$ to entries not indexed
by $E$ is deterministic). Add $\mathbf{x}$ as a new row and column
of $A$ (and put a zero in the new diagonal entry) to obtain a new symmetric
matrix $A'$.
\begin{enumerate}
\item[(a)] If $A$ has a $\eta$-balanced kernel vector,
then $\rank(A')=\rank(A)+2$ with probability $1-O_{\eta}(1/\sqrt{d})$.
\item[(b)] Let $S=\on{supp}(\ker A)$. If $A$ is $\eta$-unstructured and $|S|\le \eta^2 n/32$ and $S\cap E=\emptyset$ and $\mbf x_S=\mbf 0$ then $\rank(A')\ge\rank(A)+1$ with
probability at least $1-O_\eta((\log 2d)^{O(1)}/\sqrt{d})$.
\end{enumerate}
\end{lemma}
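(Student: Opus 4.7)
The approach rests on a Schur-complement characterisation of $\rank A'$. Writing $A' = \bigl(\begin{smallmatrix} A & \mbf{x} \\ \mbf{x}^T & 0 \end{smallmatrix}\bigr)$, one has $\rank A' = \rank A + 2$ iff $\mbf{x} \notin \on{im}(A)$, while when $\mbf{x} \in \on{im}(A)$ the rank increases by at least $1$ iff $\mbf{x}^T \mbf{w} \ne 0$ for any $\mbf{w}$ solving $A\mbf{w} = \mbf{x}$; equivalently, $Q(\mbf{x}) := \mbf{x}^T A^+ \mbf{x} \ne 0$, where $A^+$ is the Moore--Penrose pseudoinverse. Since $A$ is symmetric, $\mbf{x} \in \on{im}(A)$ iff $\mbf{x} \perp \ker A$. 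This reduces (a) to a linear anticoncentration problem and (b) to a quadratic one.

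For (a) it suffices to show $\mbf{x}^T \mbf{v} \ne 0$ whp, where $\mbf{v}$ is the given $\eta$-balanced kernel vector (this forces $\mbf{x} \notin \on{im}(A)$). Writing $\mbf{x}^T \mbf{v} = c + \sum_{i \in T} v_i$, where $c$ is determined by $\mbf{x}_{\overline E}$ and $T$ is a uniform random $d$-subset of $E$, the $\eta$-balancedness of $\mbf{v}$ combined with $|E| \ge n(1 - \eta/3)$ ensures that every level set of $\mbf{v}|_E$ has relative size at most $(1-\eta)/(1-\eta/3) < 1 - 2\eta/3$. A standard sparse linear Littlewood--Offord bound for subset sums of a balanced sequence then yields $\Pr[\sum_{i \in T} v_i = -c] = O_\eta(1/\sqrt d)$.

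For (b), the hypothesis $\mbf{x}_S = \mbf 0$ forces $\mbf{x}^T \mbf{v} = 0$ for every $\mbf{v} \in \ker A$ (since $\on{supp}(\ker A) \subseteq S$), so $\mbf{x} \in \on{im}(A)$ automatically and $\rank A' = \rank A + 2$ is impossible. The task is therefore to show $Q(\mbf{x}) \ne 0$ whp, a sparse quadratic Littlewood--Offord question. The plan is a swap-decoupling argument. For $i \in T$ and $i' \in E \setminus T$, let $\mbf{x}'$ be the vector obtained from $\mbf{x}$ by swapping positions $i$ and $i'$. A direct expansion yields
\[
Q(\mbf{x}') - Q(\mbf{x}) \;=\; 2\,\mbf{x}^T \mbf{w}_{i,i'} + C_{i,i'},
\]
where $\mbf{w}_{i,i'} := A^+(\mbf{e}_{i'} - \mbf{e}_i)$ is (modulo $\ker A$) any solution of $A\mbf{w} = \mbf{e}_{i'} - \mbf{e}_i$ and $C_{i,i'}$ depends only on $i, i'$. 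By \cref{def:unstructured}, for at least $\eta n^2$ pairs $(i, i')$ the vector $\mbf{w}_{i,i'}$ is $\eta$-balanced; the bounds $|S| \le \eta^2 n / 32$ and $S \cap E = \emptyset$ ensure that a constant fraction of such good pairs lie inside $E \times E$. The execution then splits the $d$ ones of $\mbf{x}_E$ into two batches of size $\sim d/2$, reveals the first batch, fixes a good swap pair $(i, i')$ with $i$ in the second batch and $i'$ in the complement, and applies the linear anticoncentration estimate from (a) to the functional $\mbf{x} \mapsto 2\mbf{x}^T \mbf{w}_{i,i'} + C_{i,i'}$; on the event $Q(\mbf{x}) = 0$, this controls the probability that $Q(\mbf{x}')$ also vanishes, and a union bound over admissible swaps gives the claimed rate.

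The principal technical obstacle is this decoupling step: because $\mbf{x}_E$ is uniform over $d$-subsets rather than independent Bernoulli, the two batches are mildly correlated, and we must verify that after exposing the first batch, the second batch still supplies enough fresh randomness to run the sparse linear Littlewood--Offord bound at the $O_\eta(1/\sqrt d)$ rate. A closely related subtlety is to show that sufficiently many $\eta$-unstructured-good pairs $(i,i')$ survive the conditioning with one endpoint available for swapping into the second batch. The $(\log 2d)^{O(1)}$ overhead arises from these conditioning manoeuvres, from the union bound over swap pairs, and from discarding the small fraction of pairs for which $\mbf{w}_{i,i'}$ fails to be balanced; with \cref{def:unstructured} available, these become routine but careful probability bookkeeping.
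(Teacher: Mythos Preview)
Your treatment of part (a) is essentially identical to the paper's: both observe that $\mbf v^\intercal\mbf x\ne0$ forces $\mbf x\notin\on{im}(A)$, restrict $\mbf v$ to $E$ (where it remains $\Omega(\eta)$-balanced), and apply a sparse linear Littlewood--Offord bound on the slice.

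Part (b) is where you diverge from the paper, and where there is a real gap. The paper does \emph{not} attempt a direct decoupling. Instead it (i) constructs a pseudoinverse $B$ with $AB=P$ (projection off $S$), (ii) observes that $\mbf x^\intercal B\mbf x\ne0$ suffices for the rank to increase, and (iii) checks that the symmetrised matrix $B'$ has at least $\eta^2n^4/8$ tuples $(i,i',j,j')$ with $B'_{ij}-B'_{i'j}-B'_{ij'}+B'_{i'j'}\ne0$. This second-difference count is exactly the hypothesis of a black-box quadratic Littlewood--Offord inequality on the slice (\cref{prop:quadratic-LO-general}, proved in \cite{FKSS21} via Kane~\cite{Kan14}), which directly delivers the $(\log 2d)^{O(1)}/\sqrt d$ rate. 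The $\eta$-unstructured hypothesis is used precisely to produce the required second-difference count: for each of the $\ge\eta n^2/2$ good pairs $(i,i')\in\overline S^2$, the vector $\mbf w_{i,i'}=\mbf b_i-\mbf b_{i'}$ is $\eta$-balanced, which yields $\ge\eta n^2/2$ pairs $(j,j')$ with $(\mbf w_{i,i'})_j\ne(\mbf w_{i,i'})_{j'}$.

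Your decoupling sketch, as written, does not reach the $1/\sqrt d$ rate. A single-swap or two-batch decoupling of Costello--Tao--Vu type reduces a quadratic anticoncentration problem to a linear one at the cost of a square root: if the linear rate is $p$ one obtains $\sqrt p$, i.e.\ $d^{-1/4}$ here. Your phrase ``a union bound over admissible swaps gives the claimed rate'' cannot be right as stated---a union bound over $\Theta(n^2)$ pairs would blow up, and an \emph{average} over pairs does not by itself undo the square-root loss. The $(\log d)^{O(1)}/\sqrt d$ rate for quadratic forms is a genuinely nontrivial result (Kane, or the Meka--Nguyen--Vu / Costello refinements), and you have not indicated how your argument recovers it. If you want to avoid citing \cref{prop:quadratic-LO-general}, you would need to reproduce a substantial portion of that machinery; otherwise, the clean route is to verify the second-difference hypothesis as the paper does and invoke the black box.
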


First, \cref{lem:corank-boosting-bipartite} and \cref{lem:corank-boosting-master}(a) will be simple consequences of an anti-concentration inequality for linear polynomials on the ``Boolean slice'' (i.e., for linear polynomials of uniformly random binary vectors with a prescribed number of 1s). The following lemma is a direct consequence of \cite[Lemma~4.2]{FKS21} (a similar inequality also appears in \cite{LLTTY17}), and is proved using the \emph{Erd\H{o}s--Littlewood--Offord theorem} (see for example \cite[Chapter~7]{TV10}).

\begin{lemma}\label{lem:linear-LO-slice}
Let $1\le d\le n/2$, $\kappa>0$ and let $\mbf{v}=(v_{1},\ldots,v_{n})\in \mb R^n$
be a $\kappa$-balanced vector. Let $\mbf{x}=(x_{1},\ldots,x_{n})\in \{0,1\}^n$
be a random vector, uniformly selected from the zero-one
vectors with exactly $d$ ones, and consider any $y\in \mb R$. Then
\[
\Pr[\mbf{v}^\intercal\mbf{x}=y]=O((\kappa d)^{-1/2}).
\]
\end{lemma}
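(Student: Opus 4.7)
The plan is to reduce to the classical Erdős--Littlewood--Offord inequality via a random-pairing/swapping argument, which is the standard technique for anti-concentration on a Boolean slice.

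First I would sample a uniformly random perfect matching $\Pi$ on $\{1,\ldots,n\}$ (pairing one stray index with a dummy if $n$ is odd), independently of $\mbf{x}$, and for each pair $\{i,j\}\in\Pi$ record $T_{\{i,j\}}:=x_i+x_j\in\{0,1,2\}$. Conditional on $(\Pi,T)$, within each \emph{odd} pair (meaning $T_{\{i,j\}}=1$) the placement of the $1$ is independent and uniform across odd pairs, while within even pairs both coordinates are determined. Writing $\eta_{\{i,j\}}:=2x_i-1\in\{\pm1\}$ for the smaller index $i$ of each odd pair, one obtains
\[
\mbf{v}^\intercal\mbf{x}=c(\Pi,T)+\sum_{\{i,j\}\text{ odd}}\frac{v_i-v_j}{2}\,\eta_{\{i,j\}},
\]
where the $\eta_{\{i,j\}}$ are i.i.d.\ uniform $\pm1$ and $c(\Pi,T)$ is a conditional constant. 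Letting $k=k(\Pi,T)$ count those odd pairs with $v_i\neq v_j$ (i.e.\ nonzero coefficient), the classical Erdős--Littlewood--Offord theorem yields $\Pr[\mbf{v}^\intercal\mbf{x}=y\mid\Pi,T]=O(1/\sqrt{\max(k,1)})$, so it is enough to show $k\gtrsim\kappa d$ with $(\Pi,T)$-probability $1-O((\kappa d)^{-1/2})$.

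For a lower bound on $\mb{E}\,k$, let $m_\lambda$ be the multiplicity of $\lambda$ among the entries of $\mbf{v}$. The $\kappa$-balanced hypothesis $m_\lambda\le(1-\kappa)n$ combined with $\sum_\lambda m_\lambda=n$ forces $\sum_\lambda m_\lambda^2\le(1-\kappa)n^2$, so the number of unordered pairs $\{i,j\}$ with $v_i\neq v_j$ is at least $\kappa n^2/4$ for $n$ large. Using $\Pr[\{i,j\}\in\Pi]=1/(n-1)$, $\Pr[T_{\{i,j\}}=1]=2d(n-d)/(n(n-1))$, the independence of $\Pi$ from $\mbf{x}$, and $d\le n/2$, this gives $\mb{E}\,k=\Omega(\kappa d)$.

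Finally, to pass from $\mb{E}\,k$ to the required bound on $\Pr[\mbf{v}^\intercal\mbf{x}=y]=\mb{E}[\Pr[\mbf{v}^\intercal\mbf{x}=y\mid\Pi,T]]$, I would apply the elementary inequality $\mb{E}[1/\sqrt{\max(k,1)}]\le 1/\sqrt{m}+\Pr[k<m]$ with $m=c\kappa d$ for a small constant $c>0$, and estimate the tail $\Pr[k<m]$ by Chebyshev. The main obstacle is the accompanying variance computation: expanding $\mb{E}[k^2]$ as a sum over ordered pairs of index-pairs, the random matching creates mild dependencies, but a direct calculation of $\Pr[\{i,j\},\{k,\ell\}\in\Pi]$ and of $\Pr[T_{\{i,j\}}=T_{\{k,\ell\}}=1]$ shows that the leading-order contribution is $(\mb{E}k)^2$ with a correction of order $\mb{E}k$, hence $\mr{Var}(k)\lesssim\mb{E}k$ and $\Pr[k<\mb{E}k/2]=O(1/\mb{E}k)=O((\kappa d)^{-1})$ by Chebyshev. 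Averaging the conditional Erdős--Littlewood--Offord estimate against $(\Pi,T)$ then yields $\Pr[\mbf{v}^\intercal\mbf{x}=y]=O((\kappa d)^{-1/2})$, as claimed.
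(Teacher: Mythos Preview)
Your proof is correct, and the random-matching reduction to Erd\H{o}s--Littlewood--Offord is precisely the standard technique for slice anti-concentration. The paper does not actually prove this lemma: it simply cites \cite[Lemma~4.2]{FKS21} (and notes that a similar inequality appears in \cite{LLTTY17}), remarking that the proof goes via the Erd\H{o}s--Littlewood--Offord theorem. Your argument is a self-contained version of exactly this approach, so there is nothing materially different to compare.

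One small remark on the variance step: your claim $\mr{Var}(k)\lesssim\mb E k$ is correct but deserves a word of justification for the off-diagonal terms. For disjoint pairs $\{i,j\},\{k,\ell\}$ one checks $\Pr[\{i,j\},\{k,\ell\}\in\Pi]=\frac{1}{(n-1)(n-3)}$ and $\Pr[T_{\{i,j\}}=T_{\{k,\ell\}}=1]\le\bigl(1+O(1/n)\bigr)\Pr[T_{\{i,j\}}=1]^2$, so the total positive off-diagonal covariance is at most $O((\mb E k)^2/n)$; since $\mb E k\le n/2$ this is indeed $O(\mb E k)$. Pairs sharing a vertex contribute nonpositively. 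With this made explicit, the Chebyshev step goes through exactly as you describe.
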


Now we prove \cref{lem:corank-boosting-bipartite} and \cref{lem:corank-boosting-master}(a).
\begin{proof}[Proof of \cref{lem:corank-boosting-bipartite}]
Let $\mbf{v}\in \mb R^{n_1}$ be an $\eta$-balanced kernel vector of $B^\intercal $ (note that this means that $\mbf v$ lies in the orthogonal complement of the column space of $B$). Since $\mbf v$ is $\eta$-balanced and $E\ge (1-\eta/3)n_1$, every level set of $\mbf v_E$ has size at most $((1-\eta)/(1-\eta/3))|E|$, implying that $\mbf v_E$ is $\Omega(\eta)$-balanced. Therefore it follows by \cref{lem:linear-LO-slice} that $\mb{P}[\mbf v^\intercal \mbf x = 0] =\mb{P}[\mbf v_{E}^\intercal \mbf x_{E} = -\mbf v_{\overline E}^\intercal \mbf x_{\overline E}] = O_{\eta}(1/\sqrt{d})$. But note that if $\mbf v^\intercal \mbf x \ne 0$, then $\mbf x$ does not lie in the column space of $A$, and its addition as a new column increases the rank.
\end{proof}

\begin{proof}[Proof of \cref{lem:corank-boosting-master}(a)]
Let $\mbf{v}$ be an $\eta$-balanced kernel vector of $A$ (or equivalently, of $A^\intercal $), so $\mbf v$ lies in the orthogonal complement of the row space of $A$ (or equivalently, the orthogonal complement of the column space). As above, $\mbf v_E$ is $\Omega(\eta)$-balanced, so by \cref{lem:linear-LO-slice} we have $\mb{P}[\mbf v^\intercal \mbf x = 0] =O_{\eta}(1/\sqrt{d})$. If $\mbf v^\intercal \mbf x \ne 0$, then $\mbf x$ does not lie in the row space or column space of $A$, so adding $\mbf{x}$ as a
new row and column increases the rank twice. The desired result follows.
\end{proof}

For \cref{lem:corank-boosting-master}(b) we need an anti-concentration inequality for \emph{quadratic} polynomials of random vectors $(x_1,\ldots,x_n)\in \{0,1\}^n$ on the Boolean slice. The following lemma appears as \cite[Proposition~3.4]{FKSS21}, and is proved using an inequality of Kane~\cite{Kan14}.

\begin{lemma}\label{prop:quadratic-LO-general}
Let $M=(m_{ij})_{i,j}$ be an $n\times n$ symmetric matrix for
which there are $\Omega(n^{4})$ different 4-tuples $(i,i',j,j')$
with $m_{ij}-m_{i'j}-m_{ij'}+m_{i'j'}\ne0$. Let $\mbf{x}=(x_{1},\ldots,x_{n})$
be a random zero-one vector, uniformly selected from the zero-one
vectors with exactly $d\le n/2$ ones. Then for any vector $\mbf{v}\in\mb R^{n}$
and any $x\in\mb R$ we have 
\[
\Pr[\mbf{x}^\intercal M\mbf{x}+\mbf{v}^\intercal\mbf{x}=x]\le O((\log 2d)^{O(1)}/\sqrt{d}).
\]
\end{lemma}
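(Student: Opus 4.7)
The plan is to reduce quadratic anti-concentration to the linear Littlewood--Offord estimate on the Boolean slice (\cref{lem:linear-LO-slice}) via a swap/decoupling argument in the style of Costello--Tao--Vu and Kane~\cite{Kan14}. The key algebraic identity is that if $\mbf x$ and $\mbf x'$ are Boolean slice vectors differing only at two positions (a $1$-coordinate $I$ flipped to $0$ and a $0$-coordinate $I'$ flipped to $1$), then, writing $f(\mbf y)=\mbf y^\intercal M\mbf y+\mbf v^\intercal\mbf y$,
\[
f(\mbf x)-f(\mbf x')=2\sum_{k\ne I,I'}(m_{Ik}-m_{I'k})\,x_k+(m_{II}-m_{I'I'})+(v_I-v_{I'}).
\]
In particular, the event $\{f(\mbf x)=x=f(\mbf x')\}$ forces a linear form with coefficient vector $\mbf w^{(I,I')}=(m_{Ik}-m_{I'k})_{k\ne I,I'}$ to take a prescribed value.

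Next I would translate the hypothesis on $4$-tuples into $\Omega(1)$-balancedness of $\mbf w^{(I,I')}$ for many pairs $(I,I')$. Indeed, a level set of $\mbf w^{(I,I')}$ corresponds to indices $j,j'$ with $m_{Ij}-m_{I'j}=m_{Ij'}-m_{I'j'}$, that is, with \emph{vanishing} $4$-tuple determinant $m_{Ij}-m_{I'j}-m_{Ij'}+m_{I'j'}=0$. Writing $N(I,I')$ for the number of pairs $(j,j')$ with nonvanishing determinant, the hypothesis gives $\sum_{(I,I')}N(I,I')=\Omega(n^4)$, and by an averaging argument at least $\Omega(n^2)$ pairs $(I,I')$ satisfy $N(I,I')=\Omega(n^2)$; by Cauchy--Schwarz this forces the largest level set of $\mbf w^{(I,I')}$ to have size at most $(1-c)(n-2)$ for an absolute constant $c>0$, i.e.\ $\mbf w^{(I,I')}$ is $c$-balanced in the sense of \cref{def:balanced}.

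To combine these ingredients, I would apply the standard Cauchy--Schwarz decoupling
\[
\Pr[f(\mbf x)=x]^{2}\le \mb E_{\mbf x'}\Pr_{\mbf x}\!\left[f(\mbf x)=f(\mbf x')\right],
\]
realising the coupling between $\mbf x$ and $\mbf x'$ as a random sequence of single $(1,0)$-swaps. Condition on all coordinates of $\mbf x$ except those in a random ``swap window'' containing the positions $I,I'$, and note that with constant probability the swap lands on a pair $(I,I')$ belonging to the good $\Omega(n^2)$ set identified above; for such a swap, \cref{lem:linear-LO-slice} applied to the $c$-balanced coefficient vector $\mbf w^{(I,I')}$ (restricted to the random window) contributes the factor $O(1/\sqrt d)$.

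The main obstacle is maintaining the sharp $1/\sqrt d$ rate through the decoupling step, rather than losing a polynomial in $d$. A single naive swap only controls a conditional probability and must be combined with a union bound over bad swap choices, with bad events arising when the $d$ one-entries of $\mbf x$ happen to concentrate in coordinates where $\mbf w^{(I,I')}$ is not balanced, or when the swap window avoids the ``good'' pairs. Handling these bad events simultaneously is exactly where Kane's~\cite{Kan14} iterated decoupling on the slice is used, and it is the origin of the polylogarithmic overhead $(\log 2d)^{O(1)}$; I would invoke this machinery as a black box rather than redoing the quantitative combinatorics.
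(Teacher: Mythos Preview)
The paper does not prove this lemma at all: it simply cites it as \cite[Proposition~3.4]{FKSS21}, noting that the proof there uses Kane's inequality~\cite{Kan14}. Your sketch is a faithful outline of exactly that argument --- the swap identity, the translation of the $4$-tuple hypothesis into $\Omega(1)$-balancedness of the row-difference vectors $\mbf w^{(I,I')}$, and the appeal to Kane's iterated decoupling to retain the $1/\sqrt d$ rate with only polylogarithmic loss --- so you are essentially reconstructing the cited proof rather than diverging from the paper.
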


We also need the following lemma implicit in the work of DeMichele, the first author, and Moreira~\cite{DGM22}, on the existence of a certain kind of ``pseudoinverse''.

\begin{lemma}\label{lem:pseudoinverse}
Consider a symmetric matrix $A\in \mb R^{n\times n}$, let $S=\supp(\ker(A))$ and let $\overline S=\{1,\ldots,n\}\setminus S$. Let $P\in \mb R^{n\times n}$ be the projection matrix that projects onto the coordinates indexed by $\overline S$ (that is, $P$ is a diagonal matrix with ``$1$'' in the diagonal entries indexed by $\overline S$, and ``$0$'' in the entries indexed by $S$). Then there is a matrix $B\in \mb R^{n\times n}$ such that $AB=P$. Further, $PBP$ is symmetric.
\end{lemma}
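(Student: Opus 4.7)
The plan is to construct $B$ column by column, exploiting the symmetry of $A$ to ensure the relevant preimages exist.

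First I would verify that for each $i \in \overline S$, the standard basis vector $e_i$ lies in the image (column space) of $A$. Because $A = A^\intercal$, we have $\mathrm{im}(A) = (\ker A)^\perp$. By the very definition of $S = \supp(\ker A)$, every vector $v \in \ker A$ vanishes outside $S$, so $e_i^\intercal v = 0$ for each $i \in \overline S$. Hence $e_i \in (\ker A)^\perp = \mathrm{im}(A)$, and there exists some $b_i \in \mb R^n$ with $A b_i = e_i$. For $i \in S$, I simply set $b_i = 0$, which satisfies $A b_i = 0 = P e_i$. Assembling these vectors as the columns of $B$ produces a matrix with $AB = P$.

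Next I would show that any such $B$ automatically makes $PBP$ symmetric. The matrix $PBP$ has support contained in $\overline S \times \overline S$, and for $i, j \in \overline S$ the key identity
\[
(B)_{ij} \;=\; e_i^\intercal b_j \;=\; (A b_i)^\intercal b_j \;=\; b_i^\intercal A b_j \;=\; b_i^\intercal e_j \;=\; (B)_{ji}
\]
follows from $A = A^\intercal$ together with $A b_i = e_i$ and $A b_j = e_j$. Entries of $PBP$ with a row or column index in $S$ are zero, so symmetry over the full index set is immediate.

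There is no real obstacle here: the argument is a one-shot application of the identification $\mathrm{im}(A) = (\ker A)^\perp$ available for symmetric matrices. The only point worth emphasizing is that the choice of $B$ is highly nonunique (one may add any matrix whose columns lie in $\ker A$), but the symmetry of $PBP$ does not depend on which preimages are chosen.
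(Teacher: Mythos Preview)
Your proof is correct and follows essentially the same approach as the paper: both construct the columns $b_i$ of $B$ as preimages of $e_i$ under $A$ for $i\in\overline S$ (you via $\mathrm{im}(A)=(\ker A)^\perp$, the paper via the equivalent observation that row $i$ is not a combination of the other rows), and both deduce symmetry of $PBP$ from the bilinear form $b_i^\intercal A b_j$. The only cosmetic difference is that the paper packages the symmetry step as the matrix identity $PB=(B^\intercal A)B=B^\intercal(AB)=B^\intercal P$, whereas you write it out entrywise.
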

\begin{proof}
If $i\notin S$, then there is no kernel vector of $A$ with nonzero $i$-coordinate, which means that the $i$th row of $A$ cannot be expressed as a linear combination of the other rows of $A$. This means there is some vector $\mbf b_i$ which is orthogonal to every row of $A$ except the $i$th (and by rescaling we can assume that the inner product of $\mbf b_i$ with the $i$th row of $A$ is exactly 1). 
Let $B$ be the matrix whose $i$th column is $\mbf b_i$, for $i\notin S$, and whose columns indexed by $S$ are all-zero. Writing $\mbf e_i$ for the $i$th standard basis vector, we have $\mbf e_i^\intercal AB=\mbf e_i$ for $i\notin S$, and $\mbf e_i^\intercal AB=\mbf 0$ for $i\in S$, from which it follows that $AB=P$.

Finally, since $A$ is symmetric, transposition yields $B^\intercal A=P$. We then have $PB = (B^\intercal A)B = B^\intercal (AB) = B^\intercal P=(PB)^\intercal $, so $PB=PBP$ is symmetric.
\end{proof}

A key property of this notion of pseudoinverse is that it provides a sufficient condition for the rank to increase when we add a row and column to a matrix, as follows.

\begin{lemma}\label{lem:quadratic-rank-increase}
Consider a matrix $A\in \mb R^{n\times n}$, and let $S=\supp(\ker(A))$, let $S=\supp(\ker(A))$, and let $B$ be a ``pseudoinverse'' as guaranteed in \cref{lem:pseudoinverse}. For $\mbf x,\mbf y\in \mb R^n$ and $c\in \mb R$, let $A'$ be the matrix obtained from $A$ by appending the new column $\mbf x$, the new row $\mbf y$ and the new diagonal entry $c$. If $\mbf x_S=\mbf 0$ and $\mbf y^\intercal B\mbf x\ne c$ then $\rank A'\ge \rank A+1$.
\end{lemma}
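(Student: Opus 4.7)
The plan is to exhibit $\mathbf{b} := B\mathbf{x}$ as a preimage of $\mathbf{x}$ under $A$ and then use an elementary column operation on $A'$ to reduce the problem to seeing that a block-triangular matrix with a nonzero corner gains one in rank.

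First I would observe that the hypothesis $\mathbf{x}_S = \mathbf{0}$ means $P\mathbf{x} = \mathbf{x}$, where $P$ is the projection onto $\overline{S}$ from \cref{lem:pseudoinverse}. Since $AB = P$, this gives $A(B\mathbf{x}) = P\mathbf{x} = \mathbf{x}$; in other words, setting $\mathbf{b} = B\mathbf{x} \in \mathbb{R}^n$, we have the key identity $A\mathbf{b} = \mathbf{x}$.

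Next I would perform the column operation on $A'$ that subtracts $b_i$ times the $i$th column from the $(n+1)$st column, for each $i = 1,\ldots,n$. In the top $n$ rows this replaces $\mathbf{x}$ by $\mathbf{x} - A\mathbf{b} = \mathbf{0}$, and in the last row it replaces $c$ by $c - \mathbf{y}^\intercal \mathbf{b} = c - \mathbf{y}^\intercal B\mathbf{x}$. Since column operations preserve rank, $\rank A' = \rank M$, where
\[
M = \begin{pmatrix} A & \mathbf{0} \\ \mathbf{y}^\intercal & c - \mathbf{y}^\intercal B\mathbf{x} \end{pmatrix}.
\]

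Finally I would argue that $\rank M \ge \rank A + 1$. Let $r = \rank A$ and pick column indices $j_1,\ldots,j_r \in \{1,\ldots,n\}$ such that the corresponding columns of $A$ are linearly independent. I claim that the columns of $M$ at positions $j_1,\ldots,j_r$ together with its last column are linearly independent: any dependency $\sum_k \alpha_k (A e_{j_k}, y_{j_k})^\intercal + \beta(\mathbf{0}, c - \mathbf{y}^\intercal B\mathbf{x})^\intercal = \mathbf{0}$ forces $\sum_k \alpha_k A e_{j_k} = \mathbf{0}$ in the top block, hence all $\alpha_k = 0$ by linear independence, and then $\beta(c - \mathbf{y}^\intercal B\mathbf{x}) = 0$ in the bottom block, which forces $\beta = 0$ because of the assumption $c \ne \mathbf{y}^\intercal B\mathbf{x}$. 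This yields $\rank A' = \rank M \ge r + 1 = \rank A + 1$, as desired. There is no genuine obstacle here; the only thing one has to notice is that the existence of a preimage $\mathbf{b}$ of $\mathbf{x}$ under $A$ is exactly what the pseudoinverse provides when $\mathbf{x}$ avoids the kernel support $S$.
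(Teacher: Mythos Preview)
Your proof is correct and follows essentially the same approach as the paper: both hinge on the identity $A(B\mathbf{x})=\mathbf{x}$ (from $AB=P$ and $\mathbf{x}_S=\mathbf{0}$) and then use it to isolate the nonzero scalar $c-\mathbf{y}^\intercal B\mathbf{x}$. The only cosmetic difference is that you phrase this via a column operation reducing $A'$ to block-triangular form, whereas the paper phrases it dually by exhibiting $(B\mathbf{x},-1)$ as a kernel vector of the column-augmented matrix and checking that the new row is not orthogonal to it.
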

\begin{proof}
Let $A''$ be the matrix obtained by appending the column $\mbf x$ (but not appending a new row), and let $\mbf y'$ be the last row of $A'$ (obtained by appending the entry ``$c$'' to the end of the vector $\mbf y$). Also, let $\mbf w\in \mb R^{n+1}$ be the vector obtained by appending the entry ``$-1$'' to the end of the vector $B\mbf x$.

Since $\mbf x_S=\mbf 0$, we have $AB\mbf x=\mbf x$ by the defining property of $B$, which implies that $\mbf w$ is a kernel vector of $A''$ (i.e., it lies in the orthogonal complement of the row space of $A''$). So, if $\mbf w\cdot \mbf y'\ne 0$, then $\mbf y'$ does not lie in the row space of $A''$, meaning that $\rank A'\ge\rank A''+1\ge \rank A+1$. The desired result follows, noting that $\mbf w\cdot \mbf y'=\mbf x^\intercal B^\intercal \mbf y-c$.
\end{proof}
Now we are ready to prove \cref{lem:corank-boosting-master}(b).

\begin{proof}[Proof of \cref{lem:corank-boosting-master}(b)]
Let $B,P$ be as in \cref{lem:pseudoinverse}, and consider the symmetrisation $B' = (B + B^\intercal )/2$ of $B$. Since $PBP$ is symmetric, $B$ and $B'$ have the same entries in positions indexed by $\ol S\times \ol S$. Recall that $\mbf x_S = 0$, so $\mbf x^\intercal B\mbf x=\mbf x^\intercal B'\mbf x$.

We claim that there are at least $\eta^2 n^4/4$ different tuples $(i,i',j,j')$ with $B_{i,j}-B_{i,j'}+B_{i',j}-B_{i',j'}\neq 0$. Since the symmetrisation of $B$ to $B'$ only affects at most $4|S|n^3\le \eta^2 n^4/8$ of these tuples, it will follow from \cref{prop:quadratic-LO-general} that $\mbf x^\intercal B'\mbf x=\mbf x^\intercal B'\mbf x\ne 0$ with probability at least $1-O((\log 2d)^{O_{\eta}(1)}/\sqrt{d})$, in which case $\rank A'\ge \rank A+1$ by \cref{lem:quadratic-rank-increase}, as desired.

To prove the claim, let $\mbf b_i$ be the $i$th row of $B$ and let $\mbf e_i$ be the $i$th standard unit vector. For $i,i'\in \ol S$, let $\mbf w_{i,i'}=\mbf b_i-\mbf b_{i'}$, so $A\mbf w_{i,i'}=\mbf e_i - \mbf e_{i'}$. Recall that $A$ being $\eta$-unstructured means that there are $\eta n^2$ pairs of indices $(i,i')$ for which all $\mbf w$ satisfying $\supp(A\mbf w)=\{i,i'\}$ are $\eta$-balanced. So, there are at least $\eta n^2-2|S|n\ge \eta n^2/2$ pairs $(i,i')\in \ol S^2$ for which $\mbf w_{i,i'}$ is $\eta$-balanced. For each such $(i,i')$, there are at least $\eta(1-\eta)n^2\ge \eta n^2/2$ pairs $(j,j')$ for which the $j$th and $j'$th entry of $\mbf w_{i,i'}$ differ, in which case $B_{i,j}-B_{i,j'}+B_{i',j}-B_{i',j'}\neq 0$.
\end{proof}

\section{Extracting high-degree vertices}\label{sec:corank-init}
As outlined, for the proof of \cref{thm:main-RMT} we need to ``extract'' high degree vertices from our random graph with minimum degree at least 2, without revealing too much about the neighbourhoods of the extracted vertices. We will need certain information about the graph that remains after this extraction; most notably we need control over its corank, and we need to know that most of its vertices still have degree at least 2.

\begin{lemma}\label{lem:initial-rank}
Fix $\varepsilon,\alpha,\Delta > 0$ such that $1/\Delta\ll \alpha\ll \varepsilon$.
\begin{enumerate}
    \item[(A)] Consider sets $S\subseteq V$ and an integer $m$ such that
    \begin{itemize}
        \item $|V|=n$,
        \item $\varepsilon n\le m-n\le n/\varepsilon$, and
        \item $|S|=\left\lfloor\vphantom{x^{x^x}}\alpha n\right\rfloor$.
    \end{itemize}
    Let $G\sim \mc K(V,m,2)$, and let $T=\{v\in S\colon\deg_G(v)\ge \Delta\}$ be the set of vertices in $S$ with degree at least $\Delta$. Then, whp:
    \begin{enumerate}
        \item[(1)] $\corank A(G[V\setminus T])\le |T|/8$,
        \item[(2)] Let $W$ be the set of vertices in $G[V\setminus T]$ which have degree at least 2 (with respect to $G[V\setminus T]$). Then $|(V\setminus T)\setminus W|=O_{\varepsilon}(\alpha n)$.
        \item[(3)] $|T|\le \alpha \exp(-\Omega_{\varepsilon}(\Delta)) n$,
        \item[(4)] Whp all but $|T|/\Delta$ vertices $v\in T$ satisfy $\deg_{W}(v)\ge\sqrt \Delta$.
    \end{enumerate}
    \item[(B)] Consider disjoint sets $S_1\subseteq V_1$ and $S_2\subseteq V_2$, and an integer $m$, such that
    \begin{itemize}
        \item $|V_1|,|V_2|=n+o(n)$,
        \item $\varepsilon n\le m-|V_1|-|V_2|\le n/\varepsilon$, and
        \item $|S_1|=|S_2|=\left\lfloor\vphantom{x^{x^x}}\alpha n\right\rfloor$.
    \end{itemize}
    Let $G\sim \mc K(V_1,V_2,m,2)$, and for $i\in \{1,2\}$ let $T_i=\{v\in S_i\colon\deg_G(v)\ge \Delta\}$ be the set of vertices in $S_i$ with degree at least $\Delta$. Let $V=V_1\cup V_2$ and $T=T_1\cup T_2$. Then, whp:
    \begin{enumerate}
        \item[(1)] $\corank B(G[V\setminus T])\le |T|/16$,
        \item[(2)] Let $W$ be the set of vertices in $G[V\setminus T]$ which have degree at least 2 (with respect to $G[V\setminus T]$). Then $|(V\setminus T)\setminus W|=O_{\varepsilon}(\alpha n)$.
        \item[(3)] $|T_1|=|T_2|+o_{\varepsilon,\alpha,\Delta}(n)\le \alpha \exp(-\Omega_{\varepsilon}(\Delta))n$,
        \item[(4)] Whp all but $|T|/\Delta$ vertices $v\in T$ satisfy $\deg_{W}(v)\ge\sqrt \Delta$.
    \end{enumerate}
\end{enumerate}
\end{lemma}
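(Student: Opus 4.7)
The lemma has four parts; I describe a plan for (A), since (B) is entirely analogous.

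For \textbf{Parts (3) and (2)}: First reveal the degree sequence of $G$ using \cref{lem:CW-comparison}, which shows the sequence is well-approximated by i.i.d.\ truncated Poisson variables (conditioned on value $\ge 2$) with parameter $\lambda$ depending only on $\varepsilon$. Standard Poisson tail bounds give $\Pr[\deg_G(v)\ge \Delta]\le e^{-\Omega_\varepsilon(\Delta)}$ for each $v\in S$, hence $\mb E|T|\le \alpha e^{-\Omega_\varepsilon(\Delta)} n$, and concentration around the mean via \cref{lem:azuma} yields Part~(3). In the bipartite case, the symmetry between $V_1,V_2$ (of equal size up to $o(n)$) additionally gives $|T_1|=|T_2|+o(n)$. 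For Part~(2), each $v\in (V\setminus T)\setminus W$ must have at least one edge to $T$, so $|(V\setminus T)\setminus W|\le \sum_{v\in T}\deg_G(v)\lesssim_\varepsilon |T|\log(2n/|T|)$ by \cref{lem:degree-sequence-poisson}(A4); together with $|T|\le \alpha e^{-\Omega_\varepsilon(\Delta)} n$ and $1/\Delta\ll \alpha$ (so that $\Delta$ dominates $\log(1/\alpha)$), this evaluates to $O_\varepsilon(\alpha n)$.

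For \textbf{Part (4)}: Condition on the degree sequence and work in the configuration model. The assumption $m/n\ge 1+\varepsilon$ forces $\mb E[Z\mid Z\ge 2]\ge 2+2\varepsilon$, so a constant fraction $c_\varepsilon>0$ of all stubs lie at vertices of $G$-degree at least $3$. For a fixed $v\in T$ with $\deg_G(v)\ge \Delta$, a hypergeometric Chernoff estimate (\cref{lem:chernoff}) gives $|N^{\ge 3}(v)|\ge (c_\varepsilon/2)\Delta$ with failure probability $e^{-\Omega_\varepsilon(\Delta)}$. A degree-$2$ neighbor of $v$ is automatically not in $W$ (since the edge to $v\in T$ already depletes one of its only two edges), but a degree-$\ge 3$ neighbor $w$ lies in $W$ unless $|N(w)\cap T|\ge \deg_G(w)-1$. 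Since the total number of $T$-stubs is small relative to $2m$, the latter event is rare per neighbor; a union bound gives $\deg_W(v)\ge\sqrt{\Delta}$ for each $v\in T$ with probability $1-e^{-\Omega_\varepsilon(\Delta)}$, and Markov's inequality bounds the number of failing $v$'s by $|T|/\Delta$ whp.

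For \textbf{Part (1)} --- the principal difficulty --- I would adapt the spectral convergence machinery of Bordenave--Lelarge--Salez~\cite{BLS11}, in the spirit of the rank-boosting approach of \cite{FKSS21}. Reveal the degree sequence and the set $T$, and in the configuration model further reveal which stubs of $V\setminus T$ pair with $T$-stubs (but not the matching within $V\setminus T$). Conditional on this information, $G[V\setminus T]$ is uniform over graphs with a prescribed degree sequence on $V\setminus T$, which is a mild perturbation of the truncated Poisson sequence since $|T|/n$ is a small constant. Consequently, $G[V\setminus T]$ locally weakly converges to an explicit Galton--Watson tree, and \cite{BLS11} yields that $\corank A(G[V\setminus T])/|V\setminus T|$ converges in probability to the atom at $0$ of the tree's spectral measure. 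A generating-function computation --- using $\mb E[Z\mid Z\ge 2]\ge 2+2\varepsilon$ together with the smallness of the perturbation --- should bound this atom by a sufficiently small multiple of $\alpha e^{-\Omega_\varepsilon(\Delta)}$, yielding $\corank A(G[V\setminus T])\le |T|/8$ whp. The main obstacle lies in carrying out this local weak convergence argument for the perturbed extracted graph and explicitly bounding the limiting spectral atom by the required multiple of $|T|/n$; the remaining parts are standard configuration-model concentration arguments.
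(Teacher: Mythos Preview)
Your plan for parts (2), (3), and (4) is correct and matches what the paper does (the paper simply cites the corresponding calculations from \cite{FKSS21}, which follow the lines you sketch). Your approach to (1) via the Bordenave--Lelarge--Salez machinery and local weak convergence is also the right one, and is exactly what the paper does.

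However, there is a genuine technical gap in your treatment of (1). You write that ``a generating-function computation\ldots should bound this atom by a sufficiently small multiple of $\alpha e^{-\Omega_\varepsilon(\Delta)}$'', treating this as routine. In fact this is precisely where the existing argument from \cite{FKSS21} breaks down: the proof of \cite[Lemma~7.5]{FKSS21}, which carries out exactly this bound, uses the assumption $k\ge 3$ at one key step, namely in establishing the log-concavity of a certain second derivative $\phi''$. For $k=2$ the function $\phi''$ has a different form, and the paper supplies a new argument: one computes $\phi''(x)=(1+\gamma\lambda)(e^x-h(x))$ for an explicit function $h$, and then verifies log-concavity by showing both that $h(x)+h''(x)-2h'(x)\ge 0$ on the relevant interval (an elementary term-by-term check for large $\Delta$) and that $h$ itself is log-concave. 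The latter is obtained by repeated anti-differentiation, using the fact that the antiderivative of a nonnegative log-concave function is log-concave, thereby reducing to a direct verification for $h^{(\Delta-3)}$. Without this replacement argument the generating-function step does not go through when the minimum degree is $2$, so your plan for (1) is incomplete at exactly the point you flagged as ``the main obstacle''.
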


All parts of \cref{lem:initial-rank} follow from similar calculations to those that were performed in \cite{FKSS21}. In particular, in the setting of (A), \cite[Lemma~6.6]{FKSS21} gives asymptotic formulas (in terms of $m/n$, $\alpha$ and $\Delta$) for $|T|$ and the number of vertices in $G[V\setminus T]$ with a given degree\footnote{The lemmas in \cite[Section~6]{FKSS21} are stated with an assumption $k\ge 3$, but this is completely unnecessary.}, from which (A2) and (A3) follow. Then, (A4) may be proved with a simple configuration-model calculation; such a calculation appears in the proof\footnote{Again, \cite[Lemma~8.1]{FKSS21} is stated with an assumption $k\ge 3$, but this is unnecessary.} of \cite[Lemma 8.1(3)]{FKSS21} (recalling $E$ from the notation of that lemma, we have $E\subseteq W$ when $k=2$.

Essentially the same calculations can be performed in the bipartite setting to prove (B2--4) (in fact, the relevant asymptotic formulas are nearly identical, though one needs to consider separate parameters for each side of our random bipartite graph).

(A1) and (B1) warrant a bit more explanation. \cref{lem:initial-rank}(A1) is basically the same as \cite[Lemma~7.3]{FKSS21} (which concerned random graphs constrained to have minimum degree at least $k\ge 3$, while we need to handle random graphs constrained to have minimum degree at least 2). Roughly speaking, the idea is to show that our random graph $G$ locally weakly converges (in the sense of Aldous--Steele~\cite{AS04} and Benjamini--Schramm~\cite{BS01}) to a certain Galton--Watson tree. Spectral convergence machinery of Bordenave, Lelarge, and Salez~\cite{BLS11} then can be used to bound the corank of $A(G)$ in terms of a certain probability generating function associated with that Galton--Watson tree. The proof of \cite[Lemma~7.3]{FKSS21} does use the assumption $k\ge 3$, but it was written in a slightly inefficient way; it is possible to slightly modify the proof to overcome this assumption (as we sketch momentarily).

It turns out that essentially the same proof can also be used for \cref{lem:initial-rank}(B1), because local weak convergence does not ``see'' whether a graph is bipartite or not.

\begin{proof}[Proof sketch of \cref{lem:initial-rank}(A1) and (B1)]
First, for the reader's convenience, we outline the proof of \cite[Lemma~7.3]{FKSS21}. We then discuss the minor changes that are necessary to prove \cref{lem:initial-rank}(A1).

We may assume that $2m/|V|$ converges to some $g\in[2+2\eps,2/\eps]$. Indeed, if the desired property did not hold whp, then for some $\tau>0$ there would be an infinite sequence of integers $n$ (and accompanying sets $V^{(n)}$) along which for each $n$ the corresponding property fails to hold with probability at least $\tau$. By compactness there would then be an infinite violating subsequence along which $2m/|V^{(n)}|$ converges to a limit.

As proved in \cite[Lemma~6.10]{FKSS21}, 
$G$ locally converges to a Galton--Watson tree with a certain offspring distribution $\mu$, and by \cite[Lemma~6.6]{FKSS21}, whp $|T|=\beta n+o(n)$ for some explicit $\beta>0$ depending on $g$ (and the other parameters). As discussed in \cite[Section~7]{FKSS21}, by results of Bordenave, Lelarge, and Salez (specifically \cite[Theorem~13 and Eq.~(19)]{BLS11}), we have $\corank A(G)\le \max_{x\in [0,1]}M_\mu(x)+o(n)$ for a certain function $M_\mu\colon[0,1]\to \mb R$ depending on $\mu$, so it suffices to prove that $\max_{x\in [0,1]}M_\mu(x)\le\beta/16$. This is essentially what is proved in \cite[Lemma~7.5]{FKSS21}, but there was one point where the assumption $k\ge 3$ was used: namely, for certain $\gamma,\lambda>0$ (which depend only on $g$ and $\alpha$) and a function $\phi\colon[0,(1-\gamma)\lambda]\to \mb R$ defined by
\[\phi(x) = \sum_{t=k-1}^\infty\frac{x^t}{t!}((\mbm{1}_{t\ge k}-\alpha\mbm{1}_{t\ge\Delta}) + \gamma\lambda(\mbm{1}_{t+1\ge k}-\alpha\mbm{1}_{t+1\ge\Delta})),\]
it is necessary to prove that $\phi''$ is log-concave, and the proof in \cite[Lemma~7.5]{FKSS21} uses the assumption $k\ge 3$. We give an alternative proof for the log-concavity of $\phi''$ in the case $k=2$, as follows.

In the case $k=2$, one can compute
\[\phi''(x) = \sum_{t=0}^\infty\frac{x^t}{t!}((1 + \gamma\lambda)-\alpha(\mbm{1}_{t\ge\Delta-2} + \gamma\lambda\mbm{1}_{t\ge\Delta-3})) = (1+\gamma \lambda) (e^{x} - h(x)),\]
where 
\[h(x) = \sum_{t=0}^\infty\frac{x^t}{t!}(1+\gamma \lambda) ^{-1}\alpha(\mbm{1}_{t\ge\Delta-2} + \gamma\lambda\mbm{1}_{t\ge\Delta-3}).\]
To verify that $\phi''(x)$ is log-concave it suffices to verify that 
\[\frac{d^2}{dx^2}\log(e^{x}-h(x)) = -\frac{e^x(h(x)+h''(x)-2h'(x)) + (h'(x)^2-h(x)h''(x))}{(e^x-h(x))^2}\]
is non-positive. First we compute
\begin{align*}
h(x)+h''(x)-2h'(x) &= \sum_{t=0}^\infty\bigg(\frac{x^t}{t!}-\frac{2x^{t-1}}{(t-1)!}+\frac{x^{t-2}}{(t-2)!}\bigg) (1+\gamma \lambda)^{-1}\alpha(\mbm{1}_{t\ge\Delta-2} + \gamma\lambda\mbm{1}_{t\ge\Delta-3})\\
&= \sum_{t=0}^\infty\frac{x^{t-2}(x^2-2tx+t(t-1))}{t!}(1+\gamma \lambda)^{-1}\alpha(\mbm{1}_{t\ge\Delta-2} + \gamma\lambda\mbm{1}_{t\ge\Delta-3}).
\end{align*}
For sufficiently large $\Delta$ in terms of $\lambda$ and $\gamma$, and $x\in [0,(1-\gamma)\lambda]$, each term in this expression is nonnegative, because $x^2-2tx+t(t-1)\ge 0$ for $x\le t-\sqrt{t}$. So, it suffices to verify that $h'(x)^2-h(x)h''(x)\ge 0$, which is equivalent to $h$ being log-concave.

To prove that $h$ is log-concave, we use the well-known fact (see  for example \cite[Lemma~3]{BB05}) that if a nonnegative function $f$ is log-concave on an interval $[a,b]$, then its antiderivative $x\mapsto \int_a^xf(y)dy$ is also log-concave on that same interval. Given this, it suffices to prove that the $(\Delta-3)$-fold derivative $h^{(\Delta-3)}$ is log-concave. We compute
\[h^{(\Delta-3)}(x) = (1+\gamma\lambda)^{-1}\alpha((1+\gamma\lambda)e^{x}-\gamma\lambda)),\]
so we may now finish the proof by direct differentiation. Indeed, for any $r\in [0,1]$ we compute $\frac{d^2}{dx^2}\log(e^{x}-r) = -re^{x}/(e^{x}-r)^2\le 0$, which implies the desired result.

For (B1), essentially the same proofs as in \cite[Section~6]{FKSS21} show that $G$ still locally converges to a Galton--Watson tree with offspring distribution $\mu$, and whp $|T|=\beta(|V_1|+|V_2|)+o(n)$, so the same proof as above shows that $\corank A(G[V\setminus T])\le |T|/8$. So, by \cref{fact:biadjacency-rank} we have $\corank B(G[V\setminus T])\le |T|/16$.
\end{proof}

\section{Analysing a corank-walk}\label{sec:RMT-overview}
In this section we explain how to prove \cref{thm:main-RMT}(A1) using the tools from
\cref{sec:corank-init,sec:boost-corank,sec:random-walk}. The proof of \cref{thm:main-RMT}(B1) is very similar (actually, it is slightly easier), and we briefly sketch the necessary changes for that proof in \cref{sec:bipartite-sketch}.

Fix $\varepsilon>0$, let $1+\varepsilon\le m/n\le n/\varepsilon$, let $G\sim\mathcal{K}(n,m,2)$ and write $V=\{1,\ldots,n\}$ for its vertex set. Our objective is to prove that whp
$\corank A(G)=s(G)$.

Fix $\alpha,\eta,\Delta>0$ such that $1/\Delta\ll\alpha\ll \eta\ll\varepsilon$. At the end of the proof we will take $\Delta\to \infty$ as $n\to \infty$,
but for now we view $\Delta$ as a constant. (In particular, we assume $n$ is large in terms of $\alpha,\eta,\Delta$.)

As in \cref{lem:initial-rank}, consider a set $S$ of
$\lfloor\alpha n\rfloor$ vertices (say $S=\{1,\ldots,\lfloor\alpha n\rfloor\}$),
and let $T=\{v\in S\colon\deg_{G}(v)\ge\Delta\}$ be the set of vertices
with degree at least $\Delta$. When we take $\Delta\to \infty$ at the end of the proof, we will have $|T|=o(n)$ whp, but until then the reader should think of $|T|$ as having order $n$.

Given the information in \cref{lem:initial-rank} about $G[V\setminus T]$, our strategy is to study the evolution of the corank as we add back the vertices in $T$ (in a random order), using \cref{thm:random-walk-master,lem:corank-boosting-master}. Let $N=|T|$, consider a uniform random ordering $v_{N-1},\ldots,v_{0}$ of the elements of $T$, and let $G_{t}=G[V\setminus \{v_{t-1},\ldots,v_{0}\}]$
(so $G_{N}=G[V\setminus T]$ and $G_{0}=G$).

Let $W$ be the set of vertices in $G_{N}$ which have degree at
least 2 (with respect to $G_{N}$). The idea is that the vertices
in $W$ already satisfy their degree constraints, so all the vertices
in $W$ are equally likely to be neighbours of vertices in $T$. The
following lemma makes this precise.
\begin{claim}
\label{claim:neighbourhood-symmetry}Reveal an outcome of $T$, reveal
all the edges of $G$ not between $T$ and $W$, and reveal $\deg_{W}(v)$
for each $v\in T$. Then, conditionally, the neighbourhoods $(N_{W}(v))_{v\in T}$
are independent uniformly random subsets of $W$ with sizes $(\deg_{W}(v))_{v\in T}$.
\end{claim}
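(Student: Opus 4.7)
The plan is to show that after the revealed information, the bipartite graph $H$ between $T$ and $W$ is uniform over simple bipartite graphs on $T\cup W$ with the given left-degree sequence $(\deg_W(v))_{v\in T}$ and no further constraint, from which the claimed independence/uniformity of the neighbourhoods is immediate.

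First I would sort out what has been revealed and what remains random. The vertex set $T$ depends only on $S$ (which is deterministic) and on the degrees $(\deg_G(v))_{v\in S}$, and the degrees of vertices in $S$ are visible from the edges not between $T$ and $W$ (since revealing all such edges determines $G[V\setminus T]$, and for $v\in T$ we are told $\deg_W(v)$ while the remaining incidences of $v$ lie within $T$ or from $T$ to $V\setminus T\setminus W$, which are revealed). In particular $W$ is determined from the revealed edges, the total number of $T$-$W$ edges equals $\sum_{v\in T}\deg_W(v)$ (so the total edge count $m$ is consistent), and the only unrevealed part of $G$ is $H$.

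Next I would apply Bayes' rule, using that $G\sim\mathcal{K}(V,m,2)$ is \emph{uniform} over graphs with vertex set $V$, $m$ edges and minimum degree at least $2$. Two outcomes of $G$ that agree with the revealed information differ only in $H$, so they are equiprobable provided both are legitimate outcomes of $\mathcal{K}(V,m,2)$. Edge count and left-degrees on $T$ are forced, so any simple bipartite $H$ on $T\cup W$ with left-degrees $(\deg_W(v))_{v\in T}$ yields a graph with exactly $m$ edges. The minimum-degree-$2$ condition is also automatic: each $w\in W$ already has $\deg_{G[V\setminus T]}(w)\ge 2$ by definition of $W$, so its degree in $G$ is $\ge 2$ no matter what $H$ is; each $v\in V\setminus T\setminus W$ has its full degree determined by already-revealed edges (and must be $\ge 2$ by consistency); each $v\in T$ has total degree $\deg_T(v)+\deg_{V\setminus T\setminus W}(v)+\deg_W(v)\ge 2$ by the revealed data. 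Hence the conditional distribution of $H$ is uniform over simple bipartite graphs on $T\cup W$ with prescribed left-degrees and no constraint on right-degrees.

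Finally, I would observe that such a uniform distribution is precisely one in which the neighbourhoods $(N_W(v))_{v\in T}$ are independent, with each $N_W(v)$ uniform over $\binom{W}{\deg_W(v)}$: the bipartite adjacency is encoded by a tuple of subsets of $W$, one for each $v\in T$, with no simple-graph obstruction between different $v$'s (there cannot be parallel $T$--$W$ edges), and the uniform measure on this product space factors. There is no real obstacle here; the only thing one needs to be vigilant about is confirming that the min-degree constraint is genuinely automatic on the $W$-side, which is exactly why the set $W$ was singled out.
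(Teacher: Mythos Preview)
Your proof is correct and takes essentially the same approach as the paper: both argue that swapping the bipartite graph $G[T,W]$ for any other bipartite graph with the same $T$-side degrees yields another valid outcome of $\mathcal{K}(V,m,2)$ consistent with the revealed information, hence the conditional distribution is uniform. The paper's version is terser, but your explicit verification that the swap does not disturb $T$, $W$, or the minimum-degree-$2$ constraint is exactly the content of the paper's parenthetical remark.
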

\begin{proof}
Let $G[T,W]$ be the bipartite graph of edges between $T$ and $W$. Let $H_1,H_2$ be bipartite graphs with the same bipartition $X\cup Y$, such that every vertex in $X$ has the same degree in $H_1$ as it does in $H_2$. Then, for any outcome of $G$ such that $G[T,W]=H_1$, we can swap $G[T,W]$ with $H_2$ to obtain an outcome of $G$ such that $G[T,W]=H_2$. So, $H_1$ and $H_2$ are equally likely to occur as $G[T,W]$. (It is important that this swap can never change the sets $W$ or $T$, and can never cause the degree of any vertex to drop below $2$.)
\end{proof}

Recall that, by \cref{lem:initial-rank}(A4), almost all vertices $v_1,\ldots,v_N$ have degree at least $\sqrt \Delta$ into $W$. So, 
the upshot of \cref{claim:neighbourhood-symmetry} is that when we add the vertices $v_{1},\ldots,v_{N}$
back to $G_{N}$, at most of these steps we are essentially adding
a new random row and column with many ``1''-entries, which puts
us in a position to apply \cref{lem:corank-boosting-master}.

Recall that \cref{lem:corank-boosting-master}(a) has an assumption that the matrix under consideration
has a balanced kernel vector: a kernel vector which is not dominated
by a single level set. We therefore need some estimates about kernel
vectors of the adjacency matrices $A(G_{t})$.

First, say that a level set of a vector $\mbf v$ is a \emph{nonzero level set} if it is the $\lambda$-level set of $\mbf v$ for some $\lambda\ne 0$. With some crude estimates it is not hard to show that there
are unlikely to ever be kernel vectors which are dominated by a nonzero
level set.
\begin{claim}
\label{claim:nonzero-level-sets}With probability $1-n^{-\omega(1)}$,
no $A(G_{t})$ has a kernel vector with a nonzero level set larger than $(1-\eta)(n-t)$ (provided $\eta\ll \eps$ and $\alpha\le \eta$).
\end{claim}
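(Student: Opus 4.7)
The plan is to derive a contradiction from the hypothetical kernel vector: a nearly-constant nonzero kernel vector would force the complement of its dominant level set to essentially dominate all non-isolated vertices of $G_t$, which conflicts with the well-controlled degree sequence of $G$. Throughout I work on the probability-$(1-n^{-\omega(1)})$ event that the conclusion of \cref{lem:degree-sequence-poisson}(A4) holds, i.e., $\sum_{v\in S}\deg_G(v)\lesssim_\varepsilon |S|\log(2n/|S|)$ for every $S\subseteq V$; this is the only tool the proof requires. In particular I will not appeal to \cref{lem:initial-rank}, since that is only available whp and would weaken the conclusion from $1-n^{-\omega(1)}$ to $1-o(1)$.

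Suppose then that some $A(G_t)$ has a kernel vector $\mbf v$ with a $\lambda$-level set $L\subseteq V(G_t)$ of size more than $(1-\eta)(n-t)$, for some $\lambda\ne 0$, and set $R=V(G_t)\setminus L$, so $|R|<\eta(n-t)\le \eta n$. The key observation is that for any $u\in V(G_t)$ the kernel relation reads $\lambda|N_{G_t}(u)\cap L|+\sum_{w\in N_{G_t}(u)\cap R}v_w=0$; if $u$ is non-isolated in $G_t$ and has no neighbor in $R$, this forces $\lambda\deg_{G_t}(u)=0$, which is impossible. So every non-isolated vertex of $G_t$ has a neighbor in $R$, and double-counting gives
\[
|V(G_t)|-I_t \;\le\; \sum_{u\in V(G_t)}|N_{G_t}(u)\cap R| \;=\; e_{G_t}(L,R)+2e_{G_t}(R) \;\le\; \sum_{w\in R}\deg_G(w) \;\lesssim_\varepsilon\; \eta n\log(1/\eta),
\]
where $I_t$ denotes the number of isolated vertices of $G_t$ and the last step uses \cref{lem:degree-sequence-poisson}(A4) with $|R|\le \eta n$.

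The main remaining step is to bound $I_t$ using only this degree-sum event. Any isolated vertex $v$ of $G_t$ has all of its $\ge 2$ $G$-neighbors in $V\setminus V(G_t)\subseteq T$, so it contributes at least $2$ to $\sum_{w\in T}\deg_G(w)$; hence $I_t\le (1/2)\sum_{w\in T}\deg_G(w)\lesssim_\varepsilon |T|\log(2n/|T|)\le O_\varepsilon(\alpha n\log(1/\alpha))$, using $|T|\le|S|\le \alpha n$. Combined with $|V(G_t)|\ge n-|T|\ge(1-\alpha)n$, the displayed inequality becomes
\[
(1-\alpha)n-O_\varepsilon(\alpha n\log(1/\alpha)) \;\le\; O_\varepsilon(\eta n\log(1/\eta)),
\]
which contradicts $\alpha\le\eta\ll\varepsilon$ once $\eta$ is taken small enough in terms of $\varepsilon$. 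Since the argument is uniform over the choice of $T$, over the ordering $v_{N-1},\ldots,v_0$, and over the index $t$, no union bound is needed; the failure probability is just the $n^{-\omega(1)}$ inherited from \cref{lem:degree-sequence-poisson}(A4). The only (mild) obstacle is the isolated-vertex count, which the charging above handles cleanly without appealing to \cref{lem:initial-rank}(A2).
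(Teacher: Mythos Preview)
Your proof is correct and follows essentially the same approach as the paper: both rely solely on the degree-sum bound from \cref{lem:degree-sequence-poisson}(A4) to show that a vertex whose $G_t$-neighbourhood lies entirely in the dominant level set must exist (and yields a nonzero coordinate of $A(G_t)\mbf v$). The paper's version is marginally slicker in that it treats the complement $R$ and the deleted vertices together as a single small set $\ol U\cup S$ (where $\ol U=V\setminus L$), bounding its neighbourhood in one shot and thereby avoiding your separate isolated-vertex charging; but the two arguments are equivalent in substance.
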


We defer the simple proof of \cref{claim:nonzero-level-sets} to \cref{sec:stalks-computation}. It is much more delicate to
deal with kernel vectors which are dominated by their zero level set,
i.e., kernel vectors with small \emph{support}. Indeed, the special
cycles counted by $s(G)$ each give rise to a kernel vector with small
support, so we certainly cannot rule these out entirely. For each $t\le N$, define the set of ``small-support'' kernel vectors
\[K^{(\eta)}_t=\{\mathbf{v}\in\ker A(G_{t})\colon|\on{supp}(\mathbf{v})|\le\eta (n-t)\}.\]
The following lemma shows that while there may be some vectors in $K^{(\eta)}_t$, typically these vectors are collectively supported on a small subset of indices.
\begin{claim}
\label{claim:small-kernel-vectors-rough}
For each $t$,
\[
\Pr[t\le N\emph{ and }\dim(K^{(\eta)}_t)\ge t/4]\lesssim_{\eta} \frac{1}{t}+\left(\frac{t}{n}\right)^{1/4}.
\]
\end{claim}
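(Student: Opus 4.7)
My plan is to apply Markov's inequality to $\dim K^{(\eta)}_t \cdot \mathbbm{1}_{t \le N}$. Given the target bound of $\lesssim_\eta 1/t + (t/n)^{1/4}$, Markov at level $t/4$ reduces the claim to the expectation estimate
\[
\mathbb{E}\bigl[\dim K^{(\eta)}_t \cdot \mathbbm{1}_{t \le N}\bigr] \lesssim_\eta 1 + t \cdot (t/n)^{1/4}.
\]

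The key linear-algebraic ingredient I plan to use, implicit in the work of DeMichele, the first author, and Moreira \cite{DGM22}, is that $\dim K^{(\eta)}_t$ is bounded by the number of \emph{stalks} of $A(G_t)$ with support size at most $\eta(n-t)$; here a stalk is the support of a minimal nonzero kernel vector, i.e., one whose support cannot be strictly reduced by adding multiples of other kernel vectors. Given any basis of $K^{(\eta)}_t$ one can iteratively perform Gaussian-style elimination to shrink supports, arriving at a basis whose vectors have pairwise distinct minimal supports; this reduces the expectation bound to counting small stalks of $G_t$.

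Next I would bound the expected number of such stalks using that, conditional on $T$ and the ordering, the removed set $R = \{v_0, \ldots, v_{t-1}\}$ is a uniformly random size-$t$ subset of $T$. Two contributions must be controlled. First, stalks of $G_t$ inherited from $G$ itself (stalks of $G$ whose support happens to lie in $V(G_t)$) are dominated by the special cycles of $G$, whose expected count is $O_\eta(1)$ thanks to the expansion estimate \cref{lem:dense-subset} (which rules out stalks denser than cycle-like configurations). This yields the ``$1$'' term. Second, ``new'' stalks born in $G_t$ because the removal of vertices in $R$ loosens kernel constraints: each such stalk corresponds to a near-cycle configuration in $G$ together with a prescribed set of ``blocker'' vertices in $T$ which must all have been removed for the near-cycle to become a bona fide stalk, and the probability that any fixed quadruple of blockers lies in $R$ is of order $(t/N)^4 \lesssim (t/n)^4$. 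Summing over configurations should yield the ``$t \cdot (t/n)^{1/4}$'' term.

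The main obstacle will be the careful stalk enumeration that produces exactly the factor $O_\eta(t^{5/4}/n^{1/4})$: this requires a precise structural classification of small-support stalks by their shape (essentially unions of short even cycles, possibly with low-degree trees attached via $T$-vertices), combined with the symmetry from \cref{lem:rotate-sequence-simple} (so that, conditional on the degree sequence, a configuration-model count suffices), and finally a tight enumeration in each case. This enumeration is the technical heart of \cref{sec:stalks-computation}; the claim then follows from the stalk bounds proved there combined with the Markov argument outlined above.
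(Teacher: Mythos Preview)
Your high-level plan (Markov's inequality reducing to a stalk count) is correct and matches the paper. However, your proposed decomposition into ``inherited'' versus ``new'' stalks, with the latter controlled via a blocker-probability argument of order $(t/N)^4$, is not how the paper proceeds, and it is not clear it can be made to work: the arithmetic you sketch does not obviously produce the bound $1+t(t/n)^{1/4}$, and more seriously, the $(t/n)^{1/4}$ term in the claim has a completely different origin.

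The paper's argument is cleaner and avoids this decomposition entirely. The key point (which you mention but do not exploit) is that, conditional on its degree sequence, $G_t$ is a uniformly random graph with that degree sequence. The paper first shows (\cref{lem:good-deg-seq}) that with probability at least $1-(t/n)^{1/4}$ the degree sequence of $G_t$ is $(n,m,t,\kappa)$-typical; \emph{this} is where the $(t/n)^{1/4}$ comes from, not from any stalk enumeration. Having conditioned on a typical degree sequence, one then bounds $\dim K^{(\eta)}_t\le|\supp(K^{(\eta)}_t)|$, which by \cref{lem:minimal-stalk} is at most the number of \emph{vertices} in $(\le\eta(n-t),0)$-stalks. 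The sparse-stalk estimates of \cref{lem:structure-master}(1,2,4) (together with \cref{lem:dense-subset} to discard non-sparse stalks) then give an expected vertex count of $O_\kappa(1+(t/n)^{3/2}n)$, and Markov at level $t/4$ yields $O_\kappa(1/t+(t/n)^{1/2})$. Combined with the $(t/n)^{1/4}$ from the typicality event, this gives the claim. So there is no need to track which stalks were already present in $G$ versus created by vertex removal; the degree-sequence conditioning absorbs all of that.
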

\begin{remark*}
Note that $N$ is random, and we are not conditioning on it at this stage.
\end{remark*}

We also need much more precise control for the last few steps of our random walk, essentially characterising each $K^{(\eta)}_t$ in terms of the special cycles of $G_t$. For an $h$-vertex graph $H$, let $s^{(\eta)}(H)$ be the number of special cycles of length at most $2\eta h$ in $H$, counting isolated special cycles twice. Let $C^{\mr{spec}(\eta)}_t$ be the set of special cycles of length at most $2\eta n$ in $G_t$ and let $V^{\mr{spec}(\eta)}_t$ be the set of degree-2 vertices in these special cycles.

\begin{claim}\label{claim:small-kernel-vectors-precise}
The following hold together with probability at least $1-1/\Delta$.
\begin{enumerate}
    \item[(a)] $\supp(K^{(\eta)}_t)=V_t^{\mr{spec}(\eta)}$ for each $t\le \Delta$.
    \item[(b)] $\dim(K^{(\eta)}_t)=s^{(\eta)}(G_t)$ for each $t\le \Delta$.
    \item[(c)] $(C^{\mr{spec}(\eta)}_\Delta,V^{\mr{spec}(\eta)}_\Delta)=\cdots=(C^{\mr{spec}(\eta)}_0,V^{\mr{spec}(\eta)}_0)$.
    \item[(d)] $|V^{\mr{spec}(\eta)}_t|\le \eta^2 n/50$ for each $t\le \Delta$.
    \item [(e)] $v_t$ has no neighbor in $V_{t+1}^{\mr{spec}(\eta)}$, for each $t< \Delta$.
\end{enumerate}
\end{claim}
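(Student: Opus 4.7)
The plan is to handle the five sub-claims separately and then union-bound the failure probabilities, aiming for failure probability $O(1/\Delta)$ in each. I would prove (d) by a first-moment cycle count, (a) and (b) via the ``stalks'' machinery to be developed in \cref{sec:stalks-computation}, and (c) together with (e) via the randomness in the ordering of $T$ combined with \cref{claim:neighbourhood-symmetry}.

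For (d), I would use the configuration-model representation from \cref{def:config-model,lem:simplicity} together with the degree-sequence estimates of \cref{lem:degree-sequence-poisson}(A) to bound the expected number of length-$4k$ special cycles in $G\sim\mc K(n,m,2)$. Conditioning on a typical degree sequence, a direct calculation in $\mb G^\ast(\mbf d)$---choose $2k$ degree-$2$ vertices and $2k$ higher-degree vertices alternating around a cycle, then pair stubs---yields $\mb E\,|V^{\mr{spec}(\eta)}_0|\lesssim_\varepsilon 1$, and Markov's inequality then gives $|V^{\mr{spec}(\eta)}_0|\le\eta^2 n/50$ with probability $1-O(1/n)$. Once (c) has been verified we have $V^{\mr{spec}(\eta)}_t=V^{\mr{spec}(\eta)}_0$ for every $t\le\Delta$, so (d) propagates from $t=0$ to all $t\le\Delta$.

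The main obstacle is (a) and (b), because they require a precise structural description of the short-support kernel. The strategy is to show that whp every \emph{minimal} kernel vector of $A(G_t)$ with support size at most $\eta(n-t)$ comes from a special cycle via \cref{fact:special-kernel}. This is exactly what is to come out of \cref{sec:stalks-computation}: a ``stalk'' is a minimal combinatorial witness to a kernel relation, and the first-moment estimates there will show that whp the only stalks in $\mc K(n,m,2)$ of support at most $\eta n$ are special cycles. Since $G_t$ differs from $G_0=G$ by only $\Delta=O(1)$ vertices, the same conclusion transfers to every $G_t$ with $t\le\Delta$. Granting this, each $\mbf v\in K^{(\eta)}_t$ decomposes as a linear combination of special-cycle kernel vectors, proving $\supp K^{(\eta)}_t\subseteq V^{\mr{spec}(\eta)}_t$; the reverse containment is \cref{fact:special-kernel} itself, giving (a). For (b), the expansion estimate \cref{lem:dense-subset} (see \cref{rem:f}) forces short special cycles to be vertex-disjoint, so their associated kernel vectors are linearly independent; each non-isolated short special cycle contributes one such vector and each isolated one contributes two (by shifting the $\pm1$ pattern around an edge, as in \cref{fact:special-kernel}), which is exactly $s^{(\eta)}(G_t)$.

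Finally, for (c) and (e), condition on all edges of $G$ outside $T\times W$ and on $(\deg_W(v))_{v\in T}$, so that by \cref{claim:neighbourhood-symmetry} the sets $(N_W(v))_{v\in T}$ are independent uniform random subsets of $W$ of prescribed sizes and the random ordering makes $\{v_0,\ldots,v_{\Delta-1}\}$ a uniform $\Delta$-subset of $T$. By \cref{lem:initial-rank}(A3) and \cref{lem:degree-sequence-poisson}(A4), $\sum_{v\in T}\deg_W(v)\lesssim_\varepsilon\alpha\Delta e^{-\Omega_\varepsilon(\Delta)}n$, so combined with the bound from (d) the expected number of edges from $T$ to $V^{\mr{spec}(\eta)}_0$ is $O_\varepsilon(\alpha\eta^2\Delta e^{-\Omega_\varepsilon(\Delta)})$, which yields (e) with failure probability $O(1/\Delta)$ after choosing $\alpha,\eta\ll 1$ and $\Delta$ large. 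Given (e), every short special cycle in $G_{t+1}$ survives to $G_t$ since the degrees of its degree-$2$ vertices are unchanged; the possibility of a \emph{new} short special cycle through $v_t$ is controlled by the additional observation that the vertices of $G_{t+1}$-degree at most $1$ are neighbours of $\{v_0,\ldots,v_{t-1}\}$ in $G$ and therefore number at most $O(\Delta\log n)$ by \cref{lem:degree-sequence-poisson}(A3), a tiny set that $v_t$ avoids whp by the same uniform-neighbourhood argument. Union-bounding over $t<\Delta$ and over the five sub-claims yields the claimed probability $1-1/\Delta$.
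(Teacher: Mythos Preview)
Your overall architecture is reasonable, but there are two genuine gaps that the paper's proof handles quite differently.

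\textbf{The ``transfer'' for (a) and (b) is not justified.} You prove the stalk characterisation for $G_0$ and then assert that ``since $G_t$ differs from $G_0$ by only $\Delta=O(1)$ vertices, the same conclusion transfers''. But a $(r,s,0)$-stalk $R$ in $G_t$ need not be a $0$-stalk in $G_0$: when you put back the removed vertices, some of them may land in $N_{G_0}(R)$ with only one neighbour in $R$, turning $R$ into an $\ell$-stalk with $\ell$ possibly as large as $\Delta$. So bounds on $0$-stalks in $G_0$ tell you nothing directly about $0$-stalks in $G_t$. The paper instead uses the observation (stated just before \cref{def:good-degree}) that $G_t$ is itself uniform conditional on its degree sequence, verifies that this degree sequence is whp $(n,m,t,\kappa)$-typical (\cref{lem:good-deg-seq}), and applies \cref{lem:structure-master} to each $G_t$ separately. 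Parts (3) and (4) of \cref{lem:structure-master} are there precisely to control the new stalk types (those containing a degree-$1$ vertex, and those with $s=r-1$) that can arise in $G_t$ but not in $G_0$.

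\textbf{The argument for (c) and (e) has a dependence problem and a circularity.} After the conditioning in \cref{claim:neighbourhood-symmetry}, the set $V_0^{\mr{spec}(\eta)}$ is \emph{not} determined: it depends on the very edges $T\times W$ whose randomness you want to exploit, so you cannot treat it as a fixed target for the uniform neighbourhoods $N_W(v)$. Moreover, (e) concerns $V_{t+1}^{\mr{spec}(\eta)}$, not $V_0^{\mr{spec}(\eta)}$; to identify these you are implicitly invoking (c), which you then derive from (e). The paper sidesteps both issues by working in $G$ rather than in the conditional space: using \cref{lem:dense-subset}(2) and \cref{lem:degree-sequence-poisson}(A4) it bounds the number of vertices that lie in or are adjacent to a cycle of length at most $\log\log n$ in $G$ by $\exp((\log\log n)^{O(1)})$, and then uses only the random \emph{ordering} (which is independent of $G$) to show that whp none of $v_0,\ldots,v_\Delta$ is among these ``dangerous'' vertices. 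Combined with the stalk bound $|V_t^{\mr{spec}(\eta)}|=O_{\kappa,\Delta}(1)\le\log\log n$ for each $t\le\Delta$ (established independently via \cref{lem:structure-master}(2)), this gives both (e) and (c) without circularity: any special cycle of any $G_t$ is a short cycle of $G$, and $v_t$ neither lies on nor is adjacent to any such cycle. Your separate handling of (d) via a cycle count at $t=0$ followed by propagation through (c) inherits the same circularity; the paper instead gets (d) for every $t\le\Delta$ directly from the per-$t$ stalk estimate.
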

(At the end of the proof we will take $\Delta\to \infty$, meaning that \cref{claim:small-kernel-vectors-precise} will become a with-high-probability statement.)

The proofs of \cref{claim:small-kernel-vectors-rough,claim:small-kernel-vectors-precise} are very delicate; they proceed by considering a linear-algebraic notion of \emph{minimal kernel vectors}, and studying the combinatorial consequences of this notion. We defer the proofs to \cref{sec:stalks-computation}.

For the last few steps of our random walk (for $t<\Delta$) we need to use \cref{lem:corank-boosting-master}(b), so we also need to know that $A(G_{t})$ is likely to be $\eta$-unstructured for such $t$,
in the sense of \cref{def:unstructured}.
\begin{claim}
\label{claim:unstructured-whp}For $t\in\{0,\ldots,\Delta\}$,
$A(G_{t})$ is $\eta$-unstructured with probability $1-(\log n)^{-\omega(1)}$.
\end{claim}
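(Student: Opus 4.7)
I would prove that $A(G_t)$ is $\eta$-unstructured by upper bounding the number of \emph{bad} ordered distinct pairs $(i, i') \in V_t \times V_t$ --- those admitting a non-$\eta$-balanced witness $\mathbf{v}$ with $\supp(A(G_t)\mathbf{v}) = \{i, i'\}$ --- by $(1 - \eta)|V_t|^2$, leaving at least $\eta n^2$ good pairs.

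First, I would rule out witnesses whose dominant level set is nonzero. If $\mathbf{v}$ has a $\lambda$-level set of size $> (1-\eta)|V_t|$ with $\lambda \ne 0$, set $\mathbf{w} = \mathbf{v} - \lambda\mathbf{1}$, so $|\supp(\mathbf{w})| \le \eta|V_t|$; the equations $(A\mathbf{v})_u = 0$ for $u \in V_t \setminus \{i, i'\}$ become $(A\mathbf{w})_u = -\lambda\deg_{G_t}(u)$. Any positive-degree $u$ outside $\{i, i'\} \cup \supp(\mathbf{w}) \cup N(\supp(\mathbf{w}))$ would yield a contradiction, so $\supp(\mathbf{w}) \cup N(\supp(\mathbf{w}))$ must cover all but $O_\varepsilon(\alpha n)$ vertices of $V_t$ (using parts (A2) and (A4) of \cref{lem:initial-rank}, whence whp almost all vertices of $V_t$ have positive degree in $G_t$). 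But by the degree-sum bound of \cref{lem:degree-sequence-poisson}(A4), valid with probability $1 - n^{-\omega(1)}$, we have $|\supp(\mathbf{w}) \cup N(\supp(\mathbf{w}))| \lesssim_\varepsilon \eta n \log(1/\eta) \ll n$ since $\alpha \ll \eta \ll \varepsilon$ --- a contradiction.

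Hence every witness $\mathbf{v}$ satisfies $|\supp(\mathbf{v})| \le \eta|V_t|$. Setting $S = \supp(\mathbf{v})$, the equations $\sum_{w \in N(u) \cap S} v_w = 0$ for $u \in V_t \setminus \{i, i'\}$ imply that if some $u \notin \{i, i'\}$ has exactly one neighbour in $S$, then the corresponding $v_w$ vanishes, contradicting $w \in S$. Writing $N_1(S) = \{u \in V_t : |N(u) \cap S| = 1\}$, we must therefore have $N_1(S) \subseteq \{i, i'\}$, and moreover $\{i, i'\} \subseteq S \cup N(S)$ (both $i$ and $i'$ need a neighbour in $S$ to make $(A\mathbf{v})_i, (A\mathbf{v})_{i'}$ nonzero). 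Taking $\mathcal{U}$ to be the union of $S \cup N(S)$ over all admissible small $S$, the bad pairs lie in $\mathcal{U} \times \mathcal{U}$, so it suffices to show $|\mathcal{U}| \le (1 - \eta/2)|V_t|$ with probability $1 - (\log n)^{-\omega(1)}$.

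The remaining task is to enumerate these ``generalised stalks'' --- small sets $S$ with $|N_1(S)| \le 2$ admitting a nontrivial almost-kernel vector --- and bound the number of vertices they cover. Combining $|N_1(S)| \le 2$ (which gives $2|N(S)| - 2 \le \sum_{v \in S}\deg_{G_t}(v) - 2|E(G_t[S])|$) with the subgraph density estimate of \cref{lem:dense-subset}(1) and the degree-sum control of \cref{lem:degree-sequence-poisson}(A4) forces $|N(S)|$ to lie within an additive constant of $|S|$, so $S$ must be ``cycle-plus-path-like''. By \cref{lem:dense-subset}(2), only stalks of size at most $\exp((\log\log n)^3)$ appear with the required probability, and these fall into a finite list of combinatorial types (closed special cycles, degree-$2$ paths joining two endpoints, cycles with pendant paths, and so on) whose enumeration reduces to standard configuration-model calculations. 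The main obstacle is precisely this enumeration: \cref{claim:small-kernel-vectors-precise} handles only the closed configurations (genuine small-support kernel vectors), whereas here one must additionally handle configurations with one or two free endpoints, which introduces more combinatorial types and demands tighter tail estimates to hit the $1 - (\log n)^{-\omega(1)}$ probability target (rather than the weaker $1 - 1/\Delta$ bound available from \cref{claim:small-kernel-vectors-precise}).
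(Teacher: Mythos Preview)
Your opening step---ruling out witnesses whose dominant level set is nonzero---is correct and essentially reproduces \cref{lem:nonzero-level-set}. The gap is in how you handle the small-support witnesses.

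The reduction to bounding $|\mathcal{U}|$ is far too coarse. Every degree-$2$ vertex $u$ already gives the trivial non-$\eta$-balanced witness $\mathbf{e}_u$ with $\supp(A(G_t)\mathbf{e}_u)=N(u)$, so $\mathcal{U}$ contains every degree-$2$ vertex and every vertex adjacent to one. When $m/n$ is close to $1+\varepsilon$ the vast majority of vertices have degree exactly $2$, and $|\mathcal{U}|$ is essentially all of $V_t$; even if one could show $|\mathcal{U}|\le(1-\Omega_\varepsilon(1))|V_t|$ with the required probability (which would need $\eta$ small depending on $\varepsilon$ in a specific way, plus a separate concentration argument), your sketch does not do so. The final paragraph also contains errors: \cref{lem:dense-subset}(2) bounds the \emph{number} of cycles shorter than $\log\log n$, not the size of stalks, and there is no ``finite list of combinatorial types''---there are already $\Theta(n)$ stalks of size one.

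The paper avoids all of this by counting \emph{stalks} rather than the vertices they cover. The key tool you are missing is the minimal-vector decomposition (\cref{lem:minimal-decompose}): any non-$\eta$-balanced $\mathbf{v}$ with $\supp(A\mathbf{v})=\{i,i'\}$ yields a \emph{minimal} $\mathbf{w}$ with $\supp(A\mathbf{w})\in\{\{i\},\{i'\},\{i,i'\}\}$ and $\supp(\mathbf{w})\subseteq\supp(\mathbf{v})$, and by \cref{lem:minimal-stalk} the support of $\mathbf{w}$ is an $\ell$-stalk for some $\ell\le 2$ whose exceptional vertices are exactly the indices in $\supp(A\mathbf{w})$. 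Hence each $1$-stalk accounts for at most one bad index and each $2$-stalk for at most one bad unordered pair, so the number of bad ordered pairs is at most $2n\cdot(\text{\# of }1\text{-stalks})+2\cdot(\text{\# of }2\text{-stalks})$; this is the content of \cref{lem:unstructured-stalk}. \cref{lem:structure-master}(6) and (5) then bound these counts in expectation by $O_\kappa((\log n)^2)$ and $O_\kappa(n)$ respectively, and Markov's inequality gives the $1-(\log n)^{-\omega(1)}$ probability. The point is that each stalk contributes $O(1)$ bad pairs---its exceptional vertices---not $|S\cup N(S)|^2$ pairs.
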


It turns out that \cref{claim:unstructured-whp} can be proved within the same general framework as \cref{claim:small-kernel-vectors-rough,claim:small-kernel-vectors-precise}. We defer this proof of \cref{claim:unstructured-whp} to \cref{sec:stalks-computation}.

We are now ready to define our random walk $X_{N},\ldots,X_{0}$. Say an index $t\le N$
is \emph{good} if all of the following hold.
\begin{enumerate}[{\bfseries{G\arabic{enumi}}}]
    \item\label{G1} $\deg_{W}(v_{t})\ge \sqrt \Delta$, and
    \item\label{G2} if $t\ge\Delta/2$, then $\dim(K_{t+1}^{(\eta)})< t/4$, and
    \item\label{G3} $A(G_{t+1})$ has no kernel
vector with a nonzero level set larger than $(1-\eta)(n-t-1)$, and
    \item\label{G4} if $t< \Delta$, then $v_{t}$ has no neighbour in $\supp(K_{t+1}^{(\eta)})$, and
    \item\label{G5} if $t< \Delta$, then $\dim(K_{t+1}^{(\eta)})= \dim(K_{\Delta}^{(\eta)})$, and
    \item\label{G6} if $t< \Delta$, then $A(G_{t+1})$ is $\eta$-unstructured, and
    \item\label{G7} if $t< \Delta$, then $|\supp(K_{t+1}^{(\eta)})| < \eta^2 (n-t-1)/32$.
\end{enumerate}
Let $R$ be the set of indices that are not good (i.e., bad). Then, we have the following consequence of \cref{claim:nonzero-level-sets,claim:small-kernel-vectors-rough,claim:small-kernel-vectors-precise,claim:unstructured-whp} (and the properties in \cref{lem:initial-rank}).
\begin{claim}\label{claim:bad-steps-tail}
With probability $1-o_{\Delta\to \infty}(1)$, for all $t\le N$ we have $|R\cap \{1,\ldots,t\}| \le t/100$.
\end{claim}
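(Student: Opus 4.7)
The plan is to decompose $R = R_1 \cup \cdots \cup R_7$ according to which of G1--G7 fails at each index $t$, and then to show: (i) $R_3 \cup R_4 \cup R_5 \cup R_6 \cup R_7 = \emptyset$ with probability $1 - o_{\Delta\to\infty}(1)$; and (ii) for each $i \in \{1,2\}$, $|R_i \cap \{1,\ldots,t\}| \le t/400$ for all $t \le N$ with the same probability. Summing these two then gives $|R \cap \{1,\ldots,t\}| \le t/100$.

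The easy cases (i): $R_3 = \emptyset$ is immediate from \cref{claim:nonzero-level-sets}. Conditions G4, G5, G7 only engage for $t < \Delta$, and I would deduce them directly from items (a)--(e) of \cref{claim:small-kernel-vectors-precise}. The subtle point is G5: items (c) and (e) together force the special cycles of length $\le 2\eta n$ \emph{and} their isolation statuses to be preserved across $G_0,\ldots,G_\Delta$ (no new edges incident to $V^{\mr{spec}(\eta)}_{t+1}$ are added), so $s^{(\eta)}(G_t) = \dim K^{(\eta)}_t$ is constant in $t \le \Delta$. Similarly G6 binds only for $t < \Delta$, and a union bound over these $\Delta$ values in \cref{claim:unstructured-whp} gives $R_6 = \emptyset$ with probability $1 - \Delta(\log n)^{-\omega(1)} = 1 - o(1)$.

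For $R_1$: \cref{lem:initial-rank}(A4) says that whp $|B_1| \le N/\Delta$, where $B_1 = \{v \in T : \deg_W(v) < \sqrt\Delta\}$. Since the random ordering of $T$ is independent of $G$, conditional on $B_1$ the indices $R_1$ form a uniformly random subset of $\{0,\ldots,N-1\}$ of size $|B_1|$. The target is equivalent to requiring the $k$-th order statistic $i_k$ of $R_1$ to satisfy $i_k \ge 400k-1$ for every $k$. The sharp Chernoff bound for hypergeometric variables (the form $\Pr[\on{Hyp}(N,400k,|B_1|) \ge k] \le (e\mu/k)^k$, strictly stronger than \cref{lem:chernoff}) gives
\[\Pr[i_k < 400k-1] \le \bigl(e(400k/\Delta)/k\bigr)^{k} = (400e/\Delta)^k,\]
and summing over $k \ge 1$ yields $O(1/\Delta) = o_\Delta(1)$ once $\Delta \gg 1$.

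For $R_2$: by definition $R_2 \subseteq \{t : t \ge \Delta/2\}$, so the bound is vacuous for $t < \Delta/2$. For dyadic thresholds $t_j := 2^j\lceil\Delta/2\rceil$ with $t_j \le N$, \cref{claim:small-kernel-vectors-rough} combined with Markov's inequality gives
\[\Pr\bigl[|R_2 \cap \{1,\ldots,t_j\}| \ge t_j/800\bigr] \le \frac{800}{t_j}\sum_{s=\lceil\Delta/2\rceil}^{t_j} O_\eta\bigl(1/s + (s/n)^{1/4}\bigr) = O_\eta\bigl(\tfrac{\log t_j}{t_j} + (t_j/n)^{1/4}\bigr).\]
Summing over $j$: the first term contributes a geometric series dominated by $O(\log\Delta/\Delta)$, and the second telescopes to $O((N/n)^{1/4}) = O(e^{-\Omega_\eps(\Delta)/4})$ using \cref{lem:initial-rank}(A3); both are $o_\Delta(1)$. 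A union bound over dyadic $j$, followed by interpolation to arbitrary $t$ (picking the dyadic $t_j \in [t,2t]$, at a cost of a factor $2$), then yields $|R_2 \cap \{1,\ldots,t\}| \le t/400$ for every $t \le N$. The main obstacle is exactly this step: the per-$t$ estimate of \cref{claim:small-kernel-vectors-rough} is too weak for a direct union bound over all $t$, and the dyadic scheme works only because \cref{lem:initial-rank}(A3) ensures $N/n = e^{-\Omega_\eps(\Delta)}$ decays super-polynomially in $\Delta$, taming the $(t/n)^{1/4}$ contribution in the sum.
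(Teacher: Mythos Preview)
Your proof is correct and follows essentially the same strategy as the paper: decompose $R$ by which condition \cref{G1}--\cref{G7} fails, dispose of \cref{G3}--\cref{G7} via \cref{claim:nonzero-level-sets}, \cref{claim:unstructured-whp}, and \cref{claim:small-kernel-vectors-precise}, and handle \cref{G1} and \cref{G2} by probabilistic arguments on dyadic scales. Your order-statistic treatment of \cref{G1} (via the $(e\mu/k)^k$ hypergeometric tail) is a clean alternative to the paper's two-regime dyadic argument, and your cumulative-count version of the \cref{G2} bound is equivalent to the paper's per-interval version; the only loose end is that your dyadic interpolation for $R_2$ may need a threshold $t_j$ slightly above $N$, which is harmless since the Markov bound applies equally well there.
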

\begin{remark*}
Here we use notation of the form $f=o_{\Delta\to\infty}(n)$ to mean that $f/n$ can be made arbitrarily small by taking sufficiently large $\Delta$. Recall that for the moment we are viewing $\Delta$ as a constant, but later on we will take $\Delta\to\infty$ as $n\to\infty$.
\end{remark*}
\begin{proof}
By \cref{lem:initial-rank}(A3), whp $N\le \beta n$ for some $\beta=o_{\Delta\to \infty}(n)$. For each $i=0,1,\ldots,\lfloor\log_2 (\beta n+2)\rfloor$, let $Q_i$ be the number of bad steps $t\in R$ with $t\le \beta n$ in the range $[2^i-1,2^{i+1}-1)$. It suffices to show that with probability $1-o_{\Delta\to \infty}(1)$, for each $i$ we have $Q_i\le 2^i/800$.

Let $Q_i^{\text{\cref{G1}}},\ldots,Q_i^{\text{\cref{G7}}}$ be the contribution to $Q_i$ from failure of each of \cref{G1} to \cref{G7} (so $Q_i\le Q_i^{\text{\cref{G1}}}+\cdots+Q_i^{\text{\cref{G7}}}$). We will show that with probability $1-o_{\Delta\to \infty}(1)$ we have $Q_i^{j}\le 2^i/5600$ for each $j\in \{\text{\cref{G1}},\ldots,\text{\cref{G7}}\}$.

First, the cases $j\in\{\text{\cref{G3}}, \ldots ,\text{\cref{G7}}\}$ are easy to handle with \cref{claim:nonzero-level-sets,claim:unstructured-whp,claim:small-kernel-vectors-precise} and the union bound. For $j=\text{\cref{G1}}$, say that a vertex $v\in T$ is ``degree-bad'' if $\deg_{W}(v)<\sqrt \Delta$. By \cref{lem:initial-rank}(A4), whp the fraction of degree-bad vertices in $T$ is at most $1/\Delta$. We can condition on such an outcome of these degree-bad vertices without revealing any information about the ordering $v_1,\ldots,v_N$ of the vertices in $T$. So, by a Chernoff bound for the hypergeometric distribution (\cref{lem:chernoff}), we have $\Pr[Q_i^{\text{\cref{G1}}}> 2^i/5600]\le e^{-\Omega(2^i)}$, and by Markov's inequality we have $\Pr[Q_i^{\text{\cref{G1}}}> 2^i/5600]\le O(1/\Delta)$. Using the former inequality for say $i\ge \log \Delta$ and the latter inequality for $i<\log \Delta$, the desired result follows by a union bound.

It remains to consider the case $j=\text{\cref{G2}}$. By Markov's inequality and \cref{claim:small-kernel-vectors-rough}, we have
\begin{align*}
\sum_{i=0}^{\lfloor\log_2 (\beta n+2)\rfloor} \Pr[Q_i^{\text{\cref{G2}}}> \frac{2^i}{5600}]\le \sum_{i=0}^{\lfloor\log_2 (\beta n+2)\rfloor} \frac{\mb E Q_i^{\text{\cref{G2}}}}{\Omega(2^i)}&=\sum_{i=0}^\infty\;\sum_{\substack{t\in[2^i-1,2^{i+1}-1)\\\Delta/2\le t\le \beta n}}\!\!\frac{\Pr[t\le N\text{ and }\dim(K_{t+1}^{(\eta)})\ge t/4]}{\Omega(2^i)}\\
&\lesssim_{\eta} \sum_{t=\lceil\Delta/2\rceil}^{\beta n} \frac{1/t+(t/n)^{1/4}}{t} \lesssim_{\eta} \frac{1}{\Delta}+\beta^{1/4}= o_{\Delta\to \infty}(1).\qedhere
\end{align*}
\end{proof}
Now, we are ready to complete the proof of \cref{thm:main-RMT}(A1) (using the notation and claims from throughout this section).
\begin{proof}[Proof of \cref{thm:main-RMT}(A1)]
Let $\mc I=(G_N,(\deg_W(v))_{v\in T})$. That is to say, $\mc I$ specifies $G_N$ (which determines $T$ and $W$), and the degrees from $T$ into $W$. Let $\mc E$ be the event that $|R\cap \{1,\ldots,t\}|> t/100$ for some $t\le N$, so $\Pr[\mc E]\le h(\Delta)$ fir sine $h$ satisfying $h(\Delta)\to0$ as $\Delta\to\infty$ by \cref{claim:bad-steps-tail}.

Note that $\Pr [\mc E]=\mb E[\Pr[\mc E\,|\,\mc I]]$, so applying Markov's inequality to $\Pr[\mc E\,|\,\mc I]$, and applying \cref{lem:initial-rank} for $\alpha\ll \eps$, we see that $\mc I$ satisfies
\begin{enumerate}
    \item[(i)] $\corank A(G_N)\le|T|/8$,
    \item[(ii)] $G[V_N]$ has at most $\eta n/10$ vertices with degree less than 2,
    \item[(iii)] $\Pr[\mc E\,|\,\mc I]\ge 1-h(\Delta)^{1/2}$.
\end{enumerate}
with probability at least $1-2h(\Delta)^{1/2}$. For the rest of the proof, we condition on such an outcome of $\mc I$ (so, for example, we treat $T$, $N$, and $G_N$ as deterministic objects).

Now, for $t\le N$, let
\[
X_{t}=\corank A(G_{t})-\mbm 1_{t< \Delta} \dim{K_\Delta^{(\eta)}}-t/4.
\]
We claim that the sequence $X_N,\ldots,X_0$ and the ``bad set'' $R$ satisfy the conditions of \cref{thm:random-walk-master} (with $\varepsilon=1/2$, $C=5/4$, $\delta=3/4$ and $p=o_{\Delta\to \infty}(1)$). Conditions \cref{W0} and \cref{W2} are immediate, condition \cref{W1} follows from (i) above, and condition \cref{W4} follows from (iii). So, we just need to verify \cref{W3}.

To this end, condition on any outcome of $G_{t+1}$ (which determines $X_{t+1}$). We will study how $X_t$ differs from $X_{t+1}$, in this conditional probability space.

In addition to information revealed so far, reveal $v_t$ and its degree $\deg_W(v_t)$ into $W$. If $t< \Delta$, also reveal the neighbourhood of $v_t$ in $\supp(K_{t+1}^{(\eta)})$ (this is enough information to see whether step $t$ is bad). Condition on an outcome of the revealed information; we need to show that if step $t$ is not bad, then with probability at least $1-o_{\Delta\to \infty}(1)$: if $X_{t+1}> 0$ then $X_{t}\le X_{t+1}-3/4$, and if $X_{t+1}\le 0$ then $X_t\le 0$. We assume that the revealed information is such that step $t$ is not bad (otherwise there is nothing to prove).

Let $d=\deg_W(v_t)\ge \sqrt \Delta$ (by \cref{G1}). If $t\ge \Delta$ let $E=W$ and if $t< \Delta$ let $E=W\setminus \supp(K_{t+1}^{(\eta)})$. In either case, we have $E\ge (1-\alpha)n-\eta n/10-\Delta\ge (1-\eta/3)n$ (using (ii) above and \cref{G2,G5}). By \cref{claim:neighbourhood-symmetry} and \cref{G4}, the neighbourhood $N_E(\mbf v_t)$ of $v_t$ in $E$ is a uniformly random size-$d$ subset of $E$. Now, we use one of the two parts of \cref{lem:corank-boosting-master}, as follows.
\begin{itemize}
    \item \emph{Case 1: $\corank(A(G_{t+1}))>\dim(K_{t+1}^{(\eta)})$.} In this case, there is some kernel vector $\mbf v$ of $A(G_{t+1})$ with $|\supp(\mbf v)|\ge \eta (n-t-1)$. By \cref{G3}, this kernel vector is $\eta$-balanced, so \cref{lem:corank-boosting-master}(a) yields that $\rank(A(G_t))\ge \rank(A(G_{t+1}))+2$ (and hence $X_t\le X_{t+1}-3/4$) with probability at least $1-O_\eta(\Delta^{-1/4})\ge 1-\Delta^{-1/8}$.
    \item \emph{Case 2: $\corank(A(G_{t+1}))=\dim(K_{t+1}^{(\eta)})$.} By \cref{G2} and \cref{G5}, this case can only happen if $X_{t+1} < 0$. Thus if $\corank(A(G_t))=\corank(A(G_{t+1}))$, we have $X_t \leq 0$ since the walk is quarter-integral and increases by at most $1/4$. By \cref{G6} and \cref{G7}, the conditions for \cref{lem:corank-boosting-master}(b) are met, and thus with probability at least $1-\Delta^{-1/8}$, we have $\corank(A(G_t))=\corank(A(G_{t+1}))$. The desired result follows.
\end{itemize}

Now, having verified conditions \cref{W0,W1,W2,W3,W4}, the conclusion of \cref{thm:random-walk-master} is that $X_0=\corank A(G_{0})-\dim{K_\Delta^{(\eta)}}\le 0$ with probability at least $1-O(\min(h(\Delta)^{1/2},\Delta^{-1/8}))$. By \cref{claim:small-kernel-vectors-precise}, it follows that $\corank A(G_{0})=\dim{K_0^{(\eta)}}=s^{(\eta)}(G)$ with probability at least $1-O(\min(h(\Delta)^{1/2},\Delta^{-1/8}))$. We deduce that $\corank A(G_{0})=\dim{K_0^{(\eta)}}=s^{(\eta)}(G)$ whp, taking $\Delta\to\infty$ sufficiently slowly. It now just suffices to observe that when $\corank A(G_{0})=\dim{K_0^{(\eta)}}$ there are no special cycles longer than $2\eta n$. Indeed, such a special cycle would give rise to a kernel vector with support larger than $\eta n$, by \cref{fact:special-kernel}.
\end{proof}

\section{Kernel vectors and stalks}\label{sec:stalks-computation}
In this section we prove \cref{claim:unstructured-whp,claim:nonzero-level-sets,claim:small-kernel-vectors-rough,claim:small-kernel-vectors-precise}, which are the remaining ingredients in our proof of \cref{thm:main-RMT}(A1).

\cref{claim:nonzero-level-sets,claim:small-kernel-vectors-rough,claim:small-kernel-vectors-precise} concern kernel vectors; note that \cref{claim:unstructured-whp} can be interpreted as a claim about \emph{almost} kernel vectors. Indeed, say that a vector $\mbf v\in \mb R^{n}$ is an $\ell$-almost kernel vector of a matrix $A\in \mb R^{n\times n}$ if $|\on{supp} (A\mbf v)|=\ell$ (so a kernel vector is a $0$-almost kernel vector, and the definition of $\eta$-unstructuredness in \cref{def:unstructured} concerns 2-almost kernel vectors). We will therefore be able to prove each of \cref{claim:unstructured-whp,claim:nonzero-level-sets,claim:small-kernel-vectors-rough,claim:small-kernel-vectors-precise} by carefully studying almost-kernel vectors in degree-constrained random graphs.

First, it is easy to show that kernel vectors which are dominated by a nonzero level set are unlikely: the following lemma immediately implies \cref{claim:nonzero-level-sets}, and is a simple consequence of \cref{lem:degree-sequence-poisson}(A4).

\begin{lemma}\label{lem:nonzero-level-set}
Let $G=G_0,\ldots,G_N$ be as in \cref{sec:RMT-overview}. Suppose $\eta\ll \eps$ and $\alpha\le \eta$. Then with probability $1-n^{-\omega(1)}$ no $A(G_t)$ has an $\ell$-almost kernel vector with a nonzero level set larger than $(1-\eta)(n-t)$, for any $\ell\in\{0,1,2\}$.
\end{lemma}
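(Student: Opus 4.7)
The plan is to reduce to a contradiction argument conditioned on the high-probability event from \cref{lem:degree-sequence-poisson}(A4). With probability $1-n^{-\omega(1)}$ we may assume that $\sum_{v\in S}\deg_G(v)\lesssim_\eps s\log(2n/s)$ holds for every set $S\subseteq V$ of size $s$ simultaneously; since this event depends only on $G$ (not on the random ordering $v_{N-1},\ldots,v_0$), it then suffices to show that, deterministically on this event, no $A(G_t)$ admits an $\ell$-almost kernel vector $\mbf v$ with $\ell\in\{0,1,2\}$ and a $\lambda$-level set $L\subseteq V(G_t)$ of size $|L|>(1-\eta)(n-t)$ with $\lambda\ne 0$.

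Suppose such $\mbf v$, $L$, and $\lambda$ exist, and set $\overline L=V(G_t)\setminus L$, so $|\overline L|<\eta(n-t)\le\eta n$. For any row $i\in L$ satisfying $(A(G_t)\mbf v)_i=0$, splitting $N_{G_t}(i)$ according to $L\cup\overline L$ gives
\[\lambda\bigl|N_{G_t}(i)\cap L\bigr|+\sum_{j\in N_{G_t}(i)\cap\overline L}v_j=0.\]
If $N_{G_t}(i)\cap\overline L=\emptyset$, this forces $\lambda\deg_{G_t}(i)=0$ and hence $\deg_{G_t}(i)=0$, since $\lambda\ne 0$. So every $i\in L$ with $(A(G_t)\mbf v)_i=0$ either has a neighbour in $\overline L$ or is isolated in $G_t$, and there are at most $\ell\le 2$ rows $i\in L$ with $(A(G_t)\mbf v)_i\ne 0$.

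Both populations can be bounded using \cref{lem:degree-sequence-poisson}(A4). The number of $i\in L$ with a neighbour in $\overline L$ is at most $\sum_{v\in\overline L}\deg_{G_t}(v)\le\sum_{v\in\overline L}\deg_G(v)\lesssim_\eps|\overline L|\log(2n/|\overline L|)\lesssim_\eps\eta n\log(2/\eta)$, using that $s\mapsto s\log(2n/s)$ is monotone on $(0,2n/e]$ together with $\eta<2/e$. Meanwhile, the isolated vertices of $G_t$ are precisely those $v\in V(G_t)$ whose entire $G$-neighbourhood lies in the deleted set $R=\{v_0,\ldots,v_{t-1}\}\subseteq T$, and so there are at most $\sum_{v\in R}\deg_G(v)\lesssim_\eps t\log(2n/t)\lesssim_\eps\alpha n\log(2/\alpha)$ of them, using $t\le N\le|S|\le\alpha n$.

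Combining, $|L|-2\lesssim_\eps\bigl(\eta\log(2/\eta)+\alpha\log(2/\alpha)\bigr)n$, whereas $|L|>(1-\eta)(n-t)\ge(1-\eta)(1-\alpha)n$; under the hypothesis $\alpha\le\eta\ll\eps$, the first quantity is an arbitrarily small fraction of $n$ while the second is at least $(1-2\eta)n$, a contradiction. The main subtlety is that $G_t$ may contain many vertices that have become isolated only after the deletions (and hence are not constrained by the minimum-degree condition of $G$); this is exactly what is controlled by the degree-sum bound on $R$, and it is where the hypothesis $\alpha\le\eta$ is used.
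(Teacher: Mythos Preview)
Your proof is correct and takes essentially the same approach as the paper's: both condition on the degree-sum bound from \cref{lem:degree-sequence-poisson}(A4) and argue that too many rows would have to produce a nonzero entry in $A(G_t)\mbf v$. The paper organises the count slightly differently (it takes the complement $\overline U$ of the level set inside $V(G)$ rather than $V(G_t)$, bundles it with $S$, and directly exhibits at least three vertices $v\in V(G_t)$ with $N_{G_t}(v)\subseteq U$ and $\deg_{G_t}(v)\ge 2$), but the underlying idea and the key input are identical.
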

\begin{proof}
We show that the desired result follows whenever $G$ satisfies the conclusion of \cref{lem:degree-sequence-poisson}(A4).

Consider any vector $\mbf x\in \mb R^{V(G_t)}$ with $\lambda$-level set $U\subseteq V(G_t)\subseteq V(G)$ larger than $(1-\eta)(n-t)\ge (1-\eta)(n-\alpha n)\ge (1-2\eta)n$, for some $\lambda\ne 0$, and suppose without loss of generality that $\lambda>0$. We will show that $\mbf x$ cannot be an $\ell$-almost kernel vector of $A(G_t)$, for any $\ell\le 2$.

Let $\ol U$ be the complement of $U$ in $V(G)$, and recall that we defined $G=G_0,\ldots,G_N$ by deleting some vertices from a special vertex subset $S$ with $|S|=\lfloor\alpha n\rfloor$. Note that $\ol U\cup S$ has at most $O_\eps((\eta n)\log(1/\eta))\le n-t-3$ neighbors in $G$. This implies that there are at least $3$ vertices $v\in V(G_t)$ which have at least $2$ neighbours in $G_t$, all of which are in $U$. So, the $v$-coordinate of $A(G_t) \mbf x$ is $\deg_{G_t}(v)\lambda\ge 2\lambda>0$. The desired result follows.
\end{proof}

The above lemma handles almost-kernel vectors that are dominated by a nonzero level set, but we also need to handle almost-kernel vectors that are dominated by their zero level set (i.e., almost-kernel vectors with small support). To this end we need the notion of a \emph{minimal} vector (previously appearing in work of DeMichele, the first author, and Moreira~\cite{DGM22}).

\begin{definition}
Say that a vector $\mbf v$ is \emph{minimal} if for any $\mbf w\in \mb{R}^{n}\setminus\{\mbf{0}\}$ with $\on{supp}(\mbf w)\subsetneq \on{supp}(\mbf v)$, we have $\on{supp}(A\mbf w)\nsubseteq\on{supp}(A\mbf v)$.
\end{definition}

It is clear from the above definition that for any vector $\mbf v$, there is a minimal vector whose support is contained in $\mbf v$. In fact more is true: every $\ell$-almost kernel vector can be written as a sum of such minimal almost kernel vectors.

\begin{lemma}\label{lem:minimal-decompose}
For any matrix $A\in \mb R^{n\times n}$ and vector $\mbf v\in \mb R^n$, we can write $\mbf v$ as a sum of minimal vectors $\mbf w$ satisfying $\on{supp}(A\mbf w)\subseteq \on{supp}(A\mbf v)$.
\end{lemma}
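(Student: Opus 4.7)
I would proceed by strong induction on $|\supp(\mbf v)|$. The base case $\mbf v = \mbf 0$ is handled by the empty sum, and when $\mbf v$ is itself minimal we simply take the trivial single-term decomposition $\mbf v = \mbf v$ (noting $\supp(A\mbf v) \subseteq \supp(A\mbf v)$ vacuously). This covers, in particular, every $\mbf v$ with $|\supp(\mbf v)| \le 1$, since any such $\mbf v$ is automatically minimal (the only vector with strictly smaller support is $\mbf 0$).

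For the inductive step, suppose $\mbf v \neq \mbf 0$ is not minimal. By definition, there exists a nonzero vector $\mbf w$ with $\supp(\mbf w) \subsetneq \supp(\mbf v)$ and $\supp(A\mbf w) \subseteq \supp(A\mbf v)$. Pick any coordinate $i \in \supp(\mbf w)$ (so also $i \in \supp(\mbf v)$), and set $c = v_i/w_i$, which is nonzero since $v_i, w_i \neq 0$. Then $(\mbf v - c\mbf w)_i = 0$, so
\[\supp(\mbf v - c\mbf w) \subseteq \supp(\mbf v) \setminus \{i\} \subsetneq \supp(\mbf v),\]
and by linearity of $A$ we get
\[\supp(A(\mbf v - c\mbf w)) \subseteq \supp(A\mbf v) \cup \supp(A\mbf w) \subseteq \supp(A\mbf v).\]
Both $c\mbf w$ and $\mbf v - c\mbf w$ therefore have strictly smaller support than $\mbf v$ and both satisfy the required support-containment property relative to $A\mbf v$. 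Applying the inductive hypothesis to each of $c\mbf w$ (whose $A$-image has support inside $\supp(A\mbf w) \subseteq \supp(A\mbf v)$) and $\mbf v - c\mbf w$ yields decompositions into minimal vectors, each of whose $A$-images is supported inside $\supp(A\mbf v)$. Summing these two decompositions via the identity $\mbf v = c\mbf w + (\mbf v - c\mbf w)$ gives the desired decomposition of $\mbf v$.

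The only real subtlety is ensuring that the splitting $\mbf v = c\mbf w + (\mbf v - c\mbf w)$ strictly reduces the support on both sides so the induction actually terminates: for $c\mbf w$ this is exactly the non-minimality hypothesis on $\mbf v$, and for $\mbf v - c\mbf w$ it is arranged by the specific choice of $c = v_i/w_i$ killing coordinate $i$. Everything else is routine bookkeeping about supports and linearity, so I do not anticipate any serious obstacle.
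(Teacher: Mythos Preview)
Your proof is correct and is essentially the same argument as the paper's, just recast as a direct strong induction on $|\supp(\mbf v)|$ rather than a minimal-counterexample argument. The key step---splitting $\mbf v=c\mbf w+(\mbf v-c\mbf w)$ with $c=v_i/w_i$ so that both pieces have strictly smaller support and $A$-image contained in $\supp(A\mbf v)$---is identical in both proofs.
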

\begin{proof}
Suppose for the purpose of contradiction that the lemma statement is false, and let $\mbf v=(v_1,\ldots,v_n)$ be a minimal-support counterexample (i.e., with $|\on{supp}(\mbf v)|$ as small as possible). We are assuming there is no way to represent $\mbf v$ as a sum of minimal vectors $\mbf w$ which satisfy $\on{supp}(A\mbf w)\subseteq \on{supp}(A\mbf v)$. We say a vector $\mbf w$ is \emph{properly contained} in $\mbf v$ if $\on{supp}(\mbf w)\subsetneq \on{supp}(\mbf v)$ and $\on{supp}(A\mbf w)\subseteq \on{supp}(A\mbf v)$.

By assumption, $\mbf v$ is not itself minimal, meaning that there is a vector $\mbf w=(w_1,\ldots,w_n)\in \mb{R}^{n}\setminus\{\mbf{0}\}$ that is properly contained in $\mbf v$. Fix any $i\in \on{supp}(\mbf w)$, let $\lambda=v_i/w_i$ and let $\mbf w'=\mbf v-\lambda \mbf w$. Then $\mbf w'$ is properly contained in $\mbf v$ as well (note that $i\notin \on{supp}(\mbf w')$).

Since $\mbf v$ is a minimal-support counterexample, we can write $\lambda \mbf w$ (respectively $\mbf w'$) as a sum of vectors that are properly contained in $\lambda \mbf w$ (respectively, properly contained in $\mbf w'$). Note that proper containment is transitive; since $\mbf v=\mbf w'+\lambda \mbf w$, we can now write $\mbf v$ as a sum of vectors that are properly contained in $\mbf v$, which is a contradiction.
\end{proof}

Minimal almost-kernel vectors enjoy certain combinatorial properties, which we capture in the notion of a \emph{stalk}. For a graph $G$ and a vertex set $R$, write $N(R)$ for the union of neighbourhoods of vertices in $R$ (so $N(R)$ may intersect $R$).

\begin{definition}\label{def:stalk}
Given a graph $G$, call a set of vertices $R\subseteq V(G)$ an $(r, s, \ell)$-\emph{stalk} for $G$ if:
\begin{enumerate}[{\bfseries{S\arabic{enumi}}}]
\setcounter{enumi}{-1}
    \item\label{S0} $|R| = r$ and $|N(R)| = s$,
    \item\label{S1} $s\ge r - 1 + \ell$,
    \item\label{S2} $R$ cannot be split into two nonempty sets with disjoint neighborhoods, and
    \item\label{S3} All but exactly $\ell$ vertices $v\in N(R)$ have at least two neighbours in $R$.
\end{enumerate}
The $\ell$ vertices $v\in N(R)$ with $|N(\{v\})\cap R|\le 1$ are called the \emph{exceptional vertices} for the stalk $R$. Also, we use the shorthand ``$(r,\ell)$-stalk'' to describe an $(r,s,\ell)$ stalk for any $s\ge r-1+\ell$, and the shorthand ``$(\le\!q,\ell)$-stalk'' to describe an $(r,\ell)$-stalk for any $r\le q$.
\end{definition}

\begin{lemma}\label{lem:minimal-stalk}
Consider an $n$-vertex graph $G$ and a minimal $\ell$-almost kernel vector $\mbf v$ of its adjacency matrix $A(G)$. Then $R=\on{supp}(\mbf v)$ is a $(|R|,\ell')$-stalk, for some $\ell'\le \ell$.
\end{lemma}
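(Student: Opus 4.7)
The plan is to verify conditions \cref{S0,S1,S2,S3} directly from the definitions of minimality and almost kernel vector. Set $R=\on{supp}(\mbf v)$ with $|R|=r$, and $T=\on{supp}(A\mbf v)$ with $|T|=\ell$. The first observation is that $T\subseteq N(R)$: if $v\notin N(R)$ then no neighbour of $v$ lies in $R$, so $(A\mbf v)_v=\sum_{u\sim v}v_u=0$. This immediately identifies $s=|N(R)|$ and establishes \cref{S0}.

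For \cref{S3}, let $\ell'$ denote the number of $v\in N(R)$ with $|N(v)\cap R|\le 1$ (equivalently, exactly $1$, since membership in $N(R)$ forces at least one neighbour in $R$). I claim every such $v$ must lie in $T$, and hence $\ell'\le\ell$: indeed, if $v\in N(R)\setminus T$ then $0=(A\mbf v)_v=\sum_{u\in N(v)\cap R}v_u$, and since each $v_u\ne 0$ this sum can only vanish when $|N(v)\cap R|\ge 2$. For \cref{S2}, suppose to the contrary that $R=R_1\sqcup R_2$ with each $R_i$ nonempty and $N(R_1)\cap N(R_2)=\emptyset$. Writing $\mbf v=\mbf v_1+\mbf v_2$ with $\on{supp}(\mbf v_i)=R_i$, the supports $\on{supp}(A\mbf v_i)\subseteq N(R_i)$ are disjoint, so each $\on{supp}(A\mbf v_i)$ is contained in $T=\on{supp}(A\mbf v_1+A\mbf v_2)$. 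Since each $\mbf v_i$ is nonzero with $\on{supp}(\mbf v_i)\subsetneq R$, this directly contradicts the minimality of $\mbf v$.

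The heart of the argument is \cref{S1}. Consider the subspace
\[U=\{\mbf w\in\mb R^R\colon \on{supp}(A\mbf w)\subseteq T\}.\]
The defining condition imposes one linear constraint (namely $\sum_{u\in N(v)\cap R}w_u=0$) for each $v\in N(R)\setminus T$, so $\dim U\ge r-(s-\ell)=r+\ell-s$. Note that $\mbf v|_R\in U$ and has all coordinates nonzero. If $\dim U\ge 2$, pick $\mbf w\in U$ linearly independent from $\mbf v|_R$; then some scalar combination $\mbf w'=\mbf v|_R-\lambda\mbf w$ (or simply $\mbf w'=\mbf w$ if $\mbf w$ already has a zero coordinate in $R$) is a nonzero vector with $\on{supp}(\mbf w')\subsetneq R$ and $\on{supp}(A\mbf w')\subseteq T$, again contradicting minimality. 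Thus $\dim U\le 1$, whence $s\ge r+\ell-1\ge r-1+\ell'$, establishing \cref{S1} for the value of $\ell'$ identified above.

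The main (though modest) obstacle is the dimension-counting step for \cref{S1}; the remaining conditions are essentially a direct combinatorial reading of the hypotheses. No new machinery is required beyond the definitions of minimal vector and $\ell$-almost kernel vector already in place.
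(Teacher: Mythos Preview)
Your proof is correct and follows essentially the same approach as the paper: the verifications of \cref{S2} and \cref{S3} are identical, and for \cref{S1} your dimension-count on the subspace $U$ is just a repackaging of the paper's argument (which removes a vertex from $R$ and finds a left kernel vector of the resulting wide submatrix). Both arguments establish the stronger inequality $s\ge r-1+\ell$ and then use $\ell'\le\ell$.
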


\begin{proof}
Let $Q=\supp(A(G_t)\mbf v)$ (so $|Q|=\ell$). For \cref{S1}, suppose for the purpose of contradiction that $|N(R)|<|R|+\ell-1$. Let $R'$ be obtained by removing an arbitrary vertex of $R$, and let $A'$ be the $R'\times (N(R)\setminus Q)$ submatrix of $A(G)$. Then $A'$ has $|R|-1$ rows and $|N(R)|-\ell<|R|-1$ columns, so has a nonzero left kernel vector $\mbf w$. Padding this vector with zeroes gives a nonzero vector $\mbf w'\in \ker(A(G))$ with $\supp(\mbf w')\subsetneq \supp(\mbf v)$ and $\supp(A(G)\mbf w')\subseteq \supp(A(G)\mbf v)$, contradicting the minimality of $\mbf v$.

For \cref{S2}, suppose for the purpose of contradiction that $R$ can be split into two nonempty sets $R_1,R_2$ with distinct neighbourhoods. Let $\mbf v_1\in \mb R^n$ be the vector obtained from $\mbf v_1$ by setting all entries not indexed by $R_1$ to zero. Then $\mbf v_1$ contradicts the minimality of $\mbf v$.

For \cref{S3}, suppose for the purpose of contradiction that some $u\in N(R)\setminus Q$ has exactly one neighbour in $R$ (call that neighbour $w$). But then the $u$-entry of $A(G)\mbf v$ is the same as the $w$-entry of $\mbf v$, which is impossible (recall that $w\in R=\supp(\mbf v)$ while $u\notin Q=\supp(A(G)\mbf v)$).
\end{proof}

We also need the following lemma deducing the precise corank of the adjacency matrix from information about its stalks and special cycles. Recall that a special cycle in $G$ is an induced cycle with length divisible by 4, such that every second vertex has degree 2 in $G$. Recall that $s(G)$ is the number of special cycles, counting isolated special cycles twice.

\begin{lemma}\label{lem:corank-s}Let $G$ be an $n$-vertex graph, and let
\[K^{(\eta)}=\{\mathbf{v}\in\ker A(G)\colon|\on{supp}(x)|\le\eta n.\}\]
Suppose that every $(\le\!\eta n,0)$-stalk $R$ is an $(|R|,|R|,0)$-stalk which satisfies $e(R\cup N_G(R))\le 2|R|$ and $\deg_G(v)\ge 2$ for all $v\in R$. Also, suppose that all special cycles in $G$ have length at most $\eta n$ and are vertex-disjoint from each other.
\begin{enumerate}
    \item $\supp K^{(\eta)}$ is precisely the set of degree-2 vertices in special cycles of $G$.
    \item $\dim K^{(\eta)}=s(G)$.
\end{enumerate}
\end{lemma}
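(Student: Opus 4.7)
The plan is to show that every vector in $K^{(\eta)}$ arises from an alternating kernel pattern on the degree-$2$ vertices of a special cycle, and then assemble the dimension count using the vertex-disjointness hypothesis.

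For the ``$\supseteq$'' containment of (1), \cref{fact:special-kernel} applied to each special cycle $C$ (necessarily of length at most $\eta n$ by hypothesis) furnishes a kernel vector supported on at most $\eta n/2 \le \eta n$ degree-$2$ vertices of $C$, certifying that they lie in $\supp K^{(\eta)}$. For the ``$\subseteq$'' containment, suppose $v \in \supp K^{(\eta)}$, realised by some $\mathbf{w} \in K^{(\eta)}$. The plan is to invoke \cref{lem:minimal-decompose} to split $\mathbf{w}$ into minimal kernel vectors whose supports sit inside $\supp \mathbf{w}$, and select one, call it $\mathbf{w}'$, whose support $R$ contains $v$. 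By \cref{lem:minimal-stalk}, $R$ is an $(|R|,0)$-stalk; by the hypothesis of the lemma it is in fact an $(|R|,|R|,0)$-stalk satisfying $e(R\cup N(R))\le 2|R|$, with $\deg_G(v')\ge 2$ for every $v'\in R$.

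Writing $H := G[R\cup N(R)]$, I would run the double count: every $u\in N(R)$ has $\deg_R(u)\ge 2$ (no exceptional vertices), every $v'\in R$ has $\deg_H(v')=\deg_G(v')\ge 2$ since all neighbours of $R$ lie in $N(R)\subseteq V(H)$, and $|V(H)|=2|R|-|R\cap N(R)|$. Summing yields $2e(H)\ge 2|V(H)|$, which combined with $e(H)\le 2|R|$ is tight precisely when $R\cap N(R)=\emptyset$. In that case $H$ is a $2$-regular bipartite graph with parts $R$ and $N(R)$, hence a disjoint union of even cycles; by \cref{S2} it is a single cycle; and the alternating recurrence $v_{i-1}+v_{i+1}=0$ coming from the kernel equations at the $N(R)$-vertices closes up only when the cycle length is divisible by $4$, identifying $R$ with the degree-$2$ vertex set of a special cycle of length at most $2|R|\le 2\eta n$. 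The main obstacle will be ruling out $R\cap N(R)\ne\emptyset$, since the above double count leaves slack exactly $|R\cap N(R)|$. Here I expect to exploit \emph{minimality} of $\mathbf{w}'$: any edge lying inside $R$ has both endpoints in $R\cap N(R)$, and invoking $\deg_R(u)\ge 2$ for each such endpoint propagates along $H$ and forces, step by step, all of $V(H)$ into $R$. Then $H$ is an isolated cycle with $R=V(H)$, but its two alternating halves each support a nonzero kernel vector with strictly smaller support, contradicting the minimality of $\mathbf{w}'$; so $R\cap N(R)=\emptyset$ after all.

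For (2), the vertex-disjointness hypothesis ensures that kernel vectors coming from distinct special cycles have disjoint support, hence are linearly independent. For each non-isolated special cycle of length $4k$, part (1) forces any element of $K^{(\eta)}$ supported on the cycle to vanish off its degree-$2$ vertices, and the recurrence $v_{i-1}+v_{i+1}=0$ around the cycle gives a $1$-dimensional solution space. For each isolated special cycle of length $4k$, both bipartition classes consist of degree-$2$ vertices, yielding a $2$-dimensional solution space. Summing over all special cycles, with isolated ones counted twice, reproduces $s(G)$ and gives $\dim K^{(\eta)} = s(G)$.
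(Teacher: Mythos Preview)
Your proof follows essentially the same route as the paper's: establish the lower bounds via \cref{fact:special-kernel}, decompose an arbitrary vector in $K^{(\eta)}$ into minimal kernel vectors via \cref{lem:minimal-decompose}, identify each minimal support as a $(\le\!\eta n,0)$-stalk via \cref{lem:minimal-stalk}, and then invoke the stalk hypotheses to force the special-cycle structure. Your dimension count for part (2) is in fact more explicit than the paper's (which derives (1) and (2) simultaneously by showing every minimal small-support kernel vector is a scalar multiple of a specific special-cycle vector).

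Where your argument is weakest is precisely where the paper is also terse. The paper simply asserts that an $(|R|,|R|,0)$-stalk with $e(R\cup N(R))\le 2|R|$ and $\deg_G\ge 2$ on $R$ must have $R\cup N(R)$ an induced cycle with $R$ sitting on alternate vertices, and proceeds from there. You correctly flag that the double count $2e(H)\ge 2|V(H)|$ has slack exactly $|R\cap N(R)|$, and you try to close the gap using minimality. However, your ``propagation'' step is not quite right as written: knowing $\deg_R(u)\ge 2$ for $u\in R\cap N(R)$ only tells you that $G[R\cap N(R)]$ has minimum degree $\ge 2$; it does not by itself force all of $V(H)$ into $R$, nor does it force $H$ to be $2$-regular (and hence a cycle). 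For instance, the stalk constraints alone are satisfied by $R=V(K_4)$, where $H=K_4$ is not a cycle --- this is not the support of a minimal kernel vector, but your propagation argument does not invoke minimality until \emph{after} claiming $H$ is a cycle.

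One clean way to close this is via the block structure of the $N(R)\times R$ submatrix $M$ of $A(G)$: writing $R_1=R\cap N(R)$ and $R_2=R\setminus N(R)$, one has $M=\begin{psmallmatrix}A(G[R_1]) & 0\\ \ast & M_{22}\end{psmallmatrix}$ (the upper-right block vanishes since $R_2$ has no $R$-neighbours). Minimality forces every proper subset of columns of $M$ to be independent; taking the $R_2$-columns shows $M_{22}$ is nonsingular, and then $\operatorname{rank}M=r-1$ forces $\operatorname{corank}A(G[R_1])=1$, with $\mathbf{w}'|_{R_1}$ generating the kernel. From here one can push the edge-count inequalities and minimality together to reach a contradiction when $0<|R_1|<r$, and in the case $R_1=R$ your isolated-cycle argument then applies (once one checks $H$ is $2$-regular). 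In any event, your overall strategy and the paper's coincide; the only soft spot is this structural step, which both treatments leave somewhat implicit.
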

We emphasise that \cref{lem:corank-s} is a non-probabilistic statement about general graphs $G$ (though we will eventually apply it to the random graphs $G_t$ defined in \cref{sec:RMT-overview}).
\begin{proof}
Let $V^{\mr{spec}}$ be the set of degree-2 vertices in special cycles of $G$, and suppose the vertex set of $G$ is $\{1,\ldots,n\}$. Recall from \cref{fact:special-kernel} that special cycles give rise to kernel vectors, so $\supp K^{(\eta)}\supseteq V^{\mr{spec}}$. Also, since the special cycles in $G$ are vertex-disjoint,
each of the $s(G)$ kernel vectors obtained in this way have disjoint supports, so are linearly independent. This shows that $\dim K^{(\eta)}\ge s(G)$.

Recalling \cref{lem:minimal-decompose}, to prove that $\dim K^{(\eta)}= s(G)$ it now suffices to show that every nonzero minimal kernel vector of $A(G)$ whose support size is at most $\eta n$ is a multiple of one of the explicit kernel vectors arising from special cycles via \cref{fact:special-kernel}. To this end, consider a nonzero minimal kernel vector $\mbf v=(v_1,\ldots,v_n)$, let $R=\on{supp}(\mbf v)$, and suppose $|R|\le \eta n$. Then, by \cref{lem:minimal-stalk} and the assumption in the lemma, $R=\on{supp}(\mbf v)$ is an $(|R|,|R|,0)$-stalk consisting of vertices with degree at least 2, such that $e(R\cup N_G(R))\le 2|R|$. This is only possible if $R\cup N_G(R)$ is an induced cycle in which the vertices in $R$ have degree exactly 2. Write $u_2,u_4,\ldots,u_{2q}$ (in order) for the vertices of this cycle, where $u_2,u_4,\ldots,u_{2q}\in R$.

For each $u_{i}\in N_G(R)$ (with $i$ odd), the $u_i$-coordinate of $A\mbf v$ is precisely $v_{u_{i-1}}+v_{u_{i+1}}$. Since $\mbf v$ is a kernel vector, each of $u_2,u_4,\ldots,u_{2q}$ must have the same absolute value, and cyclically alternate their signs. This is only possible if $q$ is even (i.e., if $R\cup N_G(R)$ induces a special cycle), and implies that $\mbf v$ is a multiple of one of the explicit kernel vectors arising from \cref{fact:special-kernel}.
\end{proof}

Next, the following lemma shows how to establish the $\eta$-unstructuredness property in \cref{def:unstructured} using information about stalks.
\begin{lemma}\label{lem:unstructured-stalk}
For $0<\eta<1/2$, consider a graph $G$ with at most $n/10$ different $(\le\!\eta n,1)$-stalks and at most $n^2/10$ different $(\le\!\eta n,2)$-stalks, where additionally, $A(G)$ has no 2-almost kernel vector with a nonzero level set of size at least $(1-\eta)n$. Then the adjacency matrix $A(G)$ is $\eta$-unstructured.
\end{lemma}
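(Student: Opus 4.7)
The plan is to bound the number of \emph{bad ordered pairs}: pairs $(i,i')$ with $i \ne i'$ such that some $\mbf v$ with $\supp(A\mbf v) = \{i,i'\}$ fails to be $\eta$-balanced. Once this count is shown to be at most $(1-\eta)n^2$, at least $\eta n^2$ good pairs remain, giving $\eta$-unstructuredness. First I would reduce to small-support witnesses: any bad witness $\mbf v$ is a 2-almost kernel vector, and the hypothesis forbids nonzero level sets of size $\ge (1-\eta)n$, so the unbalancing level set of $\mbf v$ must be the zero set, yielding $|\supp(\mbf v)| \le \eta n$.

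Next I would apply \cref{lem:minimal-decompose} to write $\mbf v = \sum_j \mbf w_j$ with each $\mbf w_j$ nonzero and minimal, $\supp(\mbf w_j) \subseteq \supp(\mbf v)$, and $\supp(A\mbf w_j) \subseteq \{i,i'\}$. By \cref{lem:minimal-stalk}, each $R_j := \supp(\mbf w_j)$ is a $(\le\!\eta n, \ell'_j)$-stalk with $\ell'_j \le |\supp(A\mbf w_j)| \le 2$ and exceptional vertex set inside $\supp(A\mbf w_j)$. Since $(A\mbf v)_i$ and $(A\mbf v)_{i'}$ are both nonzero, one of the following holds: (A) some piece has $\supp(A\mbf w_j) = \{i,i'\}$, or (B) two distinct pieces satisfy $\supp(A\mbf w_j) = \{i\}$ and $\supp(A\mbf w_{j'}) = \{i'\}$.

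The goal in case (A) is to show $R_j$ is a $(\le\!\eta n, 2)$-stalk with exceptional pair exactly $\{i,i'\}$, contributing at most $2 \cdot n^2/10 = n^2/5$ ordered bad pairs via the 2-stalk hypothesis (the factor of $2$ accounting for both orderings of the exceptional pair). The goal in case (B) is to show each piece's support is a $(\le\!\eta n, 1)$-stalk with the claimed exceptional vertex, so that both $i$ and $i'$ lie in the set of exceptional vertices of some $(\le\!\eta n,1)$-stalk; since there are at most $n/10$ such stalks (and hence at most $n/10$ such exceptional vertices), this contributes at most $(n/10)^2 = n^2/100$ ordered bad pairs. Summing yields at most $n^2/5 + n^2/100 = 21n^2/100 < (1-\eta)n^2$ bad pairs for $\eta < 1/2$ and $n$ sufficiently large, producing the required $\eta n^2$ good pairs.

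The main obstacle is rigorously establishing the stalk classifications in (A) and (B), because a minimal piece $\mbf w_j$ could \emph{a priori} sit on a stalk $R_j$ with $\ell'_j$ strictly smaller than $|\supp(A\mbf w_j)|$, and in particular on a $0$-stalk. The key technical step is a sharpening of \cref{lem:minimal-stalk} via a dimension argument: let $K = \{\mbf u \in \mb R^{R_j} : \supp(A\mbf u) \subseteq \supp(A\mbf w_j)\}$ and $K' = \{\mbf u \in \mb R^{R_j} : \supp(A\mbf u) \subseteq \mr{exceptional}(R_j)\}$; minimality forces $\dim K = 1$, and if $\mr{exceptional}(R_j)$ were a strict subset of $\supp(A\mbf w_j)$ then $\mbf w_j \notin K'$, so any nonzero element of $K'$ would yield a second linearly independent vector in $K$, contradicting $\dim K = 1$. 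The genuinely delicate case is when $K' = \{\mathbf 0\}$ (so $R_j$ supports no vector with $A$-image in its exceptional set); closing this degenerate subcase requires combining \cref{S1} and \cref{S2} either to extract a sub-stalk of $R_j$ already controlled by the hypothesis, or to reduce to case (B) by subtracting a lower-order witness. Managing this residual configuration cleanly is the principal technical hurdle of the proof.
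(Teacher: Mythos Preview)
Your approach matches the paper's exactly: reduce to small-support witnesses via the nonzero-level-set hypothesis, decompose via \cref{lem:minimal-decompose} into minimal pieces, and count bad pairs using the stalk bounds together with \cref{lem:minimal-stalk}. The paper's own proof is terser than yours---it simply asserts that at most $n/10$ indices $i$ admit a non-$\eta$-balanced minimal $\mbf w$ with $\supp(A\mbf w)=\{i\}$ and at most $n^2/10$ pairs $\{i,i'\}$ admit one with $\supp(A\mbf w)=\{i,i'\}$, then subtracts---and does not engage with the $\ell'<|\supp(A\mbf w)|$ degeneration you flag.

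The hurdle you identify is not merely technical: the lemma as stated is false, so no sharpening of the minimal-vector classification can close the gap. Let $G$ be a disjoint union of $n/5$ copies of $C_5$. By \cref{S2} every stalk lies in a single copy, and since $C_5$ is $2$-regular the parity identity $\sum_v |N(v)\cap R|=2|R|$ forces the number of exceptional vertices of any $R$ to be even; so $G$ has no $1$-stalks and only $O(n)\le n^2/10$ many $2$-stalks. The nonzero-level-set hypothesis holds because $A(C_5)$ is invertible, which forces every $2$-almost kernel vector to be supported on at most two components, hence on at most $10$ vertices. But for \emph{every} pair $(i,i')$ the vector $\mbf v=(A|_C)^{-1}\mbf e_i+(A|_{C'})^{-1}\mbf e_{i'}$ (on the one or two components containing $i,i'$) satisfies $\supp(A\mbf v)=\{i,i'\}$ with $|\supp(\mbf v)|\le 10$, so no pair is good and $A(G)$ is not $\eta$-unstructured. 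The culprit is precisely the uncontrolled $0$-stalks; in the paper's application (\cref{claim:unstructured-whp}) bounds on those are available via \cref{lem:structure-master}, so the correct repair is to add a $0$-stalk hypothesis to the lemma rather than to pursue your dimension argument.
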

\begin{proof}
Since we have assumed $A(G)$ has no 2-almost kernel vector with a nonzero level set of size at least $(1-\eta)n$, we only need to consider unbalanced almost-kernel vectors with a large zero level set.

We first claim that whenever there is a vector $\mbf v\in \mb R^n$ with $\supp(A\mbf v)=\{i,i'\}$, there is a \emph{minimal} vector $\mbf w\in \mb R^n$ with $\supp(A\mbf w)\in \{\{i\},\{i'\},\{i,i'\}\}$ and $\supp(\mbf w) \subseteq \supp(\mbf v)$. Indeed, consider the decomposition into minimal vectors given by \cref{lem:minimal-decompose}. For all of these vectors $\mbf w$ we have $\supp(A\mbf w)\subseteq\{i,i'\}$, and it cannot be the case that all of these vectors are kernel vectors of $A$ (otherwise $\mbf v$ would be a kernel vector as well). Since $\supp(\mbf w) \subseteq \supp(\mbf v)$, if $\mbf v$ is non-$\eta$-balanced with a large zero level set, then $\mbf w$ is also  non-$\eta$-balanced.

Then, using \cref{lem:minimal-stalk}, the assumptions in the lemma imply that there are at most $n/10$ different $i$ for which there is a non-$\eta$-balanced vector $\mbf v\in \mb R^n$ with $\supp(A\mbf v)=\{i\}$, and there are at most $n^2/10$ different pairs $\{i,i'\}$ for which there is a non-$\eta$-balanced vector $\mbf v\in \mb R^n$ with $\supp(A\mbf v)=\{i,i'\}$. It follows that there are at least $n(n-1)-2n(n/10)-2(n^2/10)\ge \eta n^2$ pairs of distinct indices $(i,i')$ for which every $\mbf v$ with $\supp(A\mbf v)=\{i,i'\}$ is $\eta$-balanced, meaning that $A(G)$ is $\eta$-unstructured.
\end{proof}

Essentially all that remains is to carefully analyse the stalks that exist in the random graphs $G_N,\ldots,G_0$ defined in \cref{sec:RMT-overview}.

\subsection{Estimates on stalks}
Recall the definitions of the random graphs $G_N,\ldots,G_0$ from \cref{sec:RMT-overview}: to obtain $G_N$ we looked at the degrees of the first $\lfloor\alpha n\rfloor$ vertices of a random graph $G\sim \mc K(n,m,2)$ and deleted the vertices with degree at least $\Delta$, then we added back these vertices in a random order to obtain $G_{N-1},\ldots,G_0$.

Crucially, a similar proof as for \cref{lem:rotate-sequence-simple} shows that for each $t\le\alpha n$, if we condition on the degree sequence of $G_t$ (more precisely, we condition on the event $t\le N$ and then further condition on the degree sequence), then $G_t$ is distributed like a uniformly random graph with that degree sequence. 
So, we perform various calculations after conditioning on properties of the degree sequence of $G_t$. Specifically, the properties we need are as follows.
\begin{definition}\label{def:good-degree}
Consider integers $m>n,t$ and some $\kappa>0$. Choose $\lambda>0$ such that if $Z\sim \on{Poisson}(\lambda)$, then $2m/n=\mb E[Z|Z\ge 2]$. A sequence $\mbf d=(d_1,\ldots,d_{n-t})\in \mb R^{n-t}$ is \emph{$(n,m,t,\kappa)$-typical} if it satisfies the following properties.
\begin{enumerate}[{\bfseries{T\arabic{enumi}}}]
    \item\label{T1} $d_v=0$ for at most $(t/n)^{3/2}n$ different $v$.

    \item\label{T2} $d_v=1$ for at most $\kappa^{-1}t\log(n/t)$ different $v$. (Here we use the convention $0\log\infty=0$ for the case $t=0$.)
    \item\label{T3} $d_v=2$ for at most $(\Pr[Z=2|Z\ge 2] + \kappa)n$ different $v$.
    \item\label{T4} $2(1-\kappa)m\le d_1+\cdots+d_{n-t}\le 2m$.
    \item\label{T5} For any $U\subseteq\{1,\ldots,n-t\}$ with $|U| = u$, we have $\sum_{v\in U}d_v\le \kappa^{-1}u\log(2n/u)$.
    \item\label{T6}$\sum_{v}\binom{d_v}2\le \left(E\left[\binom{Z}2\middle|Z\ge2\right]+\kappa\right)n$, and $\sum_{v}\binom{d_v}j\le \kappa^{-j} n$ for $j\ge 3$.
\end{enumerate}
\end{definition}
We remark that most of these bounds are essentially sharp, for a typical outcome of the degree sequence of $G_t$. The exception is \cref{T1}: the number of isolated vertices is typically about $(t\log(n/t)/n)^2n$ (but we will not need such a strong estimate).

\begin{lemma}\label{lem:good-deg-seq}
Fix $\eps,\alpha,\Delta,\kappa>0$ such that $1/\Delta\ll\alpha\ll\kappa\ll\varepsilon$. 
Recall the definitions of $G_N,\ldots,G_0$ (in terms of $\alpha,\Delta$) from \cref{sec:RMT-overview}.
Then, for each $t\le \alpha n$: with probability at least $1 - (t/n)^{1/4}$, if $t\le N$ then the degree sequence of $G_t$ is $(n,m,t,\kappa)$-typical.
\end{lemma}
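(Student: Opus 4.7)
The plan is to condition on the degree sequence $\mbf D$ of $G$ satisfying the conclusions of \cref{lem:degree-sequence-poisson}(A), which occurs with probability $1-n^{-\omega(1)}\ll (t/n)^{1/4}$. Under this conditioning, by \cref{lem:rotate-sequence-simple}(A), $G$ is uniform over graphs with degree sequence $\mbf D$, the set $T$ is fully determined by $\mbf D$, and the random $R\subseteq T$ with $|R|=t$ is uniform over size-$t$ subsets of $T$, independent of the rest of $G$. By \cref{lem:initial-rank}(A3), $\tau:=|T|/n\le\alpha\exp(-\Omega_\eps(\Delta))$ for a typical degree sequence, and since the clause ``if $t\le N$'' is vacuous unless $t\le|T|$, we may restrict attention to the range $t/n\le\tau$.

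Properties \cref{T4,T5,T6} are all monotone in the sense that they are sums or binomial sums of degrees, and these only decrease when vertices are deleted; thus the bounds from \cref{lem:degree-sequence-poisson}(A2,3,4) applied to $\mbf D$ transfer directly to $G_t$. The lower bound in \cref{T4} requires noting that the number of deleted edges is at most $\sum_{v\in R}\deg_G(v)\lesssim_\eps t\log(n/t)\le \kappa m$ by \cref{lem:degree-sequence-poisson}(A4), since $\alpha\ll\kappa$. Property \cref{T3} uses the same bound: new degree-$2$ vertices can only arise from degree-$\ge 3$ vertices losing edges to $R$, and the total such count is at most $\sum_{v\in R}\deg_G(v)\le O_\eps(\alpha\log(1/\alpha))n\le(\kappa/2)n$. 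For \cref{T2}, note that every vertex in $V\setminus T$ has degree $\ge 2$ in $G$, so any degree-$1$ vertex of $G_t$ must have lost at least one edge to $R$, bounding the count by $\sum_{v\in R}\deg_G(v)\lesssim_\eps t\log(n/t)$ as required.

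The key difficulty is \cref{T1}. Let $A=\{v\in V\setminus T:N_G(v)\subseteq T\}$; only vertices of $A$ can become isolated in $G_t$, and the isolated count equals $X_0:=|\{v\in A:N_G(v)\subseteq R\}|$. Since $d_v\ge 2$ for all $v\in A$, conditional on $G$ we have $E[X_0\mid G]\le |A|(t/|T|)^2$. To bound $E[|A|]$, I would pass to the configuration model via \cref{lem:simplicity}: for a vertex $v\notin T$ of degree $d$, the probability that all $d$ stubs of $v$ pair with stubs in $T$ is at most $(1+o(1))\rho^d$, where $\rho:=\sum_{u\in T}d_u/(2m)\lesssim_\eps\tau\log(1/\tau)$ by \cref{lem:degree-sequence-poisson}(A4). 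Summing over $v$ using \cref{lem:degree-sequence-poisson}(A1), the degree-$2$ term dominates and $E_G[|A|]\lesssim_\eps n\rho^2$, giving $E[X_0]\lesssim_\eps n\rho^2(t/(\tau n))^2=O_\eps(\log^2(1/\tau))\cdot t^2/n$. By Markov's inequality, $\Pr[X_0\ge (t/n)^{3/2}n]\lesssim_\eps \log^2(1/\tau)\,(t/n)^{1/2}$, which is at most $(t/n)^{1/4}$ once $\log^2(1/\tau)\cdot (t/n)^{1/4}\le O_\eps(1)$. The latter holds for $t/n\le\tau\le\alpha\exp(-\Omega_\eps(\Delta))$ whenever $\Delta$ is sufficiently large in terms of $\alpha$ and $\eps$, as ensured by $1/\Delta\ll\alpha$. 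The main obstacle is precisely this closing step: because $|T|$ can be as small as $\alpha\exp(-\Omega_\eps(\Delta))n$, any looser estimate of $\rho$ or of $|A|$ would leave a polynomial-in-$\Delta$ prefactor that cannot be absorbed unless one uses the super-exponential smallness of $\tau$ in $\Delta$ provided by \cref{lem:initial-rank}(A3).
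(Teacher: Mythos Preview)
Your argument is correct, and for \cref{T2,T3,T4,T5,T6} it coincides with the paper's. For \cref{T1}, however, you take an unnecessarily indirect route. The paper conditions not only on the degree sequence of $G$ but also on the specific deleted set $T_t=\{v_{t-1},\ldots,v_0\}$, and then works in the configuration model $\mb G^\ast(\mbf d)$. The point is that \cref{lem:degree-sequence-poisson}(A4) already bounds $\sum_{u\in T_t}d_u\lesssim_\eps t\log(2n/t)$ for \emph{this particular} set of size $t$; so for any vertex $v$ the probability of having at least two stubs paired into $T_t$ is at most $\binom{d_v}{2}O((t\log(2n/t)/m)^2)$, and summing using \cref{lem:degree-sequence-poisson}(A2) gives an expected isolated count of $O_\eps(n\,(t\log(2n/t)/n)^2)\le (t/n)^{1.9}n$. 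Markov then yields failure probability $(t/n)^{0.4}\le (t/n)^{1/4}$ with no further work.

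By contrast, your two-stage decomposition through $A=\{v\in V\setminus T:N_G(v)\subseteq T\}$ introduces the intermediate scale $\tau=|T|/n$ and leaves you with a $\log^2(1/\tau)$ prefactor that must then be absorbed; the ``main obstacle'' you flag is an artefact of this detour, and disappears entirely under the direct approach. Two smaller remarks: your set $A$ omits vertices of $T\setminus R$, which can also become isolated in $G_t$ (this is easily fixed, since such vertices have degree $\ge\Delta$ and contribute negligibly); and your closing appeal to \cref{lem:initial-rank}(A3) is not needed, since the trivial bound $\tau\le\alpha$ together with $\alpha\ll\kappa\ll\eps$ already forces $C_\eps\log^2(1/\tau)\tau^{1/4}\le 1$.
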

Recall that $N=o_{\Delta\to \infty}(n)$ whp, so when we take $\Delta\to \infty$, \cref{lem:good-deg-seq} becomes a with-high-probability statement.
\begin{proof}
First, note that by \cref{lem:degree-sequence-poisson}(A4), with probability at least $1-n^{-\omega(1)}$
we have $\sum_{v\in S}\deg_G(v)=O_\eps(s\log(2n/s))$ for each size-$s$ subset $S\subseteq V(G)$. This directly yields \cref{T5} since the degrees in $G_t$ are at most those in $G$. Further, \cref{lem:degree-sequence-poisson}(A4) yields that the sum of degrees of the vertices in $V(G)\setminus V(G_t)$ (i.e., the vertices $v_{t-1},\ldots,v_0$) is $O_\kappa(t\log(2n/t)) \le \kappa n\le \kappa m$ for $\alpha\ll \kappa$ (recalling that $t\le \alpha n$). This yields \cref{T4} and additionally \cref{T2}, since the number of degree $1$ vertices is at most the number of edges from $V(G) \setminus V(G_t)$. Similarly \cref{T3,T6} hold with probability $1-n^{-\omega(1)}$, by \cref{lem:degree-sequence-poisson}(A1) and (A2,A3) respectively.

For \cref{T1}, we need a simple calculation in the configuration model. Condition on an outcome of the degree sequence $\mbf d$ of $G$, satisfying the conclusions of \cref{lem:degree-sequence-poisson}. This determines $T$; also condition on an outcome of $V(G)\setminus V(G_t)=\{v_{t-1},\ldots,v_0\}$, and let $T_t=T\cap(V(G)\setminus V(G_t))$ (so $|T_t|=t$). By \cref{lem:rotate-sequence-simple}, after our conditioning, we have $G\sim \mb G(\mbf d)$. By \cref{lem:simplicity}(A), it suffices to prove the desired result for $G\sim\mb G^\ast(\mbf d)$ (i.e., we may work in the configuration model).

A vertex $v$ can only be isolated in $G_t$ if it has at least two neighbours in $T_t$. The number of stubs corresponding to the vertices in $T_t$ is $O_{\kappa}(t\log(2n/t))$, so the probability that this happens is $O_{\kappa}(t\log(2n/t)/n)^2\le (t/n)^{1.9}$ for $\alpha\ll \kappa$. That is to say, the expected number of isolated vertices is at most $(t/n)^{1.9}n$, so \cref{T1} holds with probability at least $1-(t/n)^{1/4}$ by Markov's inequality.
\end{proof}

Now, the following definition captures the stalks which are not handled by \cref{lem:dense-subset}.

\begin{definition}\label{def:sparse-stalk}
Say a stalk $R$ is \emph{$\varepsilon$-sparse} if there is no subset of vertices in $R\cup N(R)$ of any size $u$ which spans more than $u+\lfloor C_{\ref{lem:dense-subset}}(\varepsilon)u/\sqrt{\log(2n/u)}\rfloor$ edges.
\end{definition}

The following lemma encapsulates a careful analysis of small sparse stalks in random graphs with a given typical degree sequence.

\begin{lemma}\label{lem:structure-master}
Fix $\eta,\eps,\kappa, \alpha$ such that
$\alpha\ll \eta \ll \kappa \ll \eps \ll 1$. Choose $n,m,t$ with $1+\varepsilon\le m/n\le 1/\varepsilon$ and $t\le \alpha n$, let $\mbf d\in \mb R^{n-t}$ be a $(n,m,t,\kappa)$-typical sequence, and let $G_t\sim\mb G(\mbf d)$.

For $s\ge r-1+\ell$ and $1\le r\le \eta n$, let $X_{r,s,\ell}$ be the number of $\eps$-sparse $(r,s,\ell)$-stalks in $G_t$, and let $X_{r,s,\ell}'$ be the number of such stalks $R$ for which there is a vertex $v\in R$ with $d_v=1$. Then we have the following estimates.
\begin{enumerate}
    \item If $s\ge r+1$, then $\mb EX_{r,s,0}\lesssim_\kappa e^{-\Omega_\eps(r)}/n$.
    \item $\mb E X_{r,r,0}\lesssim_\kappa e^{-\Omega_\eps(r)}$.
    \item If $t\le n^{1/8}$ then $\mb E X_{r,r,0}'\lesssim_\kappa e^{-\Omega_\eps(r)}n^{-3/4}$.
    \item If $s = r-1$, then $\mb E X_{r,s,0}\lesssim_\kappa e^{-\Omega_\eps(r)}(t/n)^{3/2}n$.
    \item For any $\ell\le 2$, we have $\mb E X_{r,s,\ell}\lesssim_\kappa e^{-\Omega_\eps(r)}n$.
    \item If $t \le \log n$, then $\mb E X_{r,s,1}\lesssim_\kappa e^{-\Omega_\eps(r)}(\log n)^{2}$.
\end{enumerate}
\end{lemma}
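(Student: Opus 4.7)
My plan is a first-moment computation in the configuration model $\mathbb{G}^\ast(\mbf d)$. By Lemma \ref{lem:simplicity} together with T4 (total degree $\Theta_\eps(n)$) and T6 ($\sum_v \binom{d_v}{2} = O_\kappa(n)$), the simplicity probability is $\Omega_\kappa(1)$, so it suffices to upper-bound $\mathbb{E} X_{r,s,\ell}$ under $\mathbb{G}^\ast(\mbf d)$. In the configuration model, the probability that a specified bipartite multigraph between $R$ and $N$ with $m'$ edges and local degrees $(e^R_v),(e^N_v)$ appears on prescribed stubs is at most $(2m)^{-m'}$ times combinatorial factors $\prod_v (d_v)_{e_v}$ for selecting which stubs participate; summing over choices of $R$, $N$, and admissible bipartite edge structures gives a master first-moment bound.

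The $\eps$-sparsity condition forces stalks to be essentially tree-like. Specifically, Definition \ref{def:sparse-stalk} caps the edge count in $U = R\cup N(R)$ of size $u$ at $u + \lfloor Cu/\sqrt{\log(2n/u)}\rfloor$, while S3 forces at least $2s-\ell$ edges between $R$ and $N$, and S1 forces $s\ge r-1+\ell$. Combining these pins $s$ to within $O(u/\sqrt{\log(2n/u)})$ of $r-1+\ell$, and (together with S2's incidence-connectedness) forces the bipartite incidence structure to be a connected near-tree in which almost every vertex of $N$ has degree exactly $2$ into $R$. The enumeration selects a spanning tree of the incidence structure edge-by-edge, with the sum of vertex-degree factors at each step controlled via T5, $\sum_{v\in U}d_v\lesssim_\kappa u\log(2n/u)$; this yields an $e^{O_\eps(r)}$ prefactor, while each extra edge beyond the tree contributes a factor of order $(r+s)/n$, converting the $\eps$-sparsity slack into the $e^{-\Omega_\eps(r)}$ decay.

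Specialising to each part: Part 2 ($s=r$, $\ell=0$) is the baseline and yields $e^{-\Omega_\eps(r)}$ from one excess edge beyond the tree. Part 1 forces $s-r\ge 1$ extra edges via S3, each contributing a $1/n$ factor, yielding $e^{-\Omega_\eps(r)}/n$. Part 4 requires $m'<2r$, which either forces a $G_t$-degree-$0$ vertex in $R$ (controlled via T1, giving the $(t/n)^{3/2}n$ factor) or a degree-$1$ vertex in $R$ combined with internal-edge enumeration via T2 and $\eps$-sparsity. Part 3 selects the degree-$1$ vertex in $R$ from at most $O_\kappa(t\log(n/t))\le n^{1/8+o(1)}$ candidates when $t\le n^{1/8}$, translating to an $n^{-3/4}$ savings versus the usual $n$ choices of vertex in $R$. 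Part 5 covers general $\ell\le 2$: S1's relaxation by $\ell$ permits an extra $N$-vertex (e.g. when $\ell=2$ allows $s=r+1$), which contributes the $n$ factor to the main sum, with the exceptional vertex's single edge summing to $O(1)$ via T4. Part 6 combines Part 5's $\ell=1$ mechanism with T2 under $t\le\log n$: the bound $O_\kappa((\log n)^2)$ on degree-$1$ vertices replaces the $n$ choices in Part 5 by $(\log n)^2$, yielding the desired factor.

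The main obstacle I anticipate is simultaneously enforcing the global $\eps$-sparsity bound (which applies to every subset, not just $R\cup N$) during the spanning-tree enumeration, while correctly tracking edges internal to $R$ or to $N\setminus R$ alongside the bipartite incidence edges. A second delicate issue is obtaining the sharp $1/n$ savings in Parts 1 and 4: these require identifying precisely the ``excess edge'' that drives the decay and carefully partitioning the enumeration between the spanning tree and the non-tree edges, avoiding both under- and over-counting in the configuration-model pairing.
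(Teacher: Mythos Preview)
Your proposal misidentifies the source of the exponential decay $e^{-\Omega_\eps(r)}$, and this is a genuine gap. You claim the decay comes from ``extra edges beyond the spanning tree'' of the $R$--$N(R)$ incidence structure, with each extra edge contributing a factor $(r+s)/n$. But for the baseline $(r,r,0)$-stalk (Part 2), the incidence structure has $2r$ vertices and (by \cref{S3}) at least $2r$ edges, while $\eps$-sparsity caps it at $2r+f(2r)$ with $f(2r)=o(r)$; so there are only $1+o(r)$ excess edges, and $(r/n)^{1+o(r)}$ does not yield $e^{-\Omega_\eps(r)}$ uniformly in $r$. Meanwhile, controlling the tree enumeration via \cref{T5} (which gives $\sum_{v\in U}d_v\lesssim_\kappa u\log(2n/u)$) only produces an $e^{O_\kappa(r\log\log(n/r))}$ prefactor---growth, not decay. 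The net is the wrong sign in the exponent.

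The paper's mechanism is different and relies on a numerical inequality for truncated Poisson moments that your sketch never invokes. First, sparsity combined with \cref{S3} is used to show that almost all vertices of $R$ are forced to have degree exactly $2$ in $G_t$ (this is where \cref{T3} enters, bounding the count of degree-$2$ vertices by $(\rho_2+\kappa)n$), and that almost all vertices of $S_2=N(R)\setminus R$ have exactly two stubs into $R$ (this is where \cref{T6} enters, via $\sum_v\binom{d_v}{2}\le(E_2+\kappa)n$). After splitting $R=R_1\cup R_2$ with $R_1=R\cap N(R)$ and tracking edges inside $R_1$ separately from edges between $R_2$ and $S_2$, the enumeration-times-probability collapses to roughly
\[
\left(\frac{\rho_2 n}{m}\right)^{r_1}\left(\frac{\rho_2 E_2 n^2}{m^2}\right)^{r_2}\left(\frac{r}{m}\right)^{\text{excess}},
\]
and the crucial computation is that $\rho_2 n/m=\lambda e^{-\lambda}/(1-e^{-\lambda})$ and $\rho_2 E_2 n^2/m^2=\lambda^2 e^{-\lambda}/(1-e^{-\lambda})^2$ are both $1-\Omega_\eps(1)$ for $\lambda>0$ bounded away from $0$ and $\infty$. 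That is what produces $e^{-\Omega_\eps(r)}$; the $(r/m)^{\text{excess}}$ factor is then what distinguishes Parts 1--6. Your outline uses neither \cref{T3} nor \cref{T6} for the main term, omits the $R_1/R_2$ split, and does not arrive at these ratios, so as written it would not close.
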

We emphasise that the above estimates are only for $r\le \eta n$ (i.e., for those stalks that correspond to small-support kernel vectors of $A(G_t)$).

We remark that our notion of a stalk has some resemblance to the notion of a \emph{flipper} in \cite[Section~8]{CCKLRb}, and \cite[Lemma~8.1]{CCKLRb} plays a similar role to \cref{lem:structure-master}. However, in our setting we need much more precision, and the details are much more involved.

For our proof of \cref{lem:structure-master} we collect some elementary estimates. First, we will need to estimate products of factorials.

\begin{lemma}\label{lem:combine-bound}
If $(k_i)_{i=1}^r$ is a sequence of nonnegative integers with $\sum_{i=1}^r k_i = a$ and $\sum_{i=1}^r ik_i = b$ then 
\[\prod_{i=1}^r k_i!\ge e^{-b}a!.\]
\end{lemma}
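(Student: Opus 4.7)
The plan is to interpret the reciprocal $a!/\prod_i k_i!$ combinatorially and to bound it by the (unconstrained) count of integer compositions of $b$. Concretely, the multinomial coefficient $a!/\prod_i k_i!$ counts the number of sequences $(a_1,\ldots,a_a) \in \{1,\ldots,r\}^a$ in which the value $i$ appears exactly $k_i$ times (for each $i$). Any such sequence satisfies
\[
\sum_{j=1}^a a_j = \sum_{i=1}^r i\,k_i = b,
\]
so it is a particular ordered composition of the positive integer $b$ into $a$ positive parts, each of size at most $r$.

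The next step is to bound this against the total count of compositions. The number of ordered compositions of $b$ into \emph{any} number of positive integer parts (unbounded in size) is the classical quantity $2^{b-1}$, obtained bijectively by choosing which of the $b-1$ internal gaps between $b$ unit cells to cut. Since the sequences above form a subset of these compositions, we obtain
\[
\frac{a!}{\prod_i k_i!} \;\leq\; 2^{b-1} \;\leq\; e^{b},
\]
using $2 \leq e$ in the last step (the degenerate case $b=0$ forces all $k_i=0$ and $a=0$, and the inequality reduces to $1 \leq 1$). Rearranging yields $\prod_i k_i! \geq e^{-b} a!$, as required.

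This is essentially a one-line argument once the combinatorial interpretation is in place, and I do not expect any real obstacle. The only point that needs (brief) verification is that each admissible sequence $(a_1,\ldots,a_a)$ is indeed a distinct composition of $b$, which is immediate from $\sum_j a_j = b$ and positivity of each $a_j$.
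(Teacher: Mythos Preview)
Your proof is correct and takes a genuinely different route from the paper. The paper bounds the multinomial coefficient $a!/\prod_i k_i!$ via an entropy argument: it writes the coefficient as $\exp(aH(Y))$ for a random variable $Y$ on $\{1,\ldots,r\}$ with $\Pr[Y=i]=k_i/a$ and mean $b/a$, then invokes the fact that among positive-integer-valued random variables with a given mean the geometric distribution maximises entropy, and finally uses the calculus inequality $(e/p)^p\le e$. Your argument sidesteps all of this with a one-line injection: the multinomial coefficient counts length-$a$ sequences in $\{1,\ldots,r\}$ with prescribed multiplicities, each such sequence is a composition of $b$ into positive parts, and there are only $2^{b-1}$ compositions of $b$ in total. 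This is both shorter and more elementary, and it even yields the sharper intermediate bound $a!/\prod_i k_i!\le 2^{b-1}$ rather than $e^b$. The paper's approach has the minor conceptual advantage of making the extremal case visible (the geometric distribution), but for the purposes of this lemma your argument is strictly preferable.
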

\begin{proof}
By the multinomial theorem we have
\[\binom{a}{k_1,\ldots,k_r}k_1^{k_1}\cdots k_r^{k_r}\le (k_1+\cdots+k_r)^a=a^a.\]
So,
\[\frac{a!}{\prod_{j=1}^rk_j!}=\binom{a}{k_1,\ldots,k_r}\le \prod_j\bigg(\frac{a}{k_j}\bigg)^{k_j}=\exp\bigg(a\sum_j\frac{k_j}{a}\log\bigg(\frac{a}{k_j}\bigg)\bigg).\]
We can interpret the right-hand side as $e^{a H(Y)}$, where $H(Y)$ is the (base-$e$) entropy of a random variable $Y$ satisfying $\Pr[Y=i] = k_i/a$ for each  $i\in\{1,\ldots,r\}$.
Note that $\mb EY=b/a$; among positive integer random variables with this mean, the maximum possible entropy is attained by a geometric random variable with parameter $p:=a/b$ (see for example \cite{LV72}). The entropy of such a geometric random variable is $(-p\log p-(1-p)\log(1-p))/p\le 1-\log p$. So,
\[\frac{a!}{\prod_{j=1}^rk_j!}\le e^{a H(Y)}\le e^{a(1-\log p)}=(e/p)^{pb}\le e^b,\]
using the inequality $(e/p)^p\le e$ (which holds for all $0\le p\le 1$).
\end{proof}

We also need the following general-purpose inequality to bound various binomial coefficients.
\begin{lemma}\label{lem:binom}
For any  $z>0$, and any $a,b\in \mb N$, we have $\binom{a}{b}\le(1+z)^a e^{O_z(b)}$.
\end{lemma}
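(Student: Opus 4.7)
The plan is to use the binomial theorem to isolate the single term corresponding to $\binom{a}{b}$. For $z > 0$, all terms of the expansion
\[ (1+z)^a = \sum_{k=0}^{a} \binom{a}{k} z^k \]
are nonnegative, so dropping all but the $k=b$ term yields $\binom{a}{b} z^b \le (1+z)^a$, provided $b \le a$ (the case $b > a$ is vacuous since then $\binom{a}{b} = 0$). Rearranging gives
\[ \binom{a}{b} \le (1+z)^a \cdot z^{-b}. \]

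It then remains to absorb the $z^{-b}$ factor into the $e^{O_z(b)}$ error term. Writing $z^{-b} = e^{b\log(1/z)}$, we split into two cases. If $z \ge 1$, then $\log(1/z) \le 0$ and $z^{-b} \le 1$, so the desired inequality holds trivially (with constant $0$ in the $O_z$). If $0 < z < 1$, then $\log(1/z)$ is a positive constant depending only on $z$, so $z^{-b} = e^{b \log(1/z)} = e^{O_z(b)}$, as required.

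There is essentially no obstacle here: the lemma is a one-line consequence of the binomial theorem followed by the observation that $\log(1/z)$ is a constant whenever $z$ is treated as one. The only thing to be mindful of is keeping track of the regime $z < 1$ versus $z \ge 1$ so that the implicit constant in $O_z(b)$ is correctly identified as $\max(0, \log(1/z))$.
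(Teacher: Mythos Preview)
Your proof is correct and is actually cleaner than the paper's. The paper instead splits into two cases according to the ratio $b/a$: when $4b\ge z^2a$ it bounds $\binom{a}{b}\le(ae/b)^b\le(4e/z^2)^b=e^{O_z(b)}$ directly, and when $4b<z^2a$ it writes $x=b/a$ and uses the inequality $(e/x)^x\le 1+2\sqrt{x}$ to get $\binom{a}{b}\le(1+2\sqrt{x})^a\le(1+z)^a$. Your binomial-theorem argument avoids both the case split and the auxiliary inequality, at the cost of producing the slightly larger implicit constant $\log(1/z)$ rather than getting one of the two factors to equal $1$ in each regime; since the lemma only asks for $e^{O_z(b)}$, this makes no difference.
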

\begin{proof}
If $4b\ge z^2a$, we have $\binom{a}{b}\le (ae/b)^b\le (4e/z^2)^b=e^{O_z(b)}$. Otherwise, if $4b<z^2a$,  writing $x=b/a$, we have $\binom a b\le (ae/b)^b=((e/x)^x)^a\le (1+2\sqrt x)^a\le (1+z)^a$. (Here we used the inequality $(e/x)^x\le 1+2\sqrt x$, which holds for all $x\ge 0$.)
\end{proof}

We are now ready to prove \cref{lem:structure-master}.

\begin{proof}[Proof of \cref{lem:structure-master}]
In this proof we think of $\varepsilon$ as being a constant (without explicitly writing $\eps$ as a subscript on asymptotic notation), and we simply write ``sparse'' instead of ``$\eps$-sparse''. Also, throughout this proof we let $f(s) = \lfloor C_{\ref{lem:dense-subset}}(\varepsilon)s/\sqrt{\log(2n/s)}\rfloor$. Note that $f$ is essentially sub-linear, in the sense that $f(a+b)\le f(a)+f(b)+1$.

First, we briefly note that isolated vertices are $(1,0,0)$-stalks (and by \cref{S2}, isolated vertices are not contained in any other types of stalks). By \cref{T1}, the number of isolated vertices is at most $(t/n)^{3/2} n\lesssim n$, which handles the $(r,s)=(1,0)$ cases of (4) and (5). For the rest of the proof we can restrict our attention to $(r,s,\ell)$-stalks which do not contain any isolated vertices.

The reader may find it helpful to think of two basic examples of sparse stalks that may occur in graphs with minimum degree at least 2. First, for any even cycle in which every second vertex has degree 2, we can take those degree-2 vertices as a $(r,r,0)$-stalk. Second, for any odd cycle in which every vertex has degree 2, we can take the entire vertex set of the cycle as an $(r,r,0)$-stalk. It is not hard to estimate the expected number of these types of cycles using the configuration model.

Roughly speaking, the proof strategy is as follows. First, we prove a sequence of inequalities (\cref{claim:structure}) showing that \emph{every} stalk approximately resembles a union of copies of these two examples. Then, we do an explicit configuration-model calculation that parallels the cycle calculation mentioned above.

For this entire proof we will work with the configuration model $G_t\sim \mb G^\ast(\mbf d)$, taking $X_{r,s,\ell}=X_{r,s,\ell}'=0$ whenever $G_t$ is not simple. (By \cref{lem:simplicity}, it suffices to prove the desired estimates in this setting, noting that \cref{T6} implies that $\sum_v d_v^2=O(n)$.)

\medskip
\noindent\textbf{Step 1: Parameters of stalks.}
Fix $r,s,\ell$ with $s\ge r-1+\ell$ and $\ell\le 2$. We define a number of parameters of a sparse $(r,s,\ell)$-stalk $R$ in $G_t$. We will later study the contribution to $\mb E X_{r,s,\ell}$ and $\mb E X_{r,s,\ell}'$ from each choice of these parameters.
\begin{itemize}
    \item Let $S=N(R)$, let $S_1=R_1=S\cap R$, let $S_2=S\setminus R$ and let $R_2=R\setminus S$. For each $i\in \{1,2\}$, let $r_i=|R_i|$ and $s_i=|S_i|$. (So, $s=s_1+s_2$ and $r=r_1+r_2$ and $r_1=s_1$.)
    \item For each $i\in \{1,2\}$, let $\ell_i$ be the number of exceptional vertices in $S_i$. (So, $\ell=\ell_1+\ell_2$.)
    \item Let $x$ be the number of $v\in R_2$ which have $\deg_{G_t}(v)=1$.
    \item Let $m_1$ be the number of edges in $R_1$, let $m_{1,2}$ be the number of edges between $R_1$ and $S_2$, and let $m_{2,2}$ be the number of edges between $R_2$ and $S_2$.
    \item For $i\ge 1$, let $k_i$ be the number of vertices in $S_2$ which have exactly $i$ neighbours in $R$ (so in particular $k_1=\ell_2$).
\end{itemize}
There are a number of simple inequalities that must hold between our parameters. First, by \cref{S1}, we have \begin{equation}
    s_2-r_2=s-r\ge \ell-1.\label{eq:S1-consequence}
\end{equation}
Second, by $\eps$-sparsity, we have
\begin{equation}
    m_1\le r_1+f(r_1),\quad m_{2,2}\le r_2+s_2+f(r_2+s_2),\quad m_1+m_{1,2}+m_{2,2}\le r_1+r_2+s_2+f(r_1+r_2+s_2)\label{eq:sparsity-consequence}
\end{equation}
Third, recall from \cref{S3} that all non-exceptional vertices in $S$ have at least two neighbours in $R$. By the considerations at the start of the proof, we are assuming $R$ contains no isolated vertices, and by definition $R$ has exactly $x$ vertices with degree $1$ into $S$. So, summing over degrees in $R_1,R_2,S_2$, we obtain
\begin{equation}
m_1\ge\lceil(2r_1-\ell_1)/2\rceil=r_1-\lfloor\ell_1/2\rfloor,\quad m_{1,2}+m_{2,2}\ge 2s_2-\ell_2,\quad m_{2,2}\ge 2r_2-x.
\label{eq:S-consequence}
\end{equation}
Finally, using \cref{T5}, we have
\begin{equation}m_{1,2}\le O_{\kappa}(s_2\log(2n/s_2)).\label{eq:T5-consequence}\end{equation}
(Other similar inequalities can also be obtained via \cref{T5}, but we will not need them.)

\medskip
\noindent\textbf{Step 2: The structure of sparse stalks.}
We now combine the above inequalities, to prove the following claim about the parameters of a sparse $(r,s,\ell)$-stalk $R$. Roughly speaking, the claim says that if we consider two disjoint copies of $R$ and $S$ and a bipartite graph of the edges between the two, then almost all vertices have degree 2, and almost all edges are inside $R_1=S_1$ or between $R_2$ and $S_2$. (Note that there are no edges between $R_2$ and $S_1=R_1$, by the definition of $R_1$.)

\begin{claim}\label{claim:structure}
Consider a sparse $(r,s,\ell)$-stalk $R$, with parameters as defined as in Step 1.
\begin{enumerate}
    \item $s_2=r_2+O(1+f(r))$ (i.e., $S$ and $R$ have roughly the same size).
    \item All but $O(1+x+f(r))$ vertices in $R$ have degree exactly 2.
    \item $k_2=s_2+O(1+x+f(r))$ (i.e., almost all vertices in $S_2$ have degree exactly 2 into $R$).
    \item $m_{1,2}=O(1+x+f(r))$ (i.e., there are few edges between $R_1$ and $S_2$).
    \item $2s_2+O(1+x+f(r))\le m_{2,2}\le 2s_2+O(1+f(s_2))$ (i.e., the number of edges between $S_2$ and $R_2$ is not much more than $2s_2$, which by (1) is roughly the same as $2r_2$).
    \item $m_1=s_1+O(1+f(r_1))$ (i.e., the number of edges inside $S_1=R_1$, which is half its degree sum, is not much more than $s_1=(2s_1)/2$).
\end{enumerate}
\end{claim}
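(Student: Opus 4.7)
The plan is to derive all six estimates by combining the four sets of inequalities \eqref{eq:S1-consequence}, \eqref{eq:sparsity-consequence}, \eqref{eq:S-consequence}, and \eqref{eq:T5-consequence} established in Step 1, together with the sub-additive property $f(a+b) \le f(a) + f(b) + 1$ noted at the outset of the proof. Since the estimates are interdependent, I will prove them in the order (1), (4), (6), (2), (3), (5).

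For (1), the lower bound $s_2 - r_2 \ge -1$ is immediate from \eqref{eq:S1-consequence}. For the upper bound, summing the three lower bounds from \eqref{eq:S-consequence} gives $m_1 + m_{1,2} + m_{2,2} \ge r_1 + 2s_2 - \ell - 1$, which contrasted with the global sparsity bound $m_1 + m_{1,2} + m_{2,2} \le r + s_2 + f(r+s_2)$ from \eqref{eq:sparsity-consequence} yields $s_2 \le r_2 + O(1+f(r+s_2))$. A short bootstrap using sub-additivity (treating the regimes $s_2 \le 2r$ and $s_2 > 2r$ separately, and using that $f(s_2) = o(s_2)$ when $s_2 \le \eta n$) strips the $s_2$ dependence from $f$, giving $s_2 = r_2 + O(1+f(r))$. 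Claim (4) then follows in the same spirit: the lower bounds $m_1 \ge r_1 - 1$ and $m_{2,2} \ge 2r_2 - x$ from \eqref{eq:S-consequence}, combined with the global sparsity bound and (1), leave $m_{1,2} \le s_2 - r_2 + x + O(1+f(r)) = O(1+x+f(r))$. Claim (6) is even simpler: its lower bound is $m_1 \ge r_1 - \lfloor \ell_1/2 \rfloor$ from \eqref{eq:S-consequence}, and its upper bound is the sparsity estimate applied to the subgraph induced on $R_1 = S_1$.

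The remaining parts are bookkeeping conversions from edge counts to degree counts. For (2), I observe that a degree-$1$ vertex in $R_1$ has its unique neighbour forced to lie in $R_1$, hence is automatically exceptional, so there are at most $\ell_1 \le 2$ such vertices. Writing the degree-sum identity $\sum_{v \in R}(\deg(v) - 2) = 2m_1 + m_{1,2} + m_{2,2} - 2r$ and splitting the left-hand side by whether $v \in R_1$ or $v \in R_2$, each individual discrepancy is controlled by the estimates already in hand, giving at most $O(1+x+f(r))$ vertices of degree $\ne 2$ in total. For (3), the identities $\sum_i i k_i = m_{1,2} + m_{2,2}$ and $\sum_i k_i = s_2$ combine with $k_1 = \ell_2 \le 2$ to yield $\sum_{i \ge 3}(i-2)k_i = m_{1,2} + m_{2,2} - 2s_2 + k_1$, and plugging in the bounds on $m_{1,2}$ and $m_{2,2}$ together with (1) shows that this is $O(1+x+f(r))$, whence $s_2 - k_2 = k_1 + \sum_{i \ge 3} k_i$ is of the same order. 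Finally, (5) combines the sparsity estimate $m_{2,2} \le r_2 + s_2 + f(r_2 + s_2)$ with (1) for the upper bound, and inverts the $S_2$-degree identity from (3) for the lower bound.

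The main obstacle I foresee is the bootstrap in part (1): the sparsity function $f$ naturally appears with argument $r + s_2$, and stripping off the $s_2$ dependence requires using both sub-additivity and the fact that $f$ is genuinely sub-linear in the regime of interest, so that the inequality can be rearranged to isolate $s_2$. Once (1) is cleanly established, all of the other parts reduce to straightforward algebra with the given inequalities.
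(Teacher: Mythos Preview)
Your approach is essentially the same as the paper's (the paper proves the parts in the order (6), (1), (5), (4), (2)--(3), but the logic is interchangeable), and the arguments are correct in substance. Two small points to tighten: in the bootstrap for (1), you assert ``$f(s_2)=o(s_2)$ when $s_2\le\eta n$'', but you have not yet shown $s_2\le\eta n$ (we only know $r\le\eta n$); the paper fills this by invoking \cref{T5} directly on $R$ to get $s\le\kappa^{-1}r\log(2n/r)\lesssim\eta^{1/2}n$, which is what makes $f(r+s_2)/(r+s_2)$ small enough to close the loop. Second, for the upper bound in (5) you route through (1), which only gives $m_{2,2}\le 2s_2+O(1+f(r))$; to obtain the sharper $O(1+f(s_2))$ stated in the claim, use \eqref{eq:S1-consequence} directly (it gives $r_2\le s_2+1$, hence $r_2+s_2\le 2s_2+1$ and $f(r_2+s_2)\le 2f(s_2)+O(1)$ by sub-additivity). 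Also, a typo: in (1) you sum only the first two bounds of \eqref{eq:S-consequence}, not all three.
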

In light of \cref{rem:f} and since $r/n\le \eta$, if $\eta\ll \kappa$ we have $f(q)\le \kappa q$ for $q\in \{s_1,s_2,r\}$ (we are also using \cref{claim:structure}(1) here to show $s/n$ is small). In particular, throughout the rest of the proof, terms of the form $f(q)$ can be viewed as being ``lower order'' than $q$.
\begin{proof}
First, (6) follows from the first inequalities in \cref{eq:sparsity-consequence} and \cref{eq:S-consequence}.

Next, by combining (6), the last inequality in \cref{eq:sparsity-consequence}, and the second inequality in \cref{eq:S-consequence}, we obtain $s_2\le r_2+O(1+f(r+s_2))$. Together with \cref{eq:S1-consequence}, this nearly gives us (1), but we need to do a little more work to replace the error term ``$O(1+f(r+s_2))$'' with the desired error term ``$O(1+f(r))$''. Specifically, to show that these error terms are equivalent, we need to prove that $s_2\lesssim r$. By \cref{T5} we have $s\le\sum_{v\in R}d_v\le\kappa^{-1}r\log(2n/r)\lesssim\eta^{1/2}n$ (assuming $\eta\ll\kappa$), so $f(r+s_2)\lesssim(r+s_2)/\sqrt{\log(1/\eta)}$ and thus our initial inequality implies $s_2\lesssim r$, as desired.

Then, (5) follows from \cref{eq:S1-consequence}, the second inequality in \cref{eq:sparsity-consequence}, (1), and the last inequality in \cref{eq:S-consequence}. After this, we can deduce (4) from (1), (5), (6), and the last inequality in \cref{eq:sparsity-consequence}.

Finally, by (1,4,5,6), note that
\[\sum_{v\in R}(\deg(v)-2)=2m_1+m_{1,2}+m_{2,2}-2r\lesssim 1+x+f(r),\quad \sum_{v\in S_2}(\deg_R(v)-2)=m_{1,2}+m_{2,2}-2s_2\lesssim 1+x+f(r).\]
Recall that at most $x\lesssim 1+x+f(r)$ vertices in $R$ have degree less than 2, and at most $\ell\lesssim 1+x+f(r)$ vertices in $S$ have fewer than $2$ neighbours in $R$. So, (2) and (3) follow.
\end{proof}

\medskip
\noindent\textbf{Step 3: Breaking down the expectation.}
For a vector of parameters
\[\mbf p=(r_1,r_2,s_1,s_2,\ell_1,\ell_2,x,m_1,m_{1,2},m_{2,2},(k_i)_{i=1}^r),\]
we now consider the contribution to $\mb EX_{r,s,\ell}$ from sparse stalks with these parameters. We will eventually sum over all possible $\mbf p$. (For $\mb EX_{r,r,0}'$, we simply sum over all $\mbf p$ with $x>0$.)

Recall that we are working in the configuration model $\mb G^\ast(\mbf d)$, for a particular $(n,m,t,\kappa)$-typical degree sequence $\mbf d=(d_1,\ldots,d_{n-t})$ (so we have $n-t$ buckets corresponding to vertices, and within the bucket corresponding to a vertex $v$, there are $d_v$ stubs).

First, we define $N_{\mbf p}$ to be ``the number of possible places that a stalk may appear''. Specifically, $N_{\mbf p}$ is the number of ways to choose disjoint vertex sets $R_1,R_2,S_2$, and to colour all the stubs from $R_1$ blue and yellow, and to colour some stubs from $S_2$ red, and all the stubs from $R_2$ green, such that the following hold.

\begin{itemize}
    \item $|R_1|= r_1$, $|R_2| = r_2$, $|S_2| = s$.
    \item There are $2m_1+m_{1,2}$ stubs coming from $R_1$. Exactly $2m_1$ are blue and exactly $m_{1,2}$ are yellow.
    \item There are exactly $m_{2,2}$ stubs coming from $R_2$, all coloured green.
    \item Among the stubs from $S_2$, exactly $m_{1,2} + m_{2,2}$ are red.
    \item Each vertex in $S_2$ has at least two red stubs, except exactly $\ell_2$ which have one red stub.
    \item Exactly $x$ of the vertices in $R_2$ have degree 1.
    \item Exactly $\ell_1$ of the vertices in $R_1$ have degree 1, and none have degree 0.
\end{itemize}

Then, for each of the choices of $R_1,R_2,S_2$ and red/blue/yellow colourings as above, we consider the probability that
\begin{itemize}
    \item the $2m_1$ blue stubs (from $R_1$) pair with each other, and
    \item the $m_{1,2}$ yellow stubs (from $R_1$) pair with red stubs (from $S_2$), and
    \item the $m_{2, 2}$ stubs from $R_2$ pair with red stubs (from $S_2$).
\end{itemize}
This probability only depends on $\mbf p$; denote it by $P_{\mbf p}$. Observe that $\mb E X_{r,s,\ell}\le \sum_{\mbf p} N_{\mbf p}P_{\mbf p}$.

\medskip
\noindent\textbf{Step 4: Estimating combinatorial quantities.}
Let $Q$ be the number of possibilities for $\mbf p$.
We now give upper bounds for $Q$, $N_{\mbf p}$, and $P_{\mbf p}$. We will very often want to use the expression ``$(1+O(\kappa))^re^{O_{\kappa}(1+x+f(r))}$'' as a multiplicative error term, so we introduce the shorthand ``$O^\ast(1)$'' for a term of this form.

\begin{claim}\label{claim:num-p}
$Q=O^\ast(1)$.
\end{claim}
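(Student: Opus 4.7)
The proof is a direct counting exercise leveraging the structural constraints of \cref{claim:structure}, which show that most coordinates of $\mbf p$ are pinned down up to an additive error of $O(1+x+f(r))$.

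First, I would observe that among the ``size'' parameters $(r_1,r_2,s_1,s_2,\ell_1,\ell_2)$ only $r_1$ and $\ell_1$ are free: once $r_1$ is chosen in $\{0,\dots,r\}$, the identities $r_2=r-r_1$, $s_1=r_1$, $s_2=s-r_1$ determine the rest, and $\ell_2=\ell-\ell_1$ is fixed once $\ell_1\in\{0,1,2\}$ is chosen. This yields at most $3(r+1)$ choices, and since $x$ satisfies $0\le x\le r_2\le r$, it contributes at most a further factor of $r+1$.

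Next, I would bound each of the edge-count parameters using the corresponding part of \cref{claim:structure}: part~(6) confines $m_1$ to an interval of length $O(1+f(r_1))$, part~(4) confines $m_{1,2}$ to $O(1+x+f(r))$ values, and part~(5) confines $m_{2,2}$ to $O(1+x+f(r))$ values (using the sub-linearity $f(a+b)\le f(a)+f(b)+1$ together with part~(1) to conclude $f(s_2)=O(f(r))+O(1)$).

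The delicate step is counting the degree-distribution tail $(k_i)_{i=1}^{r}$. Here $k_1=\ell_2$ is already determined. By part~(3) of \cref{claim:structure} together with the trivial $k_2\le s_2$, there are at most $O(1+x+f(r))$ choices for $k_2$. The remaining tail $(k_i)_{i\ge 3}$ then satisfies
\[
\sum_{i\ge 3}k_i = s_2-\ell_2-k_2=O(1+x+f(r)),\qquad \sum_{i\ge 3} i\,k_i = m_{1,2}+m_{2,2}-\ell_2-2k_2=O(1+x+f(r)),
\]
the latter using parts~(4) and~(5). The second estimate forces $k_i=0$ for $i>O(1+x+f(r))$, and the number of nonnegative integer sequences on this support with prescribed sum at most $O(1+x+f(r))$ is bounded above by a binomial coefficient of the form $\binom{O(1+x+f(r))}{O(1+x+f(r))}\le e^{O_{\kappa}(1+x+f(r))}$.

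Multiplying these contributions gives
\[
Q\;\lesssim_{\kappa}\;(r+1)^2\cdot(1+f(r_1))\cdot(1+x+f(r))^{3}\cdot e^{O_{\kappa}(1+x+f(r))}.
\]
To conclude $Q=O^{\ast}(1)$, I would absorb the polynomial factor $(r+1)^{2}$ into $(1+O(\kappa))^{r}$ when $r$ exceeds some threshold $r_{0}(\kappa)$ (and into $e^{O_{\kappa}(1)}$ otherwise), and absorb the polynomial factors in $(1+x+f(r))$ into the exponential using $t\le e^{t}$. The only real obstacle is the control of $(k_i)_{i\ge 3}$; all other coordinates of $\mbf p$ are direct consequences of \cref{claim:structure}.
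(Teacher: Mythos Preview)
There is a genuine gap. Recall that $Q$ is defined as the \emph{total} number of admissible parameter vectors $\mbf p$ (for fixed $r,s,\ell$); it is a single number, not a function of $\mbf p$. The way it is used in Step~5 is via $\mathbb E X_{r,s,\ell}\le\sum_{\mbf p}N_{\mbf p}P_{\mbf p}\le Q\cdot\max_{\mbf p}N_{\mbf p}P_{\mbf p}$, so the statement $Q=O^\ast(1)$ must hold when the $O^\ast(1)$ is evaluated at \emph{every} $\mbf p$, in particular at $x=0$; that is, one needs $Q\le(1+O(\kappa))^r e^{O_\kappa(1+f(r))}$. Your displayed inequality has both a factor $(r+1)$ coming from ``all choices of $x$'' and an $x$ left free on the right-hand side, which is incoherent. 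If one reads your argument honestly---fix $x$, bound the remaining coordinates via \cref{claim:structure}, then sum over $x$---the count of $(k_i)_{i\ge3}$ contributes $e^{O_\kappa(1+x+f(r))}$ for each $x$, and summing (or maximising) over $0\le x\le r$ yields only $Q\le e^{O_\kappa(r)}$. This is far too weak: the exponent must be $O(\kappa r)$, not $O_\kappa(r)$.

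The paper sidesteps this by refusing to use the $x$-dependent refinements of \cref{claim:structure} here. It simply notes that $(k_i)_{i=1}^r$ encodes an integer partition of $m_{1,2}+m_{2,2}\lesssim r$ (an $x$-independent bound), so by the Hardy--Ramanujan estimate the number of choices for $(k_i)$ is $e^{O(\sqrt r)}$; together with the crude bounds $m_1,m_{1,2},m_{2,2}\lesssim r$ this gives $Q\lesssim r^{O(1)}e^{O(\sqrt r)}$, which is easily absorbed into $(1+O(\kappa))^r e^{O_\kappa(1)}$. Your sharper per-$x$ estimates on the tail $(k_i)_{i\ge3}$ are correct but unhelpful for this claim---they would only become relevant if one restructured Step~5 to sum over $x$ and exploit the $((t/n)\log(n/t))^x$ decay in $N_{\mbf p}P_{\mbf p}$, which is not what is being asked.
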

\begin{proof}
Recall the definitions of the various parameters from Step 1, and recall from \cref{claim:structure} that $s,m_1,m_{1,2},m_{2,2}\lesssim r$. It is easy to see that there are at most $(\ell+1)(r+1)^2(s+1)\lesssim r^3$ choices for $r_1,r_2,s_1,s_2,\ell_1,\ell_2$, at most $O(r^3)$ choices for $m_1,m_{1,2},m_{2,2}$,
and at most $r+1$ choices for $x$. Then, note that $\sum_{i=1}^r i k_i=m_{1,2} + m_{2,2}$, so $(k_i)_{i=1}^r$ encodes an integer partition of $m_{1,2} + m_{2,2}$ ($k_i$ is the number of parts of size $i$). For each $m_{1,2},m_{2,2}$, the number of such partitions is $e^{O(\sqrt{m_{1,2} + m_{2,2}})}=e^{O(\sqrt r)}$. 
\end{proof}

\begin{claim}\label{claim:Pp}
For any $\mbf p$ we have
\[P_{\mbf p}\le O^\ast(1)\frac{r_1^{m_1}s_2^{m_{1,2}+m_{2,2}}}{e^{r_1+2r_2}m^{m_1+m_{1,2}+m_{2,2}}}.\]
\end{claim}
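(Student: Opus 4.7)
The plan is to compute $P_{\mbf p}$ exactly as a ratio of matching counts in the configuration model, apply Stirling's formula, and then use the structural bounds from \cref{claim:structure} to massage the result into the claimed form. Throughout, polynomial factors in $r$ and terms of size $e^{O_\kappa(1+x+f(r))}$ will be absorbed into $O^\ast(1)=(1+O(\kappa))^r e^{O_\kappa(1+x+f(r))}$.

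Write $M=\sum_v d_v$ for the total number of stubs, and set $L=m_{1,2}+m_{2,2}$ and $K=m_1+L$. Sequentially revealing the matching --- pair the $2m_1$ blue stubs among themselves, then each of the $m_{1,2}$ yellows with a red, then each of the $m_{2,2}$ greens with a remaining red --- gives the exact formula
\[P_{\mbf p}=\frac{(2m_1-1)!!\cdot L!}{\prod_{j=0}^{K-1}(M-1-2j)}.\]
For the denominator, \cref{T4} gives $M=2m(1+O(\kappa))$, and \cref{claim:structure} combined with $\varepsilon$-sparsity yields $K\le 2r+O(1+f(r))\le \eta n\ll m$. Thus $\prod_{j=0}^{K-1}(M-1-2j)\ge M^K(1-2K/M)^K\ge (2m)^K/O^\ast(1)$, with the correction $(1-2K/M)^K\ge e^{-O(K^2/M)}\ge e^{-O(\eta r)}$ absorbed using $\eta\ll\kappa$. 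For the numerator, Stirling gives $(2m_1-1)!!\lesssim 2^{m_1}m_1^{m_1}e^{-m_1}$ and $L!\lesssim \sqrt L\cdot L^L e^{-L}$; the $\sqrt L$ factor fits into $O^\ast(1)$ because $\sqrt L\le\sqrt r\le(1+O(\kappa))^r$ for $r$ large (and is $O_\kappa(1)$ for $r$ bounded).

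The final step is to swap $m_1$ for $r_1$ and $L$ for $2s_2\approx 2r_2$ inside the exponentials. \cref{claim:structure}(1,4,5) gives $L=2s_2+O(1+x+f(r))$ and $s_2=r_2+O(1+f(r))$, while \cref{eq:sparsity-consequence,eq:S-consequence} give $m_1=r_1+O(1+f(r_1))$. Since $m_1/r_1$ and $L/(2s_2)$ are both $O(1)$ (or else the corresponding terms vanish), standard manipulations give $m_1^{m_1}\le r_1^{m_1}O^\ast(1)$, $L^L\le 2^L s_2^L O^\ast(1)$, and $e^{-m_1-L}\le e^{-r_1-2r_2}O^\ast(1)$. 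Assembling, the numerator is at most $2^{m_1}\cdot r_1^{m_1}\cdot 2^L s_2^L\cdot e^{-r_1-2r_2}\cdot O^\ast(1)=2^K r_1^{m_1}s_2^L e^{-r_1-2r_2}O^\ast(1)$. Dividing by the denominator $\ge 2^K m^K/O^\ast(1)$ cancels the $2^K$ factors and yields the claim.

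The main obstacle is the careful bookkeeping of error terms: each of the Stirling polynomial corrections, the $(1-2K/M)^K$ factor, the $(m_1/r_1)^{m_1}$ and $(L/(2s_2))^L$ factors, and the mismatches between $m_1+L$ and $r_1+2r_2$ must be checked to fit into $O^\ast(1)$. The key inputs are $K,L\lesssim r$ (so polynomial-in-$r$ factors are absorbable via $(1+O(\kappa))^r$ for large $r$ and $e^{O_\kappa(1)}$ for bounded $r$), $K^2/M\lesssim \eta r$ with $\eta\ll\kappa$ (so the denominator correction is absorbable), and the tight control $|L-2s_2|,|m_1-r_1|,|s_2-r_2|\lesssim 1+x+f(r)$ from \cref{claim:structure} (so the exponent mismatch fits into the $e^{O_\kappa(1+x+f(r))}$ part).
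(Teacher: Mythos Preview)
Your overall strategy---computing $P_{\mbf p}$ exactly in the configuration model, applying Stirling, and then using \cref{claim:structure} to simplify---is the same as the paper's. Your exact formula for $P_{\mbf p}$ is correct, and the bounds on the denominator and on $m_1^{m_1}$ and $e^{-m_1-L}$ are fine.

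The gap is in the step ``$L/(2s_2)$ is $O(1)$, so $L^L\le 2^Ls_2^L\,O^\ast(1)$''. This assertion is false: from \cref{claim:structure}(4,5) you only get $L=2s_2+O(1+x+f(r))$, so $L/(2s_2)=1+O((1+x+f(r))/s_2)$, which is unbounded when $s_2$ is small (e.g.\ $s_2=1$, $r_2=0$, $x=0$, and $r$ large enough that $f(r)$ is large; then $L=m_{1,2}$ can be of order $f(r)$ while $s_2=1$). In this regime $(L/(2s_2))^L$ can be as large as $f(r)^{\Theta(f(r))}$, which is \emph{not} $O^\ast(1)$ from the information you have used.

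This is exactly the difficulty the paper isolates: after reaching the intermediate bound $P_{\mbf p}\le O^\ast(1)\,r_1^{m_1}m_{1,2}^{m_{1,2}}s_2^{m_{2,2}}/(e^{r_1+2r_2}m^{m_1+m_{1,2}+m_{2,2}})$, the paper must still show $(m_{1,2}/s_2)^{m_{1,2}}\le O^\ast(1)$, and does so via a three-case analysis. The crucial case (their Case~3, where $x$ is small but $f(r)$ is not) uses the degree-sum bound \cref{eq:T5-consequence}, namely $m_{1,2}\le O_\kappa(s_2\log(2n/s_2))$, to control $\log(m_{1,2}/s_2)$ by $O(\log\log(n/s_2))$; this is then absorbed because $f(r)\log\log(n/r)\lesssim r\log\log(n/r)/\sqrt{\log(n/r)}\le\kappa r$ for $\eta\ll\kappa$. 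Your argument never invokes \cref{T5} in this way, and without it the bound $L^L\le(2s_2)^L\,O^\ast(1)$ cannot be established. (Your target inequality is in fact true---it follows from the paper's bound via $L^L\lesssim m_{1,2}^{m_{1,2}}m_{2,2}^{m_{2,2}}\binom{L}{m_{1,2}}$---but proving it requires essentially the same case analysis.)
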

\begin{proof}
Let $d_\Sigma=d_1+\cdots+d_{n-t}=(2+O(\kappa))m$. First note that
\[P_{\mbf p}\le \frac{(2m_1)!!(d_\Sigma-2m_1)!!}{d_\Sigma!!}\cdot \binom{m_{1,2} + m_{2,2}}{m_{1,2}}\cdot \frac{m_{1,2}!m_{2,2}!}{(d_\Sigma-2m_1)_{m_{1,2}+m_{2,2}}}.\]
Indeed, the first term accounts for the probability that the $2m_1$ blue stubs pair with each other, the second term is the number of ways to choose which of the $m_{1,2}+m_{2,2}$ red stubs will pair with yellow stubs and which will pair with green stubs, and the last two terms bound the probability that the red stubs do indeed pair with the yellow and green stubs in this way.

We now just need to manipulate the above expression using the inequalities in \cref{claim:structure}. Throughout, we will use the crude bounds that $m_1,m_{1,2},m_{2,2}\lesssim r$ without further remark.

First, we have $\binom{m_{1,2} + m_{2,2}}{m_{1,2}}\le (1+\kappa)^{m_{1,2}+m_{2,2}}e^{O_\kappa(m_{1,2})}$ by \cref{lem:binom},
and
\[\frac{(2m_1)!!(d_\Sigma-2m_1)!!}{d_\Sigma!!}=\frac{\binom{d_{\Sigma}/2}{m_1}}{\binom{d_{\Sigma}}{2m_1}}\le (1+O(\kappa))^{m_1}\frac{\binom{m}{m_1}}{\binom{2m}{2m_1}}\le (1+O(\kappa))^{r} \bigg(\frac{m_1}{em}\bigg)^{m_1},\]
provided $\eta\ll \kappa$ (recall that $m_1\lesssim r\le \eta n$, while $m\ge n$). Then (again with $\eta\ll\kappa$) we have $(d_\Sigma-2m_1)_{m_{1,2}}=(d_\Sigma-O(r))^{m_{1,2}+m_{2,2}}=(1+O(\kappa))^{O(r)}(d_\Sigma)^{m_{1,2}+m_{2,2}}$. Using Stirling's formula, we therefore have
\[P_{\mbf p}\le (1+O(\kappa))^{O(r)}e^{O_\kappa(m_{1,2})}\sqrt{m_{1,2} m_{2,2}} \bigg(\frac{m_1}{em}\bigg)^{m_1}\bigg(\frac{m_{1,2}}{2em}\bigg)^{m_{1,2}}\bigg(\frac{m_{2,2}}{2em}\bigg)^{m_{2,2}}.
\]
Next, using \cref{claim:structure}(4,5,6), we have $m_1=r_1+O(1+f(r_1))=(1+O(\kappa))r_1+O(1)$, $m_{1,2}=O(1+x+f(r))$, and $m_{2,2}\le 2s_2+O(1+f(s_2))=(1+O(\kappa))2s_2+O(1)$. So, we deduce
\[P_{\mbf p}\le O^\ast(1)\frac{r_1^{m_1}m_{1,2}^{m_{1,2}}s_2^{m_{2,2}}}{e^{r_1+2r_2}m^{m_1+m_{1,2}+m_{2,2}}}.
\]
Now, to finish the proof of the claim it suffices to show that $(m_{1,2}/s_2)^{m_{1,2}}\le O^\ast(1)$. We distinguish cases.

\begin{itemize}
    \item \textit{Case 1: $f(r)+x\le 1$.} We have $m_{1,2}\lesssim 1$ by \cref{claim:structure}(4), so $(m_{1,2}/s_2)^{m_{1,2}}\lesssim 1$.
    \item \textit{Case 2: $x\ge f(r)+2$}. Recall that $x$ counts degree-1 vertices in $R_2$, so $x\le r_2$, and recall from \cref{eq:S1-consequence} that $s_2\ge r_2-1$. Also, \cref{claim:structure}(4) implies that $m_{1,2}\lesssim x$. So, $s_2\ge r_2-1\ge x-1\ge x/2\gtrsim m_{1,2}$, meaning that $(m_{1,2}/s_2)^{m_{1,2}}\le e^{O(x)}$.
    \item \textit{Case 3: $f(r)+x\ge 2$ and $x\le f(r)+1$}. In this remaining case, note that $f(r) \gtrsim 1+x$. Since $s_2/n\le 1/2$, from \cref{eq:T5-consequence} we have $m_{1,2}\le O_{\kappa}(s_2\log (n/s_2))$, which implies that    
    \[\frac{m_{1,2}}{\log(n/m_{1,2})}\le \frac{O_{\kappa}(s_2\log(n/s_2))}{\log(n/O_{\kappa}(s_2\log(n/s_2)))}=s_2\frac{O_{\kappa}(\log(n/s_2))}{\log((n/s_2)/O_{\kappa}(\log(n/s_2)))}\le O_{\kappa}(s_2).\] 
    Here in the last inequality we have used the fact that $s_2/n \lesssim r/n \leq \eta$ is small relative to $\kappa$ (meaning that the denominator $\log((n/s_2)/O_\kappa(\log (n/s_2)))$ is at least say $\log(n/s_2)/2$).
    Now
    \begin{align*}
    \log\left(\left(\frac{m_{1, 2}}{s_2} \right)^{m_{1, 2}}\right) &= m_{1, 2}\log\left(O_{\kappa}(\log(n/m_{1,2}))\right) \lesssim m_{1, 2}\log \log\left(\frac{n}{m_{1,2}}\right)\\ &\lesssim f(r)\log \log\left(\frac{n}{f(r)}\right)\lesssim r \left(\frac{\log\log  (n/r)}{\sqrt{\log(n/r)}}\right)\le r\log(1+\kappa),
    \end{align*}
    provided $\eta\ll\kappa$. Exponentiating yields the desired result.\qedhere
\end{itemize}
\end{proof}

\begin{claim}\label{claim:Np}
For any $\mbf p$ we have
\[N_{\mbf p}\le O^\ast(1)\frac{n^{r_1+r_2+s_2}}{r_1^{r_1}s_2^{r_2+s_2}}\cdot e^{r_1+2r_2}\rho_2^{r_1+r_2}\bigg(\frac{\log(n/t)}{n/t}\bigg)^{\!x} E_2^{r_2},\]
where $\rho_2=\Pr[Z=t|Z\ge 2]$ and $E_2=\mb E\left[\binom Z2\middle|Z\ge 2\right]$ for $Z\sim\on{Poisson}(\lambda)$, where $\lambda$ is such that $2m/n=\mb E[Z|Z\ge 2]$.
\end{claim}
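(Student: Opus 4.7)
The plan is to decompose $N_{\mbf p}$ into multiplicative contributions from the choices of $R_1$, $R_2$, $S_2$ (with their stub colourings); dropping disjointness costs only $O^\ast(1)$, and Stirling's formula $k! \ge (k/e)^k$ converts binomials into the exponential denominators of the target. The typicality conditions in \cref{def:good-degree} (notably \cref{T2,T3,T6}) supply the degree counts, while \cref{claim:structure} ensures the bulk of vertices behave as if they had degree $2$, with only small-order corrections.

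For $R_1$: by \cref{claim:structure}(4) and (6) the stub sum from $R_1$ equals $2r_1 + O(1+x+f(r))$, so all but $O(1+x+f(r))$ of the $r_1$ vertices have degree exactly $2$ (with $\ell_1 \le 2$ of degree $1$). Applying \cref{T3} and Stirling, the number of ways to choose $R_1$ is at most $\binom{(\rho_2+\kappa)n}{r_1}\cdot O^\ast(1) \le \rho_2^{r_1}n^{r_1}e^{r_1}/r_1^{r_1}\cdot O^\ast(1)$, and the blue/yellow stub colouring contributes $O^\ast(1)$ because the $m_{1,2}=O(1+x+f(r))$ yellow stubs are few.

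For $R_2$: split $R_2$ into $x$ degree-$1$ vertices (counted via \cref{T2}), mostly-degree-$2$ vertices (via \cref{T3}), and $O(1+x+f(r))$ exceptional higher-degree vertices (handled using \cref{T6} together with the ratio identity $\binom{N}{r-j}/\binom{N}{r} \approx (r/N)^j$). Combining gives $\rho_2^{r_2-x}n^{r_2}e^{r_2}/r_2^{r_2}\cdot (t\log(n/t)/n)^x \cdot O^\ast(1)$; since $\rho_2 = \Omega_\eps(1)$ (because $\lambda$ is bounded when $2m/n \in [2+\eps, 2/\eps]$), the correction $\rho_2^{-x} = e^{O_\eps(x)}$ is absorbed into $O^\ast(1)$, yielding $\rho_2^{r_2}n^{r_2}e^{r_2}/r_2^{r_2}\cdot (\log(n/t)/(n/t))^x\cdot O^\ast(1)$. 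For $S_2$, weight each non-exceptional vertex by $\binom{d_v}{2}$ (the choices of its two red stubs), while $\ell_2 \le 2$ exceptional vertices contribute $O^\ast(1)$. Summing over unordered size-$s_2$ subsets and using \cref{T6} gives $(\sum_v \binom{d_v}{2})^{s_2}/s_2! \le ((E_2+\kappa)n)^{s_2}/s_2! \le E_2^{s_2}n^{s_2}e^{s_2}/s_2^{s_2}\cdot O^\ast(1)$ by Stirling, and \cref{claim:structure}(1) with $E_2=O_\eps(1)$ then converts $E_2^{s_2}$ to $E_2^{r_2}\cdot O^\ast(1)$.

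Multiplying the three contributions and applying $s_2-r_2 = O(1+f(r))$ to rewrite $r_2^{r_2}$ as $s_2^{r_2}\cdot O^\ast(1)$ and $e^{r_1+r_2+s_2}$ as $e^{r_1+2r_2}\cdot O^\ast(1)$ yields the claimed bound. The main obstacle will be the delicate bookkeeping: every small correction (exceptional higher-degree vertices in each of $R_1, R_2, S_2$; the degree-$1$ handling in $R_2$; alternate stub colourings; and the various $s_2 \leftrightarrow r_2$ conversions) must fit into the error term $O^\ast(1) = (1+O(\kappa))^r e^{O_\kappa(1+x+f(r))}$. This requires using the constant-order bounds $\rho_2, E_2 = \Theta_\eps(1)$ and the structural bounds of \cref{claim:structure} in tandem, together with the observation $O(x) \subseteq O(1+x+f(r))$ to absorb $e^{O_\eps(x)}$ factors.
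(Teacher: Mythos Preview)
Your overall strategy—factoring $N_{\mbf p}$ into contributions from $R_1$, $R_2$, and $S_2$, using \cref{claim:structure} to argue that almost all relevant vertices have degree exactly $2$, then applying the typicality bounds and Stirling—matches the paper's, and your treatments of $R_1$ and $R_2$ are essentially correct.

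The genuine gap is in your $S_2$ count. You weight each non-exceptional vertex by $\binom{d_v}{2}$ and arrive at $(\sum_v\binom{d_v}{2})^{s_2}/s_2!$, but the parameter vector $\mbf p$ includes the full profile $(k_i)_{i\ge 1}$ of red-stub counts, and the quantity to be bounded is $\prod_i\frac{1}{k_i!}\big(\sum_v\binom{d_v}{i}\big)^{k_i}$. When some $k_i>0$ for $i\ge 3$ (which is permitted: \cref{claim:structure}(3) only says $s_2-k_2=O(1+x+f(r))$, not that it vanishes), the weight per such vertex is $\binom{d_v}{i}$, and your expression is simply not an upper bound for the true count. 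The paper closes this gap using the second half of \cref{T6}—the bound $\sum_v\binom{d_v}{j}\le\kappa^{-j}n$ for $j\ge 3$—together with the entropy-type inequality of \cref{lem:combine-bound}, which converts $\prod_{i\ge 3}\frac{1}{k_i!}$ into $e^{O(1+x+f(r))}/(s_2-k_2-\ell_2)!$. Only after this step does the $O^\ast(1)$ absorption go through. Your proposal never invokes \cref{lem:combine-bound}, and without it a naive attempt to account for the $O(1+x+f(r))$ excess red stubs (say by bounding the number of ways to place them via \cref{T5}) produces a factor of order $(s_2\log(n/s_2))^{O(1+x+f(r))}$, which is not $O^\ast(1)$.
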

\begin{proof}
Recall that our degree sequence is $(n,m,t,\kappa)$-typical. First, we bound the number of choices of $R_1$. Recall from \cref{claim:structure}(2) that there is some $i=O(1+x+f(r))$ such that at least $r_1-i$ vertices in $R_1$ have degree exactly 2. The number of ways to choose a sequence of $r_1$ vertices, for which the first $r_1-i$ have degree exactly 2, is at most $(\rho_2+\kappa)^{r_1-i}n^{r_1}=(\rho_2+\kappa)^{r_1}(O(1))^i n^{r_1}$. Also, there are $\binom{2m_1+m_{1,2}}{m_{1,2}}$ ways to choose a blue/yellow colouring of the stubs from $R_1$. In total, the number of choices of $R_1$ and a suitable blue/yellow colouring of its stubs is at most
\begin{equation}\label{eq:R1-count}
    \binom{r_1}i\cdot \frac{(\rho_2+\kappa)^{r_1}(O(1))^{i}n^{r_1}}{r_1!}\cdot \binom{2m_1+m_{1,2}}{m_{1,2}}\le O^\ast(1)\left(\frac{en\rho_2}{r_1}\right)^{r_1}
\end{equation}
where we used \cref{lem:binom} twice (with $z=\kappa$) and Stirling's inequality.

Second, we bound the number of choices for $R_2$. Recall that $x$ of the vertices in $R_2$ have degree exactly 1, and there is some $j=O(1+x+f(r))$ such that at least $r_2-x-j$ of the other vertices in $R_2$ have degree exactly 2. By \cref{T2} and \cref{T3}, the number of ways to choose a sequence of $r_2$ vertices, of which the first $x$ have degree exactly 1, and the next $r_2-x-j$ have degree exactly 2, is at most  $O^\ast(1)((t\log(n/t)/n)^x(\rho_2 + \kappa)^{r_2-x-j}n^{r_2}$. The number of choices of $R_2$ is therefore at most
\begin{equation}\label{eq:R2-count}
    \binom{r_2}x\binom{r_2}j\cdot \frac{((t\log(n/t)/n)^x(\rho_2 + \kappa)^{r_2-x}(O(1))^jn^{r_2}}{r_2!}=O^\ast(1)\left(\frac{e\rho_2n}{r_2}\right)^{r_2}\left(\frac{t\log(n/t)}n\right)^x,
\end{equation}
where we used \cref{lem:binom} twice and Stirling's inequality.

Third, the number of ways to choose $S_2$, and to choose which of its stubs are red, is at most
\[\prod_{i=1}^r\frac{1}{k_i!}\left(\sum_{v\in V(G_t)}\binom{d_v}{i}\right)^{k_i}\le \frac{1}{k_2!}(E_2+\kappa)^{k_2}\frac{e^{m_{1,2}+m_{2,2}-2k_2-\ell_2}}{(s_2-k_2-\ell_2)!}(1/\kappa)^{m_{1,2}+m_{2,2}-2k_2-\ell_2}n^{s_2}.\]
Here we used that $\sum_v \binom {d_v}1\le m\lesssim n$ by \cref{T4}, we used \cref{T6}, and we used \cref{lem:combine-bound} applied to $(k_i)_{i\ge 2}$, noting that $\sum_{i=1}^r k_i=s_2$, $\sum_{i=1}^r ik_i=m_{1,2}+m_{2,2}$ and $k_1=\ell_2$.

Now, by \cref{claim:structure}(1,2,5), we have $m_{1,2}+m_{2,2}-2k_2-\ell_2=O(1+x+f(r))$, so the above expression is bounded by 
\begin{equation}\label{eq:S2-count}
    O^\ast(1)\frac{(E_2 n)^{s_2}}{k_2!(s_2-k_2-\ell_2)!}= O^\ast(1)\binom {s_2}{k_2}\frac{(E_2 n)^{s_2}}{s_2!}\le O^\ast(1)\left(\frac{eE_2n}{s_2}\right)^{s_2}.
\end{equation}
(we have used the fact that $s_2 = k_2 + O(1+x + r)$, \cref{lem:binom}, and Stirling's inequality).

Multiplying the expressions in \cref{eq:S2-count,eq:R2-count,eq:R1-count} (counting the number of ways to choose $R_1,R_2,S_2$, and their stub-colourings) shows that
\[N_{\mbf p}\le O^\ast(1)\frac{n^{r_1+r_2+s_2}}{r_1^{r_1}r_2^{r_2}s_2^{s_2}}\cdot e^{s_2}(e\rho_2)^{r_1+r_2}\bigg(\frac{\log(n/t)}{n/t}\bigg)^{\!x} E_2^{s_2}.\]
The desired result follows, noting that $s_2=r_2+O(1+x+f(r))$ by \cref{claim:structure}(1) and hence $(s_2/r_2)^{r_2}=(1+O(1+x+f(r))/r_2)^{r_2}=e^{O(1+x+f(r))}=O^\ast(1)$.
\end{proof}

\medskip
\noindent\textbf{Step 5: Putting everything together.}
Let $E_{\mbf p}=Q N_{\mbf p} P_{\mbf p}$, so
\[\mb EX_{r,s,\ell}\le \max_{\mbf p} E_{\mbf{p}},\quad\mb EX'_{r,s,\ell}\le \max_{\mbf p:x>0} E_{\mbf{p}}.\]
Combining \cref{claim:num-p,claim:Pp,claim:Np}, we have
\[E_{\mbf p}\le O^\ast(1)\bigg(\frac{\log(n/t)}{n/t}\bigg)^{\!x}\rho_2^{r_1}(\rho_2E_2)^{r_2}\frac{n^{r_1+r_2+s_2}r_1^{m_1-r_1}s_2^{m_{1,2}+m_{2,2}-r_2-s_2}}{m^{m_1+m_{1,2}+m_{2,2}}}.
\]
Now, $r_1,s_1\le 2r$, and $m_1-r_1,m_{1,2}+m_{2,2}-r_2-s_2\lesssim 1+x+f(r)$, by \cref{claim:structure}(1,4,5,6). So, if $m_1-r_1$ (respectively, $m_{1,2}+m_{2,2}-r_2-s_2$) is nonnegative, then $\left(\frac{r_1}{r}\right)^{m_1-r_1}=O^\ast(1)$ (respectively, $\left(\frac{s_2}{r}\right)^{m_{1,2}+m_{2,2}-r_2-s_2}=O^\ast(1)$). By \cref{eq:S1-consequence} and \cref{eq:S-consequence}, $m_1-r_1$ and $m_{1,2}+m_{2,2}-r_2-s_2$ can only be very slightly negative (i.e., if either is negative, it is $O(1)$). In such a case, we again have $\left(\frac{r_1}{r}\right)^{m_1-r_1}=O^\ast(1)$ or $\left(\frac{s_2}{r}\right)^{m_{1,2}+m_{2,2}-r_2-s_2}=O^\ast(1)$, respectively. Also, $(m/n)^{s_2-r_2}=e^{O(1+x+f(r))}$. Putting all this together, we further bound
\[E_{\mbf p}\le O^\ast(1)\bigg(\frac{\log(n/t)}{n/t}\bigg)^{\!x}\left(\frac{\rho_2n}{m}\right)^{r_1}\left(\frac{\rho_2E_2n^2}{m^2}\right)^{r_2}\left(\frac{r}{m}\right)^{m_1+m_{1,2}+m_{2,2}-r_1-r_2-s_2}.\]
Recalling the definitions of $\rho_2$ and $E_2$ in terms of a Poisson random variable $Z$, and recalling the choice of the Poisson parameter $\lambda$, we compute
\[\frac{\rho_2n}{m}=\frac{2\Pr[Z=2|Z\ge 2]}{\mb E[Z|Z\ge 2]}=\frac{2\Pr[Z=2]}{\mb E[Z\mbm 1_{Z\ge 2}]}=\frac{2(\lambda^2e^{-\lambda}/2)}{\lambda-\lambda e^{-\lambda}}=\frac{\lambda e^{-\lambda}}{1-e^{-\lambda}}=1-\Omega_\eps(1),\]
and similarly
\[\frac{\rho_2E_2n^2}{m^2}=\frac{4\Pr[Z=2]\,\mb E\binom Z 2}{\mb E[Z\mbm 1_{Z\ge 2}]^2}=\frac{4(\lambda^2 e^{-\lambda}/2)(\lambda^2/2)}{(\lambda-\lambda e^{-\lambda})^2}=\frac{\lambda^2e^{-\lambda}}{(1-e^{-\lambda})^2}= 1-\Omega_\eps(1).\]
Also, combining the inequalities in \cref{eq:S-consequence} in different ways, we can obtain $m_1+m_{1,2}+m_{2,2}\ge s_1+2s_2-\ell=r_1+r_2+s_2-\ell+(s-r)$ and $m_1+m_{1,2}+m_{2,2}\ge r_1+2r_2-\lfloor\ell_1/2\rfloor-x=r_1+r_2+s_2-\lfloor\ell_1/2\rfloor-x+(r-s)$. We deduce
\[E_{\mbf p}\lesssim_{\kappa} e^{-\Omega_\eps(r)}\bigg(O_{\kappa}(1)\frac{\log(n/t)}{n/t}\bigg)^{\!x}\left(\frac{r}{m}\right)^{\max(s-r-\ell,\;r-s-\lfloor\ell_1/2\rfloor-x)}\]
for $\kappa\ll \eps$, recalling the definition $O^\ast(1):= (1+O(\kappa))^re^{O_{\kappa}(1+x+f(r))}\le e^{O(\kappa r)}O_{\kappa}(1)O_{\kappa}(1)^x$ (for the inequality, we are using that $\eta\ll \kappa$, so the ``$f(r)$'' in the exponent is sufficiently small compared to $r$).

We finally break into cases to prove the six different parts of \cref{lem:structure-master}. Observe that since we are assuming $t \leq \alpha n$ for $\alpha\ll\kappa$, we have that $\big(O_{\kappa}(1)\log(n/t)/(n/t)\big)^{\!x} \leq 1$.
\begin{enumerate}
    \item If $s\ge r+1$ and $\ell=0$, then taking the first term in the ``max'' in the exponent yields $\mb EX_{r,s,0}\lesssim_\kappa e^{-\Omega_\eps(r)}(r/m)=e^{-\Omega_\eps(r)}/n$.
    \item If $s = r$ and $\ell=0$, taking the first term in the ``max'', we see $\mb EX_{r,r,0}\lesssim_\kappa e^{-\Omega_\eps(r)}$.
    \item When $s = r$, $\ell=0$, $x > 0$, and $t\le n^{1/8}$, notice that we have $O_{\kappa}(1)t\log(n/t)/n\le n^{-3/4}$. Thus taking the first term in the ``max'' yields $\mb E X_{r,r,0}'\lesssim_\kappa e^{-\Omega_\eps(r)}n^{-3/4}$.
    \item Suppose $s=r-1$ and $\ell = 0$. If $x\ge 2$ then take the first term in the ``max'', and if $x<2$ take the second term. Thus for $x \ge 2$, we have $E_{\mbf p} \lesssim_\kappa e^{-\Omega_\eps(r)}(t\log(n/t)/n)^2\cdot n\lesssim_\kappa e^{-\Omega_\eps(r)}(t/n)^{3/2} n$.
    For $x = 1$, we have $E_{\mbf p} \lesssim_\kappa e^{-\Omega_\eps(r)}(t\log(n/t)/n)\lesssim_\kappa e^{-\Omega_\eps(r)}(t/n)^{3/2} n$ and for $x = 0$, we have $E_{\mbf p} \lesssim_\kappa e^{-\Omega_\eps(r)}\frac{r}{m} \lesssim_{\kappa} e^{-\Omega_\eps(r)}/m \lesssim e^{-\Omega_\eps(r)}(t/n)^{3/2} n$. 
    \item For any $\ell$, by \cref{S1} we have $s-r-\ell\ge -1$, so (taking the first term in the ``max'') we have $\mb E X_{r,s,\ell}\lesssim_\kappa e^{-\Omega_\eps(r)}n$.
    \item If $\ell = 1$ and $t \leq \log n$, notice that $t\log (n/t)/n\le (\log n)^2/n$. If $x > 0$ then take the first term in the ``max'', which is at most $-1$ by \cref{S1}. If $x = 0$, then $\lfloor\ell_1/2\rfloor=0$ means the ``max'' term must evaluate to at least $0$. Together these two cases yield $\mb E X_{r,s,1}\lesssim_\kappa e^{-\Omega_\eps(r)}(\log n)^{2}$. \qedhere
\end{enumerate}
\end{proof}

\subsection{Deductions}
We now deduce \cref{claim:unstructured-whp,claim:small-kernel-vectors-rough,claim:small-kernel-vectors-precise}.

\begin{proof}[Proof of \cref{claim:unstructured-whp}]
By \cref{lem:unstructured-stalk} and \cref{lem:nonzero-level-set}, for $t\le \Delta \leq \log n$ it suffices to prove that $G_t$ has at most $(n-t)/10$ different $(\le\!\eta (n-t),1)$-stalks and at most $(n-t)^2/10$ different $(\le\!\eta (n-t),2)$-stalks, with probability $1-(\log n)^{-\omega(1)}$. By \cref{lem:dense-subset}, we only need to consider $\eps$-sparse stalks. Also, by \cref{lem:good-deg-seq}, with probability $1 - (t/n)^{1/4} \geq 1-(\log n)^{-\omega(1)}$, the degree sequence of $G_t$ is $(n,m,t,\kappa)$-typical. Thus it suffices to prove the result conditional on a particular such degree sequence.

Conditioning on a typical degree sequence, by \cref{lem:structure-master}(6) and (5) respectively, we have the expected numbers of $\eps$-sparse $(\le\!\eta (n-t),1)$-stalks and $(\le\!\eta (n-t),2)$-stalks are at most 
\[\sum_{r=1}^\infty re^{-\Omega(r)}O_{\kappa}(\log n)^2\lesssim_\kappa(\log n)^2\quad\text{and}\quad\sum_{r=1}^\infty re^{-\Omega(r)}O_{\kappa}(n)\lesssim_\kappa n,\]
respectively. The desired result follows from Markov's inequality.
\end{proof}

\begin{proof}[Proof of \cref{claim:small-kernel-vectors-rough}]
Fix $t$. We would like to prove that with probability at least $1-O_{\kappa}(1/t + (n/t)^{1/4})$, we either have $t> N$ (i.e., $t$ is outside our range of consideration), or $\dim(K^{(\eta)}_t) < t/4$.

Notice that (if $t\le N$) we have $\dim(K^{(\eta)}_t)\le |\supp(K^{(\eta)}_t)|$, which by \cref{lem:minimal-stalk} is at most the number of vertices in $(\le\!\eta (n-t),0)$-stalks. By \cref{lem:dense-subset} we only need to worry about $\eps$-sparse stalks, and by \cref{lem:good-deg-seq}, it suffices to prove the result conditioned on a particular $(n,m,t,\kappa)$-typical degree sequence for $G_t$ (note that the degree sequence of $G_t$ determines whether $t\le N$).

Conditioning on a typical degree sequence, by \cref{lem:structure-master}(1,2,4), the expected number of vertices in $\eps$-sparse $(\le\!\eta (n-t),0)$-stalks is at most
\[\sum_{r=1}^\infty O_{\kappa}(re^{-\Omega(r)}+re^{-\Omega(r)}(t/n)^{3/2}n)=O_\kappa(1+(t/n)^{3/2}n).\]
By Markov's inequality, the probability this number is greater than $t/4$ is $O_\kappa((t/n)^{1/2}+1/t)$.
\end{proof}

\begin{proof}[Proof of \cref{claim:small-kernel-vectors-precise}]
In this proof we only consider $t\le \Delta$ (so, for example, ``all $t$'' should be read as ``all $t\le \Delta$''). We prove that each of (a,b,c,d,e) hold with probability at least $1-1/(5\Delta)$. Say a ``special stalk'' is an $(r,r,0)$-stalk for some $r\le \eta (n-t)$.

Let $V_t^\ast$ be the set of vertices contained in an $\eps$-sparse special stalk. By \cref{lem:structure-master}(2), in the setting of \cref{lem:structure-master} (conditioning on a particular typical degree sequence for $G_t$), we have
\[\mb E|V_t^\ast|\lesssim_{\kappa} \sum_{r=1}^\infty re^{-\Omega(r)}=O_\kappa(1).\]
By Markov's inequality and \cref{lem:good-deg-seq}, and a union bound over $t$, with probability at least say $1-1/(10\Delta)$ each $|V_t^\ast|\le O_{\kappa,\Delta}(1)$. So, by \cref{lem:dense-subset}, with probability at least say $1-1/(9\Delta)$, for each $t$ there are at most $O_{\kappa,\Delta}(1)$ vertices in special stalks. For any special cycle of length $4k\le 2\eps(n-t)$, there is a $(2k,2k,0)$-stalk containing half its vertices (i.e., a special stalk), so this takes care of (d).

Similarly, by \cref{lem:structure-master}(1,2,3,4) together with Markov's inequality and \cref{lem:good-deg-seq}, with probability $1/(10\Delta)$ the only $0$-stalks in any $G_t$ are $(r,r,0)$-stalks which do not contain any degree-1 vertices, for some $|R|=O_{\kappa,\Delta}(1)$. The union of any two non-disjoint cycles has strictly more edges than vertices (since in such a union every vertex has degree at least 2, and some vertex has degree strictly greater than 2). So, given the above event, if two of the special cycles in $G_t$ were not vertex disjoint, they would provide a set of $O_{\kappa,\Delta}(1)=o_{\kappa,\Delta}(\sqrt{\log n})$ vertices contradicting \cref{lem:dense-subset} (which holds with probability $1-n^{-\omega(1)}$). So, \cref{lem:corank-s} takes care of (a,b).

For (c,e), note that by \cref{lem:dense-subset}(A2) and \cref{lem:degree-sequence-poisson}(A4), there are at most say $\exp((\log\log n)^4)$ edges (and thus, vertices) in $G$ which are in a cycle of length at most $\log \log n$ or adjacent to such a cycle. We can reveal these ``dangerous'' vertices in $G$ without revealing the random ordering $v_{N-1},\ldots,v_0$ of the vertices in $T$ (recall that the vertices of $T$ are deleted then added back in some random order to form our sequence of graphs $G_N,\ldots,G_0$). With probability $1 - 1/(10\Delta)$, none of the vertices $v_{\Delta},\ldots,v_0$ is dangerous (indeed, the expected number of such dangerous vertices is $\Delta n^{-1+o(1)} \leq 1/(10\Delta^2)$, so Markov's inequality yields this). This handles (e). If (e) holds, the special cycles of length at most $\log \log n$ are completely unaffected by the vertex additions defining the sequence $G_{\Delta},\ldots,G_0$; this takes care of (c), recalling that with probability at least $1-1/(10\Delta)$ each $|V_t^{\mr{spec}}|\le O_{\kappa,\Delta}(1)\le \log \log n$.
\end{proof}

\section{The bipartite case}\label{sec:bipartite-sketch}
Having just proved \cref{thm:main-RMT}(A1), we now sketch the changes that must be made for a proof of (B1). The proof strategy is extremely similar, but there are some minor simplifications and complications. The most notable simplification is that we can use \cref{lem:corank-boosting-bipartite} instead of the more sophisticated \cref{lem:corank-boosting-master}, and the primary complication is that a small amount of extra notation and bookkeeping becomes necessary, due to the fact that we need to pay attention to both right and left kernels (i.e., the kernels of our matrix $B$ and its transpose $B^\intercal $).

Recall that in the setting of \cref{thm:main-RMT}(A1), we had a set $T$ of high-degree vertices (coming from \cref{lem:initial-rank}(A)). We ``extracted'' this set (and used \cref{lem:initial-rank}(A1) to control the rank of the resulting matrix), then added back these vertices one-by-one in a random order (each such addition corresponds to the addition of a new row and column), studying how the rank changes during this process.

In the setting of \cref{thm:main-RMT}(B1), after sampling $G\sim\mc{K}(n_1,n_2,m,2)$ and using the setup of \cref{lem:initial-rank}(B) we will now have \emph{two} sets $T_1,T_2$ of high-degree vertices (whose sizes are almost the same). We let $\mbf T\subseteq T_1\times T_2$ be a set of $\min(|T_1|,|T_2|)$ disjoint \emph{pairs} of vertices from $T_1\times T_2$, which will play the role of $T$ above. Indeed, let $G[V\setminus \mbf T]$ be the (balanced) bipartite graph obtained from $G$ by removing the vertices in the pairs in $\mbf T$, so by \cref{lem:initial-rank}(B1,B3) we have $\corank G[V\setminus \mbf T]\le |T|/15$. The plan is then to add back the pairs in $\mbf T$ pair-by-pair in a random order (each such addition corresponds to the addition of a new row for the $T_1$-vertex and the addition of a new column for the $T_2$-vertex).

Similarly to \cref{sec:RMT-overview}, we define $N=|\mbf T|$, and let $G_t$ be the graph that results after $t$ of the pairs in $\mbf T$ have been added back. Instead of just defining the ``small-support kernel''  $K^{(\eta)}_t$, we now need both a right and left version:
\[K^{(\eta)}_t=\{\mathbf{v}\in\ker B(G_{t}):|\on{supp}(x)|\le\eta (n_2-t)\},\quad Q^{(\eta)}_t=\{\mathbf{v}\in\ker B(G_{t})^\intercal :|\on{supp}(x)|\le\eta (n_1-t)\}.\]

Then, it is straightforward to prove bipartite analogues to \cref{claim:neighbourhood-symmetry,claim:nonzero-level-sets,claim:small-kernel-vectors-rough,claim:small-kernel-vectors-precise} (we have no need for an analogue of \cref{claim:unstructured-whp}, because the corank-boosting part is now simpler). Specifically, our analogue of \cref{claim:neighbourhood-symmetry} should say that after appropriate revelations the neighbourhoods of vertices in $T_1$ are uniformly random subsets of $W\cap V_2$, and the neighbourhoods of the vertices in $T_2$ are uniformly random subsets of $W\cap V_1$ (of the appropriate sizes), all independent of each other. Our analogue of \cref{claim:nonzero-level-sets} should hold for both $B(G_t)$ and $B(G_t)^\intercal $, and our analogues of \cref{claim:small-kernel-vectors-rough,claim:small-kernel-vectors-precise} should hold for both $K^{(\eta)}_t$ and $Q^{(\eta)}_t$ (for \cref{claim:small-kernel-vectors-precise}, $K^{(\eta)}_t$ should be described in terms of 2-special cycles, and $Q^{(\eta)}_t$ should be described in terms of 1-special cycles). There are no additional difficulties in the proofs of any of these claims. Actually, things are slightly simpler: we remark that the bipartite analogue of a \emph{stalk} in \cref{sec:stalks-computation} should be defined to be a set of vertices $S$ contained on just one side of our bipartite graph, so there can be no intersection between $S$ and its neighbourhood $N(S)$; this simplifies the calculations in \cref{claim:structure}.

Now, recall that in the proof of \cref{thm:main-RMT}(A1) we considered a random walk defined by random variables of the form $\dim\ker A(G_{t})-\mbm 1_{t\le \Delta} \dim{K_\Delta^{(\eta)}}-t/4$.
For \cref{thm:main-RMT}(B1) we need a similar definition that takes both sides of our bipartite graph into account: let
\[
X_{t}=\min\left(\dim\ker B(G_{t})-\mbm 1_{t\le \Delta} \dim{K_\Delta^{(\eta)}},\;  \dim\ker B(G_{t})^\intercal -\mbm 1_{t\le \Delta} \dim{Q_\Delta^{(\eta)}}\right)-t/4.
\]
Actually, it turns out that only the first term of the ``min'' is really necessary: recall that we are assuming $n_1-n_2\to \infty$, and note that $n_2-\dim\ker B(G_{t})=\rank B(G_t)=n_1-\dim\ker B(G_{t})^\intercal $.
Also, by (a bipartite analogue of) \cref{claim:small-kernel-vectors-rough}, whp $\dim{K_\Delta^{(\eta)}}$ and $\dim{Q_\Delta^{(\eta)}}$ are of the form $o(n_1-n_2)$. So whp we actually have
\[
X_{t}=\dim\ker B(G_{t})-\mbm 1_{t\le \Delta} \dim{K_\Delta^{(\eta)}}-t/4.
\]
for all $t$. The above reasoning also shows that whp for all $t\le \Delta$, we have $\dim \ker B(G_t)^\intercal -\dim{Q_\Delta^{(\eta)}}>0$, i.e., $B(G_t)^\intercal $ has a kernel vector $\mbf v\in \mb R^{n_2}$ with $|\supp (\mbf v)|\ge \eta n_1$.

Now, we apply \cref{thm:random-walk-master} in basically the same way as for the proof of \cref{thm:main-RMT}(A1). We say an index is \emph{good} if it satisfies the natural analogues of \cref{G1}, \cref{G2}, \cref{G3}, \cref{G4}, \cref{G5}, \cref{G7} (where \cref{G3} needs to hold for both $B(G_t)$ and $B(G_t)^\intercal $, and \cref{G2,G4,G5,G7} need to hold for both $K^{(\eta)}_t$ and $Q^{(\eta)}_t$), and if the following property holds (c.f., the discussion in the previous paragraph):
\begin{enumerate}[{\bfseries{G\arabic{enumi}}}]
    \setcounter{enumi}{7}
    \item \label{Gspec} If $t\le \Delta$, then $B(G_t)^\intercal $ has a kernel vector $\mbf v\in \mb R^{n_2}$ with $|\supp (\mbf v)|\ge \eta n_1$.
\end{enumerate}

Now, the rest of the proof of \cref{thm:main-RMT}(A1) basically translates directly into a proof of \cref{thm:main-RMT}(A1), with the exception that we need to replace the applications of \cref{lem:corank-boosting-master} with applications of \cref{lem:corank-boosting-bipartite}. Specifically:
\begin{itemize}
    \item In ``Case 1'': instead of \cref{lem:corank-boosting-master}(a) we apply \cref{lem:corank-boosting-bipartite} twice to increase the rank by 2 (first we add our new column to obtain a matrix $B'$, then we view our new-row-addition as adding a column to $(B')^\intercal $).
    \item In ``Case 2'': instead of \cref{lem:corank-boosting-master}(b) we add our new column and apply \cref{lem:corank-boosting-bipartite} (using \cref{Gspec}), then note that adding an additional row cannot decrease the rank.
\end{itemize}

\section{Asymptotic distributions}\label{sec:distributions}
In this section we prove the central limit theorem in \cref{cor:CLT}, and discuss the (Poisson-type) asymptotic distributions of various quantities in \cref{thm:main-RMT,thm:matching-vs-rank}.

First, we prove \cref{cor:CLT}.
\begin{proof}[Proof of \cref{cor:CLT}]
For $p=c/n$ with $c<1$ or $c>e$, the matching number $\nu(G)$ of a random graph $G\sim\mathbb{G}(n,c/n)$ is known to satisfy a central limit theorem: there are $\mu=\mu(c,n)$ and $\sigma=\sigma(c,n)$ (where $\mu$ and $\sigma^2$ both have order of magnitude $n$) such that $(\nu(G)-\mu)/\sigma\overset d \to \mc N(0,1)$. For $c<1$ this is due to Pittel~\cite{Pit90}, and for $c>e$ this is due to Krea\v{c}i\'c~\cite[Theorem~19]{Kre17}.

By \cref{thm:matching-vs-rank}(A1), we have $2\nu(G)-\rank A(G)=o(\sqrt n)$ whp, which implies that $X=\rank A(G)$ satisfies the same central limit theorem as $\nu(G)$.

Strictly speaking, it remains to show that we also have $(\rank A(G)-\mb E X)/\sqrt{\on{Var} X}\to \mc N(0,1)$. Indeed, \emph{a priori}, there may be no connection between $\mu$ and $\mb E X$ or between $\sigma^2$ and $\on{Var}X$, if the mean or variance of $X$ is dominated by the effect of outliers. To rule out such pathological behaviour, we need the well-known observation (easily proved with the Azuma--Hoeffding martingale concentration inequality; see for example the appendix of \cite{BLS11}) that the rank of a random matrix is subgaussian with ``variance proxy'' $O(n)$ (and thus the tails have negligible contribution to the mean and variance).
\end{proof}

Next we prove \cref{thm:main-RMT}(A2). We omit the proof of \cref{thm:main-RMT}(B2), as it follows from an easier version of the same argument.

First, we need expressions for certain infinite sums, which can both be obtained by manipulating the Taylor series $\log(1-x)=-\sum_{k=1}^\infty x^k/k$.
\begin{lemma}\label{lem:sum-formulas}
Let $Z\sim\operatorname{Poisson}(\lambda)$.
\begin{enumerate}
\item[(A)] If $Q\in \mb R$ satisfies $|Q|\lambda<e^{\lambda/2}-e^{-\lambda/2}$ then
\[
\sum_{k=1}^{\infty}\frac{1}{4k}\cdot \frac{(2Q^{2}\Pr[Z=2|Z\ge2]\mb E[Z(Z-1)|Z\ge2])^{2k}}{\mb E[Z|Z\ge2]^{4k}}=-\frac{1}{4}\log\left(1-\left(\frac{Q\lambda}{e^{\lambda/2}-e^{-\lambda/2}}\right)^{4}\right).
\]
Also, for all $\lambda>0$ we have $\lambda<e^{\lambda/2}-e^{-\lambda/2}$ (i.e., the above holds for $Q$ sufficiently close to 1).
\item[(B)] If $Q\in \mb R$ satisfies $|Q|\lambda<e^{\lambda}-1$ then
\[
\sum_{k=1}^{\infty}\frac{1}{8k}\left(\frac{2Q\Pr[Z=2|Z\ge2])}{\mb E[Z|Z\ge2]}\right)^{4k}=-\frac{1}{8}\log\left(1-\left(\frac{Q\lambda}{e^{\lambda}-1}\right)^{4}\right).
\]
Also, for all $\lambda>0$ we have $\lambda<e^{\lambda}-1$.
\end{enumerate}
\end{lemma}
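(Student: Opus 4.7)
The plan for both parts is the same: compute the Poisson moments in closed form, simplify the algebraic expressions on the left-hand sides, and then recognise the resulting sums as Taylor expansions of $-\log(1-x)$. This is essentially a direct computation with no surprises; the main task is to be careful with the conditioning on $Z \ge 2$.

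First I would write down the relevant Poisson statistics. For $Z \sim \operatorname{Poisson}(\lambda)$, using $Z(Z-1) \equiv 0$ on $\{Z \le 1\}$ and $\mb E[Z(Z-1)] = \lambda^2$, I obtain
\[
\Pr[Z\ge 2] = 1 - e^{-\lambda} - \lambda e^{-\lambda}, \quad
\Pr[Z=2] = \tfrac{\lambda^2}{2}e^{-\lambda},
\]
\[
\mb E[Z \mathbf{1}_{Z \ge 2}] = \lambda(1-e^{-\lambda}), \quad
\mb E[Z(Z-1)\mathbf{1}_{Z \ge 2}] = \lambda^2.
\]
Dividing by $\Pr[Z \ge 2]$ gives the conditional expectations.

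Next, for part (A), substituting these values into $2Q^{2}\Pr[Z=2|Z\ge2]\,\mb E[Z(Z-1)|Z\ge2]/\mb E[Z|Z\ge2]^{2}$ and cancelling the common factors of $\Pr[Z \ge 2]$, I expect to land on $Q^{2}\lambda^{2}e^{-\lambda}/(1-e^{-\lambda})^{2}$; multiplying numerator and denominator by $e^{\lambda}$ this rewrites as $(Q\lambda)^{2}/(e^{\lambda/2}-e^{-\lambda/2})^{2}$, using $e^{\lambda}-2+e^{-\lambda}=(e^{\lambda/2}-e^{-\lambda/2})^{2}$. Raising to the power $2k$ and applying $\sum_{k\ge 1} x^{4k}/(4k) = -\tfrac14\log(1-x^{4})$ (valid for $|x|<1$) with $x = Q\lambda/(e^{\lambda/2}-e^{-\lambda/2})$ yields the claimed identity. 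For part (B), the analogous simplification $2Q\Pr[Z=2|Z\ge2]/\mb E[Z|Z\ge2] = Q\lambda e^{-\lambda}/(1-e^{-\lambda}) = Q\lambda/(e^{\lambda}-1)$ is slightly simpler, and the Taylor identity $\sum_{k\ge 1} y^{4k}/(8k) = -\tfrac18 \log(1-y^{4})$ closes the computation.

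Finally, the inequalities $\lambda < e^{\lambda/2}-e^{-\lambda/2}$ and $\lambda < e^{\lambda}-1$ for $\lambda>0$ follow immediately from the Taylor expansions $e^{\lambda/2}-e^{-\lambda/2} = 2\sinh(\lambda/2) = \lambda + 2\sum_{k\ge 1}(\lambda/2)^{2k+1}/(2k+1)!$ and $e^{\lambda}-1 = \lambda + \sum_{k\ge 2}\lambda^{k}/k!$, whose extra terms are strictly positive for $\lambda>0$. There is no real obstacle in this lemma; the only thing to watch is keeping track of the $\Pr[Z\ge2]$ factors when combining conditional expectations, and verifying that the quantity inside the logarithm is in $(-1,1)$ so that the geometric series converges, which is exactly what the hypothesis on $|Q|\lambda$ guarantees.
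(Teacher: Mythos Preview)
Your proposal is correct and follows exactly the approach the paper indicates: the paper does not give a detailed proof, merely stating that both identities ``can be obtained by manipulating the Taylor series $\log(1-x)=-\sum_{k=1}^\infty x^k/k$'', and your computation of the conditional Poisson moments followed by the recognition of the resulting series is precisely that manipulation carried out in full.
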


\begin{lemma}\label{lem:large-special-cycles}
In the setting of \cref{thm:main-RMT}(A), for any $M\to\infty$, whp there are no special cycles of length at least $4M$.
\end{lemma}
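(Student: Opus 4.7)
The strategy is to compute the expected number of long special cycles in the configuration model and apply Markov's inequality, with the tail summability coming from \cref{lem:sum-formulas}(A).

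First, by \cref{lem:degree-sequence-poisson}(A), with probability $1 - n^{-\omega(1)}$ the degree sequence $\mbf d$ of $G$ is $(n,m,0,\kappa)$-typical, so we may condition on such an outcome of $\mbf d$. By \cref{lem:rotate-sequence-simple}(A) and \cref{lem:simplicity}(A), it suffices to prove the desired statement for $G \sim \mb G^*(\mbf d)$ (since $\sum_v d_v^2 = O(n)$ by typicality, the configuration model is simple with probability $\Omega_\varepsilon(1)$).

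Second, I would perform a standard first-moment calculation in the configuration model. For each ordered tuple $(v_1,\dots,v_{4k})$ of distinct vertices with $d_{v_{2j}}=2$ for all $j$, the probability that $v_1 v_2\cdots v_{4k}v_1$ forms a cycle is $\prod_i d_{v_i}(d_{v_i}-1) / \prod_{j=0}^{4k-1}(2m-2j-1)$. Summing over such tuples, splitting the product over odd and even indices, and dividing by $4k$ to handle the symmetries preserving the alternating structure (which correctly counts isolated special cycles with multiplicity 2), the typicality of $\mbf d$ yields
\[
\mb E[\#\text{special cycles of length }4k] \le (1+o(1))^{O(k)} \cdot \frac{r^{2k}}{4k},
\]
where $r = 2\Pr[Z=2\mid Z\ge 2]\,\mb E[Z(Z-1)\mid Z\ge 2]/\mb E[Z\mid Z\ge 2]^2 = \bigl(\lambda/(e^{\lambda/2}-e^{-\lambda/2})\bigr)^2$. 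By \cref{lem:sum-formulas}(A), $r<1$ (since $\lambda<2\sinh(\lambda/2)$ for all $\lambda>0$), so $\sum_{k\ge 1}r^{2k}/(4k)=\gamma(\lambda)<\infty$.

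Third, I sum over $k\ge M_n$. In the range $M_n\le k\le n^{1/3}$ the typicality errors (of order $n^{-1/2+o(1)}$) and the denominator correction $\exp(O(k^2/m))$ combine to a factor $1+o(1)$, so the contribution is at most $(1+o(1))\sum_{k\ge M_n}r^{2k}/(4k)$, which tends to $0$ as $M_n\to\infty$ by the convergence of the series. For $k>n^{1/3}$, the exponential decay $r^{2k}\le r^{2n^{1/3}}$ combined with the structural constraint that every special cycle of length $4k$ consumes $2k$ degree-$2$ vertices (and each degree-$2$ vertex lies in at most one induced cycle, so there are at most $\rho_2 n/k=O(n/k)$ such cycles) suffices to show that this range contributes $o(1)$ to the expectation. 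Markov's inequality then finishes the proof.

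The main obstacle is controlling the correction factor $(1+o(1))^{O(k)}$ for large $k$: both the typicality error and the ratio $(2m)^{4k}/\prod_{j=0}^{4k-1}(2m-2j-1)$ contribute terms of the form $\exp(O(k^2/m))$ that are negligible only for $k$ up to a small polynomial in $n$. To extend to essentially all relevant $k\le n/4$, one must carefully split into multiple ranges and combine the probabilistic bound (good for moderate $k$) with the structural pigeonhole bound on degree-$2$ vertices (good for very large $k$), relying on the fact that $r$ is bounded strictly away from $1$ (by a constant depending only on $\varepsilon$).
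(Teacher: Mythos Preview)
Your configuration-model first-moment calculation for the range $M\le k\le n^{1/3}$ is correct and is essentially identical to the paper's computation. In fact, since the typicality error in $\rho_2$ and $E_2$ is $o(1)$ and the denominator correction is $\exp(O(k^2/m))=e^{o(k)}$, the estimate
\[
\mb E[\#\text{special cycles of length }4k]\le \frac{((1+o(1))r)^{2k}}{4k}
\]
holds uniformly for all $k=o(n)$ (not just $k\le n^{1/3}$), and since $r<1$ strictly this already handles all $k$ up to any $o(n)$ cutoff. Your splitting at $n^{1/3}$ is unnecessarily cautious.

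The genuine gap is your treatment of very long cycles. The claim ``each degree-$2$ vertex lies in at most one induced cycle'' is false: take vertices $v,a,b,c,d$ with edge set $\{va,vb,ac,ad,bc,bd\}$; then $v$ has degree $2$ and lies in both induced $4$-cycles $vacb$ and $vadb$, each of which is special. Adding further common neighbours of $a$ and $b$ shows a single degree-$2$ vertex can lie in arbitrarily many special cycles, so no pigeonhole bound of the form $O(n/k)$ is available. Moreover, even granting such a bound, it is unclear how you intend to combine a deterministic count bound with the probability factor $r^{2k}$ to get an $o(1)$ contribution to the expectation; these do not multiply in any obvious way.

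The paper circumvents this entirely by appealing to the already-proved \cref{thm:main-RMT}(A1). At the end of \cref{sec:RMT-overview} it was shown that whp $\corank A(G)=\dim K_0^{(\eta)}=s^{(\eta)}(G)$; any special cycle of length exceeding $2\eta n$ would yield (via \cref{fact:special-kernel}) a kernel vector with support larger than $\eta n$, forcing $\corank A(G)>\dim K_0^{(\eta)}$. Hence whp there are no special cycles of length $\ge 2\eta n$, and letting $\eta\to 0$ slowly reduces to the case $k=o(n)$, where the configuration-model bound suffices. You should replace your pigeonhole step with this appeal to \cref{thm:main-RMT}(A1).
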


\begin{proof}
First, we need to separately rule out extremely long special cycles. One could perform a configuration model calculation, but it is convenient to borrow from the proof of \cref{thm:main-RMT}: right at the end of \cref{sec:RMT-overview} (at the end of the proof of \cref{thm:main-RMT}(A1)): we proved that, for an arbitrarily small constant $\eta$, whp there is no special cycle longer than $2\eta n$ (in the notation of that section, $s(G)=s^{(\eta)}(G)$). Taking $\eta\to0$ sufficiently slowly, it now suffices to consider special cycles of length $o(n)$.

The remaining long cycles of length $o(n)$ can actually also be handled by borrowing from the proof of \cref{thm:main-RMT} (specifically, from \cref{lem:good-deg-seq,lem:structure-master}(2)). However, as a warm-up to more involved calculations that will appear later in the proof of \cref{thm:main-RMT}(A2), we perform an explicit configuration model calculation.

Condition on a degree sequence satisfying the properties in \cref{lem:degree-sequence-poisson}(A). We compute with the
configuration model (which suffices, by \cref{lem:simplicity}). Let $n'=\Pr[Z=2|Z\ge2]n+o(n)$ be the number
of degree-2 vertices (using \cref{lem:degree-sequence-poisson}(A1)). Using \cref{lem:degree-sequence-poisson}(A2), the number of configurations of $4k$ pairs that correspond to a special cycle of length $4k$ is
\begin{align*}
\frac{(n')_{2k}2^{2k}}{4k}\sum_{\substack{v_{1},\ldots,v_{2k}\\\text{ distinct}}
}\prod_{i=1}^{2k}d_{v_{i}}(d_{v_{i}}-1)
&\le \frac{1}{4k}\left( 4n'\sum_v \binom {d_v}2\right)^{2k}\\
&\le \frac{1}{4k}\left(\vphantom{\sum}(2+o(1))n\Pr[Z=2|Z\ge2]\,\mb E[Z(Z-1)|Z\ge2]\right)^{2k}
\end{align*}
(this counts isolated cycles twice). For $k=o(n)$, the probability such a configuration
actually appears is
\[
\frac{(2m-8k-1)!!}{(2m-1)!!}=\frac{(m)_{4k}2^{4k}}{(2m)_{8k}}=\left(\frac{1+o(1)}{\mb E[Z|Z\ge 2]}\right)^{4k}.
\]
We compute that the expected number of special cycles with length at least $4M$ (and at most $o(n)$) is at most
\[
\sum_{k=M}^{o(n)}\frac{\left(\vphantom{\sum}(2+o(1))\Pr[Z=2|Z\ge2]\,\mb E[Z(Z-1)|Z\ge2]\right)^{2k}}{4k(\mb E[Z|Z\ge2])^{4k}}.
\]
By \cref{lem:sum-formulas}, this tends to zero as $M\to\infty$, so the desired result follows from Markov's inequality.
\end{proof}

We are now ready to prove \cref{thm:main-RMT}(A2).

\begin{proof}
[Proof of \cref{thm:main-RMT}(A2)]Recall that an induced cycle is special if its length is
divisible by 4 and every second vertex has degree 2. Say that the
cycle is \emph{weakly special} if it is not isolated (i.e., if it has at least one vertex whose degree is not 2).

We work in the configuration model, conditioning on a degree sequence satisfying the properties in \cref{lem:degree-sequence-poisson}. Again, let $n'=\Pr[Z=2|Z\ge2]n+o(n)$ be the number
of degree-2 vertices. Let $N_{k}$ be the number of
weakly special cycles of length $4k$, and let $N_{k}^{\dagger}$ be the number
of isolated cycles of length $4k$. Let $A_{1}$ and $A_{2}$ be the
numbers of loops and 2-cycles (so the random multigraph produced by the configuration model is simple if and only if $A_1=A_2=0$). Also, let
\[
\gamma_k^\dagger=\frac{1}{8k}\left(\frac{2\Pr[Z=2|Z\ge2])}{\mb E[Z|Z\ge2]}\right)^{4k},\quad\gamma_{k}=\frac{1}{4k}\cdot\frac{(2\Pr[Z=2|Z\ge2]\mb E[Z(Z-1)|Z\ge2])^{2k}}{\mb E[Z|Z\ge2]^{4k}}-2\gamma_k^\dagger,
\]
\[
\eta_{1}=\frac{1}{2}\cdot\frac{\mb E[Z(Z-1)|Z\ge2]}{\mb E[Z|Z\ge2]},\quad\eta_{2}=\frac{1}{4}\left(\frac{\mb E[Z(Z-1)|Z\ge2]}{\mb E[Z|Z\ge2]}\right)^{2}.
\]
Let $(W_{1},W_{2})$, $(Y_{k})_{k=1}^{\infty}$ and $(Y_{k}^{\dagger})_{k=1}^{\infty}$
be independent sequences of independent Poisson random variables with
$\mb E W_{i}=\eta_{i}$, $\mb E Y_{k}=\gamma_{k}$, and $\mb E Y_{k}^{\dagger}=\gamma_k^\dagger$. 

Abusing notation, we write $\cup$ for concatenation of sequences.
We claim that for any $M$, we have
\[
(A_{1},A_{2})\cup(N_{k})_{k=1}^{M}\cup(N_{k}^{\dagger})_{k=1}^{M}\overset{d}{\to}(W_{1},W_{2})\cup(Y_{k})_{k=1}^{M}\cup(Y_{k}^{\dagger})_{k=1}^{M}.
\]
(where here $n$ goes to infinity while $M$ is fixed). In combination with \cref{lem:large-special-cycles} and \cref{lem:simplicity}, this suffices to prove \cref{thm:main-RMT}(A2). Indeed, \cref{lem:sum-formulas} then shows that the number of weakly special cycles converges in distribution to a Poisson random variable with parameter $\gamma(c)$, and the number of isolated cycles converges to a Poisson random variable with parameter $\gamma^\dagger(c)$.

To this end, by the method of moments (see for example \cite[Lemma~2.8]{Wor99}) it suffices to
prove that for any $(s_{1},s_{2})\cup(r_{k})_{k=1}^{M}\cup(r_{k}^{\dagger})_{k=1}^{M}$ (which we treat as fixed, while $n\to \infty$),
we have
\[
\mb E\left[(A_{1})_{s_{1}}(A_{2})_{s_{2}}\prod_{k=1}^{M}(N_{k})_{r_{k}}(N_{k}^{\dagger})_{r_{k}^{\dagger}}\right]\to\eta_1^{s_1}\eta_2^{s_2}\prod_{k=1}^{M}\gamma_{k}^{r_{k}}(\gamma_k^\dagger)^{r_{k}^{\dagger}}.
\]

Note that $(A_{1})_{s_{1}}(A_{2})_{s_{2}}\prod_{k=1}^{M}(N_{k})_{r_{k}}(N_{k}^{\dagger})_{r_{k}^{\dagger}}$
is the number of (ordered) collections of distinct cycles, containing
$s_{1}$ loops, $s_{2}$ 2-cycles, $r_{k}$ weakly special cycles of
each length $4k$ and $r_k^\dagger$, isolated cycles of each length $4k$.

If such a collection does not consist of vertex-disjoint cycles, then
the union of this collection has strictly more edges than vertices
(because every vertex has degree at least 2, and some vertex has degree
strictly greater than 2). The expected number of such collections
is therefore $O(1/n)$. So, it suffices to consider the contribution
from collections of vertex-disjoint cycles.

In such a collection of disjoint cycles, let $x=s_{1}+2s_{2}$ be the number of vertices that should be in loops and 2-cycles (which can have any degree), let $y=\sum_{k=1}^{M}(2kr_{k}+4kr_{k}^{\dagger})$ be the number of vertices that should have degree 2 in special cycles, and let $t=x+y+\sum_{k=1}^{M}2kr_{k}$ be the total number of vertices (recall that in weakly special cycles, not all vertices have degree 2, so there is a mild restriction on the degrees of the vertices not counted by $x$ and $y$). The number of ways to choose configurations forming an appropriate collection
of disjoint cycles is
\[
\frac{(n')_{y}2^{y}}{2^{s_{1}}4^{s_{2}}\prod_{k=1}^{M}(4k)^{r_{k}}(8k)^{r_{k}^{\dagger}}}\left(\;{\sideset{}{^\ast}\sum_{v_{1},\ldots,v_{t-y}}}\prod_{i=1}^{t-y}d_{v_{i}}(d_{v_{i}}-1)\right)
\]
where we fix a set $V^\ast$ of $y$ degree-2 vertices, and the sum with a ``$\ast$'' is over all tuples of distinct
vertices $v_{1},\ldots,v_{t-y}\notin V^\ast$, satisfying the following condition.
After the first $x$ vertices, if we group the vertices into consecutive
blocks of lengths $2,\ldots,2,4,\ldots,4,6,\ldots,6,\ldots,2M,\ldots,2M$
(where there are $r_{k}$ blocks of each length $2k$), then each block
has at least one non-degree-2 vertex (and therefore its corresponding
special cycle is only weakly special). Using \cref{lem:degree-sequence-poisson}(A4) with $s=1$, we see that the distinctness restriction on the $v_{1},\ldots,v_{t-y}$ makes essentially no difference (and similarly with the condition that they do not lie in $V^\ast$). Thus, observe that 
\[
{\sideset{}{^\ast}\sum_{v_{1},\ldots,v_{t-y}}}\prod_{i=1}^{t-y}d_v(d_v-1)=\left(\sum_{v\in V(G)}d_v(d_v-1)\right)^{x}\prod_{k=1}^{M}\left(\left(\sum_{v\in V(G)}d_v(d_v-1)\right)^{2k}-(n')^{2k}2^{2k}\right)^{r_{k}}+o(n^{t-y}).
\]
We may estimate the above sum using the property in \cref{lem:degree-sequence-poisson}(A2). Now, for any of our configurations of disjoint cycles, the probability such a configuration actually appears is
\[
\frac{(2m-t)!!}{(2m)!!}=\frac{1+o(1)}{(n\mb E[Z|Z\ge2])^t}.
\]
The desired result follows, using our expressions for $n'$ and $\sum_{v\in V(G)}d_v(d_v-1)$ and simplifying.
\end{proof}

\subsection{Further comments on asymptotic distributions}\label{sub:further-comments}
We finish this section with some discussion of the asymptotic distributions of $\nu(G)-\rank B(G)$ in the setting of \cref{thm:matching-vs-rank}(B), and $2\nu(G)-\rank A(G)$, $\sigma(G)-\rank A(G)$ in the setting of \cref{thm:matching-vs-rank}(A).

First, we believe that in the setting of \cref{thm:matching-vs-rank}(B), whp $\nu(G)$ attains the Karp--Sipser bound in \cref{cor:KS-bounds}(B) exactly (meaning that $\nu(G)=n-\max(i_1(G),i_2(G))$). To prove this in the same way as \cref{thm:matching-characterisation}, one needs to prove that in the setting of \cref{thm:main-RMT}(B), we have $\nu(G)=n_2$ whp (i.e., there is a matching saturating the smaller side of the bipartite graph). A very similar statement was proved in a difficult paper of Frieze~\cite{Fri05}, and we believe that the ideas in his paper are also applicable to our setting. If this were true, then the asymptotic distribution of $\nu(G)-\rank B(G)$ would be precisely as described in \cref{thm:rank-characterisation-distribution}(B).

Similarly, we believe that in the setting of \cref{thm:matching-vs-rank}(A), whp $\sigma(G)$ attains the Karp--Sipser bound in \cref{cor:KS-bounds}(A2) exactly (meaning that $\sigma(G)=n-i(G)$). To prove this in the same way as \cref{thm:matching-characterisation}, one needs to prove that in the setting of \cref{thm:main-RMT}(A), we have $\sigma (G)=n$ (i.e., there is a collection of vertex-disjoint cycles and edges covering the entire graph). We believe that such a collection can almost entirely consist of edges (i.e., it is essentially a matching), but odd cycles must be included if $G$ has isolated odd cycles, and an additional odd cycle may be necessary for parity reasons. We believe that the ideas by Frieze and Pittel~\cite{FP04} on matchings in degree-constrained random graphs should be suitable to prove this (in fact, it may be possible to deduce the desired statement from the main result of \cite{FP04} in a black-box manner). If this were true, then the asymptotic distribution of $\sigma(G)-\rank A(G)$ would be precisely as described in \cref{thm:rank-characterisation-distribution}(A).

In the setting of \cref{thm:matching-vs-rank}(A), we already have a characterisation of $\nu(G)$ from \cref{thm:matching-characterisation}. A routine calculation in the configuration model (similar to the one used to prove \cref{thm:main-RMT}(A2) earlier in this section) shows that the number of isolated odd cycles in $\on{core}_\mr{KS}(G)$ is asymptotically independent from $s(G)$. If $c>e$, it is not hard to see that the parity of the size of the giant component of $\on{core}_\mr{KS}(G)$ is asymptotically independent from these two quantities. We believe that the ideas in \cite[Section~7]{CCKLRb} can be used to prove that this parity is asymptotically equidistributed, in which case the asymptotic distribution of $2\nu(G)-\rank A(G)$ would be $Y+2Y^{\dagger}-W-\mbm 1_{c>e} U$, where $Y,Y^{\dagger}$ are as in \cref{thm:rank-characterisation-distribution}(A), and, independently, $U$ is uniform on $\{0,1\}$ and $W$ is Poisson with mean
\[\sum_{k=1}^\infty \frac{1}{2(2k-1)}\cdot \left(\frac{2\Pr[Z=2|Z\ge2])}{\mb E[Z|Z\ge2]}\right)^{2k-1}=\frac{1}{4}\left(\log\left(1 + \frac\lambda{e^\lambda-1}\right) - \log\left(1 - \frac\lambda{e^\lambda-1}\right)\right).\]

\bibliographystyle{amsplain0.bst}
\bibliography{main.bib}

\providecommand{\bysame}{\leavevmode\hbox to3em{\hrulefill}\thinspace}
\providecommand{\MR}{\relax\ifhmode\unskip\space\fi MR }
% \MRhref is called by the amsart/book/proc definition of \MR.
\providecommand{\MRhref}[2]{%
  \href{http://www.ams.org/mathscinet-getitem?mr=#1}{#2}
}
\providecommand{\href}[2]{#2}
\begin{thebibliography}{10}

\bibitem{ABG10}
L.~Addario-Berry, N.~Broutin, and C.~Goldschmidt, \emph{Critical random graphs:
  limiting constructions and distributional properties}, Electron. J. Probab.
  \textbf{15} (2010), no. 25, 741--775.

\bibitem{AE14}
Louigi Addario-Berry and Laura Eslava, \emph{Hitting time theorems for random
  matrices}, Combin. Probab. Comput. \textbf{23} (2014), 635--669.

\bibitem{AS08}
R.~C. Alamino and D.~Saad, \emph{Typical kernel size and number of sparse
  random matrices over {G}alois fields: a statistical physics approach}, Phys.
  Rev. E (3) \textbf{77} (2008), 061123, 12.

\bibitem{AS04}
David Aldous and J.~Michael Steele, \emph{The objective method: probabilistic
  combinatorial optimization and local weak convergence}, Probability on
  discrete structures, Encyclopaedia Math. Sci., vol. 110, Springer, Berlin,
  2004, pp.~1--72.

\bibitem{AFP98}
Jonathan Aronson, Alan Frieze, and Boris~G. Pittel, \emph{Maximum matchings in
  sparse random graphs: {K}arp-{S}ipser revisited}, Random Structures
  Algorithms \textbf{12} (1998), 111--177.

\bibitem{BB05}
Mark Bagnoli and Ted Bergstrom, \emph{Log-concave probability and its
  applications}, Economic theory \textbf{26} (2005), 445--469.

\bibitem{BR18}
Anirban Basak and Mark Rudelson, \emph{Sharp transition of the invertibility of
  the adjacency matrices of sparse random graphs}, arXiv:1809.08454.

\bibitem{BG01b}
M.~Bauer and O.~Golinelli, \emph{Core percolation in random graphs: a critical
  phenomena analysis}, The European Physical Journal B \textbf{24} (2001),
  339--352.

\bibitem{BG01}
M.~Bauer and O.~Golinelli, \emph{Exactly solvable model with two
  conductor-insulator transitions driven by impurities}, Physical Review
  Letters \textbf{86} (2001), 2621--2624.

\bibitem{BBK72}
A.~B\'{e}k\'{e}ssy, P.~B\'{e}k\'{e}ssy, and J.~Koml\'{o}s, \emph{Asymptotic
  enumeration of regular matrices}, Studia Sci. Math. Hungar. \textbf{7}
  (1972), 343--353.

\bibitem{BC78}
Edward~A. Bender and E.~Rodney Canfield, \emph{The asymptotic number of labeled
  graphs with given degree sequences}, J. Combinatorial Theory Ser. A
  \textbf{24} (1978), 296--307.

\bibitem{BS01}
Itai Benjamini and Oded Schramm, \emph{Recurrence of distributional limits of
  finite planar graphs}, Electron. J. Probab. \textbf{6} (2001), no. 23, 13.

\bibitem{BS13}
Jose Blanchet and Alexandre Stauffer, \emph{Characterizing optimal sampling of
  binary contingency tables via the configuration model}, Random Structures
  Algorithms \textbf{42} (2013), 159--184.

\bibitem{BCFF00}
B.~Bollob\'{a}s, C.~Cooper, T.~I. Fenner, and A.~M. Frieze, \emph{Edge disjoint
  {H}amilton cycles in sparse random graphs of minimum degree at least {$k$}},
  J. Graph Theory \textbf{34} (2000), 42--59.

\bibitem{Bol80}
B\'{e}la Bollob\'{a}s, \emph{A probabilistic proof of an asymptotic formula for
  the number of labelled regular graphs}, European J. Combin. \textbf{1}
  (1980), 311--316.

\bibitem{Bol84}
B\'{e}la Bollob\'{a}s, \emph{The evolution of sparse graphs}, Graph theory and
  combinatorics ({C}ambridge, 1983), Academic Press, London, 1984, pp.~35--57.

\bibitem{Bol01}
B\'{e}la Bollob\'{a}s, \emph{Random graphs}, second ed., Cambridge Studies in
  Advanced Mathematics, vol.~73, Cambridge University Press, Cambridge, 2001.

\bibitem{BLS11}
Charles Bordenave, Marc Lelarge, and Justin Salez, \emph{The rank of diluted
  random graphs}, Ann. Probab. \textbf{39} (2011), 1097--1121.

\bibitem{BLS13}
Charles Bordenave, Marc Lelarge, and Justin Salez, \emph{Matchings on infinite
  graphs}, Probab. Theory Related Fields \textbf{157} (2013), 183--208.

\bibitem{BCC22}
Thomas Budzinski, Alice Contat, and Nicolas Curien, \emph{The critical
  {K}arp--{S}ipser core of random graphs}, arXiv:2212.02463.

\bibitem{CW06}
Julie Cain and Nicholas Wormald, \emph{Encores on cores}, Electron. J. Combin.
  \textbf{13} (2006), Research Paper 81, 13.

\bibitem{CCKLRb}
Amin Coja-Oghlan, Oliver Cooley, Mihyun Kang, Joon Lee, and Jean~Bernoulli
  Ravelomanana, \emph{The sparse parity matrix}, arXiv:2107.06123.

\bibitem{CCKS19}
Amin Coja-Oghlan, Oliver Cooley, Mihyun Kang, and Kathrin Skubch, \emph{Core
  forging and local limit theorems for the {$k$}-core of random graphs}, J.
  Combin. Theory Ser. B \textbf{137} (2019), 178--231.

\bibitem{CEGHR20}
Amin Coja-Oghlan, Alperen~A. Erg\"{u}r, Pu~Gao, Samuel Hetterich, and Maurice
  Rolvien, \emph{The rank of sparse random matrices}, Proceedings of the 2020
  {ACM}-{SIAM} {S}ymposium on {D}iscrete {A}lgorithms, SIAM, Philadelphia, PA,
  2020, pp.~579--591.

\bibitem{CS21}
Simon Coste and Justin Salez, \emph{Emergence of extended states at zero in the
  spectrum of sparse random graphs}, Ann. Probab. \textbf{49} (2021),
  2012--2030.

\bibitem{CTV06}
Kevin~P. Costello, Terence Tao, and Van Vu, \emph{Random symmetric matrices are
  almost surely nonsingular}, Duke Math. J. \textbf{135} (2006), 395--413.

\bibitem{CV10}
Kevin~P. Costello and Van Vu, \emph{On the rank of random sparse matrices},
  Combin. Probab. Comput. \textbf{19} (2010), 321--342.

\bibitem{CV08}
Kevin~P. Costello and Van~H. Vu, \emph{The rank of random graphs}, Random
  Structures Algorithms \textbf{33} (2008), 269--285.

\bibitem{DM08}
Amir Dembo and Andrea Montanari, \emph{Finite size scaling for the core of
  large random hypergraphs}, Ann. Appl. Probab. \textbf{18} (2008), 1993--2040.

\bibitem{DGM22}
Patrick DeMichele, Margalit Glasgow, and Alexander Moreira, \emph{On the rank,
  kernel, and core of sparse random graphs}, arXiv:2105.11718.

\bibitem{DLP14}
Jian Ding, Eyal Lubetzky, and Yuval Peres, \emph{Anatomy of the giant
  component: the strictly supercritical regime}, European J. Combin.
  \textbf{35} (2014), 155--168.

\bibitem{FJSS21}
Asaf Ferber, Vishesh Jain, Ashwin Sah, and Mehtaab Sawhney, \emph{Random
  symmetric matrices: rank distribution and irreducibility of the
  characteristic polynomial}, arXiv:2106.04049.

\bibitem{FKSS21}
Asaf Ferber, Matthew Kwan, Ashwin Sah, and Mehtaab Sawhney, \emph{Singularity
  of the k-core of a random graph}, arXiv:2106.05719.

\bibitem{FKS21}
Asaf Ferber, Matthew Kwan, and Lisa Sauermann, \emph{Singularity of sparse
  random matrices: simple proofs}, arXiv:2011.01291.

\bibitem{Fri05}
Alan Frieze, \emph{Perfect matchings in random bipartite graphs with minimal
  degree at least 2}, Random Structures Algorithms \textbf{26} (2005),
  319--358.

\bibitem{FK16}
Alan Frieze and Micha{\l} Karo\'{n}ski, \emph{Introduction to random graphs},
  Cambridge University Press, Cambridge, 2016.

\bibitem{FP04}
Alan Frieze and Boris Pittel, \emph{Perfect matchings in random graphs with
  prescribed minimal degree}, Mathematics and computer science. {III}, Trends
  Math., Birkh\"{a}user, Basel, 2004, pp.~95--132.

\bibitem{matching-CLT}
Margalit Glasgow, Christina Goldschmidt, Eleonora Krea{\v c}i\'c, Matthew Kwan,
  Ashwin Sah, and Mehtaab Sawhney, \emph{A central limit theorem for the
  matching number of a sparse random graph}, in preparation.

\bibitem{GMW06}
Catherine Greenhill, Brendan~D. McKay, and Xiaoji Wang, \emph{Asymptotic
  enumeration of sparse 0-1 matrices with irregular row and column sums}, J.
  Combin. Theory Ser. A \textbf{113} (2006), 291--324.

\bibitem{Gui}
Alice Guionnet, \emph{Bernoulli random matrices}, Proceedings of the 8th
  European Congress in Mathematics, to appear, arXiv:2112.05506.

\bibitem{Hua18}
Jiaoyang Huang, \emph{Invertibility of adjacency matrices for random $ d
  $-regular graphs}, arXiv:1807.06465.

\bibitem{Jan09}
Svante Janson, \emph{The probability that a random multigraph is simple},
  Combin. Probab. Comput. \textbf{18} (2009), 205--225.

\bibitem{Jan14}
Svante Janson, \emph{The probability that a random multigraph is simple. {II}},
  J. Appl. Probab. \textbf{51A} (2014), 123--137.

\bibitem{JLR00}
Svante Janson, Tomasz {\L}uczak, and Andrzej Rucinski, \emph{Random graphs},
  Wiley-Interscience Series in Discrete Mathematics and Optimization,
  Wiley-Interscience, New York, 2000.

\bibitem{Kan14}
Daniel~M. Kane, \emph{The correct exponent for the {G}otsman-{L}inial
  conjecture}, Comput. Complexity \textbf{23} (2014), 151--175.

\bibitem{KS81}
R.~M. Karp and M.~Sipser, \emph{Maximum matching in sparse random graphs}, 22nd
  Annual Symposium on Foundations of Computer Science (sfcs 1981), {IEEE},
  October 1981.

\bibitem{Kom67}
J.~Koml\'{o}s, \emph{On the determinant of {$(0,\,1)$} matrices}, Studia Sci.
  Math. Hungar. \textbf{2} (1967), 7--21.

\bibitem{Kom68}
J.~Koml\'{o}s, \emph{On the determinant of random matrices}, Studia Sci. Math.
  Hungar. \textbf{3} (1968), 387--399.

\bibitem{Kre17}
Eleonora Krea{\v c}i\'c, \emph{Some problems related to the {K}arp-{S}ipser
  algorithm on random graphs}, Ph.D. thesis, University of Oxford, 2017.

\bibitem{Lel13}
Marc Lelarge, \emph{Bypassing correlation decay for matchings with an
  application to {XORSAT}}, 2013 {IEEE} Information Theory Workshop ({ITW}),
  {IEEE}, sep 2013.

\bibitem{LV72}
JHC Lisman and MCA Van~Zuylen, \emph{Note on the generation of most probable
  frequency distributions}, Statistica Neerlandica \textbf{26} (1972), 19--23.

\bibitem{LLTTY17}
Alexander~E. Litvak, Anna Lytova, Konstantin Tikhomirov, Nicole
  Tomczak-Jaegermann, and Pierre Youssef, \emph{Adjacency matrices of random
  digraphs: singularity and anti-concentration}, J. Math. Anal. Appl.
  \textbf{445} (2017), 1447--1491.

\bibitem{MapNote}
Kenneth Maples, \emph{Symmetric random matrices over finite fields
  announcement}, unpublished note,
  \url{http://user.math.uzh.ch/maples/maples.symma.pdf}, 2009.

\bibitem{McK85}
Brendan~D. McKay, \emph{Asymptotics for symmetric {$0$}-{$1$} matrices with
  prescribed row sums}, Ars Combin. \textbf{19} (1985), 15--25.

\bibitem{MW91}
Brendan~D. McKay and Nicholas~C. Wormald, \emph{Asymptotic enumeration by
  degree sequence of graphs with degrees {$o(n^{1/2})$}}, Combinatorica
  \textbf{11} (1991), 369--382.

\bibitem{Mes20}
Andr\'{a}s M\'{e}sz\'{a}ros, \emph{The distribution of sandpile groups of
  random regular graphs}, Trans. Amer. Math. Soc. \textbf{373} (2020),
  6529--6594.

\bibitem{MRZ03}
M.~M\'{e}zard, F.~Ricci-Tersenghi, and R.~Zecchina, \emph{Two solutions to
  diluted {$p$}-spin models and {XORSAT} problems}, J. Statist. Phys.
  \textbf{111} (2003), 505--533.

\bibitem{MU17}
Michael Mitzenmacher and Eli Upfal, \emph{Probability and computing}, second
  ed., Cambridge University Press, Cambridge, 2017.

\bibitem{Bog18}
Bogdan Nica, \emph{A brief introduction to spectral graph theory}, EMS
  Textbooks in Mathematics, European Mathematical Society (EMS), Z\"{u}rich,
  2018.

\bibitem{Pit90}
Boris Pittel, \emph{On tree census and the giant component in sparse random
  graphs}, Random Structures Algorithms \textbf{1} (1990), 311--342.

\bibitem{TV10}
T.~Tao and V.~H. Vu, \emph{Additive combinatorics}, Cambridge Studies in
  Advanced Mathematics, vol. 105, Cambridge University Press, Cambridge, 2010.

\bibitem{Tik20}
Konstantin Tikhomirov, \emph{Singularity of random {B}ernoulli matrices}, Ann.
  of Math. (2) \textbf{191} (2020), 593--634.

\bibitem{HMZ}
Remco van~der Hofstad, Noela M\"uller, and Haodong Zhu, \emph{The rank of
  sparse symmetric matrices over arbitrary fields}, arXiv:2301.12978.

\bibitem{Vu20}
Van Vu, \emph{Recent progress in combinatorial random matrix theory},
  arXiv:2005.02797.

\bibitem{Vu08}
Van Vu, \emph{Random discrete matrices}, Horizons of combinatorics, Bolyai Soc.
  Math. Stud., vol.~17, Springer, Berlin, 2008, pp.~257--280.

\bibitem{VuSlides}
Van Vu, \emph{Some recent results on random matrices}, lecture at a workshop on
  Probabilistic Techniques and Applications, hosted by the Institute for Pure
  \& Applied Mathematics (IPAM),
  \url{http://www.ipam.ucla.edu/abstract/?tid=8303&pcode=CMAWS1}, 2009.

\bibitem{VuVideo}
Van Vu, \emph{Combinatorial problems in random matrix theory}, invited lecture
  at the International Congress of Mathmaticians (ICM) in Seoul,
  \url{https://www.youtube.com/watch?v=9REUYqf8EWA}, 2014.

\bibitem{WorThesis}
N~C Wormald, \emph{Some problems in the enumeration of labelled graphs}, 1978,
  PhD thesis, University of Newcastle.

\bibitem{Wor99}
N.~C. Wormald, \emph{Models of random regular graphs}, Surveys in
  combinatorics, 1999 ({C}anterbury), London Math. Soc. Lecture Note Ser., vol.
  267, Cambridge Univ. Press, Cambridge, 1999, pp.~239--298.

\bibitem{Wor99DE}
N.C. Wormald, \emph{The differential equation method for random graph processes
  and greedy algorithms}, Lectures on Approximation and Randomized Algorithms,
  PWN, Warsaw, 1999, pp.~73--155.

\end{thebibliography}

\appendix
\section{Analysing the subcritical Karp--Sipser process}\label{sec:AFP}
Here we briefly sketch the analysis in \cite{AFP98} used to prove \cref{lem:subcritical-cycles}(A): if $c<e$, then the Karp--Sipser core whp consists of a collection of vertex-disjoint cycles, and the numbers of cycles of each length are asymptotically jointly Poisson distributed. As will become clear, one can prove \cref{lem:subcritical-cycles}(B) (i.e., the bipartite case of the same fact) with essentially the same analysis.

The main part of the proof is an analysis of the Karp--Sipser leaf-removal process. This analysis is slightly simpler on a random multigraph than a random graph: instead of $\mb G(n,c/n)$, we consider a random multigraph whose edges correspond to a sequence of exactly $\lfloor cn/2\rfloor$ pairs of vertices, sampled uniformly at random with replacement (results about such random multigraphs can be transferred to random graphs, as observed in \cite[Lemma~1]{AFP98}). At each step of the leaf-removal process, we consider the number $v_0$ of isolated vertices, the number $v_1$ of degree-1 vertices, the number $v$ of vertices of degree at least 2, and the number $m$ of edges remaining. These statistics $(v_0,v_1,v,m)$ can be shown to evolve as a Markov chain (\cite[Lemma~3]{AFP98}).

The authors study the typical trajectory of these statistics as the process evolves, using the \emph{differential equations method}. Namely, they first study the expected change in each of $v_0,v_1,v,m$ after a single step of the leaf-removal process, in terms of the statistics $v_0,v_1,v,m$ themselves (\cite[Lemmas~6 and~7]{AFP98}). These expected one-step changes approximately correspond to a system of differential equations (solved in \cite[Lemma~8]{AFP98}), and it can then be shown that whp the trajectories of the evolving statistics $v_0,v_1,v,m$ are well-approximated by the solution to this system of differential equations (\cite[Lemma~11]{AFP98}).

Specifically, to study the expected change after a single step of leaf-removal, the authors use the fact that at any time $t$, the distribution of the remaining multigraph is uniform among all multigraphs with statistics $(v_0,v_1,v,m)$ (\cite[Lemma~2]{AFP98}). Apart from the $v_0+v_1$ vertices of degree 0 and 1, the degrees of the remaining vertices are then shown to be well-approximated by a sequence of truncated Poisson random variables (with a particular Poisson parameter $z$ defined in terms of $v_0,v_1,v,m$; see \cite[Lemmas~4 and~5]{AFP98}), and this degree information can be used to estimate the expected 1-step changes in the various statistics (in the leaf-removal process, if we delete a leaf $x$ with neighbour $y$, then the change to $v_0,v_1,v_2,m$ can be described in terms of the degrees of the neighbours of $y$).

As the process continues, the Poisson parameter $z$ evolves with $v_0,v_1,v,m$. Differential equation heuristics suggest that if $c<e$ then $z$ converges to zero as the process reaches completion. Actually, it is convenient to parameterise the process by $z$: \cite[Lemma~11]{AFP98} allows one to control the trajectories of $v_0,v_1,v,m$ (showing that they are well-approximated by differential equation heuristics) until say $z< n^{-0.1}$. At this point, almost all of the $v$ vertices with degree at least 2 in fact have degree exactly 2. The number of degree-1 vertices $v_1$ is about $n(1-\eta)z^2/c$ and the number of degree-2 vertices is about $n\eta z^2/(2c)$, where $\eta$ is the solution to the equation $c=\eta e^\eta$ (see \cite[Eqs.~(79)--(90)]{AFP98}).

Although we are still some way from the end of the process (there are still a lot of degree-1 vertices remaining), the key observation is that it is already possible to see what the final Karp--Sipser core will end up looking like. Indeed, since there are so few vertices with degree 3 or greater, by a configuration-model calculation, it is easy to see that whp there are no ``heavy cycles'' containing a degree-3 vertex (see \cite[Eqs.~(91)--(93)]{AFP98} and the following discussion). So, at this stage, the connected components are trees and isolated cycles, meaning that the Karp--Sipser core will consist precisely of the (disjoint) cycles which still exist at this stage. In the rest of \cite[Section~5.1]{AFP98}, the authors then use the method of moments (in a similar way to the calculations in the proof of \cref{thm:main-RMT}(A2) in \cref{sec:distributions}) to show that the number of cycles of length $k$ is asymptotically Poisson with mean $\eta^k/(2k)$ (independently for each $k$). Roughly speaking, given a typical outcome of the degree sequence (with $N\approx z^2 n/c$ stubs in the corresponding configuration model), the number of possible sets of $k$ configuration-edges corresponding to a $k$-cycle is about $(\eta N)^k/(2k)$, and the probability a given $k$-cycle is present is about $1/N^k$.

In the bipartite case, we can perform essentially exactly the same differential-equations-method calculation to track the evolution of the number of edges $m$, the numbers of isolated vertices $v_0^{(1)},v_0^{(2)}$ \emph{on each side}, the numbers of degree-1 vertices $v_1^{(1)},v_1^{(2)}$ on each side, and the numbers $v^{(1)},v^{(2)}$ of vertices of degree at least 2 on each side. Actually, the differential equations are exactly the same (where $v_0^{(1)}, v_0^{(2)}$ both take essentially the same value $v_0$, and $v_1^{(1)}, v_1^{(2)}$ take essentially the same value $v_1$, and $v^{(1)},v^{(2)}$ take essentially the same value $v$), because $\mb G(n,c/n)$ and $\mb G(n,n,c/n)$ locally ``look the same'' (locally, they both look like a $\on{Poisson}(c)$ Galton--Watson tree). Near the end of the process, the number of degree-1 vertices on each side is about $n(1-\eta)z^2/c$ and the number of degree-2 vertices on each side is about $n\eta z^2/(2c)$. Given such a degree sequence, for even $k$ the number of possible $k$-cycles is about $(\eta N)^k/k$, where $N\approx z^2 n/c$ is the approximate number of stubs on each side in the corresponding configuration model, and the probability a given $k$-cycle is present is about $1/N^k$. So, the expected number of $k$-cycles is $\eta^k/k$.

\end{document}